\DeclareMathSymbol{\shortminus}{\mathbin}{AMSa}{"39}
\tikzset{
dot/.style={draw,fill,circle,inner sep = 0pt,minimum size = 3pt},
bigdot/.style={dot,minimum size = 4pt},
evenbiggerdot/.style={draw, thick, circle, inner sep = 1pt, minimum size = 12pt},
terminal/.style={draw,circle, inner sep=2.5pt},
vcolour/.style={draw,inner sep=1.5pt,font=\scriptsize,label distance=2pt},
22box/.style={draw,minimum width=2cm,minimum height=1.5cm,font=\LARGE,node contents=\( \Downarrow \)},
22boxsmall/.style={22box,minimum width=1.5cm, minimum height=1cm},
Vset/.style={draw=black!30,ellipse,minimum width=1.5cm,minimum height=2.5cm},
}
\newtheorem{theorem}{Theorem}
\newtheorem{lemma}{Lemma}
\newtheorem{corollary}{Corollary}
\newtheorem{observation}{Observation}
\newtheorem{problem}{Problem}
\newtheoremstyle{crampedthm}% <name>
{3pt}% <Space above>
{3pt}% <Space below>
{}% <Body font>
{}% <Indent amount>
{\bfseries}% <Theorem head font>
{.\\}% <Punctuation after theorem head>
{.5em}% <Space after theorem headi>
\theoremstyle{crampedthm}
\newtheoremstyle{freethm}% <name>
{3pt}% <Space above>
{3pt}% <Space below>
{}% <Body font>
{}% <Indent amount>
{\bfseries}% <Theorem head font>
{.\\}% <Punctuation after theorem head>
{.5em}% <Space after theorem headi>
\theoremstyle{freethm}
\theoremstyle{plain}
\newtheoremstyle{thmpart}% <name>
{3pt}% <Space above>
{3pt}% <Space below>
{}% <Body font>
{}% <Indent amount>
{\bfseries}% <Theorem head font>
{:}% <Punctuation after theorem head>
{.5em}% <Space after theorem headi>
\theoremstyle{thmpart}
\newtheorem{myclaim}{Claim}%
\theoremstyle{plain}
\newenvironment{customthm}[1]
  {\innercustomthm}
  {\endinnercustomthm}
\newenvironment{customcor}[1]
  {\innercustomcor}
  {\endinnercustomthm}
\newenvironment{customlem}[1]
  {\innercustomlem}
  {\endinnercustomlem}
\theoremstyle{freethm}
\newcommand{\colcol}[1]{\textcolor{blue}{#1}}
\newcommand{\inline}[1]{ }
\newcommand{\inlinenew}[1]{ }
\providecommand{\keywords}[1]{\textbf{\textit{Keywords: }} #1}
\begin{document}
\title{Star colouring and locally constrained graph homomorphisms}
%\title{Star colouring of regular graphs and\\ related graph homomorphisms\thanks{A preliminary version of this work appeared in the second author's thesis \cite{cyriac}.}}
%Complexity of Restricted Variant\\ of Star Colouring\thanks{First author is supported by SERB(DST), MATRICS scheme MTR/2018/000086}}
%
%\titlerunning{An extremal problem on star colouring of regular graphs}
% If the paper title is too long for the running head, you can set
% an abbreviated paper title here
%
\author[1]{Cyriac Antony}
\affil[1]{Luiss University, Rome, Italy; Email: \href{mailto:acyriac@luiss.it}{\texttt{acyriac@luiss.it}}}
\author[2]{Shalu M.\ A.}
\affil[2]{Indian Institute of Information Technology, Design \& Manufacturing\protect\\ (IIITDM) Kancheepuram, Chennai, India; Email: \href{mailto:shalu@iiitdm.ac.in}{\texttt{shalu@iiitdm.ac.in}}}
%\author[3]{Rohini S.}
%\affil[3]{IIT Madras, Chennai, India; Email: \href{mailto:ma17d017@smail.iitm.ac.in}{\texttt{ma17d017@smail.iitm.ac.in}}}

\date{}
%\author{Shalu M.\ A.$^1$\orcidID{0000-0002-4399-0791}
%\and Cyriac Antony$^2$\orcidID{0000-0001-7503-801X}}
%
%\authorrunning{Shalu M.\ A.\ \and C.\ Antony}
% First names are abbreviated in the running head.
% If there are more than two authors, 'et al.' is used.
%
%\institute{Indian Institute of Information Technology, Design \& Manufacturing\\ (IIITDM) Kancheepuram, Chennai, India; Email: %\\
%\email{shalu@iiitdm.ac.in}
%\and 
%IIT Madras, Chennai, India; Email:  %\\
%\email{ma23r004@smail.iitm.ac.in}
%}
%
\maketitle              % typeset the header of the contribution
\begin{abstract}
We relate star colouring of even-degree regular graphs to the notions of locally constrained graph homomorphisms to the oriented line graph \( \vec{L}(K_q) \) of the complete graph \( K_q \) and to its underlying undirected graph \( L^*(K_q) \). 
%We also show that an oriented graph \( \vec{G} \) admits an out-neighbourhood bijective homomorphism to a strongly connected oriented graph \( \vec{H} \) if and only if \( G \) can be constructed from copies of \( \vec{H} \) using a specific graph operation. 
Our results have consequences for locally constrained graph homomorphisms and oriented line graphs in addition to star colouring. 
We show that \( L^*(H) \) is a 2-lift of the line graph \( L(H) \) for every graph \( H \). 
Dvo{\v{r}}{\'a}k, Mohar and \v{S}\'amal~(J.\ Graph Theory, 2013) proved that for every 3-regular graph \( G \), the line graph of \( G \) is 4-star colourable if and only if \( G \) admits a locally bijective homomorphism to the cube~\( Q_3 \). 
%Thanks to the properties of locally bijective homomorphisms, this result is equivalent to the following: for every 3-regular graph \( G \), the line graph \( L(G) \) of \( G \) is 4-star colourable if and only if \( L(G) \) admits a locally bijective homomorphism to \( L(Q_3) \). 
We generalise this result as follows: for \( p\geq 2 \), a \( K_{1,p+1} \)-free \( 2p \)-regular graph \( G \) admits a \( (p+2) \)-star colouring if and only if \( G \) admits a locally bijective homomorphism to \( L^*(K_{p+2}) \). 
As a result, if a \( K_{p+1} \)-free \( 2p \)-regular graph \( G \) with \( p\geq 2 \) is \( (p+2) \)-star colourable, then \( -2 \) and \( p-2 \) are eigenvalues of \( G \). 
We also prove the following:  
(i)~for \( p\geq 2 \), a \( 2p \)-regular graph \( G \) admits a \( (p+2) \)-star colouring if and only if \( G \) has an orientation that admits an out-neighbourhood bijective homomorphism to \( \vec{L}(K_{p+2}) \);  
(ii)~the line graph of a 3-regular graph \( G \) is 4-star colourable if and only if \( G \) is bipartite and distance-two 4-colourable; and (iii)~it is NP-complete to check whether a planar 4-regular 3-connected graph is 4-star colourable. 
\end{abstract}
\keywords{Star coloring, Graph homomorphism, Locally constrained homomorphism, Locally bijective homomorphism, Oriented line graph, Regular graphs.}
%\keywords{Star coloring \and Graph homomorphism \and Locally constrained graph homomorphism \and Regular graphs \and Locally bijective homomorphism \and Graph orientation.}
%
%
%
%\todo{
%[Reviews]
%}

%%% in final version, comment out from here
% \section*{(Notes on responses to reviewer comments)}
%% %% comment this section out
%Some of the contents omitted from the paper from the previous version are included as todo notes when reviewer comments address them. 
%The comments are added in place as todo notes. 
%Short responses (such as `corrected') are added together with the comments themselves, but in blue colour. 
%Other responses are placed immediately after them as todo notes with a grey backgroud. 
%Our comments indicating other changes 
%are added as todo notes with blue background, and newly added paragraphs in isolation are marked within \textcolor{blue}{\textbf{``}} and \textcolor{blue}{\textbf{''}} symbols. 
%%Basic information such as newly added subsections 
%%Basic information such as newly added subsections 
%%are indicated using todo notes with blue background, and newly added paragraphs in isolation are marked within \textcolor{blue}{\textbf{``}} and \textcolor{blue}{\textbf{''}} symbols. 
%%%%% till here

\todo{
\inlinenew{\Large \textbf{Reviews}}\\[7pt]
%\textbf{Reviewer comments}
\textbf{Reviewer F:}

This paper mostly concerns characterisations of \( 2p \)-regular graphs that have a start colouring that uses \( p+2 \) colors (which is the smallest possible numer). The first theorem says that a \( 2p \)-regular and \( K_{1, p+1} \)-free graph \( G \) has a \( (p+2) \)-star coloring if and only if there is a locally bijective homomorphism from \( G \) to some specific graph \( G_{2p} \). It is a generalization of an earlier result, that a 4-regular line graph \( L(H) \) has 4-star coloring if \( H \) has a locally bijective homomorhism to the 3-dimensional cube. The second theorem gives a characterization of \( 2p \)-regular graphs that have a \( (p+2) \)-star coloring: the existance of a star coloring of \( G \) is equivalent to existance of a out-neighborhood bijective homomorphism from an orientation of \( G \) to an orientation of the same specific graph \( G_{2p} \). This result is a follow-up to an earlier work by the same authors (reference [29]). The proof of this theorem is not self-contained (it references part of the proof of a theorem from [29]). The paper also contains a number of other observations and corollaries of lesser importance.

The results are, in my opinion, very narrow. However, they may be interesting to some people and have some potential to inspire future research. The proofs are elementary, relying on some case-checking and routine arguments; however, they would not be straightforward to find.

The writing is appropriate. Personally I found it hard to follow the text, but I would attribute it to a large number of technical details and definitions that the reader must keep in mind.

My overall impression is that the paper is quite solid and definitely worth publishing, but I am not sure if it meets high standards of The Electronic Journal of Combinatorics. Therefore, I would recommend to accept the paper, but with a fairly low degree of confidence (hence my ``See comments'' recommendation).

I conclude with three minor remarks (moved to todo notes).

~\\
\textbf{Reviewer G:}

The paper presents several results that relate Star colorings to locally bijective homomorphisms (and to the related oriented variant).

These results are new to me and shall be published. I would strongly recommend to revisit some of the specific statements related to graph covers (see suggestions in the attached file [moved to todo notes]). It seems to me that several proofs could be simplified in these more general settiong. Indeed I convinced myself that several statements are correct by my own argument (could be short, see e.g.\ Thm 9) rather than reading the proof.

The paper shall be throughly revised taking into account the comments in the attached file (moved to todo notes), with emphasis on:

- change the flow of the presentation, so all proofs follow the statements

- all definitions precede the first use of the defined concepts and denoted symbols

- the notation is chosen in a coherent and natural way

- use simpler arguments whenever possible, and consider consequences of your finding (e.g. ``open'' Problem 2).

Then it shall be considered again whether it is good enough for The Electronic Journal of Combinatorics.

Recommendation: Major revisions required, resubmit for further review
}

\section{Introduction}
%\todo[color=blue!5!white]{
%Section is re-written.
%}
Star colouring is an extensively studied colouring variant~\cite{fertin2004,albertson,nesetril_mendez2003,borodin2013,timmons2009,kawarabayashi_mohar,bu_cranston,choi_park,gebremedhin2007,sanders,lyons,yue,omoomi}, and there is an exclusive survey~\cite{lei_shi} on star colouring of line graphs. 
A star colouring is a proper vertex colouring without any bicoloured 4-vertex path. 
Our focus in this paper is on star colouring of regular graphs and related notions, such as graph orientations and homomorphisms. 
Albertson et al.~\cite{albertson} and independently Ne\v{s}et\v{r}il and Mendez~\cite{nesetril_mendez2003} found that star colourings of a graph \( G \) are associated with certain orientations of \( G \), called in-orientations. 
The number of colours required to star colour a \( d \)-regular graph is at least \( \lceil (d+3)/2 \rceil \)~\cite{fertin2003}, and at least \( \lceil (d+4)/2 \rceil \) for \( d\geq 3 \)~\cite{shalu_cyriac3}. 
%In particular, at least \( (p+2) \) colours are required to star colour \( 2p \)-regular graphs with \( p\geq 2 \). 
Even-degree regular graphs attaining this bound (i.e., \( 2p \)-regular \( (p+2) \)-star colourable graphs) are characterised in terms of a special case of in-orientations in \cite{shalu_cyriac3}. 
We show that the structure of \( 2p \)-regular \( (p+2) \)-star colourable graphs is closely related to locally constrained graph homomorphisms and a graph operation called oriented line graph operation (which is in turn related to the notions of line graph of a graph and line digraph of a digraph).

The locally constrained graph homomorphisms which are central to this paper are the well-known Locally Bijective Homomorphism (LBH), and an oriented version we introduce called Out-neighbourhood Bijective Homomorphism (OBH). 
An LBH from a graph \( G \) to a graph \( H \) is a mapping \( \psi\colon V(G)\to V(H) \) such that for every vertex \( v \) of \( G \), the restriction of \( \psi \) to the neighbourhood \( N_G(v) \) is a bijection from \( N_G(v) \) onto \( N_H(\psi(v)) \)~\cite{fiala_kratochvil}. 
An OBH from an oriented graph \( \vec{G} \) to an oriented graph \( \vec{H} \) is a mapping \( \psi\colon V(\vec{G})\to V(\vec{H}) \) such that for every vertex \( v \) of \( \vec{G} \), the restriction of \( \psi \) to the out-neighbourhood \( N_{\vec{G}}^+(v) \) is a bijection from \( N_{\vec{G}}^+(v) \) to \( N_{\vec{H}}^+(\psi(v)) \).

The oriented line graph of (an undirected) graph \( G \) is the oriented graph with vertex set \( \bigcup_{uv\in E(G)}\{(u,v), (v,u)\} \) and there is an arc from a vertex \( (u,v) \) to a vertex \( (v,w) \) in it when \( u\neq w \)~\cite{parzanchevski}. 
We denote the oriented line graph of a graph \( G \) by \( \vec{L}(G) \), and its underlying undirected graph by \( L^*(G) \). 
We show that \( L^*(G) \) is always a 2-lift of the line graph \( L(G) \), which means that \( L^*(G) \) has double the number of vertices of \( L(G) \) and it admits an LBH to \( L(G) \). 
%%%%
%%%% This one we can also move to main contributions
%%%%
%%%%

%For each \( p\geq 2 \), Shalu and Antony~\cite{shalu_cyriac3} constructed a fixed 2p-regular \( (p+2) \)-star colourable graph \( G_{2p} \), and characterised \( 2p \)-regular \( (p+2) \)-star colourable graphs in terms of graph orientations. 
%The graph \( G_4 \) is the line graph of the cube, better known as the cuboctahedral graph. 
Dvo{\v{r}}{\'a}k, Mohar and \v{S}\'amal~\cite{dvorak} proved that for every 3-regular graph \( H \), the line graph of \( H \) is 4\nobreakdash-star colourable if and only if \( H \) admits an LBH to the cube~\( Q_3 \). 
Thanks to the properties of locally bijective homomorphisms (see Theorem~\ref{thm:lbh goes with line graph operation}), this result is equivalent to the following: for every 3-regular graph \( H \), the line graph \( L(H) \) is 4-star colourable if and only if \( L(H) \) admits an LBH to \( L(Q_3) \). 
Clearly, the following statement is stronger: a \( K_{1,3} \)-free 4-regular graph \( G \) is 4-star colourable if and only if \( G \) admits an LBH to \( L(Q_3) \). 
Since \( L(Q_3)\cong L^*(K_4) \), it follows that a \( K_{1,3} \)-free 4-regular graph \( G \) is 4-star colourable if and only if \( G \) admits an LBH to \( L^*(K_4) \). 
We prove the following generalisation of the statement. 
\begin{itemize}
\item 
A \( K_{1,p+1} \)-free \( 2p \)-regular graph \( G \) with \( p\geq 2 \) admits a \( (p+2) \)-star colouring if and only if \( G \) admits an LBH to \( L^*(K_{p+2}) \) (see Theorem~\ref{thm:p+2 star colourable iff LBH}).  
\end{itemize}
Other main contributions of the paper are as follows. 
\begin{itemize}
\item
A \( 2p \)-regular graph \( G \) with \( p\geq 2 \) admits a \( (p+2) \)-star colouring if and only if \( G \) admits an OBH to \( \vec{L}(K_{p+2}) \) (see Theorem~\ref{thm:characterise 2p-regular p+2-star colourable}). 
\item 
For every 3-regular graph \( G \), the line graph of \( G \) is 4-star colourable if and only if \( G \) is bipartite and distance-two 4-colourable (see Theorem~\ref{thm:4-star colourable in terms of distance-two colouring}). 
\item   
It is NP-complete to check whether a planar 4-regular graph is 4-star colourable  (see Corollary~\ref{cor:4-star colouring 4-regular NPC}). 
\item
\( K_{1,p+1} \)-free \( 2p \)-regular \( (p+2) \)-star colourable graphs \( G\) have the following properties for \( p\geq 2 \):
  \begin{itemize}
  \item Eigenvalues of adjacency matrix of \( G \) include \( -2 \) and \( p-2 \) (see Theorem~\ref{thm:star col eigenvalue}).
  \item \( G \) has an intersection model, namely a clique graph (see Theorem~\ref{thm:characterise 2p-regular p+2-star colourable}). 
  %\item if \( p=2 \), then \( G \) is the line graph of a bipartite graph.
  \item if \( p=2 \), then \( G \) is a line graph (see Theorem~\ref{thm:4reg 4-starcol clawfree imply line}). 
  \end{itemize}
\end{itemize}

\todo{
(omitted) [re-written]

\inlinenew{The theorem statements are moved so that proofs directly follow them.}

\textbf{Theorem.}
%\begin{theorem}
\emph{
Let \( G \) be a \( K_{1,p+1} \)-free \( 2p \)-regular graph, where \( p\geq 2 \). 
Then, \( G \) admits a \( (p+2) \)-star colouring if and only if \( G \) admits a locally bijective homomorphism to \( G_{2p} \). 
}
%If \( G \) admits a \( (p+2) \)-star colouring, then \( G \) is locally linear, and \( K\big(K(G)\big)\cong G \). 
%\end{theorem}
Our results on \( 2p \)-regular \( (p+2) \)-star colourable graphs also include the following (see Section~\ref{sec:def} for definitions). 

\textbf{Theorem.}
\emph{
For \( p\geq 2 \), the following are equivalent for every \( 2p \)-regular graph \( G \). 
~\\
\( I \). \( G \) admits a \( (p+2) \)-star colouring. 
~\\
\( II \). \( G \) admits a \( (p+2) \)-colouring such that every bicoloured component is \( K_{1,p} \). 
~\\
\( III \). \( G \) admits a \( (p+2) \)-colourful Eulerian orientation. 
~\\
\( IV \). \( G \) has an orientation \( \vec{G} \) that admits an out-neighbourhood\\ bijective homomorphism to \( \overrightarrow{G_{2p}} \). 
}

% \begin{theorem}\label{thm:characterise 2p-regular p+2-star colourable}
% For \( p\geq 2 \), the following are equivalent for every \( 2p \)-regular graph \( G \). 
% \begin{enumerate}[label={\Roman*.},itemsep=0pt,topsep=3pt]
% \item \( G \) admits a \( (p+2) \)-star colouring. 
% \item \( G \) admits a \( (p+2) \)-colouring such that every bicoloured component is \( K_{1,p} \). 
% \item \( G \) admits a \( (p+2) \)-colourful Eulerian orientation. 
% \item \( G \) has an orientation \( \vec{G} \) that admits an out-neighbourhood\\ bijective homomorphism to \( \overrightarrow{G_{2p}} \). 
% \end{enumerate}
% \end{theorem}
% }

\textbf{Theorem.}
%\begin{theorem}
\emph{
Let \( G \) be a \( K_{1,p+1} \)-free \( 2p \)-regular graph, where \( p\geq 2 \). 
If \( G \) admits a \( (p+2) \)-star colouring, then \( G \) is a locally linear graph and a clique graph, and \( K\big(K(G)\big)\cong G \). 
}
%\end{theorem}

\textbf{Theorem.}
\emph{
%\begin{theorem}\label{thm:4-star colourable in terms of distance-two colouring}
For every 3-regular graph \( G \), the line graph of \( G \) is 4-star colourable if and only if \( G \) is bipartite and distance-two 4-colourable. 
}
%\end{theorem}

This is an extension of the work in \cite{shalu_cyriac3}. 
}
This is an extension of the work in \cite{shalu_cyriac3}, but can be read independently. 
Parts of this work appeared in \cite{cyriac} and \cite{shalu_cyriac6}. 
This paper is organised as follows. 
%\textcolor{red}{(to be edited)}
Section~\ref{sec:def} provides the definitions. 
Section~\ref{sec:prelims} presents some preliminaries and results on the tools we employ. %on locally constrained graph homomorphisms. 
The main results and proofs appear in Section~\ref{sec:2p-regular p+2-star colourable}. 
We conclude with Section~\ref{sec:open problems} devoted to open problems.

\section{Definitions}\label{sec:def}
%\todo[color=blue!5!white]{
%I (had) plan to change `\( k \)-star colouring' to `star \( k \)-colouring'. 
%With even when I tried offline with vi (non-greedy search), still it failed. 
%}
We denote the set of positive integers by \( \mathbb{N} \). %, and the set \( \{0,1,\dots,k-1\} \) by \( \mathbb{Z}_k \). 
All graphs considered in this paper are finite and simple, and undirected unless otherwise specificed.
We follow West~\cite{west} for graph theory terminology and notation. 
A \( q \)-clique in a graph \( G \) is a set of \( q \) pairwise adjacent vertices of~\( G \). 
We assume that \( \mathbb{Z}_q \) is the vertex set of the complete graph \( K_q \). 
The graph \( K_{1.3} \) is also called a claw, and the graph \( K_4-e \) is also called a diamond. 
For a fixed graph \( H \), a graph \(  \) is said to be \emph{\( H \)-free} if no induced subgraph of \( G \) is isomorphic to \( H \). 
%An odd-hole is an odd cycle of length at least~5 and odd. 
A graph \( G \) is said to be odd-hole-free if \( G \) is \( C_{2q+3} \)-free for every \( q\in\mathbb{N} \). 
\todo{
Reviewer G:\\
1. this would be for all holes, not only odd. \colcol{(corrected)}\\
2. there are different conventions whether N includes 0 or not, so clarify \colcol{(done)}.
}
The \emph{bipartite double} \( G\times K_2 \) of \( G \) is a graph with vertex set \( V(G)\times \mathbb{Z}_2 \), and two vertices \( (u,i) \) and \( (v,j) \) in it are adjacent if \( uv\in E(G) \) and \( i\neq j \). 
%\todo{
%(omitted)
%
%\inlinenew{The definition of tensor product is not needed since we need only bipartite double [to keep fewer definitions].}
%
%The tensor product of two graphs \( G \) and \( H \), denoted by \( G\times H \), has vertex set \( V(G)\times V(H) \), and two vertices \( (g,h) \) and \( (g',h') \) in \( G\times H \) are adjacent if and only if \( gg'\in E(G) \) and \( hh'\in E(H) \). 
%The graph tensor product \( G\times K_2 \) is known as the \emph{bipartite double} of \( G \). 
%}

For \( k\in \mathbb{N} \), a \( k\)-colouring of a graph \( G \) is a function \( f\colon V(G)\to \mathbb{Z}_k \) such that \( f(u)\neq f(v) \) for every edge \( uv \) of \( G \). 
A coloured graph is an ordered pair \( (G,f) \) where \( G \) is a graph and \( f \) is a colouring of \( G \). 
A coloured graph \( (G,f) \) is said to be a \( k \)-coloured graph if \( f \) is a \( k \)-colouring of \( G \). 
A \emph{bicoloured component} of a \( k \)-coloured graph \( (G,f) \) is a component of the subgraph of \( G \) induced by some pair of colour classes (i.e., a component of \( G[V_i\cup V_j] \), where \( V_\ell=f^{-1}(\ell) \) for each \( \ell\in \mathbb{Z}_k \)). 
A \( k \)-colouring \( f \) of \( G \) is a \emph{\( k \)-star colouring} of \( G \) if every bicoloured component of \( (G,f) \) is a star (i.e., \( K_{1,q} \), where \( q\geq 0 \)). 
A \( k \)-colouring \( f \) of \( G \) is a \emph{distance-two \( k \)-colouring} of \( G \) if every bicoloured component of \( (G,f) \) is \( K_1 \) or \( K_2 \). 
The \emph{line graph} of a graph \( G \), denoted by \( L(G) \), is the graph with vertex set \( E(G) \), and two vertices in \( L(G) \) are adjacent if the corresponding edges in \( G \) are incident on a common vertex in \( G \). 
The \emph{clique graph} of a graph \( G \), denoted by \( K(G) \), is the intersection graph of maximal cliques in~\( G \). 
That is, the vertex set of \( K(G) \) is the set of all maximal cliques in \( G \), and two vertices in \( K(G) \) are adjacent if the corresponding cliques in \( G \) intersect. 
A graph \( G \) is said to be a \emph{clique graph} if there exists a graph \( H \) such that \( G\cong K(H) \). 
A graph \( G \) is \emph{locally linear} if every edge in \( G \) is in exactly one triangle in \( G \) \cite{froncek}. 
A locally linear graph \( G \) is an even-degree graph, and for each vertex \( v \) of \( G \), the neighbourhood of \( v \) in \( G \) induces a matching in \( G \) (hence, a locally linear graph is also called a locally matching graph). 
\todo{Reviewer F:\\
I would remove the word "perfect", because I understand a "perfect matching in G" as a matching that covers all vertices of G, which is clearly incorrect. \colcol{(corrected)}
}
For a fixed graph \( H \), a graph \( G \) is said to be \emph{locally-\( H \)} if the neighbourhood of each vertex of \( G \) induces \( H \) (i.e., \( G[N_G(v)]\cong H \) for all \( v\in V(G) \)). 
For every pair of positive integers \( q \) and \( r \), let \( F(q,r) \) denote the family of connected \( qr \)-regular graphs \( G \) such that the neighbourhood of each vertex of \( G \) induces \( qK_r \)~\cite{devillers}. 
That is, \( F(q,r) \) is the family of connected locally-\( qK_r \) graphs. 
The next observation follows from the definitions. 
\begin{observation}\label{obs:locally linear iff Fp2}
For \( p\geq 2 \), a connected \( 2p \)-regular graph \( G \) is a locally linear graph \( ( \)i.e., locally matching graph\( ) \) if and only if \( G\in F(p,2) \). 
\qed
\end{observation}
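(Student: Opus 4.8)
The plan is to unfold the definitions and observe that the defining condition of a locally linear graph and the defining condition of membership in \( F(p,2) \) say the same thing about each vertex neighbourhood. The pivotal observation is that, for an edge \( uv \) of \( G \), the triangles of \( G \) through \( uv \) are in bijection with the common neighbours of \( u \) and \( v \), and a common neighbour of \( u \) and \( v \) is precisely a neighbour of \( u \) that lies inside \( N_G(v) \). Hence the statement ``every edge \( uv \) of \( G \) lies in exactly one triangle'' is equivalent to ``for every vertex \( v \), every vertex of the graph \( G[N_G(v)] \) has degree exactly \( 1 \) in \( G[N_G(v)] \)'', i.e.\ \( G[N_G(v)] \) is \( 1 \)-regular for every \( v \).

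For the forward direction, I would suppose \( G \) is a connected \( 2p \)-regular locally linear graph and fix \( v\in V(G) \). Then \( |N_G(v)|=2p \), and by the observation above \( G[N_G(v)] \) is \( 1 \)-regular on \( 2p \) vertices, hence isomorphic to \( pK_2 \). Since \( v \) was arbitrary and \( G \) is connected and \( 2p \)-regular, \( G \) lies in \( F(p,2) \). For the backward direction, I would suppose \( G\in F(p,2) \), so that \( G \) is connected, \( 2p \)-regular, and \( G[N_G(v)]\cong pK_2 \) for every \( v \); then \( G[N_G(v)] \) is \( 1 \)-regular for every \( v \), so by the observation every edge of \( G \) lies in exactly one triangle, i.e.\ \( G \) is locally linear.

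I do not anticipate any genuine obstacle, since the statement is essentially a definitional identity; the only point requiring a little care is the degree bookkeeping. In the forward direction one must combine \( 2p \)-regularity of \( G \) (so that \( |N_G(v)|=2p \)) with \( 1 \)-regularity of \( G[N_G(v)] \) to conclude \( G[N_G(v)]\cong pK_2 \) rather than merely ``an induced matching that need not be perfect''; conversely one must note that \( pK_2 \) is itself \( 1 \)-regular, which is what feeds back into the ``exactly one triangle'' condition. The hypothesis \( p\geq 2 \) is not essential to the argument but is retained for consistency with the surrounding results.
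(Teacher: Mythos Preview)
Your proposal is correct and matches the paper's approach: the paper simply notes that the observation follows from the definitions and provides no explicit proof, so your unfolding of the definitions via the bijection between triangles on an edge \(uv\) and neighbours of \(u\) inside \(N_G(v)\) is exactly the intended (and only natural) argument.
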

Devillers et al. proved that for every \( q\geq 2 \), \( r\geq 1 \) and \( G\in F(q,r) \), we have \( K(G)\in F(r+1,q-1) \) and \( K(K(G))\cong G \) \cite[Theorem~1.4]{devillers}.

\todo{
(omitted) [from previous version]

Let us recall the definition of the graph \( G_{2p} \) in \cite{shalu_cyriac3}. 
For each \( p\geq 2 \), the graph \( G_{2p} \) has vertex set \( \{(i,j)\in \mathbb{Z}\times \mathbb{Z}\colon 0\leq i\leq p+1,\ 0\leq j\leq p+1,\ i\neq j\} \), and \( (i,j)(k,\ell) \) is an edge in \( G_{2p} \) if (i)~\( k=j \) and \( \ell\notin \{i,j\} \), or (ii)~\( k\notin \{i,j\} \) and \( \ell=i \). 
Note that every edge of \( G_{2p} \) is of the form \( (i,j)(k,i) \) for some pairwise distinct indices \( i,j,k\in \{0,1,\dots,p+1\} \). 
}
\todo{
Reviewer G:\\
1. As this graph is important for this paper, choose another symbol than \( G_{2p} \) not to be confusely related to a general graph \( G \) (see e.g. Thm 8.). Symbols \( F_{2p} \) or \( H_{2p} \) would be possible candidates. \\ 
2. It would be significantly easier to grasp for a reader, if (oriented) edges are of form \( (i,j)(j,k) \). I do not see a reason to follow the other convention. See also my comment on Obs 7.
}
\todo[color=gray!10!white]{
The convention of orienting edges as \( (i,j)\to (j,k) \) is applied. 
This makes it clear that the oriented graph \( \overrightarrow{G_{2p}} \) is precisely the oriented line graph of \( K_{p+2} \). 
The paper is re-written accordingly. 
The notations \( G_{2p} \) and \( \overrightarrow{G_{2p}} \) are replaced with \( L^*(K_{p+2}) \) and \( \vec{L}(K_{p+2}) \), respectively. 
}

\todo[color=blue!5!white]{
Figure showing two drawings of \( G_4 \), definition of \( G_{2p} 
\), definition of \( 
\overrightarrow{G_{2p}} 
\), etc. are omittted}
%\todo{
%(omitted) [from Prev version]
%
%Observe that \( G_4 \) is the cuboctahedral graph \( L(Q_3) \) (see Figure~\ref{fig:G4}; Figure~\ref{fig:G4 as L(Q3) plane drawing} is an easily recognised plane drawing of the cuboctahedral graph). 
%}

An \emph{orientation} \( \vec{G} \) of a graph \( G \) is the directed graph obtained by assigning a direction on each edge of \( G \); that is, if \( uv \) is an edge in \( G \), then exactly one of \( (u,v) \) or \( (v,u) \) is an arc in \( \vec{G} \). 
An orientation \( \vec{G} \) is \emph{Eulerian} if the in-degree equals the out-degree for every vertex. 
A orientation \( \vec{G} \) is \emph{strongly connected} if for every pair of vertices \( u \) and \( v \), there is a directed \( u,v \)-path in \( \vec{G} \). 
%\todo{
%(omitted) [from Prev version]
%
%For \( p\geq 2 \), \( \overrightarrow{G_{2p}} \) is the orientation of \( G_{2p} \) obtained by orienting each edge \( (i,j)(k,i) \) as \( (i,j)\to (k,i) \) (where \( i,j,k\in \{0,1,\dots,p+1\} \)). 
%}

A \emph{homomorphism} from a graph \( G \) to a graph \( H \) is a mapping \( \psi \colon V(G)\to V(H) \) such that \( \psi(u)\psi(v) \) is an edge in \( H \) whenever \( uv \) is an edge in \( G \). %; see Figure~\ref{fig:eg Hom} for an example.  
If \( \psi \) is a homomorphism from \( G \) to \( H \) and \( \psi(v)=w \), then we say that \emph{\( v \) is a copy of \( w \) in \( G \)} (under \( \psi \)). 
Let \( G \) and \( H \) be two graphs with orientations \( \vec{G} \) and \( \vec{H} \), respectively. 
A \emph{homomorphism} from the orientation \( \vec{G} \) to the orientation \( \vec{H} \) is a mapping \( \psi\colon V(\vec{G})\to V(\vec{H}) \) such that \( (\psi(u),\psi(v)) \) is an arc in \( \vec{H} \) whenever \( (u,v) \) is an arc in \( \vec{G} \). 
%\todo[color=blue!5!white]{
%Is a citation needed?
%If yes, I would cite \cite{hedrlin_pultr} and the book Graphs and homomorphisms
%}
If \( \psi \) is a homomorphism from \( \vec{G} \) to \( \vec{H} \) and \( \psi(v)=w \), then we say that \( v \) is a copy of \( w \) in \( \vec{G} \) (under~\( \psi \)).  
We say that a homomorphism from \( G \) to \( H \) (or from \( \vec{G} \) to \( \vec{H} \)) is \emph{degree-preserving} if \( \deg_G(v)=\deg_H(\psi(v)) \) for every \( v\in V(G) \).

A \emph{locally bijective homomorphism} (in short, \emph{LBH}) from \( G \) to \( H \) is a mapping \( \psi\colon V(G)\to V(H) \) such that for every vertex \( v \) of \( G \), the restriction of \( \psi \) to the neighbourhood \( N_G(v) \) is a bijection from \( N_G(v) \) onto \( N_H(\psi(v)) \)~\cite{fiala_kratochvil} (see Figure~\ref{fig:eg LBH} for an example; observe that such a mapping \( \psi \) is always a homomorphism from \( G \) to \( H \)). 
% \todo{
% (omitted)
% 
% We say that \( G \) \emph{covers} \( H \) if there exists a locally bijective homomorphism from \( G \) to \( H \). 
%% \todo{
%%(drop if we don't use the `covers' terminology)
%}
A homomorphism \( \psi \) from \( G \) to \( H \) is \emph{locally injective} if for every vertex \( v \) of \( G \), the restriction of \( \psi \) to the neighbourhood \( N_G(v) \) is an injection from \( N_G(v) \) to \( N_H(\psi(v)) \)~\cite{fiala_kratochvil}. 
In other words, a homomorphism \( \psi \) from \( G \) to \( H \) is locally bijective (resp.\ injective) if for each vertex \( w \) of \( H \) and each neighbour \( x \) of \( w \) in \( H \), each copy of \( w \) in \( G \) has exactly one (resp.\ at most one) copy of \( x \) in \( G \) as its neighbour. 

\begin{figure}[hbt] 
\centering
\begin{tikzpicture}[scale=0.8,thick]
\draw (0,0) circle [radius=1.5cm];
\path
(180:1.5) node(10) [draw,fill=white,rectangle]{1}
(140:1.5) node(20) [draw,fill=white,regular polygon, regular polygon sides=5,inner sep=1.0pt]{2}
(100:1.5) node(30) [draw,fill=white,circle,inner sep=1.2pt]{3}
 (60:1.5) node(11) [draw,fill=white,rectangle]{1}
 (20:1.5) node(21) [draw,fill=white,regular polygon, regular polygon sides=5,inner sep=1.0pt]{2}
 (-20:1.5) node(31) [draw,fill=white,circle,inner sep=1.2pt]{3}
 (-60:1.5) node(12) [draw,fill=white,rectangle]{1}
(-100:1.5) node(22) [draw,fill=white,regular polygon, regular polygon sides=5,inner sep=1.0pt]{2}
(-140:1.5) node(32) [draw,fill=white,circle,inner sep=1.2pt]{3};
\path 
(10) --+(180:1.5) node(00) [draw,isosceles triangle,isosceles triangle apex angle=60,inner sep=0.5pt]{\,0}
(11) --+(60:1.5)  node(01) [draw,isosceles triangle,isosceles triangle apex angle=60,inner sep=0.5pt]{\,0}
(12) --+(-60:1.5) node(02) [draw,isosceles triangle,isosceles triangle apex angle=60,inner sep=0.5pt]{\,0};
\draw (00)--(10)  (01)--(11)  (02)--(12);

\path (-90:3.25) node{\( G \)}; 

\path (21) -- coordinate[xshift=1cm](arrowBeg) (31);
\path (arrowBeg) --+(5.5,0) coordinate(arrowEnd);
\draw [->] (arrowBeg) --node[above,align=center]{Locally bijective\\homomorphism (LBH)} (arrowEnd);

\path (arrowEnd) ++(1,0) node(0) [draw,isosceles triangle,isosceles triangle apex angle=60,inner sep=0.5pt]{\,0} ++(1.5,0) node(1) [draw,rectangle]{1} ++(1,1) node(2) [draw,regular polygon, regular polygon sides=5,inner sep=1.0pt]{2};
\path (1) ++(1,-1) node(3) [draw,circle,inner sep=1.2pt]{3};
\draw (0)--(1)--(2)--(3)--(1);
\draw (0) to[bend left=30] (2);

\path (1) +(0,-1.5) node{\( H \)};

\draw (22) to[bend left=35] (01);
\draw (21) to[bend left=35] (02);
\draw (20) to[bend right=35] (00);
\end{tikzpicture}
\caption{A locally bijective homomorphism from a graph \( G \) to a graph \( H \). 
The vertices in \( H \) are labelled distinct and are drawn by distinct shapes. 
For each vertex \( w \) of \( H \), each copy of \( w \) in \( G \) is drawn in the same shape as~\( w \) (and labelled the same). 
}
\label{fig:eg LBH}
\end{figure}

\todo[color=blue!5!white]{
Figure~\ref{fig:eg obh} is newly added.
%e.g. of OBH
%(preferably one which is not an LBH)
}

\begin{figure}[hbt]
\centering
\begin{tikzpicture}[scale=0.75,thick]
\tikzset{
>={Straight Barb[length=1.0mm]}
}
%\draw (0,0) circle [radius=1.5cm];
\path
(180:1.5) node(10) [draw,fill=white,rectangle]{1}
(120:1.5) node(20) [draw,fill=white,regular polygon, regular polygon sides=5,inner sep=1.0pt]{2}
 (60:1.5) node(30) [draw,fill=white,circle,inner sep=1.2pt]{3}
  (0:1.5) node(11) [draw,fill=white,rectangle]{1}
(-60:1.5) node(21) [draw,fill=white,regular polygon, regular polygon sides=5,inner sep=1.0pt]{2}
(-120:1.5) node(31) [draw,fill=white,circle,inner sep=1.2pt]{3};
\path 
(10) --+(-1.5,1)  node(00) [draw,isosceles triangle,isosceles triangle apex angle=60,inner sep=0.5pt]{\,0}
(10) --+(-1.5,0)  node(01) [draw,isosceles triangle,isosceles triangle apex angle=60,inner sep=0.5pt]{\,0}
(10) --+(-1.5,-1) node(02) [draw,isosceles triangle,isosceles triangle apex angle=60,inner sep=0.5pt]{\,0};
\begin{scope}[thick,decoration={
    markings,
    mark=at position 0.5 with {\arrow{>}}},
    ] 
\draw [postaction={decorate}] (00)--(10);
\draw [postaction={decorate}] (01)--(10);
\draw [postaction={decorate}] (02)--(10);
\draw [postaction={decorate}] (10)--(20);
\draw [postaction={decorate}] (20)--(30);
\draw [postaction={decorate}] (30)--(11);
\draw [postaction={decorate}] (11)--(21);
\draw [postaction={decorate}] (21)--(31);
\draw [postaction={decorate}] (31)--(10);
\end{scope}

\path (-100:2.25) node{\( \vec{G} \)}; %% label

\path (11) --+(1,0) coordinate(arrowBeg);
\path (arrowBeg) --+(6.5,0) coordinate(arrowEnd);
\draw [->] (arrowBeg) --node[above,align=center]{Out-neighbourhood bijective\\ homomorphism (OBH)} (arrowEnd);

\path (arrowEnd) ++(1,0) node(0) [draw,isosceles triangle,isosceles triangle apex angle=60,inner sep=0.5pt]{\,0} ++(1.5,0) node(1) [draw,rectangle]{1} ++(1,1) node(2) [draw,regular polygon, regular polygon sides=5,inner sep=1.0pt]{2};
\path (1) ++(1,-1) node(3) [draw,circle,inner sep=1.2pt]{3};
\begin{scope}[thick,decoration={
    markings,
    mark=at position 0.5 with {\arrow{>}}},
    ] 
\draw [postaction={decorate}] (0)--(1);
\draw [postaction={decorate}] (1)--(2);
\draw [postaction={decorate}] (2)--(3);
\draw [postaction={decorate}] (3)--(1);
\end{scope}

\path (1) +(0,-1.5) node{\( \vec{H} \)}; %% label
\end{tikzpicture}
\caption{An out-neighbourhood bijective homomorphism from an oriented graph \( 
\vec{G} \) to an oriented graph \( \vec{H} \). 
The vertices in \( \vec{H} \) are labelled distinct and are drawn by distinct shapes. 
For each vertex \( w \) of \( \vec{H} \), each copy of \( w \) in \( \vec{G} \) is drawn in the same shape as \( w \) (and labelled the same). 
}
\label{fig:eg obh}
\end{figure}

Similar to the notion of locally constrained homomorphisms between (undirected) graphs, one can define locally constrained homomorphisms between orientations (i.e., oriented graphs) and between directed graphs. 
Such notions can be defined in a variety of ways \cite{bard_etal2018}. 
A notion of locally injective homomorphism between directed graphs is defined and studied by MacGillivray and Swarts~\cite{macgillivray}. 
We are interested in a similar notion of locally constrained homomorphisms between orientations. 
Let \( G \) and \( H \) be graphs with orientations \( \vec{G} \) and \( \vec{H} \), respectively. 
Recall that a homomorphism from \( \vec{G} \) to \( \vec{H} \) is a mapping \( \psi\colon V(\vec{G})\to V(\vec{H}) \) such that \( (\psi(u),\psi(v)) \) is an arc in \( \vec{H} \) whenever \( (u,v) \) is an arc in \( \vec{G} \). 
%Also recall that if \( \psi \) is a homomorphism from \( \vec{G} \) to \( \vec{H} \) and \( \psi(v)=w \) where \( v\in V(\vec{G}) \) and \( w\in V(\vec{H}) \), then we say that \( v \) is a copy of \( w \) in \( \vec{G} \) (under \( \psi \)). 
We define an \emph{out-neighbourhood bijective homomorphism} from \( \vec{G} \) to \( \vec{H} \) as a mapping \( \psi\colon V(\vec{G})\to V(\vec{H}) \) such that for every vertex \( v \) of \( \vec{G} \), the restriction of \( \psi \) to the out-neighbourhood \( N_{\vec{G}}^+(v) \) is a bijection from \( N_{\vec{G}}^+(v) \) to \( N_{\vec{H}}^+(\psi(v)) \). 
Observe that out-neighbourhood bijective homomorphisms from \( \vec{G} \) to \( \vec{H} \) are indeed homomorphisms from \( \vec{G} \) to \( \vec{H} \). 
A homomorphism \( \psi \) from \( \vec{G} \) to \( \vec{H} \) is \emph{out-neighbourhood injective} is if for every vertex \( v \) of \( \vec{G} \), the restriction of \( \psi \) to the out-neighbourhood \( N_{\vec{G}}^+(v) \) is an injection from \( N_{\vec{G}}^+(v) \) to \( N_{\vec{H}}^+(\psi(v)) \). 
In other words, a homomorphism \( \psi \) from \( \vec{G} \) to \( \vec{H} \) is out-neighbourhood bijective (resp.\ injective) if for each vertex \( w \) of \( \vec{H} \) and each out-neighbour \( x \) of \( w \) in \( \vec{H} \), each copy of \( w \) in \( \vec{G} \) has exactly one (resp.\ at most one) copy of \( x \) in \( \vec{G} \) as its out-neighbour. 
\todo{
Reviewer G:\\
1 provide reference where these concepts were defined earlier.  
%\textcolor{red}{(to do)}
\\
2. use ``mapping'' instead of ``function''. Though these are often considered as synonyms, ``functions'' should be reserved to real/complex-valued ones. \colcol{(done)}
%\\~ 
%[This convention is used in algebraic geometry; so this comment is very relevant I think. 
%Yet, the choice is not so simple; in the same domains, `mapping' could also refer to function with some additional structure; links: \href{https://math.stackexchange.com/a/95795}{1}, \href{https://math.stackexchange.com/a/1674516}{2}]
}

\todo[color=gray!10!white]{
We introduce out-neighbourhood bijective/injective homomorphisms in this paper, motivated by (in-neighbourhood) injective homomorphism of MacGillivray and Swarts~\cite{macgillivray}. 
}

\inlinenew{\textbf{``}}
%Also, some related notions are worth mentioning here. 
Bard et al.~\cite{bard_etal2018} defined various notions of locally injective homomorphisms between oriented graphs. 
A homomorphism \( \psi \) from a oriented graph \( \vec{G} \) to a oriented graph \( \vec{H} \) is called a locally bijective homomorphism if the following hold for every vertex \( v \) of \( \vec{G} \): (i)~\( \psi \) maps the in-neighbours of \( v \) bijectively to in-neighbours of \( \psi(v) \), and (ii)~\( \psi \) maps the out-neighbours of \( v \) bijectively to out-neighbours of \( \psi(v) \) (in the terminology of Bard et al.~\cite{bard_etal2018}, it is an ios-bijective homomorphism, where `ios' is short for `in and out separately'). 
In contrast, an iot-bijective homomorphism between two oriented graphs (where `iot' stands for `in and out together') is precisely a locally bijective homomomorphism between the underlying undirected graphs. 
It is a folklore result that for two graphs \( G \) and \( H \), a mapping \( \psi\colon V(G)\to V(H) \) is an LBH from \( G \) to \( H \) if and only if \( \psi \) is an LBH from an orientation of \( G \) to an orientation of \( H \). 
Ne\v{s}et\v{r}il and Mendez~\cite{nesetril_mendez2006} introduced \( d \)\nobreakdash-folding, a version of homomorphism between directed graphs which is a much stronger notion than in-neighbourhood injective homomorphism. 

It is well-known that if there is an LBH from a graph \( G \) to a graph \( H \), then every eigenvalue of (adjacency matrix of) \( H \) with geometric multiplicity \( t \) is an eigenvalues of \( G \) with geometric multiplicity at least \( t \)~\cite{sachs}, and as a result the characteristic polynomial of (adjacency matrix of) \( H \) divides the characteristic polynomial of \( G \) (because adjacency matrices of \( G \) and \( H \) are diagonalizable); see \cite{godsil_royle} for an alternate proof. 
\inlinenew{\textbf{''}}
Observe that, in general, a locally bijective homomorphism need not preserve subgraphs (or induced subgraphs for that matter). 
Nevertheless, they preserve subgraphs of diameter~2. 
\begin{observation}\label{obs:lbh preseves diameter 2 subgraphs}
Let \( J \) be a graph of diameter~2, and let \( G \) be a graph that contains \( J \) as subgraph. 
If \( G \) admits an LBH to a graph \( H \), then \( H \) contains \( J \) as subgraph. 
\end{observation}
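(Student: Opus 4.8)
The plan is to show that the LBH $\psi\colon V(G)\to V(H)$, when restricted to the vertex set of the subgraph $J$, is injective. Once that is established, the image of $J$ under $\psi$ is a subgraph of $H$ isomorphic to $J$: since $\psi$ is a homomorphism it maps each edge of $J$ to an edge of $H$, and injectivity on $V(J)$ guarantees that distinct vertices (and hence distinct edges) of $J$ have distinct images. So the whole statement reduces to proving that $\psi\restriction_{V(J)}$ is injective, and this is where the diameter-$2$ hypothesis enters.

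To prove this, I would take two distinct vertices $x,y\in V(J)$ and split on $d_J(x,y)$, which is $1$ or $2$ because $J$ has diameter~$2$. If $d_J(x,y)=1$, then $xy\in E(J)\subseteq E(G)$, so $\psi(x)\psi(y)\in E(H)$, and since $H$ is simple (no loops) this forces $\psi(x)\ne\psi(y)$. If $d_J(x,y)=2$, fix a common neighbour $z$ of $x$ and $y$ in $J$; then $x$ and $y$ are two distinct vertices of $N_G(z)$, and because $\psi$ is an LBH, $\psi\restriction_{N_G(z)}$ is a bijection — in particular an injection — from $N_G(z)$ onto $N_H(\psi(z))$, whence $\psi(x)\ne\psi(y)$ again. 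This exhausts both cases, so $\psi\restriction_{V(J)}$ is injective.

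To finish, I would let $J'$ be the subgraph of $H$ with vertex set $\psi(V(J))$ and edge set $\{\psi(u)\psi(v)\colon uv\in E(J)\}$ (a genuine set of edges of $H$, since $\psi$ is a homomorphism). By the previous paragraph, $\psi\restriction_{V(J)}$ is a bijection from $V(J)$ onto $V(J')$ that carries edges to edges, and it is a bijection between $E(J)$ and $E(J')$ by injectivity; hence it is an isomorphism from $J$ onto $J'$, so $H$ contains $J$ as a subgraph.

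I do not expect a genuine obstacle here; the only points worth flagging are that the argument uses only \emph{local injectivity} of $\psi$ at the common neighbours, so the observation in fact holds verbatim for locally injective homomorphisms, and that diameter~$2$ is exactly the property ensuring that every pair of vertices of $J$ is either adjacent or joined by a common neighbour of $J$ — the two configurations that the ``locally bijective'' (indeed ``locally injective'') condition is able to control.
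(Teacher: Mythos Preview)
Your proof is correct and follows essentially the same approach as the paper: both arguments reduce to showing that $\psi$ is injective on $V(J)$, and both handle the two cases (adjacent, or at distance~$2$ with a common neighbour in $J$) using the homomorphism property and local injectivity at the common neighbour, respectively. The paper phrases the injectivity step by contradiction while you do a direct case split, and your write-up is a bit more explicit about building the isomorphic copy $J'$ in $H$; your remark that only local injectivity is needed is also accurate.
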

\begin{proof}
Suppose that \( G \) admits an LBH to a graph \( H \). 
Since \( \psi \) is a homomorphism from \( G \) to \( H \),\\ 
\( \psi(x)\psi(y) \) is an edge in \( H \) for every edge \( xy \) of \( J \). 
Hence, \( H[\psi(V(J))] \) contains \( J \) as subgraph, provided that \( \psi \) maps distinct vertices of \( J \) to distinct vertices in \( H \). 
Thus, to prove that \( H \) contains~\( J \) as subgraph, it suffices to show that \( \psi \) maps distinct vertices of \( J \) to distinct vertices in~\( H \). 
On the contrary, assume that \( u,v\in V(J) \) and \( \psi(u)=\psi(v) \). 
Hence, \( uv\notin E(G) \) (because \( \psi \) is a homomorphism from \( G \) to \( H \)). 
Since \( J \) is a graph of diameter~2, the vertices \( u \) and \( v \) of \( J \) have a common neighbour \( x \). 
Since \( \psi \) is an LBH from \( G \) to \( H \), the restriction of \( \psi \) to \( N_G(x) \) is a bijection from \( N_G(x) \) onto \( N_H(\psi(x)) \). 
Since \( u \) and \( v \) are distinct members of \( N_G(x) \), the mapping \( \psi \) maps \( u \) and \( v \) to distinct vertices in \( H \). 
This is a contradiction to \( \psi(u)=\psi(v) \). 
\end{proof}

\subsection{Star Colourings and Orientations}
\todo[color=blue!5!white]{
Subsection is re-written. 
The notion of coloured in-orientation in the literature is used more prominently (in place of in-orientation), and we define coloured MINI-orientation similarly. 
}
%\todo{
%(omitted) [from Prev version]
%
%In \cite{shalu_cyriac3}, we defined the notion of colourful Eulerian orientations to characterise \( 2p \)-regular \( (p+2) \)-star colourable graphs. 
%One limitation of this notion is that it applies only to even-degree graphs. 
%Here, we define a closely related notion called MINI-orientation which applies to general graphs. 
%Besides, we relate the new notion to the notion of in-orientation, which is widely known to be linked with star colouring in general graphs \cite{albertson,dvorak_esperet}. 
%Let us recall the relevant definitions. 
%}
Star colourings are known to be closely related with a particular type of graph orientations called in-orientations. 
In this section, we define a special type of in-orientations. 
Let \( G \) be an (undirected) graph. 
A \emph{coloured orientation of \( G \)} is an ordered pair \( (\vec{G},f) \) such that \( \vec{G} \) is an orientation of \( G \) and \( f \) is a colouring of \( G \). 
A coloured orientation \( (\vec{G},f) \) is called a \emph{coloured in-orientation} if the edges in each bicoloured 3-vertex path in \( (\vec{G},f) \) are oriented towards the middle vertex~\cite{dvorak_esperet}. 
%\todo{
%(omitted) [from Prev version]
%A colouring \( f \) of \( G \) is called an \emph{in-colouring} of \( G \) if there exists an orientation \( \vec{G} \) of \( G \) such that \( (\vec{G},f) \) is a coloured in-orientation~\cite{albertson}. 
%Albertson et al.~\cite{albertson} proved that a colouring \( f \) of \( G \) is a star colouring if and only if \( f \) is an in-colouring of \( G \). 
%In other words, a colouring \( f \) of \( G \) is a star colouring if and only if there exists an orientation \( \vec{G} \) of \( G \) such that \( (\vec{G},f) \) is a coloured in-orientation. 
%If \( (\vec{G},f) \) is a coloured in-orientation of a graph \( G \), then \( f \) is a star colouring of \( G \).  
%In the other direction, if \( f \) is a star colouring of \( G \), then \( (\vec{G},f) \) is a coloured in-orientation of \( G \), where \( \vec{G} \) is obtained by orienting edges in each bicoloured 3-vertex path in \( (G,f) \) towards the middle vertex, and then orienting the remaining edges arbitrarily; let us call \( \vec{G} \) as an \emph{in-orientation of \( G \) induced by \( f \)}. 
%}
If \( (\vec{G},f) \) is a coloured in-orientation of a \( G \), then \( f \) is a star colouring of \( G \). 
On the other hand, if \( f \) is a star colouring of \( G \), then \( (\vec{G},f) \) is a coloured in-orientation of \( G \), where \( \vec{G} \) is obtained by orienting edges in each bicoloured 3-vertex path in \( (G,f) \) towards the middle vertex, and then orienting the remaining edges arbitrarily; let us call \( \vec{G} \) an \emph{in-orientation of \( G \) induced by \( f \)}. 
%We include the following observation for quick reference. 
\begin{observation}[\cite{albertson}]
\label{obs:in-orientation underlying colouring is star}
\( (\vec{G},f) \) is a coloured in-orientation of a graph \( G \) if and only if \( f \) is a star colouring of \( G \) and \( \vec{G} \) an in-orientation of \( G \) induced by \( f \). 
\qed
\end{observation}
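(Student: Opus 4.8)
The plan is to unwind the definitions on both sides of the equivalence, the only non-formal ingredient being the characterisation of star colourings via forbidden bicoloured $4$-vertex paths.

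\emph{($\Rightarrow$)} Assume $(\vec G,f)$ is a coloured in-orientation of $G$. I would first argue that $f$ is a star colouring. Since $f$ is a colouring, it is proper, so it suffices to exclude a bicoloured path $v_1v_2v_3v_4$. If such a path existed, then both $v_1v_2v_3$ and $v_2v_3v_4$ would be bicoloured $3$-vertex paths, and the defining property of a coloured in-orientation would force the edge $v_2v_3$ to point towards $v_2$ (coming from the first path) and towards $v_3$ (coming from the second), which is impossible. Having established that $f$ is a star colouring, I would then check that $\vec G$ is an in-orientation of $G$ induced by $f$: by hypothesis every edge lying on some bicoloured $3$-vertex path is oriented towards the corresponding middle vertex, the edges lying on no such path are oriented arbitrarily (which is exactly what the definition of an induced in-orientation allows), and the star-colouring property guarantees that the middle vertex an edge is asked to point to is unambiguous, since an edge that were an outer edge of a bicoloured $3$-vertex path at \emph{both} of its endpoints would again produce a bicoloured $4$-vertex path.

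\emph{($\Leftarrow$)} Assume $f$ is a star colouring of $G$ and $\vec G$ is an in-orientation of $G$ induced by $f$. By the definition of an induced in-orientation, the edges of every bicoloured $3$-vertex path of $(G,f)$ are oriented in $\vec G$ towards the middle vertex of that path; this is precisely the condition defining a coloured in-orientation, so $(\vec G,f)$ is one.

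The single place that requires an argument rather than a mere appeal to a definition is the consistency step inside the ($\Rightarrow$) direction, namely verifying that the orientation forced on an edge by the bicoloured-$3$-path condition is well defined; that is exactly where the absence of bicoloured $4$-vertex paths is used, and the remainder is bookkeeping with the definitions.
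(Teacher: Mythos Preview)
Your argument is correct and is precisely the standard one; the paper does not spell out a proof at all (the observation is stated with a \qed and attributed to \cite{albertson}), but the sentences immediately preceding it sketch exactly the two directions you wrote out, so your proof is in line with what the paper intends.
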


In-orientation of \( G \) induced by \( f \) is unique if and only if no bicoloured component of \( (G,f) \) is (isomorphic to) \( K_{1,1} \).

An coloured orientation \( (\vec{G},f) \) of \( G \) is a \emph{\( q \)-coloured in-orientation} if \( f \) is a \( q \)-colouring of \( \vec{G} \) and the following hold for every vertex \( v \) of \( (\vec{G},f) \):\\
(i)~no out-neighbour of \( v \) has the same colour as an in-neighbour of \( v \), and\\
(ii)~out-neighbours of \( v \) have pairwise distinct colours.\\
A coloured in-orientation of \( G \) is a \( q \)-coloured in-orientation of \( G \) for some \( q\in \mathbb{N} \). 
Clearly, a graph \( G \) admits a \( q \)-star colouring if and only if \( G \) admits a \( q \)-coloured in-orientation. 
This gives a characterization of the star chromatic number of \( G \) which is equivalent to the characterization of Ne\v{s}et\v{r}il and Mendez~\cite{nesetril_mendez2003}: the star chromatic number of \( G \) is equal to the least integer \( q \) such that \( G \) admits a \( q \)-coloured in-orientation. %\\
%By implicitly using the notion of in-orientation, Ne\v{s}et\v{r}il and Mendez~\cite{nesetril_mendez2003} characterised the star chromatic number of \( G \) in terms of orientations of~\( G \). 
%We rephrase their characterisation as follows. 
%Thus, the star chromatic number of \( G \) is equal to the least integer \( q \) such that \( G \) admits a \( q \)-coloured in-orientation. %\\

We say that a coloured orientation \( (\vec{G},f) \) of \( G \) is a \emph{\( q \)-coloured Monochromatic In-Neighbourhood In-orientation} (in short, \emph{\( q \)-coloured MINI-orientation}) if \( f \) is a \( q \)-colouring of \( \vec{G} \) and the following hold for every vertex \( v \) of \( (\vec{G},f) \):\\
(i)~no out-neighbour of \( v \) has the same colour as an in-neighbour of \( v \),\\
(ii)~out-neighbours of \( v \) have pairwise distinct colours, and\\
(iii)~all in-neighbours of \( v \) have the same colour.

Observe that compared to \( q \)-coloured in-orientation, the only new condition is Condition~(iii), which says that the in-neighbourhood of \( v \) is monochromatic (hence the name monochromatic in-neighbourhood in-orientation). 
Note that every graph \( G \) admits a \( q \)-coloured in-orientation with \( q=|V(G)| \) (assign pairwise distinct colours on vertices). 
In contrast, due to the addition of Condition~(iii), there exist graphs that do not admit a \( q \)-coloured MINI-orientation for any~\( q \) (for instance, see Theorem~\ref{thm:CL2r+1 obstruction to CEO}). 
%\todo{
%(omitted)
%
%(this language can be dropped)
%
%To emphasise the role of the colouring \( f \), we shall say that \( \vec{G} \) is a MINI-orientation with \( f \) as its underlying colouring. 
%}

\begin{figure}[hbtp]
\centering
\tikzset{
>={Straight Barb[length=1.0mm]}
}
\begin{tikzpicture}[scale=0.75]%\tikzstyle every label=[label distance=-6pt]
\path (-2,-2) node(01)[evenbiggerdot]{}--(-2,2) node(13)[evenbiggerdot]{}--(2,2) node(32)[evenbiggerdot]{}--(2,-2) node(20)[evenbiggerdot]{}--cycle;
\path (01)--node(30)[evenbiggerdot]{} (13)--node(21)[evenbiggerdot]{} (32)--node(03)[evenbiggerdot]{} (20)--node(12)[evenbiggerdot]{} (01);
\path (30)--node(02)[evenbiggerdot]{} (21)--node(10)[evenbiggerdot]{} (03)--node(31)[evenbiggerdot]{} (12)--node(23)[evenbiggerdot]{} (30);
\begin{scope}[thick,decoration={
    markings,
    mark=at position 0.5 with {\arrow{>}}}
    ] 
\draw [postaction={decorate}] (01)--(30);
\draw [postaction={decorate}] (30)--(13);
\draw [postaction={decorate}] (13)--(21);
\draw [postaction={decorate}] (21)--(32);
\draw [postaction={decorate}] (32)--(03);
\draw [postaction={decorate}] (03)--(20);
\draw [postaction={decorate}] (20)--(12);
\draw [postaction={decorate}] (12)--(01);

\draw [postaction={decorate}] (12)--(31);
\draw [postaction={decorate}] (31)--(03);
\draw [postaction={decorate}] (03)--(10);
\draw [postaction={decorate}] (10)--(21);
\draw [postaction={decorate}] (21)--(02);
\draw [postaction={decorate}] (02)--(30);
\draw [postaction={decorate}] (30)--(23);
\draw [postaction={decorate}] (23)--(12);

\draw [postaction={decorate}] (02)--(10);
\draw [postaction={decorate}] (10)--(31);
\draw [postaction={decorate}] (31)--(23);
\draw [postaction={decorate}] (23)--(02);

\draw [postaction={decorate}] (01) to[bend right=35] (20);
\draw [postaction={decorate}] (20) to[bend right=35] (32);
\draw [postaction={decorate}] (32) to[bend right=35] (13);
\draw [postaction={decorate}] (13) to[bend right=35] (01);
\end{scope}
\node at (01) {0};
\node at (02) {0};
\node at (03) {0};
\node at (10) {1};
\node at (12) {1};
\node at (13) {1};
\node at (20) {3};
\node at (21) {3};
\node at (23) {3};
\node at (30) {2};
\node at (31) {2};
\node at (32) {2};
\end{tikzpicture}
%\caption[A 4-CEO of \( L(Q_3) \).]{A 4-coloured MINI-orientation of the cuboctahedral graph \( L(Q_3) \) with the underlying colouring \( f \). 
%Since the orientation is also Eulerian, it is a 4-CEO of~\( L(Q_3) \).}\label{fig:4-CEO of G4}
\caption{A 4-coloured MINI-orientation of \( L(Q_3) \) (the cuboctahedral graph).
}\label{fig:4-CEO of G4}
%\inlinenew{Changed to circle nodes \& colour inside}
\end{figure}

A 4-coloured MINI-orientation of the cuboctahedral graph is shown in Figure~\ref{fig:4-CEO of G4}. 
For each \( q\in \mathbb{N} \), admitting a \( q \)-coloured MINI-orientation is a hereditary property: if a graph \( G \) admits a \( q \)-coloured MINI-orientation  \( (\vec{G},f) \), then every subgraph \( H \) of \( G \) admits a \( q \)-coloured MINI-orientation (of the form \( (\vec{H},f_{\restriction V(H)}) \)).

\subsection{Oriented line graph}\label{sec:oriented line graph def}
\todo[color=blue!5!white]{
Newly added subsection. 
}
\todo[color=blue!5!white]{
\inline{Figure to be added: example for oriented line graph} %\( \vec{L}(G) \) and \( L^*(G) \).

[as in my presentation: \( G=\mathrm{diamond} \), \( \vec{D}(G) \), \( \vec{L}(G) \) top of \( \vec{D}(G) \), and \( \vec{L}(G) \)]
}
\begin{figure}[hbtp]
\centering
\tikzset{
>={Straight Barb[length=1.0mm]},
circnode/.style={draw,circle,minimum size=5pt},
}
\begin{subfigure}[c]{0.5\textwidth}
\centering
\begin{tikzpicture}[thick]
\path (0:2) node(1)[circnode]{};
\path (120:2) node(2)[circnode]{};
\path (-120:2) node(3)[circnode]{};
\path (1) --+(3.5,0) node(0)[circnode]{};

\draw (0)--(1)--(2)--(3)--(1);
\end{tikzpicture}
\caption{\( G \)}
\end{subfigure}%
\begin{subfigure}[c]{0.5\textwidth}
\centering
\begin{tikzpicture}[thick]
\path (0:2) node(1)[circnode]{};
\path (120:2) node(2)[circnode]{};
\path (-120:2) node(3)[circnode]{};
\path (1) --+(3.5,0) node(0)[circnode]{};

\begin{scope}[thick,decoration={
    markings,
    mark=at position 0.525 with {\arrow{>}}}
    ] 
\draw [postaction={decorate}] (0) to[bend left=20] node(01)[pos=0.5]{} (1);
\draw [postaction={decorate}] (1) to[bend left=20] node(10)[pos=0.5]{} (0);
\draw [postaction={decorate}] (1) to[bend left=20] node(12)[pos=0.5]{} (2);
\draw [postaction={decorate}] (2) to[bend left=20] node(21)[pos=0.5]{} (1);
\draw [postaction={decorate}] (1) to[bend left=20] node(13)[pos=0.5]{} (3);
\draw [postaction={decorate}] (3) to[bend left=20] node(31)[pos=0.5]{} (1);
\draw [postaction={decorate}] (2) to[bend left=20] node(23)[pos=0.5]{} (3);
\draw [postaction={decorate}] (3) to[bend left=20] node(32)[pos=0.5]{} (2);
\end{scope}
\end{tikzpicture}
\caption{\( \vec{D}(G) \)}
\end{subfigure}%

\begin{subfigure}[c]{0.5\textwidth}
\centering
\begin{tikzpicture}[thick]
\begin{scope}[opacity=0]
\path (0:2) node(1)[circnode]{};
\path (120:2) node(2)[circnode]{};
\path (-120:2) node(3)[circnode]{};
\path (1) --+(3.5,0) node(0)[circnode]{};
\end{scope}

\begin{scope}[thick,opacity=0,decoration={
    markings,
    mark=at position 0.525 with {\arrow{>}}}
    ] 
\draw [postaction={decorate}] (0) to[bend left=20] node(01)[pos=0.5]{} (1);
\draw [postaction={decorate}] (1) to[bend left=20] node(10)[pos=0.5]{} (0);
\draw [postaction={decorate}] (1) to[bend left=20] node(12)[pos=0.5]{} (2);
\draw [postaction={decorate}] (2) to[bend left=20] node(21)[pos=0.5]{} (1);
\draw [postaction={decorate}] (1) to[bend left=20] node(13)[pos=0.5]{} (3);
\draw [postaction={decorate}] (3) to[bend left=20] node(31)[pos=0.5]{} (1);
\draw [postaction={decorate}] (2) to[bend left=20] node(23)[pos=0.5]{} (3);
\draw [postaction={decorate}] (3) to[bend left=20] node(32)[pos=0.5]{} (2);
\end{scope}

\node (01-) at (01) [circnode]{};
\node (10-) at (10) [circnode]{};
\node (12-) at (12) [circnode]{};
\node (21-) at (21) [circnode]{};
\node (13-) at (13) [circnode]{};
\node (31-) at (31) [circnode]{};
\node (23-) at (23) [circnode]{};
\node (32-) at (32) [circnode]{};

\begin{scope}[thick,decoration={
    markings,
    mark=at position 0.5 with {\arrow{>}}}
    ] 
\draw [postaction={decorate}] (12-)--(23-);
\draw [postaction={decorate}] (23-)--(31-);
\draw [postaction={decorate}] (31-)--(12-);
\draw [postaction={decorate}] (21-)--(13-);
\draw [postaction={decorate}] (13-)--(32-);
\draw [postaction={decorate}] (32-)--(21-);
\draw [postaction={decorate}] (01-) to[bend right](12-);
\draw [postaction={decorate}] (31-) to[bend right] (10-);
\draw [postaction={decorate}] (21-) to[bend left] (10-);
\draw [postaction={decorate}] (01-) to[bend left](13-);
\end{scope}
\end{tikzpicture}
\caption{\( \vec{L}(G) \)}
\end{subfigure}%
\begin{subfigure}[c]{0.5\textwidth}
\centering
\begin{tikzpicture}[thick]
\begin{scope}[draw=gray,fill=gray]
\path (0:2) node(1)[circnode]{};
\path (120:2) node(2)[circnode]{};
\path (-120:2) node(3)[circnode]{};
\path (1) --+(3.5,0) node(0)[circnode]{};
\end{scope}

\begin{scope}[thick,draw=gray,fill=gray,decoration={
    markings,
    mark=at position 0.525 with {\arrow{>}}}
    ] 
\draw [postaction={decorate}] (0) to[bend left=20] node(01)[pos=0.5]{} (1);
\draw [postaction={decorate}] (1) to[bend left=20] node(10)[pos=0.5]{} (0);
\draw [postaction={decorate}] (1) to[bend left=20] node(12)[pos=0.5]{} (2);
\draw [postaction={decorate}] (2) to[bend left=20] node(21)[pos=0.5]{} (1);
\draw [postaction={decorate}] (1) to[bend left=20] node(13)[pos=0.5]{} (3);
\draw [postaction={decorate}] (3) to[bend left=20] node(31)[pos=0.5]{} (1);
\draw [postaction={decorate}] (2) to[bend left=20] node(23)[pos=0.5]{} (3);
\draw [postaction={decorate}] (3) to[bend left=20] node(32)[pos=0.5]{} (2);
\end{scope}

\node(01-) at (01) [circnode]{};
\node(10-) at (10) [circnode]{};
\node(12-) at (12) [circnode]{};
\node(21-) at (21) [circnode]{};
\node(13-) at (13) [circnode]{};
\node(31-) at (31) [circnode]{};
\node(23-) at (23) [circnode]{};
\node(32-) at (32) [circnode]{};

\begin{scope}[thick,decoration={
    markings,
    mark=at position 0.5 with {\arrow{>}}}
    ] 
\draw [postaction={decorate}] (12-)--(23-);
\draw [postaction={decorate}] (23-)--(31-);
\draw [postaction={decorate}] (31-)--(12-);
\draw [postaction={decorate}] (21-)--(13-);
\draw [postaction={decorate}] (13-)--(32-);
\draw [postaction={decorate}] (32-)--(21-);
\draw [postaction={decorate}] (01-) to[bend right](12-);
\draw [postaction={decorate}] (31-) to[bend right] (10-);
\draw [postaction={decorate}] (21-) to[bend left] (10-);
\draw [postaction={decorate}] (01-) to[bend left](13-);
\end{scope}
\end{tikzpicture}
\caption{\( \vec{L}(G) \) drawn on top of \( \vec{D}(G) \)}
\end{subfigure}%
\caption{An example of the oriented line graph operation.}
\label{fig:eg Larrow}
\end{figure}

Oriented line graphs were introduced by Kotani and Sunada~\cite{kotani_sunada} to study the Ihara-Zeta functions of graphs, and they are also used to study Ramanujan graphs~\cite{murty} and Ramanujan digraphs~\cite{parzanchevski}. 
For instance, it is known that a regular graph is a Ramanujan graph if and only if its oriented line graph is a Ramanujan digraph \cite{parzanchevski}. 
%Let \( G \) be a (simple undirected) graph, and let \( \vec{D}(G) \) be the oriented graph obtained from \( G \) be replacing each edge \( uv \) of \( G \) (where \( u,v\in V(G) \)) by two arcs \( (u,v) \) and \( (v,u) \). 
Let \( G \) be a (simple undirected) graph, and let \( \vec{D}(G) \) be the oriented graph obtained from \( G \) be replacing each edge \( uv \) of \( G \) by two arcs \( (u,v) \) and \( (v,u) \). 
The \emph{oriented line graph} of \( G \) is the oriented graph with the set of arcs of \( \vec{D}(G) \) as its vertex set, and there is an arc in it from a vertex \( (u,v) \) to a vertex \( (v,w) \) when \( u\neq w \) (see Figure~\ref{fig:eg Larrow}). 
%The oriented line graph of a graph \( G \), which we denote by \( \vec{L}(G) \), is the oriented graph with vertex set \( \bigcup_{uv\in E(G)}\{(u,v), (v,u)\} \) and there is an arc from a vertex \( (u,v) \) to a vertex \( (v,w) \) of \( \vec{L}(G) \) when \( u\neq w \). 
Another way to obtain the oriented line graph of \( G \) is to take the line digraph of \( \vec{D}(G) \) as defined in \cite{beineke_bagga2021a}, and then remove all parallel edges in that graph (i.e, if \( t\geq 2 \) edges exist between two vertices, then remove all those \( t \) edges). 
Note that this definition is not artificial since two parallel edges in opposite directions `cancel each other' in some related contexts such as signed graphs. 
We denote the oriented line graph of \( G \) by \( \vec{L}(G) \) and its underlying undirected graph by \( L^*(G) \).

\section{Tools}\label{sec:prelims}
\todo[color=blue!5!white]{
Section name is changed (previously `Preliminaries').
}

\subsection{Oriented line graph}\label{sec:oriented line graph}
\todo[color=blue!5!white]{
Newly added subsection. 
}
In this subsection, we show that \( L^*(H) \) is a 2-lift of \( L(H) \) for every \( H \) (i.e., \( L^*(H) \) has double the number of vertices and admits an LBH to \( L(H) \)). 
\begin{theorem}\label{thm:LstarH has LBH to LH}
For every graph \( H \), there is an LBH from \( L^*(H) \) to \( L(H) \). 
\end{theorem}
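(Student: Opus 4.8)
The plan is to show that the obvious ``forget the orientation'' map is already a locally bijective homomorphism. Define $\psi\colon V(L^*(H))\to V(L(H))$ by $\psi((u,v))=uv$; that is, $\psi$ sends each of the two vertices $(u,v)$ and $(v,u)$ of $L^*(H)$ that come from an edge $uv$ of $H$ to the vertex $uv$ of $L(H)$. This is a $2$-to-$1$ surjection, so $|V(L^*(H))|=2|V(L(H))|$ (the vertex-count half of the ``$2$-lift'' statement); it then remains to check that $\psi$ is an LBH.

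First I would pin down the adjacency structure of $L^*(H)$ directly from the definition of $\vec{L}(H)$: two vertices $(a,b)$ and $(c,d)$ of $L^*(H)$ are adjacent if and only if, for a suitable labelling, $(a,b)=(u,v)$ and $(c,d)=(v,w)$ with $w\neq u$ (the unique arc of $\vec{L}(H)$ incident with both then runs from $(u,v)$ to $(v,w)$; there is no ``back'' arc, and the two pairs $(u,v),(v,u)$ coming from a single edge of $H$ are non-adjacent). Consequently, for a fixed vertex $(u,v)$ of $L^*(H)$,
\[
N_{L^*(H)}((u,v))=\{(v,w):w\in N_H(v)\setminus\{u\}\}\ \cup\ \{(x,u):x\in N_H(u)\setminus\{v\}\},
\]
which is the disjoint union of the out-neighbourhood and the in-neighbourhood of $(u,v)$ in $\vec{L}(H)$. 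In particular $\psi$ is a homomorphism: if $(u,v)$ is adjacent to $(v,w)$ with $w\neq u$, then $uv$ and $vw$ are distinct edges of $H$ sharing the endpoint $v$, so $\psi((u,v))$ and $\psi((v,w))$ are adjacent in $L(H)$.

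Next I would verify local bijectivity at an arbitrary vertex $(u,v)$. Its image $uv$ has neighbourhood $N_{L(H)}(uv)=\{uy:y\in N_H(u)\setminus\{v\}\}\cup\{vw:w\in N_H(v)\setminus\{u\}\}$ in $L(H)$. Applying $\psi$ to the displayed description of $N_{L^*(H)}((u,v))$ sends $(v,w)\mapsto vw$ and $(x,u)\mapsto xu$, so $\psi$ maps $N_{L^*(H)}((u,v))$ onto $N_{L(H)}(uv)$. For injectivity on this set: distinct choices of $w$ give distinct edges $vw$, distinct choices of $x$ give distinct edges $xu$, and an edge $vw$ with $w\neq u$ cannot coincide with an edge $xu$ with $x\neq v$, since $\{v,w\}=\{x,u\}$ together with $x\neq v$ and $w\neq u$ would force $v=u$, contradicting $uv\in E(H)$. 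Hence the restriction of $\psi$ to $N_{L^*(H)}((u,v))$ is a bijection onto $N_{L(H)}(\psi((u,v)))$, so $\psi$ is an LBH and the theorem follows.

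The only delicate point — the closest thing to an obstacle — is getting the adjacency relation of $L^*(H)$ exactly right from the definition of $\vec{L}(H)$ (in particular, confirming that there is never a parallel or ``back'' arc, so every edge of $L^*(H)$ corresponds to a unique arc $(u,v)\to(v,w)$ with $w\neq u$); once that bookkeeping is settled, the rest is routine. One can even bypass the explicit injectivity check by the pigeonhole principle, since $|N_{L^*(H)}((u,v))|=(\deg_H(u)-1)+(\deg_H(v)-1)=|N_{L(H)}(uv)|$ and $\psi$ has already been shown to map the former set onto the latter.
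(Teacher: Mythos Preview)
Your proof is correct and follows essentially the same approach as the paper: both define the natural map $\psi((u,v))=uv$, verify it is a homomorphism, and then check local bijectivity by explicitly matching up the neighbourhood of a vertex $(u,v)$ in $L^*(H)$ with the neighbourhood of $uv$ in $L(H)$. Your write-up is somewhat tidier in isolating the description of $N_{L^*(H)}((u,v))$ first, but the argument is the same.
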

\begin{proof}
Let \( H \) be a graph. 
Note that the vertex set of the graph \( L^*(H) \) is \( \cup_{uv\in E(H)}\{(u,v),(v,u)\} \). 
Define \( \psi\colon V(L^*(H)) \to E(H) \) as \( \psi((u,v))=\{u,v\} \) for every \( (u,v)\in V(L^*(H)) \). 
For each edge \( \{(u,v),(v,w)\} \) of \( L^*(H) \) where \( u,v,w\in V(H) \), we have \( \{\psi((u,v)),\psi((v,w))\}=\{uv,vw\} \) is an edge in \( L(H) \). 
Hence, \( \psi \) is a homomorphism from \( L^*(H) \) to \( L(H) \). 
It remains to prove that \( \psi \) is locally bijective. 
To prove this, it suffices to show that for an arbitrary vertex \( x \) of \( L(H) \), and an arbitrary copy \( w \) of \( x \) in \( L^*(H) \) under \( \psi \) (i.e., \( \psi(w)=x \)), the members of \( N_{L^*(H)}(w) \) are precisely copies of members of \( N_{L(H)}(x) \) in \( L^*(H) \) in a bijective fashion. 
To this end, consider an arbitrary vertex \( u_1v_1 \) of \( L(H) \), where \( u_1,v_1\in V(H) \). 
Let \( u_1,u_2,\dots,u_k \) be the neighbours of \(v_1 \) in \( H \), and let \( v_1,v_2,\dots,v_\ell \) be the neighbours of \( u_1 \) in \( H \) (where \( k,\ell\in \mathbb{N} \)). 
The neighbours of \( u_1v_1 \) in \( L(H) \) are \( u_1v_2,\dots,u_1v_\ell \) (provided \( \ell>1 \)) and \( v_1u_2,\dots,v_1u_k \) (provided \( k>1 \)). 
By the definition of \( \psi \), for each vertex \( yz \) in \( L(H) \) (where \( y,z\in V(H) \)), the copies of \( yz \) in \( L^*(H) \) (under \( \psi \)) are \( (y,z) \) and \( (z,y) \). 
In particular, the copies of \( u_1v_1 \) in \( L^*(H) \) are \( (u_1,v_1) \) and \( (v_1,u_1) \). 
The neighbours of \( (u_1,v_1) \) in \( L^*(H) \) are \( (v_1,u_2),\dots,(v_1,u_k), \)\( (v_2,u_1),\dots,(v_\ell,u_1) \), which are precisely copies of \( v_1u_2,\dots,v_1u_k, \)\( u_1v_2,\dots,u_1v_\ell \) in \( L^*(H) \) respectively in a bijective fashion. 
Similarly, the neighbours of \( (v_1,u_1) \) in \( L^*(H) \) are \( (u_1,v_2),\dots,(u_1,v_\ell), \)\allowbreak \( (u_2,v_1),\dots,(u_k,v_1)  \), which are precisely copies of \( u_1v_2,\dots,u_1v_\ell, \)\( v_1u_2,\dots,v_1u_k \) in \( L^*(H) \) respectively in a bijective fashion. 
That is, for each copy \( w \) of \( u_1v_1 \) in \( L^*(H) \), the members of \( N_{L^*(H)}(w) \) are precisely copies of members of \( N_{L(H)}(u_1v_1) \) in \( L^*(H) \) in a bijective fashion.
Since \( u_1v_1\in V(L(H)) \) is arbitrary, \( \psi \) in an LBH from \( L^*(H) \) to \( L(H) \).
\end{proof}

\subsection{Locally Bijective Homomorphisms and the line graph operation}
In this subsection, we prove that locally bijective homomorphisms between 3-regular graphs, in a sense, behave well with respect to the line graph operation.
%\todo{
%(omitted) [from Prev version]
%
%%\begin{theorem}\label{thm:lbh goes with line graph operation}
%\textbf{Theorem.}
%\emph{
%Let \( G \) be a 3-regular graph, and let \( H \) be a triangle-free 3-regular graph. 
%Then, there exists an LBH from \( G \) to \( H \) if and only if there exists an LBH from \( L(G) \) to \( L(H) \). 
%}
%%\qed
%%\end{theorem}
%
%~\\
%The following two observations are employed in the proof of the theorem. 
%}
\todo[color=blue!5!white]{
Changed from observations to lemmas
}
\begin{lemma}\label{lem:lbh leads to lbh btwn line graphs}
If \( \psi \) is a locally bijective homomorphism from a graph \( G \) to a graph \( H \), then the mapping \( \psi'\colon E(G)\to E(H) \), defined as \( \psi'(uv)=\psi(u)\psi(v) \) for all \( uv\in E(G) \) \( ( \)where \( u,v\in V(G) \)\( ) \) is a locally bijective homomorphism from \( L(G) \) to \( L(H) \). 
\end{lemma}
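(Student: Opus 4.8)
The plan is to verify three things in turn: that $\psi'$ is well defined (its image really lands in $E(H)=V(L(H))$), that $\psi'$ is a homomorphism from $L(G)$ to $L(H)$, and that this homomorphism is locally bijective. Well-definedness is immediate: since $\psi$ is a homomorphism, $\psi(u)\psi(v)\in E(H)$ whenever $uv\in E(G)$, and the unordered pair $\psi'(uv)=\psi(u)\psi(v)$ is determined by $uv$.

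For the homomorphism property, take two adjacent vertices of $L(G)$, i.e.\ two distinct edges $uv$ and $vw$ of $G$ sharing the endpoint $v$. Then $u$ and $w$ are \emph{distinct} neighbours of $v$ in $G$, so because $\psi$ restricted to $N_G(v)$ is injective, $\psi(u)\neq\psi(w)$; hence $\psi(u)\psi(v)$ and $\psi(v)\psi(w)$ are distinct edges of $H$ that share the endpoint $\psi(v)$, which means $\psi'(uv)$ and $\psi'(vw)$ are adjacent in $L(H)$.

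For local bijectivity, fix a vertex $e=uv$ of $L(G)$ and write $a=\psi(u)$, $b=\psi(v)$, so $\psi'(e)=ab$ and $a\neq b$. I would first record that $N_{L(G)}(uv)$ is the \emph{disjoint} union of $A_u:=\{uv' : v'\in N_G(u)\setminus\{v\}\}$ and $A_v:=\{u'v : u'\in N_G(v)\setminus\{u\}\}$, and that within each of these sets the listed edges are pairwise distinct; both facts use only that $G$ is simple. Symmetrically, $N_{L(H)}(ab)$ is the disjoint union of $B_a:=\{ay : y\in N_H(a)\setminus\{b\}\}$ and $B_b:=\{xb : x\in N_H(b)\setminus\{a\}\}$, where disjointness of $B_a$ and $B_b$ uses $a\neq b$ (and simplicity of $H$). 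Now the restriction of $\psi$ to $N_G(u)$ is a bijection onto $N_H(a)$ sending $v$ to $b$, so $v'\mapsto\psi(v')$ is a bijection from $N_G(u)\setminus\{v\}$ onto $N_H(a)\setminus\{b\}$; pre- and post-composing with $v'\mapsto uv'$ and $y\mapsto ay$ shows $\psi'$ maps $A_u$ bijectively onto $B_a$. The same argument applied to the endpoint $v$ shows $\psi'$ maps $A_v$ bijectively onto $B_b$. Since $A_u\sqcup A_v$ and $B_a\sqcup B_b$ are disjoint unions, $\psi'$ restricted to $N_{L(G)}(uv)$ is a bijection onto $N_{L(H)}(\psi'(uv))$, completing the proof.

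The only delicate points — and the closest thing to an obstacle — are the bookkeeping facts that the ``$u$-side'' and ``$v$-side'' of each neighbourhood are genuinely disjoint (this would fail with loops or parallel edges, but holds because $G$ and $H$ are simple and $\psi(u)\neq\psi(v)$), which is exactly what lets the two half-bijections assemble into a bijection rather than merely a surjection; everything else is a routine unwinding of the definition of local bijectivity of $\psi$ at the two endpoints $u$ and $v$.
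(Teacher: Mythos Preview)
Your proof is correct and follows essentially the same approach as the paper: verify that $\psi'$ is a homomorphism, then fix an edge $uv$ and use local bijectivity of $\psi$ at both endpoints $u$ and $v$ to establish local bijectivity of $\psi'$ at $uv$. Your packaging via the disjoint decompositions $N_{L(G)}(uv)=A_u\sqcup A_v$ and $N_{L(H)}(ab)=B_a\sqcup B_b$ is a bit cleaner than the paper's explicit three-case injectivity check (the paper's Case~2, ruling out collisions between the $u$-side and $v$-side images, is exactly your observation that $B_a$ and $B_b$ are disjoint), but the underlying argument is the same.
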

\begin{proof}
Let \( \psi \) be an LBH from \( G \) to \( H \). 
Define \( \psi'\colon E(G)\to E(H) \) as \( \psi'(uv)=\psi(u)\psi(v) \) for all \( uv\in E(G) \), where \( u,v\in V(G) \). 
Consider an arbitrary edge \( u_1v_1 \) of \( G \), where \( u_1,v_1\in V(G) \). 
To prove the observation, it suffices to establish the following claim (since \( u_1v_1\in E(G) \) is arbitrary). 
\begin{myclaim}\label{clm:line graph operation keeps lbh}
    the restriction of \( \psi' \) to the set \( N_{L(G)}(u_1v_1) \) is a bijection from \( N_{L(G)}(u_1v_1) \) onto \( N_{L(H)}(\psi'(u_1v_1)) \) \( \big( \)i.e., \( N_{L(H)}(\psi(u_1)\psi(v_1)) \)\( \big) \). 
\end{myclaim}
Let \( u_1,u_2,\dots,u_k \) be the neighbours of \( v_1 \) in~\( G \), and let \( v_1,v_2,\dots,v_\ell \) be the neighbours of \( u_1 \) in~\( G \), where \( k,\ell\in \mathbb{N} \).
The neighbours of \( u_1v_1 \) in \( L(G) \) are \( u_1v_2,\dots,u_1v_\ell,u_2v_1,\dots,u_kv_1 \). 
Since \( \psi \) is an LBH from \( G \) to \( H \), the restriction of \( \psi \) to \( N_G(u_1) \) is a bijection from \( N_G(u_1) \) onto \( N_H(\psi(u_1)) \). 
Hence, \( \psi(v_1), \psi(v_2), \dots, \psi(v_\ell) \) are pairwise distinct. 
Similarly, \( \psi(u_1), \psi(u_2), \dots, \psi(u_k) \) are pairwise distinct. 

Since \( \psi \) is a homomorphism from \( G \) to \( H \), the vertices \( \psi(u_1), \psi(u_2), \dots, \psi(u_k) \) are neighbours of \( \psi(v_1) \) in \( H \). 
Similarly, \( \psi(v_1), \psi(v_2), \dots, \psi(v_\ell) \) are neighbours of \( \psi(u_1) \) in \( H \). 
Hence, \( \psi(u_2)\psi(v_1), \dots \psi(u_k)\psi(v_1), \psi(u_1)\psi(v_2), \dots, \psi(u_1)\psi(v_\ell) \) are neighbours of \( \psi(u_1)\psi(v_1) \) in \( L(H) \). 
Since \( \psi'(u_iv_1)=\psi(u_i)\psi(v_1) \) for \( 2\leq i\leq k \) and \( \psi'(u_1v_j)=\psi(u_1)\psi(v_j) \) for \( 2\leq j\leq \ell \), the mapping \( \psi' \) maps each member of \( N_{L(G)}(u_1v_1) \) to a member of \( N_{L(H)}(\psi(u_1)\psi(v_1)) \). 
Hence, to prove Claim~\ref{clm:line graph operation keeps lbh}, it suffices to show that no two members of \( N_{L(G)}(u_1v_1) \) are mapped to the same vertex in \( L(H) \) by \( \psi' \). 
\inline{``}
To produce a contradiction, assume on the contrary that one of the following holds:\\ (Case~1)~\( \psi'(u_iv_1)=\psi'(u_jv_1) \) for some distinct \( i,j\in \{2,\dots,k\} \);\\ (Case~2)~\( \psi'(u_iv_1)=\psi'(u_1v_j) \) for some \( i\in \{2,\dots,k\} \) and \( j\in \{2,\dots,\ell\} \); or\\ (Case~3)~\( \psi'(u_1v_i)=\psi'(u_1v_j) \) for some distinct \( i,j\in \{2,\dots,\ell\} \). 

~\\
\noindent \emph{Case~1:} \( \psi'(u_iv_1)=\psi'(u_jv_1) \) for some distinct \( i,j\in \{2,\dots,k\} \), say \( i=2 \) and \( j=3 \). 

\noindent Since \( \psi'(u_2v_1)=\psi'(u_3v_1) \), we have \( \psi(u_2)\psi(v_1)=\psi(u_3)\psi(v_1) \) (as edges in \( L(H) \)). 
That is, \( \{\psi(u_2),\psi(v_1)\}=\{\psi(u_3),\psi(v_1)\} \), or in other words, \( \psi(u_2)=\psi(u_3) \). 
This is a contradiction since \( \psi(u_2),\psi(u_3),\dots,\psi(u_k) \) are pairwise distinct. 

~\\
\noindent \emph{Case~2:} \( \psi'(u_iv_1)=\psi'(u_1v_j) \) for some \( i\in \{2,\dots,k\} \) and\( j\in \{2,\dots,\ell\} \), say \( i=j=2 \). 

\noindent Since \( \psi'(u_2v_1)=\psi(u_1v_2) \), we have \( \psi(u_2)\psi(v_1)=\psi(u_1)\psi(v_2) \) (as edges in \( L(H) \)). 
That is, \( \{\psi(u_2),\psi(v_1)\}=\{\psi(u_1),\psi(v_2)\} \). 
But, \( \psi(v_1)\notin \{\psi(u_1),\psi(v_2)\} \) because \( \psi(v_1)\neq \psi(u_1) \) (since \( \psi \) is a homomorphism) and \( \psi(v_1)\neq \psi(v_2) \) (since \( \psi(v_1),\psi(v_2),\dots,\psi(v_\ell) \) are pairwise distinct); a contradiction. 

~\\
\noindent By symmetry, Case~3 leads to a contradiction like Case~1. 
Since all three cases lead to contradictions, Claim~\ref{clm:line graph operation keeps lbh} is proved. 
\inline{''}
\todo{
Reviewer G:\\
Simplify by use of Obs 2 as the image of a \( K_3 \) in \( G \) is a \( K_3 \) in \( H \), which is forbidden.
}
\todo[color=gray!10!white]{
Lemma~\ref{lem:lbh leads to lbh btwn line graphs} is for general graphs \( G \) and \( H \); that is, not necessarily 3-regular, and \( H \) need not be triangle-free. 
}

\end{proof}

\begin{lemma}\label{lem:lbh sometimes leads to lbh between clique graphs}
Let \( q\in \mathbb{N} \), and let \( G \) and \( H \) be two graphs such that all maximal cliques in \( G \) and \( H \) are \( q \)-cliques. 
If \( G \) admits an LBH to \( H \), then \( K(G) \) admits an LBH to \( K(H) \). 
%If there is an LBH from \( G \) to \( H \), then there is an LBH from \( K(G) \) to \( K(H) \). 
%If \( G \) covers \( H \), then \( K(G) \) covers \( K(H) \). 
\end{lemma}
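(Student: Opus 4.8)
The plan is to turn the given locally bijective homomorphism $\psi\colon V(G)\to V(H)$ into a map $\Psi\colon V(K(G))\to V(K(H))$ by setting $\Psi(C)=\psi(C)$ for each maximal clique $C$ of $G$. First one checks this is well defined: $\psi$ is injective on $C$ (any two vertices of $C$ are adjacent, and a homomorphism sends adjacent vertices to distinct ones — a special case of Observation~\ref{obs:lbh preseves diameter 2 subgraphs} applied to $K_q$), so $\psi(C)$ is a set of $q$ pairwise adjacent vertices of $H$, i.e.\ a $q$-clique, hence maximal in $H$ by hypothesis. The cases $q\le 2$ are degenerate (for $q=1$ both graphs are edgeless and $K(G)=G$, $K(H)=H$; for $q=2$ the maximal cliques are the edges, so $K(G)=L(G)$, $K(H)=L(H)$, and the claim is Lemma~\ref{lem:lbh leads to lbh btwn line graphs}), so I focus on $q\ge 3$. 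The engine for the rest is the sublemma: \emph{if $C\neq C'$ are maximal cliques of $G$ sharing a vertex $v$, then $\psi(C)\neq\psi(C')$}. Indeed $C\setminus\{v\}$ and $C'\setminus\{v\}$ are distinct subsets of $N_G(v)$, and $\psi$ restricted to $N_G(v)$ is a bijection onto $N_H(\psi(v))$, hence sends distinct subsets to distinct subsets; since $\psi(v)$ lies in neither image (injectivity of $\psi$ on $C$ and on $C'$), we get $\psi(C)\neq\psi(C')$. Consequently, if $C\cap C'\neq\emptyset$ then $\psi(C)$ and $\psi(C')$ are distinct intersecting maximal cliques of $H$, so $\Psi$ is a homomorphism.

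For local bijectivity fix a maximal clique $C_0$ of $G$, put $D_0=\Psi(C_0)$, and study the restriction of $\Psi$ to $N_{K(G)}(C_0)$, which maps into $N_{K(H)}(D_0)$ since $\Psi$ is a homomorphism. For injectivity, suppose $C_1\neq C_2$ in $N_{K(G)}(C_0)$ had $\Psi(C_1)=\Psi(C_2)=:D$; then $C_1\cap C_2=\emptyset$ by the sublemma, so picking $v_i\in C_i\cap C_0$ we have $v_1\neq v_2$ and $v_1v_2\in E(G)$ (both lie in the clique $C_0$), hence $\psi(v_1)\neq\psi(v_2)$; as $\psi(v_2)\in D=\psi(C_1)$, some $c\in C_1\setminus\{v_1\}\subseteq N_G(v_1)$ has $\psi(c)=\psi(v_2)$, and since $v_2\in N_G(v_1)$ as well, injectivity of $\psi$ on $N_G(v_1)$ gives $c=v_2\in C_1$, contradicting $C_1\cap C_2=\emptyset$. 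For surjectivity, given $D\in N_{K(H)}(D_0)$, pick $w\in D\cap D_0$ and let $v$ be the unique vertex of $C_0$ with $\psi(v)=w$; since $D\setminus\{w\}\subseteq N_H(\psi(v))$ and $\psi$ is a bijection from $N_G(v)$ onto $N_H(\psi(v))$, there is a unique $M\subseteq N_G(v)$ with $\psi(M)=D\setminus\{w\}$, and $|M|=q-1$. If $M$ is a clique, then $C:=M\cup\{v\}$ is a $q$-clique, hence a maximal clique of $G$, with $\psi(C)=D$, $v\in C\cap C_0$, and $C\neq C_0$ (because $\psi(C)=D\neq D_0$); thus $C\in N_{K(G)}(C_0)$ and $\Psi(C)=D$, finishing the argument.

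Everything therefore reduces to showing that the lifted set $M$ is a clique; equivalently, that $\psi$ restricted to the closed neighbourhood $N_G[v]$ is a graph isomorphism onto $N_H[\psi(v)]$ for every $v$ — i.e.\ that $\psi$ not only preserves but \emph{reflects} edges within neighbourhoods. This is the main obstacle, and it is exactly where the hypothesis that $H$ (not only $G$) has all maximal cliques of size $q$ must do real work: $\psi$ embeds $G[N_G(v)]$ as a spanning subgraph of $H[N_H(\psi(v))]$, and both graphs have all maximal cliques of size $q-1$, and one must argue this forces them to coincide. For $q\le 2$ this is trivial since $|M|\le 1$. For the general case I would try an induction on $q$, passing from a vertex $v$ to the link $G[N_G(v)\cap N_G(x)]$ of an incident edge $vx$ — whose maximal cliques have size $q-2$, as do those of the corresponding link in $H$ — and checking that $\psi$ restricts to a locally bijective homomorphism between these links; the delicate point is propagating the ``no edge is missing'' conclusion down this descent. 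Once $\psi$ is known to be a local isomorphism, $M$ is a clique and the proof above goes through.
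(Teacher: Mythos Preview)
Your approach is essentially the paper's: both define \(\Psi(C)=\psi(C)\), check well-definedness, prove that intersecting cliques map to intersecting cliques, and then argue local injectivity by the same two-case split (whether the two neighbouring cliques share a vertex or not). The difference is that you explicitly isolate \emph{surjectivity} on neighbourhoods as the remaining obstacle, whereas the paper's proof simply writes ``it suffices to show that \(\psi^*\) maps distinct members of \(N_{K(G)}(K)\) to distinct members of \(N_{K(H)}(\psi(K))\)'' and then proves only injectivity.

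Your instinct that this is the real difficulty is correct---so correct, in fact, that the inductive repair you sketch cannot succeed: the lemma is \emph{false} as stated. Take \(q=3\), let \(H=K_{2,2,2}\) (the octahedron, whose eight maximal cliques are all triangles), and let \(G\) be the 2-lift obtained by putting sign \(-1\) on the two edges \(a_0b_0\) and \(a_0c_0\) and \(+1\) elsewhere. Exactly two of the eight triangles of \(H\) become unbalanced (\(\{a_0,b_0,c_1\}\) and \(\{a_0,b_1,c_0\}\)); every edge of \(H\) still lies in a balanced triangle, so every edge of \(G\) lies in a triangle, and \(G\) is \(K_4\)-free by Observation~\ref{obs:lbh preseves diameter 2 subgraphs}. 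Hence all maximal cliques of \(G\) are triangles. But the vertex \((a_0,0)\) lies in only two triangles of \(G\) (its neighbourhood induces \(2K_2\), not \(C_4\)), and one checks that the triangle \(T=\{(a_0,0),(b_0,1),(c_0,1)\}\) has exactly four neighbours in \(K(G)\), whereas \(K(H)\) (the cocktail-party graph \(K_{4\times 2}\)) is 6-regular. So no LBH \(K(G)\to K(H)\) exists, via \(\Psi\) or otherwise.

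The missing ingredient is precisely what you identified: \(\psi\) need not reflect edges inside neighbourhoods. The lemma becomes true if one adds the hypothesis that \(H\) is diamond-free (equivalently, every edge of \(H\) lies in a unique maximal clique): then \(H[N_H(w)]\) is a disjoint union of copies of \(K_{q-1}\), and a spanning subgraph of \(rK_{q-1}\) all of whose maximal cliques have size \(q-1\) must be \(rK_{q-1}\) itself, giving the local isomorphism you wanted. This extra hypothesis holds in the paper's sole application (Theorem~\ref{thm:lbh goes with line graph operation}, where both graphs are locally-\(2K_2\)), so that theorem survives, but the lemma and its proof do not stand in the stated generality.
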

\todo{
Reviewer G:\\
Consider to use the well established term ``\( G \) covers \( H \)''. 
}
\todo[color=gray!10!white]{
The usage is changed from `there is an LBH from \( G \) to \( H \)' to `\( G \) admits an LBH to \( H \)'. 
We prefer the homomorphism terminology to have fewer definitions. 
To make comparison with OBH easier, it is better to keep the term `locally bijective homomorphism (LBH)' instead of `covering projection'. 
Since the term covering projection is not used, it feels more consistent to use `\( G \) admits an LBH to \( H \)' and `\( G \) admits an LBH \( \psi \) to \( H \)' (instead of `\( G \) covers \( H\)' and `\( G \) admits an LBH \( \psi \) to \( H \)'). 
Besides, we feel that the direction of the covering projection is not immediately clear with the usage `\( G \) covers \( H \)'; for instance, although unrelated, in the edge cover terminology, \( G \) covers \( H \) means that copies of \( G \) form an edge cover of \( H \). 
%Similarly, we decided not to use `\( G \) is a lift of \( H \)' to make comparison with OBH easier. 
}
%\todo[color=blue!5!white]{
%Shall I change `exists LBH from \dots to \dots' everywhere to `\dots covers \dots'? 
%I prefer the homomorphism terminology. 
%But, if we use `cover' here, it is better to use in other places as well for consistency. 
%Also, `there exists LBH \( \psi \) from \dots to \dots' should be kept as it is. 
%The other option is to use the term `covering projection', but this would only add more terminology; this is the reason I prefer `there exists LBH' to `cover'. 
%
%I prefer to use `LBH \( \psi \)' and `there exists an LBH from \( G \) to \( H \)' in place of `covering projection \( \psi \)' and `\( G \) covers \( H \)', respectively. 
%I feel that the direction of the covering projection is not immediately clear with the usage `\( G \) covers \( H \)'; for instance, although unrelated, in the edge cover terminology, \( G \) covers \( H \) means that copies of \( G \) form an edge cover of \( H \).
%}
%\todo[color=gray!10!white]{
%I prefer the homomorphism terminology since '\( G \) covers \( H \)' could be confused with edge cover of \( H \) with copies of \( G \); this is important because edge covers are relevant to the discussion , although they do not appear in this paper (e.g. appears in in [caldam 2024 paper]). 
%But, this is not very serious. 
%}
\begin{proof}
Suppose that \( \psi \) is an LBH from \( G \) to \( H \). 
Recall that for each \( S\subseteq V(G) \), the set-image \( \psi(S)=\{\psi(x)\colon x\in S\} \). 
Since \( \psi \) is a homomorphism from \( G \) to \( H \), for each clique \( K \) of \( G \), the set-image \( \psi(K) \) is a clique of \( H \). 
Since maximal cliques of \( H \) are \( q \)-cliques, for each maximal clique \( K \) of \( G \), the set-image \( \psi(K) \) is a maximal clique of \( H \). 
Define \( \psi^*\colon V(K(G))\to V(K(H)) \) as \( \psi^*(K)=\psi(K) \) for each maximal clique \( K \) of \( G \) (here, \( \psi^*(K) \) is the image of \( \psi^* \) at \( K \), whereas  \( \psi(K) \) is a set-image). 
Observe that \( \psi^* \) is well-defined. 
\setcounter{myclaim}{0}
\begin{myclaim}\label{clm:psi star is homomorphism}
    \( \psi^* \) is a homomorphism from \( K(G) \) to \( K(H) \). 
\end{myclaim}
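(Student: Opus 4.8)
The plan is to verify that $\psi^*$ respects adjacency in the clique graphs, i.e., that if $K_1$ and $K_2$ are maximal cliques of $G$ with $K_1 \cap K_2 \neq \emptyset$, then $\psi^*(K_1) = \psi(K_1)$ and $\psi^*(K_2) = \psi(K_2)$ are maximal cliques of $H$ that intersect. Since we already know from the preamble of the proof that $\psi(K)$ is a maximal clique of $H$ for every maximal clique $K$ of $G$ (using that $\psi$ is a homomorphism, hence maps cliques to cliques, together with the hypothesis that all maximal cliques of $H$ have size exactly $q$, combined with $|\psi(K)| = q$ because $\psi$ restricted to a clique is injective — a clique has diameter at most $1$, so no two of its vertices can map to the same vertex of $H$ since adjacent vertices must map to adjacent, hence distinct, vertices), the map $\psi^*$ is well-defined into $V(K(H))$. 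So the only thing left is the adjacency check.

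First I would take two adjacent vertices $K_1, K_2$ of $K(G)$, so $K_1, K_2$ are maximal cliques of $G$ and there is a vertex $x \in K_1 \cap K_2$. Then $\psi(x) \in \psi(K_1) \cap \psi(K_2)$, so the maximal cliques $\psi(K_1)$ and $\psi(K_2)$ of $H$ share the vertex $\psi(x)$, hence are either equal or adjacent in $K(H)$. To conclude that $\psi^*(K_1)\psi^*(K_2)$ is an edge of $K(H)$ we need $\psi(K_1) \neq \psi(K_2)$, i.e., $\psi^*$ maps the two endpoints of an edge of $K(G)$ to distinct vertices. This is where I expect the one genuine subtlety. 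Since $K_1 \neq K_2$ and both are maximal cliques, there is a vertex $y \in K_1 \setminus K_2$ (and similarly a vertex in $K_2 \setminus K_1$), and by maximality of $K_2$, the vertex $y$ has a non-neighbour $z \in K_2$; note $x$ is a common neighbour of $y$ and $z$ (it lies in $K_1$ so is adjacent to $y$, and lies in $K_2$ so is adjacent to $z$). Because $\psi$ is locally bijective, its restriction to $N_G(x)$ is a bijection onto $N_H(\psi(x))$; in particular $\psi(y) \neq \psi(z)$ since $y \neq z$ are both in $N_G(x)$. Now $\psi(y) \in \psi(K_1)$ while $\psi(z) \in \psi(K_2)$; but this alone does not immediately give $\psi(K_1) \neq \psi(K_2)$, so I would instead argue: if $\psi(K_1) = \psi(K_2)$, then $\psi(y) \in \psi(K_2)$, so $\psi(y) = \psi(w)$ for some $w \in K_2$; both $y$ and $w$ would need to be neighbours of $x$ (for $w \in K_2 \ni x$, adjacency holds unless $w = x$, but $w = x$ would give $\psi(y) = \psi(x)$, contradicting $y \in N_G(x)$ and local bijectivity being vertex-distinguishing on $N_G(x) \cup \{x\}$ — more simply, $\psi(y) = \psi(x)$ is impossible since $y$ is adjacent to $x$). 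Then $y, w \in N_G(x)$ with $\psi(y) = \psi(w)$ forces $y = w$ by local bijectivity at $x$, so $y \in K_2$, contradicting $y \in K_1 \setminus K_2$.

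So the key steps, in order, are: (1) recall $\psi^*$ is well-defined, sending maximal cliques to maximal cliques; (2) given an edge $K_1K_2$ of $K(G)$, pick $x \in K_1 \cap K_2$ to see $\psi(x) \in \psi(K_1) \cap \psi(K_2)$; (3) show $\psi(K_1) \neq \psi(K_2)$ by choosing $y \in K_1 \setminus K_2$, using maximality and local bijectivity at a common neighbour $x$ of $y$ and any purported preimage $w \in K_2$ of $\psi(y)$, to derive $y = w \in K_2$, a contradiction; (4) conclude $\psi^*(K_1)\psi^*(K_2) \in E(K(H))$, so $\psi^*$ is a homomorphism. I expect step (3) — ruling out that two distinct but intersecting maximal cliques of $G$ collapse to the same maximal clique of $H$ — to be the main obstacle, since it is the only place the local bijectivity of $\psi$ (as opposed to mere homomorphism) is essential; everything else is bookkeeping. (The well-definedness and the clique-to-maximal-clique facts have already been established in the lines preceding this claim, so I would just cite them.)
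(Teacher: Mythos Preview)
Your proof is correct and follows the same basic line as the paper's: for an edge $K_1K_2$ of $K(G)$, pick $x\in K_1\cap K_2$ and observe $\psi(x)\in\psi(K_1)\cap\psi(K_2)$. You go further than the paper does for this particular claim by also verifying $\psi(K_1)\neq\psi(K_2)$ in your step~(3); the paper's proof of Claim~1 simply asserts that two intersecting maximal cliques of $H$ give an edge in $K(H)$ without checking distinctness, effectively deferring the local-bijectivity argument you give here to the subsequent Claim~2 (where the case $\psi(c_1)=\psi(c_2)$ for adjacent $c_1,c_2$ is ruled out). Your detour through the non-neighbour $z$ is unnecessary for the final contradiction, but harmless.
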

\noindent Consider an arbitrary edge \( c_1c_2 \) of \( K(G) \), where \( c_1 \) and \( c_2 \) are maximal cliques in \( G \). 
We need to show that \( \psi^*(c_1)\psi^*(c_2) \) is an edge in \( K(H) \) \( \big( \)i.e., \( \psi(c_1)\psi(c_2)\in E(K(H)) \)\( \big) \). 
Since \( c_1c_2 \) is an edge in \( K(G) \), we have \( c_1\cap c_2\neq \emptyset \). 
Let \( v\in c_1\cap c_2 \) (where \( v\in V(G) \)). 
Clearly, \( \psi(v)\in \psi(c_1) \) and \( \psi(v)\in \psi(c_2) \). 
Since \( \psi(v)\in \psi(c_1)\cap \psi(c_2) \), it follows that \( \psi(c_1)\cap \psi(c_2)\neq \emptyset \). 
Since \( \psi(c_1) \) and \( \psi(c_2) \) are maximal cliques in \( H \) that intersect, \( \psi(c_1)\psi(c_2) \) is an edge in \( K(H) \). 
This proves Claim~\ref{clm:psi star is homomorphism}; that is, \( \psi^* \) is a homomorphism from \( K(G) \) to \( K(H) \). 

\begin{myclaim}\label{clm:psi star is LBH}
    \( \psi^* \) is an LBH from \( K(G) \) to \( K(H) \). 
\end{myclaim}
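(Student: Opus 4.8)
The plan is to fix an arbitrary vertex $c$ of $K(G)$, i.e.\ an arbitrary maximal clique $c$ of $G$, and to show that $\psi^*$ restricts to a bijection from $N_{K(G)}(c)$ onto $N_{K(H)}(\psi^*(c))$, where $\psi^*(c)=\psi(c)=:d$ is a maximal clique of $H$ (this was observed in the proof of Claim~\ref{clm:psi star is homomorphism}). I would first record two elementary facts that follow purely from $\psi$ being locally injective: (a) $\psi$ restricted to any clique of $G$ is injective; and (b) if $c,c'$ are maximal cliques of $G$ with $c\cap c'\neq\emptyset$, then $\psi(c\cap c')=\psi(c)\cap\psi(c')$. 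For (b), the non-trivial inclusion is handled by a short local-injectivity argument: if $w\in\psi(c)\cap\psi(c')$ had distinct preimages $v\in c$ and $v'\in c'$, then any vertex $x\in c\cap c'$ other than $v,v'$ would be a common neighbour of $v$ and $v'$, contradicting local injectivity at $x$ (and $x\in\{v,v'\}$ forces $v=v'$ by (a)); hence $v=v'\in c\cap c'$.

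The core step is to prove that for every vertex $v$ of $G$, $\psi$ induces a bijection $\Phi_v$ from the set of maximal cliques of $G$ through $v$ onto the set of maximal cliques of $H$ through $\psi(v)$; here $\Phi_v(c')=\psi(c')$ is well defined since $\psi(c')$ is a $q$-clique of $H$, hence maximal. Injectivity of $\Phi_v$ is routine: if $c_1,c_2\ni v$ satisfy $\psi(c_1)=\psi(c_2)$, then for each $u\in c_1\setminus\{v\}$ the unique $\psi$-preimage of $\psi(u)$ in $N_G(v)$ lies in $c_1$ and in $c_2$ (local injectivity at $v$), giving $c_1\subseteq c_2$, and symmetrically $c_2\subseteq c_1$. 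Surjectivity of $\Phi_v$ — lifting a maximal clique of $H$ — is the main obstacle. Given a maximal clique $d=\{\psi(v),w_1,\dots,w_{q-1}\}$ of $H$, let $u_i$ be the unique neighbour of $v$ in $G$ with $\psi(u_i)=w_i$; one must show $\{v,u_1,\dots,u_{q-1}\}$ is a clique (then it is a $q$-clique, hence a maximal clique, mapping onto $d$). I would organize this via the fiber $F:=G[\psi^{-1}(d)]$: one checks that $\psi\restriction F$ is an LBH from $F$ onto $H[d]\cong K_q$ and that $F$ is $(q-1)$-regular, and then shows $F$ is a disjoint union of copies of $K_q$; the component of $v$ in $F$ is then the maximal clique we want. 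The hypothesis on $G$ is indispensable precisely here — a $(q-1)$-regular graph admitting an LBH to $K_q$ need not be a union of $K_q$'s (e.g.\ $C_6$ admits an LBH to $K_3$) — but since every edge of $G$, hence of $F$, lies in a $q$-clique of $G$ and $F$ is $(q-1)$-regular, that $q$-clique is forced to lie inside $F$ and to be a whole component of $F$. This forcing is the delicate point of the whole proof.

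Finally I would assemble local bijectivity of $\psi^*$ at $c$ from $\Phi_v$ and facts (a),(b). For surjectivity: given $d'\in N_{K(H)}(d)$, pick $w\in d\cap d'$ and the vertex $v\in c$ with $\psi(v)=w$; surjectivity of $\Phi_v$ produces a maximal clique $c'$ of $G$ with $v\in c'$ and $\psi(c')=d'$, and then $c'\neq c$ (distinct images) while $v\in c\cap c'$, so $c'\in N_{K(G)}(c)$ and $\psi^*(c')=d'$. For injectivity: if $c_1',c_2'\in N_{K(G)}(c)$ with $\psi(c_1')=\psi(c_2')=d'$, then $\psi(c_i'\cap c)=d\cap d'$ by (b), and since $\psi\restriction c$ is injective by (a), both $c_1'\cap c$ and $c_2'\cap c$ equal $(\psi\restriction c)^{-1}(d\cap d')$; choosing $v$ in this common non-empty set, $c_1'$ and $c_2'$ are maximal cliques of $G$ through $v$ with the same $\psi$-image, so injectivity of $\Phi_v$ gives $c_1'=c_2'$. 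Since $c$ was an arbitrary vertex of $K(G)$, this shows $\psi^*$ is an LBH from $K(G)$ to $K(H)$, completing the proof of the claim and hence of the lemma.
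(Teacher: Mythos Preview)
Your surjectivity argument for $\Phi_v$ has a genuine gap at precisely the point you flag as delicate, and the step cannot be repaired: the lemma is false as stated. You assert that if $uv$ is an edge of the fiber $F=G[\psi^{-1}(d)]$ and $c$ is a maximal ($q$-)clique of $G$ containing $uv$, then $c$ ``is forced to lie inside $F$''. That needs $\psi(c)=d$, which would follow only if the edge $\psi(u)\psi(v)$ of $H$ lay in a \emph{unique} maximal clique of $H$ --- and nothing in the hypotheses guarantees that. Concretely: take $H=K_{2,2,2}$ (the octahedron, with non-edges $\{1,2\},\{3,4\},\{5,6\}$), so $q=3$ and every edge of $H$ lies in two triangles; let $G$ be the $2$-lift of $H$ in which exactly the edges $25$ and $35$ are crossed and all other edges are straight. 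One checks that $G$ is $K_4$-free and every edge of $G$ lies in a triangle, so all maximal cliques of $G$ are $3$-cliques; yet the triangle $\{1,3,5\}$ of $H$ lifts to a $6$-cycle in $G$ and hence has no preimage under $\psi^*$. In particular $1^0$ lies in only three maximal cliques of $G$ (namely $\{1^0,3^0,6^0\}$, $\{1^0,4^0,5^0\}$, $\{1^0,4^0,6^0\}$) while $1=\psi(1^0)$ lies in four, so $\Phi_{1^0}$ is not surjective and $\psi^*$ is not an LBH.

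For comparison, the paper's own proof of this claim does not address local surjectivity at all: it shows that $\psi^*$ maps $N_{K(G)}(K)$ into $N_{K(H)}(\psi(K))$ injectively and then asserts that this ``suffices'', which is unjustified. Both your argument and the paper's become correct under the additional hypothesis that every edge of $H$ lies in a unique maximal clique (equivalently, $H$ is locally a disjoint union of copies of $K_{q-1}$): under that hypothesis your forcing step goes through verbatim, since $\psi(c)$ and $d$ are then two maximal cliques of $H$ sharing the edge $\psi(u)\psi(v)$ and must coincide. This extra hypothesis does hold in the paper's only application of the lemma (Theorem~\ref{thm:lbh goes with line graph operation}), where the role of $H$ is played by the line graph of a triangle-free $3$-regular graph, which is locally-$2K_2$.
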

Consider an arbitrary vertex \( K \) of \( K(G) \) (i.e., \( K \) is a maximal clique in \( G \)). 
Since \( K \) is arbitrary, to prove Claim~\ref{clm:psi star is LBH}, it suffices to show that the restriction of \( \psi^* \) to \( N_{K(G)}(K) \) is a bijection from \( N_{K(G)}(K) \) onto \( N_{K(H)}(\psi^*(K)) \) (i.e, \( N_{K(H)}(\psi(K)) \)). 
Neighbours of \( K \) in \( K(G) \) are maximal cliques of \( G \) that intersect with \( K \). 
Each such clique \( \Tilde{K} \) of \( G \) satisfies \( \psi(K)\cap \psi(\Tilde{K})\neq \emptyset \) since \( K\cap \Tilde{K}\neq \emptyset \) and \( \psi(v)\in \psi(K)\cap \psi(\Tilde{K}) \) for all \( v\in K\cap \Tilde{K} \). 
Hence, for each neighbour \( \Tilde{K} \) of \( K \) in \( K(G) \), its image \( \psi^*(\Tilde{K}) \) (=\( \psi(\Tilde{K}) \)) is a neighbour of \( \psi(K) \) in \( K(H) \). 
That is, \( \psi^* \) maps each member of \( N_{K(G)}(K) \) to a member of \( N_{K(H)}(\psi(K)) \). 
Hence to prove Claim~\ref{clm:psi star is LBH}, it suffices to show that \( \psi^* \) maps distinct members of \( N_{K(G)}(K) \) to distinct members of \( N_{K(H)}(\psi(K)) \). 
On the contrary, assume that \( \psi^* \) maps distinct members \( c_1 \) and \( c_2 \) of \( N_{K(G)}(K) \) to the same member of \( N_{K(H)}(\psi(K)) \); 
that is, \( c_1,c_2\in N_{K(G)}(K) \) and \( \psi^*(c_1)=\psi^*(c_2) \) (i.e., \( \psi(c_1)=\psi(c_2) \)). 
Let \( x\in c_1\cap K \). 

~\\
\emph{Case~1:} \( x\in c_2 \).

\noindent Since \( c_1 \) and \( c_2 \) are distinct \( q \)-cliques in \( G \), there exists a vertex \( y\in c_1\setminus c_2 \). 
Note that \( \psi(u)\neq \psi(v) \) for each \( u\in c_1\setminus \{x\} \) and \( v\in c_2\setminus \{x\} \) because \( \psi \) restricted to \( N_G(x) \) is a bijection from \( N_G(x) \) onto \( N_H(\psi(x)) \) (and \( u,v\in N_G(x) \)). 
Hence, \( \psi(y)\notin \psi(c_2) \), a contradiction to \( \psi(c_1)=\psi(c_2) \). 

~\\
\emph{Case~2:} \( x\notin c_2 \). 

\noindent Since \( \psi(c_1)=\psi(c_2) \), there exists a vertex \( u\in c_2 \) such that \( \psi(x)=\psi(u) \). 
Since \( c_2 \) and \( K \) intersect, there exists a vertex \( y\in K\cap c_2 \). 
Since \( \psi \) restricted to \( N_G(y) \) is a bijection from \( N_G(y) \) onto \( N_H(\psi(y)) \), the LBH \( \psi \) maps the distinct neighbours \( x \) and \( u \) of \( y \) to distinct neighbours of \( \psi(y) \). 
That is, \( \psi(x)\neq \psi(u) \); a contradiction. 

Since we have a contradiction in both cases, \( \psi^* \) maps distinct members of \( N_{K(G)}(K) \) to distinct members of \( N_{K(H)}(\psi(K)) \). 
This proves Claim~\ref{clm:psi star is LBH}. 
\end{proof}

%\todo{
%(omitted) [from Prev version]
%
%We are ready to prove Theorem~\ref{thm:lbh goes with line graph operation} now. 
%}

Thanks to the above lemmas, we have the following theorem.

\begin{theorem}\label{thm:lbh goes with line graph operation}
Let \( G \) be a 3-regular graph, and let \( H \) be a triangle-free 3-regular graph. 
Then, there exists an LBH from \( G \) to \( H \) if and only if there exists an LBH from \( L(G) \) to \( L(H) \). 
\end{theorem}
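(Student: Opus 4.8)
The forward direction needs no new work: if $\psi$ is an LBH from $G$ to $H$, then Lemma~\ref{lem:lbh leads to lbh btwn line graphs} shows that the map $uv\mapsto\psi(u)\psi(v)$ is an LBH from $L(G)$ to $L(H)$. So the content is in the converse.

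For the converse, suppose $\phi$ is an LBH from $L(G)$ to $L(H)$. The plan is to first deduce that $G$ is triangle-free, and then to recover the desired LBH from the clique-graph operation via Lemma~\ref{lem:lbh sometimes leads to lbh between clique graphs}. Since $H$ is $3$-regular and triangle-free, $L(H)$ is $4$-regular, and the neighbourhood in $L(H)$ of a vertex $uw$ consists of the two other edges at $u$ together with the two other edges at $w$, with no edge of $L(H)$ joining an edge of the first pair to an edge of the second pair (such an edge would produce a triangle in $H$). Hence $L(H)$ is locally $2K_2$ and, in particular, diamond-free, since in a diamond some vertex-neighbourhood would contain a vertex of degree~$2$, whereas $2K_2$ is $1$-regular. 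Now if $G$ contained a triangle $uwz$, then $L(G)$ would contain a diamond as a subgraph on the four vertices $uw$, $wz$, $zu$, and the third edge at $u$; since the diamond has diameter~$2$, Observation~\ref{obs:lbh preseves diameter 2 subgraphs} would force $L(H)$ to contain a diamond, a contradiction. Therefore $G$ is triangle-free.

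With $G$ and $H$ now both $3$-regular and triangle-free, every clique of $L(G)$ is a set of pairwise incident edges of $G$ which, by triangle-freeness, must share a common vertex; hence the maximal cliques of $L(G)$ are exactly the ``vertex triangles'' $T_v=\{e\in E(G):v\in e\}$ for $v\in V(G)$, each of size~$3$, and similarly the maximal cliques of $L(H)$ are $3$-cliques. Since $T_u\cap T_v\neq\emptyset$ exactly when $uv\in E(G)$, the map $T_v\mapsto v$ is an isomorphism from $K(L(G))$ onto $G$, and likewise $K(L(H))\cong H$. Applying Lemma~\ref{lem:lbh sometimes leads to lbh between clique graphs} with $q=3$ to the LBH $\phi$ from $L(G)$ to $L(H)$ yields an LBH from $K(L(G))$ to $K(L(H))$, and composing with the two isomorphisms gives an LBH from $G$ to $H$, completing the proof.

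The only genuine obstacle is the triangle-free reduction in the middle step: the clique-graph machinery reconstructs $G$ from $L(G)$ only when $G$ is triangle-free, so triangles in $G$ must be excluded, and the diamond argument against the locally-$2K_2$ structure of $L(H)$ seems to be the cleanest way to do it. Everything else is bookkeeping that assembles Lemma~\ref{lem:lbh leads to lbh btwn line graphs}, Observation~\ref{obs:lbh preseves diameter 2 subgraphs}, and Lemma~\ref{lem:lbh sometimes leads to lbh between clique graphs}.
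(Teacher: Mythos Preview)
Your proof is correct and follows essentially the same route as the paper: the forward direction via Lemma~\ref{lem:lbh leads to lbh btwn line graphs}, and the converse by showing $L(H)$ is locally $2K_2$, ruling out triangles in $G$ via the diamond argument and Observation~\ref{obs:lbh preseves diameter 2 subgraphs}, and then recovering the LBH through Lemma~\ref{lem:lbh sometimes leads to lbh between clique graphs} together with $K(L(G))\cong G$ and $K(L(H))\cong H$. The only cosmetic difference is that the paper invokes Devillers et al.\ for the isomorphisms $K(L(G))\cong G$, $K(L(H))\cong H$ and for the locally-$2K_2$ property of $L(H)$, whereas you supply short direct arguments for these facts; also, the paper verifies that all maximal cliques in $L(G)$ are $3$-cliques (using $K_4$-freeness from Observation~\ref{obs:lbh preseves diameter 2 subgraphs}) before applying Lemma~\ref{lem:lbh sometimes leads to lbh between clique graphs}, and only afterwards proves $G$ triangle-free, but this reordering is immaterial.
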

%\begin{customthm}{\ref{thm:lbh goes with line graph operation}}
%Let \( G \) be a 3-regular graph, and let \( H \) be a triangle-free 3-regular graph. 
%Then, there exists an LBH from \( G \) to \( H \) if and only if there exists an LBH from \( L(G) \) to \( L(H) \). 
%%\qed    
%\end{customthm}
\begin{proof}
If there is an LBH from \( G \) to \( H \), then there is an LBH from \( L(G) \) to \( L(H) \) by Lemma~\ref{lem:lbh leads to lbh btwn line graphs}. 

Conversely, suppose that there is an LBH from \( L(G) \) to \( L(H) \). 
Since \( H \) is a triangle-free 3-regular graph, \( L(H) \) is \( K_4 \)-free and \( K(H)=L(H) \). 
Since \( H \) is a triangle-free 3-regular graph, \( H \) is a locally-\( 3K_1 \) graph. 
Hence, \( K(H) \) (\( =L(H) \)) is a locally-\( 2K_2 \) graph~\cite[Theorem~1.4]{devillers}. 
Thus, \( L(H) \) is a \( K_4 \)-free locally-\( 2K_2 \) graph. 
Hence, every maximal clique in \( L(H) \) is a 3-clique. 
Since \( L(H) \) is \( K_4 \)-free and \( L(G) \) admits an LBH to \( L(H) \), by Observation~\ref{obs:lbh preseves diameter 2 subgraphs}, \( L(G) \) is \( K_4 \)-free. 
Since \( G \) is 3-regular, every edge of \( L(G) \) lies in a triangle. 
Also, there are no isolated vertices in \( G \). 
Hence, every maximal clique in \( L(G) \) is a 3-clique. 
Thus, by Lemma~\ref{lem:lbh sometimes leads to lbh between clique graphs}, there exists an LBH from \( K(L(G)) \) to \( K(L(H)) \). 

\setcounter{myclaim}{0}
\begin{myclaim}\label{clm:G is traingle-free}
    \( G \) is triangle-free. 
\end{myclaim}

\noindent On the contrary, assume that \( G \) contains a triangle. 
Then, \( L(G) \) contains diamond (as subgraph). 
Hence, \( L(H) \) contains diamond by Observation~\ref{obs:lbh preseves diameter 2 subgraphs}. 
But, \( L(H) \) does not contain diamond since it is locally-\( 2K_2 \). 
This contradiction proves Claim~\ref{clm:G is traingle-free}. 
\todo{
Reviewer G:\\
Again, use Obs 2 directly.
}
\todo[color=gray!10!white]{
For this direction, we need to show that \( G \) covers \( H \) if \( L(G) \) covers \( L(H) \). 
Since we have an LBH from \( L(G) \) to \( L(H) \) (rather than from \( G \) to \( H \)), we cannot use Obs 2 directly to argue that \( G \) is triangle-free. 
}

Since \( G \) and \( H \) are triangle-free 3-regular graphs, \( K(G)=L(G) \) and \( K(H)=L(H) \). 
Due to the same reason, \( G \) and \( H \) are locally-\( 3K_1 \). 
Hence, \( K(L(G))=K(K(G))\cong G \) and \( K(L(H))=K(K(H))\cong H \)\cite[Theorem~1.4]{devillers}. 
Moreover, there is an LBH from \( K(L(G)) \) to \( K(L(H)) \) by Lemma~\ref{lem:lbh sometimes leads to lbh between clique graphs}. 
Therefore, there is an LBH from \( G \) to \( H \). 
\end{proof}

\subsection{Homomorphisms That Carry Back Star Colouring}\label{sec:homs that keep star colouring}
It is well-known that homomorphisms carry \( k \)-colourability of target graphs backwards in the following sense: if there is a homomorphism \( \psi \) from a graph \( G \) to a \( k \)-colourable graph~\( H \), then \( G \) is \( k \)-colourable as well~\cite{hell_nesetril2004}. 
In this subsection, we show that out-neighbourhood injective homomorphisms carry \( k \)-star colourability of target graphs backwards. 
We prove that if \( (\vec{H},h) \) is a \( q \)-coloured in-orientation (resp.\ \( q \)-coloured MINI-orientation) of \( H \) and \( \psi \) is an out-neighbourhood injective homomorphism from \( \vec{G} \) to \( \vec{H} \), then \( (\vec{G},h\circ \psi) \) is a \( q \)-coloured in-orientation (resp.\ \( q \)-coloured MINI-orientation) of \( G \). 
%The proofs of the following results are straightforward and hence moved to Appendix~\ref{sec:A.1} for better flow of presentation. 
\todo{
Reviewer G:\\
For better flow include the proofs in this section, after the statements 
\colcol{(done)}
}
%(proofs are omitted here, and they are available in \cite[Chapter~5]{cyriac}).
\begin{theorem}\label{thm:out-nbd I Hom preserve in-orientation}
Let \( (\vec{H},h) \) be a \( q \)-coloured in-orientation of a graph \( H \), where \( q\in \mathbb{N} \). 
Let \( \vec{G} \) be an orientation of a graph \( G \), and let \( \psi \) be is an OBH from \( \vec{G} \) to \( \vec{H} \). 
Then, \( (\vec{G},h\circ \psi) \) is a \( q \)-coloured in-orientation of \( G \). 
%\qed
\end{theorem}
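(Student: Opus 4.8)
The plan is to verify directly that the coloured orientation $(\vec{G}, h\circ\psi)$ meets the three requirements in the definition of a $q$-coloured in-orientation: that $f:=h\circ\psi$ is a $q$-colouring of $G$; that condition~(ii) holds, i.e.\ the out-neighbours of every vertex of $\vec{G}$ receive pairwise distinct colours; and that condition~(i) holds, i.e.\ no out-neighbour of a vertex of $\vec{G}$ has the same colour as an in-neighbour of that vertex. Throughout I would use that an OBH from $\vec{G}$ to $\vec{H}$ is in particular a homomorphism from $\vec{G}$ to $\vec{H}$, so its underlying vertex map is a homomorphism from $G$ to $H$.

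First I would show $f$ is a $q$-colouring of $G$. Since $h$ maps $V(H)$ into $\mathbb{Z}_q$, the composite $f=h\circ\psi$ maps $V(G)$ into $\mathbb{Z}_q$. For properness, take $uv\in E(G)$; exactly one of $(u,v),(v,u)$ is an arc of $\vec{G}$, say $(u,v)$. Then $(\psi(u),\psi(v))$ is an arc of $\vec{H}$, so $\psi(u)\psi(v)\in E(H)$, and since $h$ is a colouring of $H$ we get $f(u)=h(\psi(u))\neq h(\psi(v))=f(v)$.

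Next, for condition~(ii), fix a vertex $v$ of $\vec{G}$ and two distinct out-neighbours $x,y\in N_{\vec{G}}^+(v)$. Because $\psi$ is an OBH, its restriction to $N_{\vec{G}}^+(v)$ is a bijection onto $N_{\vec{H}}^+(\psi(v))$; hence $\psi(x)$ and $\psi(y)$ are \emph{distinct} out-neighbours of $\psi(v)$ in $\vec{H}$. As $(\vec{H},h)$ is a $q$-coloured in-orientation, out-neighbours of $\psi(v)$ have pairwise distinct colours, so $h(\psi(x))\neq h(\psi(y))$, i.e.\ $f(x)\neq f(y)$. Finally, for condition~(i), fix a vertex $v$ of $\vec{G}$, an out-neighbour $x$ and an in-neighbour $y$ of $v$. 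Since $\psi$ is a homomorphism of oriented graphs, the arc $(v,x)$ gives $\psi(x)\in N_{\vec{H}}^+(\psi(v))$, and the arc $(y,v)$ gives that $\psi(y)$ is an in-neighbour of $\psi(v)$ in $\vec{H}$. Condition~(i) for $(\vec{H},h)$ then yields $h(\psi(x))\neq h(\psi(y))$, i.e.\ $f(x)\neq f(y)$. With all three requirements checked, $(\vec{G},h\circ\psi)$ is a $q$-coloured in-orientation of $G$.

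I do not expect a genuine obstacle here: the statement is a routine pull-back of local colour constraints along a locally constrained homomorphism. The only point demanding a little care is condition~(ii), where one must invoke the injectivity half of out-neighbourhood bijectivity so that two distinct out-neighbours of $v$ are not identified in $\vec{H}$ before condition~(ii) for $(\vec{H},h)$ is applied; for condition~(i) and for properness the bare homomorphism property suffices. In particular the argument uses only that $\psi$ is out-neighbourhood injective, and an entirely parallel argument — with a one-line check that in-neighbours of $v$ map to in-neighbours of $\psi(v)$ and hence inherit a common colour — handles the ``$q$-coloured MINI-orientation'' version as well.
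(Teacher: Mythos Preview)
Your proof is correct and essentially identical to the paper's own argument: both verify properness of $h\circ\psi$ via the homomorphism property, then check conditions~(i) and~(ii) by pushing in- and out-neighbours of $v$ forward to $\psi(v)$ and invoking the corresponding conditions for $(\vec{H},h)$, with the injectivity of $\psi$ on out-neighbourhoods used exactly where you use it. Your closing remark that only out-neighbourhood \emph{injectivity} is needed, and that the MINI-orientation case follows by adding the one-line in-neighbour check, also matches the paper's treatment.
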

\begin{proof}
Recall that an out-neighbourhood injective homomorphism from \( \vec{G} \) to \( \vec{H} \) is a homomorphism from \( \vec{G} \) to \( \vec{H} \). 
Hence, for every arc \( (u,v) \) in \( \vec{G} \), we know that \( \big(\psi(u),\psi(v)\big) \) is an arc in \( \vec{H} \), and thus \( h(\psi(u))\neq h(\psi(v)) \). 
Thus, \( h\circ \psi \) is a \( q \)-colouring of \( \vec{G} \). 
We need to prove that \( (\vec{G},h\circ \psi) \) is a \( q \)-coloured in-orientation of \( G \). 
That is, we need to prove the following:\\
(i)~for each vertex \( v \) of \( \vec{G} \) with an in-neighbour \( w \) and an out-neighbour \( x \), we have \( h(\psi(w))\neq h(\psi(x)) \) (i.e., each in-neighbour and each out-neighbour of \( v \) have different colours under \( h\circ \psi \)); and\\
(ii)~for each vertex \( v \) of \( \vec{G} \) with two out-neighbours \( x_1 \) and \( x_2 \), we have  \( h(\psi(x_1))\neq h(\psi(x_2)) \) (i.e., no two out-neighbours of \( v \) have the same colour under \( h\circ \psi \)).\\

To prove (i), assume that \( v \) is a vertex in \( \vec{G} \) with an in-neighbour \( w \) and an out-neighbour \( x \). 
That is, \( (w,v) \) and \( (v,x) \) are arcs in \( \vec{G} \). 
Since \( \psi \) is a homomorphism from \( \vec{G} \) to \( \vec{H} \), it follows that \( (\psi(w),\psi(v)) \) and \( (\psi(v),\psi(x)) \) are arcs in \( \vec{H} \). That is, \( \psi(v) \) is a vertex in \( \vec{H} \) with \( \psi(w) \) as an in-neighbour and \( \psi(x) \) as an out-neighbour. 
Hence, \( h(\psi(w))\neq h(\psi(x)) \) since \( (\vec{H},h) \) is a \( q \)-coloured in-orientation. 
This proves~(i).

To prove (ii), assume that \( v \) is a vertex in \( \vec{G} \) with two out-neighbours \( x_1 \) and \( x_2 \). 
Since \( \psi \) is a homomorphism from \( \vec{G} \) to \( \vec{H} \), vertex \( \psi(x_i) \) is an out-neighbour of \( \psi(v) \) for \( i\in \{1,2\} \). 
By definition, \( v \) is a copy of \( \psi(v) \) in \( \vec{G} \), and \( x_i \) is a copy of \( \psi(x_i) \) in \( \vec{G} \) for \( i\in \{1,2\} \). 
Since \( \psi \) is an out-neighbourhood injective homomorphism, the copy \( v \) of \( \psi(v) \) in \( \vec{G} \) has at most one copy of \( \psi(x_1) \) in \( \vec{G} \) as its out-neighbour (in \( \vec{G} \)). 
Thus, \( x_2 \) is not a copy of \( \psi(x_1) \) in \( \vec{G} \). 
That is, \( \psi(x_2)\neq \psi(x_1) \). 
Hence, \( \psi(x_1) \) and \( \psi(x_2) \) are distinct out-neighbours of \( \psi(v) \) in \( \vec{H} \). 
since \( (\vec{H},h) \) is a \( q \)-coloured in-orientation, \( h(\psi(x_1))\neq h(\psi(x_2)) \). 
This proves~(ii). 
\end{proof}

\begin{theorem}\label{thm:out-nbd I Hom preserve star colouring}
Let \( \psi \) be a locally injective homomorphism from a graph \( G \) to a graph \( H \), and let \( h \) be a \( q \)-star colouring of \( H \). 
Then, \( h\circ \psi \) is a \( q \)-star colouring of \( G \). 
%\qed
\end{theorem}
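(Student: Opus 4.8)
The plan is to deduce this from Theorem~\ref{thm:out-nbd I Hom preserve in-orientation} by building an appropriate orientation of $G$. First I would note that $h\circ\psi$ is a proper $q$-colouring of $G$: a locally injective homomorphism is in particular a homomorphism, so every edge $uv$ of $G$ maps to an edge $\psi(u)\psi(v)$ of $H$, whence $\psi(u)\neq\psi(v)$ and $h(\psi(u))\neq h(\psi(v))$. It remains to control the bicoloured components.

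Since $h$ is a $q$-star colouring of $H$, Observation~\ref{obs:in-orientation underlying colouring is star} lets me fix an in-orientation $\vec{H}$ of $H$ induced by $h$, so that $(\vec{H},h)$ is a $q$-coloured in-orientation of $H$. I would then orient $G$ by ``pulling back'' $\vec{H}$ along $\psi$: for each edge $uv$ of $G$, the pair $\psi(u)\psi(v)$ is an edge of $H$, so exactly one of $(\psi(u),\psi(v))$ and $(\psi(v),\psi(u))$ is an arc of $\vec{H}$; orient $uv$ in $\vec{G}$ in the matching direction. This $\vec{G}$ is a well-defined orientation of $G$, and $\psi$ is by construction a homomorphism from $\vec{G}$ to $\vec{H}$.

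Next I would verify that $\psi$ is an out-neighbourhood injective homomorphism from $\vec{G}$ to $\vec{H}$. For every vertex $v$ of $\vec{G}$ we have $N^+_{\vec{G}}(v)\subseteq N_G(v)$, and $\psi$ restricted to $N_G(v)$ is an injection into $N_H(\psi(v))$ because $\psi$ is locally injective; moreover, if $x\in N^+_{\vec{G}}(v)$ then $(v,x)$ is an arc of $\vec{G}$, so $(\psi(v),\psi(x))$ is an arc of $\vec{H}$ and $\psi(x)\in N^+_{\vec{H}}(\psi(v))$. Hence $\psi$ restricted to $N^+_{\vec{G}}(v)$ is an injection into $N^+_{\vec{H}}(\psi(v))$. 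Applying Theorem~\ref{thm:out-nbd I Hom preserve in-orientation} then gives that $(\vec{G},h\circ\psi)$ is a $q$-coloured in-orientation of $G$, and therefore, by Observation~\ref{obs:in-orientation underlying colouring is star}, $h\circ\psi$ is a $q$-star colouring of $G$.

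The only subtlety is that local injectivity yields out-neighbourhood \emph{injectivity} rather than the out-neighbourhood \emph{bijectivity} literally used in the statement of Theorem~\ref{thm:out-nbd I Hom preserve in-orientation}; its proof, however, invokes nothing beyond out-neighbourhood injectivity, so it applies as is. If one prefers to bypass orientations, a direct argument also works: were $h\circ\psi$ to contain a bicoloured 4-vertex path $v_1v_2v_3v_4$ with $h(\psi(v_1))=h(\psi(v_3))\neq h(\psi(v_2))=h(\psi(v_4))$, then $\psi(v_1),\psi(v_2),\psi(v_3),\psi(v_4)$ would be four \emph{distinct} vertices of $H$ --- distinctness of $\psi(v_1),\psi(v_3)$ and of $\psi(v_2),\psi(v_4)$ comes from local injectivity at $v_2$ and at $v_3$, while $\psi(v_1)\neq\psi(v_4)$ because otherwise $\psi(v_3)\psi(v_4)$ would be a monochromatic edge of $H$ --- so they would form a bicoloured 4-vertex path in $H$, contradicting that $h$ is a star colouring. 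I expect this distinctness bookkeeping to be the main point requiring care in either version.
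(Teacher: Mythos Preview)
Your proof is correct and follows essentially the same route as the paper: fix an in-orientation $\vec{H}$ induced by $h$, pull it back along $\psi$ to obtain $\vec{G}$, observe that $\psi$ becomes out-neighbourhood injective, and apply Theorem~\ref{thm:out-nbd I Hom preserve in-orientation} together with Observation~\ref{obs:in-orientation underlying colouring is star}. You supply more detail than the paper (which merely asserts the existence of a suitable $\vec{G}$) and correctly flag the injective/bijective discrepancy in the statement of Theorem~\ref{thm:out-nbd I Hom preserve in-orientation}; your optional direct bicoloured-$P_4$ argument is also sound, though the distinctness of $\psi(v_1)$ and $\psi(v_4)$ follows more simply from $h(\psi(v_1))\neq h(\psi(v_4))$.
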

\begin{proof}
Let \( \vec{H} \) be an in-orientation of \( H \) induced by the \( q \)-star colouring \( h \). 
Since \( \psi \) is a locally injective homomorphism from \( G \) to \( H \), there exists an orientation \( \vec{G} \) of \( G \) such that \( \psi \) is an out-neighbourhood injective homomorphism from \( \vec{G} \) to \( \vec{H} \). 
Since \( (\vec{H},h) \) is a \( q \)-coloured in-orientation of \( H \), it follows from Theorem~\ref{thm:out-nbd I Hom preserve in-orientation} that \( (\vec{G},h\circ \psi) \) is a \( q \)-coloured in-orientation of \( G \). 
By Observation~\ref{obs:in-orientation underlying colouring is star}, \( h\circ \psi \) is a \( q \)-star colouring of \( G \). 
\end{proof}

%\begin{proof}
%Let \( \vec{H} \) be an in-orientation of \( H \) induced by the \( q \)-star colouring \( h \). 
%Since \( \psi \) is a locally injective homomorphism from \( G \) to \( H \), there exists an orientation \( \vec{G} \) of \( G \) such that \( \psi \) is an out-neighbourhood injective homomorphism from \( \vec{G} \) to \( \vec{H} \) (see Observation~\ref{obs:locally bijective implies out-nbd bijective}). 
%Since \( \vec{H} \) is a \( q \)-coloured in-orientation of \( H \), it follows from Theorem~\ref{thm:out-nbd I Hom preserve in-orientation} that \( \vec{G} \) is a \( q \)-coloured in-orientation of \( G \) with \( h\circ \psi \) as its underlying \( q \)-colouring. 
%Since the underlying \( q \)-colouring of a \( q \)-coloured in-orientation is always a star colouring (see Observation~\ref{obs:in-orientation underlying colouring is star}), \( h\circ \psi \) is a \( q \)-star colouring of \( G \). 
%\end{proof}

\begin{theorem}\label{thm:out-nbd I Hom preserve colourful out-nbd orientation}
Let \( (\vec{H},h) \) be a \( q \)-coloured MINI-orientation of a graph \( H \), where \( q\in \mathbb{N} \). 
Let \( \vec{G} \) be an orientation of a graph \( G \), and let \( \psi \) be an out-neighbourhood injective homomorphism from \( \vec{G} \) to \( \vec{H} \). 
Then, \( (\vec{G},h\circ \psi) \) is a \( q \)-coloured MINI-orientation of \( G \). 
%\qed
\end{theorem}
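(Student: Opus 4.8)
The plan is to follow the template of the proof of Theorem~\ref{thm:out-nbd I Hom preserve in-orientation} and simply verify the three defining conditions of a $q$-coloured MINI-orientation for $(\vec{G},h\circ\psi)$. First I would observe, exactly as there, that an out-neighbourhood injective homomorphism is in particular a homomorphism from $\vec{G}$ to $\vec{H}$, so that for every arc $(u,v)$ of $\vec{G}$ the pair $(\psi(u),\psi(v))$ is an arc of $\vec{H}$ and hence $h(\psi(u))\neq h(\psi(v))$; thus $h\circ\psi$ is a proper $q$-colouring of $G$.

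Next I would dispose of Conditions~(i) and~(ii) for free. Since every $q$-coloured MINI-orientation is, by definition, also a $q$-coloured in-orientation (Condition~(iii) being the only extra requirement), the pair $(\vec{H},h)$ is a $q$-coloured in-orientation of $H$. Theorem~\ref{thm:out-nbd I Hom preserve in-orientation} then applies verbatim and yields that $(\vec{G},h\circ\psi)$ is a $q$-coloured in-orientation of $G$, i.e.\ it already satisfies Conditions~(i) and~(ii). (Alternatively one could just re-run the two short arguments from that proof.)

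The only new ingredient is Condition~(iii): every vertex of $\vec{G}$ has a monochromatic in-neighbourhood under $h\circ\psi$. Here I would take an arbitrary vertex $v$ of $\vec{G}$ with in-neighbours $w_1$ and $w_2$, so $(w_1,v)$ and $(w_2,v)$ are arcs of $\vec{G}$. Because $\psi$ is a homomorphism from $\vec{G}$ to $\vec{H}$, both $(\psi(w_1),\psi(v))$ and $(\psi(w_2),\psi(v))$ are arcs of $\vec{H}$, so $\psi(w_1)$ and $\psi(w_2)$ are in-neighbours of $\psi(v)$ in $\vec{H}$. Since $(\vec{H},h)$ is a $q$-coloured MINI-orientation, all in-neighbours of $\psi(v)$ carry the same colour, whence $h(\psi(w_1))=h(\psi(w_2))$. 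As $v$, $w_1$, $w_2$ were arbitrary, Condition~(iii) holds, completing the verification.

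I do not anticipate a genuine obstacle: the argument is a routine chase along arcs. The one point worth flagging is that Condition~(iii) uses only the homomorphism property of $\psi$ and not its out-neighbourhood injectivity; injectivity is needed solely for Condition~(ii), which is precisely the part already handled by Theorem~\ref{thm:out-nbd I Hom preserve in-orientation}. So the cleanest write-up is to invoke that theorem for (i) and (ii) and add the single short paragraph above for (iii).
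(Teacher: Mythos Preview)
Your proposal is correct and follows essentially the same approach as the paper: invoke Theorem~\ref{thm:out-nbd I Hom preserve in-orientation} to obtain Conditions~(i) and~(ii), then verify Condition~(iii) by pushing two in-neighbours of $v$ forward along the homomorphism $\psi$ and using the monochromatic in-neighbourhood property of $(\vec{H},h)$. Your closing remark that Condition~(iii) requires only the homomorphism property (not out-neighbourhood injectivity) is a worthwhile observation that the paper does not make explicit.
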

\todo{
Reviewer G:\\
``such that \( \psi \) is'' \( \to \) ``and let \( \psi \) be'' \colcol{(done)} 
}
\begin{proof}
Clearly, \( (\vec{H},h) \) be a \( q \)-coloured in-orientation of \( H \). 
Since \( \psi \) is an out-neighbourhood injective homomorphism from \( \vec{G} \) to \( \vec{H} \), it follows from Theorem~\ref{thm:out-nbd I Hom preserve in-orientation} that \( (\vec{G},h\circ \psi) \) is a \( q \)-coloured in-orientation of \( G \). 
To complete the proof of the theorem, it suffices to show that for each vertex \( v \) of \( \vec{G} \) with two in-neighbours \( w_1 \) and \( w_2 \), we have \( h(\psi(w_1))=h(\psi(w_2)) \) (i.e., all in-neighbours of \( v \) have the same colour under \( h\circ \psi \)).

Suppose that \( v \) is a vertex in \( \vec{G} \) with two in-neighbours \( w_1 \) and \( w_2 \). Since \( \psi \) is a homomorphism from \( \vec{G} \) to \( \vec{H} \), it follows that \( \psi(w_i) \) is an in-neighbour of \( \psi(v) \) in \( \vec{H} \) for \( i\in \{1,2\} \). 
Since \( (\vec{H},h) \) is a coloured MINI-orientation of \( H \), all in-neighbours of \( \psi(v) \) have the same colour in \( (\vec{H},h) \). 
Thus, \( h(\psi(w_1))=h(\psi(w_2)) \). 
%This completes the proof since the vertex \( v \) of \( \vec{G} \) and its in-neighbours \( w_1 \) and \( w_2 \) are arbitrary. 
This completes the proof since \( v, w_1 \) and \( w_2 \) are arbitrary. 
\end{proof}

%\begin{proof}
%By definitions, a \( q \)-coloured MINI-orientation with an underlying \( q \)-colouring \( h \) is a \( q \)-coloured in-orientation with \( h \) as its underlying \( q \)-colouring. 
%Hence, \( \vec{H} \) is a \( q \)\nobreakdash-coloured in-orientation of \( H \) with \( h \) as its underlying \( q \)-colouring. 
%Since \( \psi \) is an out-neighbourhood injective homomorphism from \( \vec{G} \) to \( \vec{H} \), it follows from Theorem~\ref{thm:out-nbd I Hom preserve in-orientation} that \( \vec{G} \) is a \( q \)-coloured in-orientation of \( G \) with \( h\circ \psi \) as its underlying \( q \)-colouring. 
%To complete the proof of the theorem, it suffices to show that for each vertex \( v \) of \( \vec{G} \) with two in-neighbours \( w_1 \) and \( w_2 \), we have \( h(\psi(w_1))=h(\psi(w_2)) \) (i.e., all in-neighbours of \( v \) have the same colour under \( h\circ \psi \)). 
%
%
%Suppose that \( v \) is a vertex in \( \vec{G} \) with two in-neighbours \( w_1 \) and \( w_2 \). Since \( \psi \) is a homomorphism from \( \vec{G} \) to \( \vec{H} \), it follows that \( \psi(w_i) \) is an in-neighbour of \( \psi(v) \) in \( \vec{H} \) for \( i\in \{1,2\} \). 
%Since \( \vec{H} \) is a MINI-orientation of \( H \) with \( h \) as its underlying colouring, all in-neighbours of \( \psi(v) \) in \( \vec{H} \) have the same colour under \( h \). 
%Thus, \( h(\psi(w_1))=h(\psi(w_2)) \). 
%This completes the proof since the vertex \( v \) of \( \vec{G} \) and its in-neighbours \( w_1 \) and \( w_2 \) are arbitrary. 
%\end{proof}

\iftoggle{extended}
{\section{A Lower Bound for the Star Chromatic Number}
Xie et al.~\cite{xie} proved that at least four colours are needed to star colour a 3-regular graph. 
It follows from a result of Fertin et al.~\cite{fertin2003} that at least \( \lceil (d+3)/2 \rceil \) colours are needed to star colour a \( d \)-regular graph. 
For \( d\geq 2 \), we improve this lowerbound to \( \lceil (d+4)/2 \rceil \), by generalizing the proof of Xie et al~\cite{xie}.
In the next section, we show that this lowerbound is tight for each \( d \), and characterise the extremal graphs.

\begin{theorem}\label{thm:lb chi_s}
Let \( G \) be a \( d \)-regular graph with \( d\geq 2 \). Then, \( \chi_s(G)\geq \raisebox{1pt}{\big\lceil}\frac{d+4}{2}\raisebox{1pt}{\big\rceil} \).
\end{theorem}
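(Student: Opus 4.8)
The plan is to invoke the in-orientation characterization of star colourings from Section~\ref{sec:def}: since the star chromatic number of $G$ equals the least $q$ for which $G$ admits a $q$-coloured in-orientation, it suffices to show that every $k$-coloured in-orientation $(\vec{G},f)$ of a $d$-regular graph $G$ with $d\ge 2$ satisfies $k\ge \lceil (d+4)/2\rceil$. So I would fix a $k$-star colouring $f$ of $G$ with $k=\chi_s(G)$, pass to an in-orientation $\vec{G}$ of $G$ induced by $f$ (so that $(\vec{G},f)$ is a $k$-coloured in-orientation by Observation~\ref{obs:in-orientation underlying colouring is star}), and write $d^+(v),d^-(v)$ for the out- and in-degree of $v$, so that $d^+(v)+d^-(v)=d$ for every vertex $v$.

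Next I would record two elementary per-vertex inequalities that fall straight out of the definition of a $k$-coloured in-orientation. Since the out-neighbours of $v$ receive pairwise distinct colours and none of them is $f(v)$, we get $d^+(v)\le k-1$. Moreover, if $v$ has an in-neighbour $w$, then $f(w)$ differs from $f(v)$ and also from every colour appearing on an out-neighbour of $v$; hence $f(w)$ lies in a set of $k-1-d^+(v)$ colours, which forces $k-1-d^+(v)\ge 1$, i.e. $d^+(v)\le k-2$ whenever $d^-(v)\ge 1$.

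Then I would split into two cases. If $\vec{G}$ has a source $v$ (that is, $d^-(v)=0$), then $d=d^+(v)\le k-1$, so $k\ge d+1$; and since $(d+4)/2\le d+1$ for all $d\ge 2$ and $d+1$ is an integer, this gives $k\ge\lceil(d+4)/2\rceil$. Otherwise every vertex has $d^-(v)\ge 1$, hence $d^+(v)\le k-2$ for every $v$; summing over all $n=|V(G)|$ vertices and using $\sum_{v}d^+(v)=|E(G)|=nd/2$ yields $nd/2\le n(k-2)$, i.e. $d\le 2k-4$, which again gives $k\ge\lceil(d+4)/2\rceil$.

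I do not anticipate a real obstacle: the argument is a short double-counting on top of the already-established equivalence between $q$-star colourings and $q$-coloured in-orientations, and it generalizes the cubic-graph argument of Xie et al. in an essentially routine way. The only points that need a little care are treating a possible source uniformly with the generic case, and verifying the elementary inequality $(d+4)/2\le d+1$ for $d\ge 2$ so that the source case does not produce a weaker bound than the counting case.
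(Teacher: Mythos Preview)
Your argument is correct, and it takes a genuinely different route from the paper's proof.

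The paper splits by the parity of $d$. For even $d$ it simply quotes Fertin et al.'s lower bound on the acyclic chromatic number, which already gives $\chi_a(G)\ge (d+4)/2$ and hence the star bound. For odd $d=2k+1$ it argues by contradiction from a hypothetical $(k+2)$-star colouring, via an edge count over bicoloured components: each component $K_{1,q}$ has edge--vertex ratio $q/(q+1)\le d/(d+1)$, with equality only for $K_{1,d}$; a short argument shows some bicoloured component must fail to be $K_{1,d}$, so the total inequality is strict, and summing over all colour pairs contradicts $m=dn/2$.

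Your approach is more elementary and uniform: it avoids the parity split and does not rely on the external acyclic-colouring bound, reducing everything to the single observation $d^+(v)\le k-2$ at non-source vertices and a global out-degree sum. The paper's approach, by contrast, surfaces the structural fact that the obstruction in the odd case is precisely the impossibility of making every bicoloured component a $K_{1,d}$; this connects directly to the later characterization (Theorem~\ref{thm:characterise 2p-regular p+2-star colourable}) of $2p$-regular $(p+2)$-star colourable graphs by the condition that all bicoloured components be $K_{1,p}$.
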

\begin{proof}
We have two cases: (i)~\( d \) is even, and (ii)~\( d \) is odd. 
If \( d \) is even, say \( d=2k \), then at least \( \lceil (d+3)/2 \rceil = (d+4)/2 = k+2 \) colours are needed to acyclic colour \( G \) \cite[Proposition~1]{fertin2003}; hence, at least \( k+2=\lceil (d+4)/2 \rceil \) colours are needed to star colour \( G \). Next, we consider the case when \( d \) is odd, say \( d=2k+1 \). 
To prove that \( \chi_s(G)\geq \lceil (d+4)/2 \rceil=k+3 \), it suffices to show that \( G \) is not \( (k+2) \)-star colourable. 
Assume that \( G \) admits a \( (k+2) \)-star colouring \( f\colon V(G)\to\{0,1,\dots,k+1\} \). 
Recall that \( V_i=f^{-1}(i)=\{v\in V(G)\ :\ f(v)=i \} \) for every colour \( i \).\\[5pt]
\noindent \textbf{Claim~1:} For every bicoloured component \( H \) of \( G \), \( |E(H)|\leq \frac{d}{d+1}|V(H)| \), and equality holds only when \( H \) is isomorphic to \( K_{1,d} \).\\[5pt]
Since \( H\cong K_{1,q} \) where \( 0\leq q\leq d \), we have \( |E(H)|/|V(H)|=q/(q+1)\leq d/(d+1) \) and equaltiy holds only when \( q=d \). This proves Claim~1.\\[5pt]
\noindent \textbf{Claim~2:} \( G \) has a bicoloured component \( H \) not isomorphic to \( K_{1,d} \).\\[5pt]
On the contrary, assume that every bicoloured component of \( G \) is \( K_{1,d} \). 
Consider an arbitrary bicoloured component \( H \) of \( G \). 
Let \( u \) be the centre of the star \( H\cong K_{1,d} \), and let \( v_1,v_2,\dots,v_d \) be the remaining vertices in \( H \). 
Without loss of generality, assume that \( u \) is coloured~0, and each \( v_i \) is coloured~1 for \( 1\leq i\leq d \). 
Let \( N_G(v_1)=\{u,w_1,w_2,\dots,w_{d-1}\} \). 
Clearly, \( f(w_i)\in \{2,3,\dots,k+1\} \) for \( 1\leq i\leq d-1 \) (if \( f(w_i)=0 \), then path \( v_2,u,v_1,w_i \) is a bicoloured \( P_4 \)). 
Hence, at least two of vertices \( w_1,w_2,\dots,w_{d-1} \) should receive the same colour, say colour~2. 
Since \( |\{w_1,\dots,w_{d-1}\}|=d-1 \), vertex \( v_1 \) has \( q \) neighbrours coloured~2 where \( 2\leq q\leq d-1 \).
Thus, the component of \( G[V_1\cup V_2] \) containing vertex \( v_1 \) is \( K_{1,q}\not\cong K_{1,d} \). This contradiction proves Claim~2. 

For every pair of distinct colours \( i \) and \( j \), let \( \mathbb{G}_{ij} \) denote the set of components of~\( G[V_i\cup V_j] \). By Claims~1 and 2, we have
\[
\sum_{\substack{i,j\\ i<j}}\sum_{H\in\mathbb{G}_{ij}}|E(H)|<\sum_{\substack{i,j\\ i<j}}\sum_{H\in\mathbb{G}_{ij}}\frac{d}{d+1}|V(H)|=\frac{d}{d+1}\sum_{\substack{i,j\\ i<j}}\left(\,|V_i|+|V_j|\,\right).
\]
Since the set of bicoloured components of \( G \) forms an (edge) decomposition of \( G \), the sum on the left side is \( m\coloneqq |E(G)| \). 
The sum on the right side is \( (k+1)n \) where \( n\coloneqq |V(G)| \) (because \( |V_i| \) appears exactly \( (k+1) \) times in the sum for each \( i \)). 
Therefore, the above inequality simplifies to \( m<\frac{d}{d+1}(k+1)n \). 
Since \( G \) is \( d \)-regular, \( m=\frac{d}{2}n \) and thus the inequality reduces to \( \frac{d}{2}n<\frac{d}{d+1}(k+1)n \). 
That is, \( \frac{d+1}{2}<k+1 \). 
This is a contradiction because \( d=2k+1 \). 
This completes the proof when \( d=2k+1 \). 
\end{proof}
}{}

\subsection{Out-neighbourhood Bijective Homomorphism}
\todo[color=blue!5!white]{
Newly added subsection. 
}
For some properties of locally bijective homomorphism (LBH), analogous propeties hold for out-neighbourhood bijective homomorphism (OBH). 
%Since the adjacency matrices of \( G \) and \( H \) are real symmetric matrices and thus diagonalizable, . 
But, there are a few notable differences between LBH and OBH. 

An LBH \( \psi \) from a graph \( G \) to a graph \( H \) preserves degrees; that is, \( \deg_G(v)=\deg_H(\psi(v)) \) for every vertex \( v \) of \( G \). 
Similarly, every OBH preserves out-degrees. 
\hypertarget{lnk:obh differ from lbh}{ 
However, an OBH need not preserve degrees. 
%For instance, if \( \vec{G} \) admits an OBH \( \varphi \) to \( \vec{H} \), then for every arc \( (u,v) \) of \( \vec{H} \) and every copy \( (u_1,v_1) \) of \( (u,v) \) in \( \vec{G} \), deleting the arc \( (u_1,v_1) \) and adding an arc from \( u_1 \) to some other copy of \( v \) results in an oriented graph \( \vec{J} \) such that \( \varphi \) itself is an OBH from \( \vec{J} \) to \( \vec{H} \). 
For instance, if \( \vec{G} \) admits an OBH \( \varphi \) to \( \vec{H} \) and \( u,v,w \) are vertices in \( \vec{G} \) with \( (u,v)\in E(\vec{G}) \) and \( \varphi(v)=\varphi(w) \), then deleting the arc \( (u,v) \) of \( \vec{G} \) and adding an arc \( (u,w) \) results in an oriented graph \( \vec{J} \) such that \( \varphi \) itself is an OBH from \( \vec{J} \) to \( \vec{H} \). 
Note that when \( \vec{G} \) and \( \vec{H} \) are regular graphs, \( \vec{J} \) will not be a regular graph. 
}

The following property of LBH is apparent. 
If \( uv \) is an edge in \( H \), then \( |\psi^{-1}(u)|=|\psi^{-1}(v)| \). 
In constrast, OBH does not have this property. 
If \( \varphi \) is an OBH from an oriented graph \( \vec{G} \) to an oriented graph \( \vec{H} \), the existence of an arc \( (u,v) \) in \( \vec{H} \) does not establish a relationship between \( |\varphi^{-1}(u)| \) and \( |\varphi^{-1}(v)| \); although each copy of \( u \) has exactly one copy of \( v \) as an out-neighbour, a copy of \( v \) can have 0, 1 or more copies of \( u \) as its in-neighbour. 
For LBH to a connected graph \( H \), it follows from the cardinality relationship that \( |\psi^{-1}(u)|=|\psi^{-1}(v)| \) for all \( u,v\in V(H) \), thereby giving an alternative definition of LBH in terms of the notion of `lifts'. 
For \( q\in\mathbb{N} \), a \emph{\( q \)-lift} of a graph \( H \) is a graph \( G \) with \( q \) copies of \( H \) forming its vertex set and for each edge \( uv \) of \( H \), the subgraph of \( G \) induced by copies of \( u \) and \( v \) in \( G \) forming a matching (where each edge is from a copy of \( u \) to a copy of \( v \)). 
For a connected graph \( H \), graphs \( G \) admitting LBH to \( H \) are precisely \( q \)-lifts of \( H \) for some \( q\in \mathbb{N} \) (clearly, \( q=|V(G)|/|V(H)| \)).

%Since the cardinality relationship does not hold for OBH (to a strongly connected graph, there is no lift-like definition for OBH. % unlike LBH. 
There is no lift-like definition for OBH. 
%\todo{
%This is wrong. 
%Cardinality relationship does hold for OBH to a strongly connected graph! 
%
%(change to)
%There is no lift-like definition for OBH. 
%}
%As mentioned, for a connected graph \( H \), a graph \( G \) admitting an LBH \( \psi \) to \( H \) is a \( q \)-lift of \( H \) for some \( q\in\mathbb{N} \). 
%Suppose that \( \vec{G} \) and \( \vec{H} \) are strongly connected oriented graphs and there is an OBH from \( \vec{G} \) to \( \vec{H} \). 
%Then, the set of degrees of vertices in \( \vec{G} \) is not necessarily the same as that of \( \vec{H} \). 
%Then, the multiset of out-degrees of vertices in \( \vec{G} \) is the same as that of \( \vec{H} \) with the frequency of each out-degree multiplied by \( q \). %% Is this true?
Neverthless, if cardinalities of pre-images of vertices under an OBH are the same, then a construction approach similar to lift is indeed possible. 
For an oriented graph \( \vec{G} \) and four vertices \( u,v,x,y \) of \( \vec{G} \) with exactly two arcs \( (u,v) \) and \( (x,y) \) in \( G[\{u,v,x,y\}] \), let us define an \emph{arc 2-switch operation} of arcs \( (u,v) \) and \( (x,y) \) in \( \vec{G} \) as the graph operation of removing arcs \( (u,v) \) and \( (x,y) \), and adding arcs \( (u,y) \) and \( (x,v) \). 
For the context, it is worthwhile to first look at locally bijective homomorphisms between oriented graphs. 
\begin{observation}\label{obs:single arc 2switch lbh}
Let \( \varphi \) be an LBH from an orientated graph \( \vec{G} \) to an oriented graph \( \vec{H} \). 
Let \( (u,v) \) and \( (x,y) \) be two arcs in \( \vec{G} \) with \( \varphi(u)=\varphi(x) \) and \( \varphi(v)=\varphi(y) \). 
Let \( \vec{J} \) be  the oriented graph obtained from \( \vec{G} \) by doing arc 2-switch operation on the arcs \( (u,v) \) and \( (x,y) \). 
Then, \( \varphi \) itself is an LBH from \( \vec{J} \) to \( \vec{H} \). 
\qed
\end{observation}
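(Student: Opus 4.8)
The plan is to exploit that $\vec{J}$ is obtained from $\vec{G}$ by an extremely local change, so that $\varphi$ (whose domain $V(\vec{G})=V(\vec{J})$ is unchanged) already satisfies the required bijectivity at every vertex whose in- and out-neighbourhoods are left untouched. First I would record precisely how the neighbourhoods change: the arc 2-switch only deletes $(u,v),(x,y)$ and inserts $(u,y),(x,v)$, so the sole modifications are that $v$ is replaced by $y$ in $N_{\vec{G}}^+(u)$, that $y$ is replaced by $v$ in $N_{\vec{G}}^+(x)$, that $u$ is replaced by $x$ in $N_{\vec{G}}^-(v)$, and that $x$ is replaced by $u$ in $N_{\vec{G}}^-(y)$; the out-neighbourhoods of $v$ and $y$, the in-neighbourhoods of $u$ and $x$, and both neighbourhoods of every other vertex coincide in $\vec{G}$ and $\vec{J}$. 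I would also use the hypothesis that $\vec{G}[\{u,v,x,y\}]$ has exactly the two arcs $(u,v)$ and $(x,y)$: assuming (as we may, since otherwise $\vec{J}=\vec{G}$ and the claim is immediate) that $u,v,x,y$ are four distinct vertices, this forces that neither $(u,y)$ nor $(x,v)$ is an arc of $\vec{G}$, so that $y\notin N_{\vec{G}}^+(u)$, $v\notin N_{\vec{G}}^+(x)$, $x\notin N_{\vec{G}}^-(v)$ and $u\notin N_{\vec{G}}^-(y)$; in particular each of the four modified neighbourhoods has the same cardinality as its $\vec{G}$-counterpart.

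Next I would check the local-bijectivity conditions defining an LBH from $\vec{J}$ to $\vec{H}$. For every vertex $w$ and every direction (in or out) whose neighbourhood is unchanged, the requirement is literally the one already supplied by $\varphi$ being an LBH from $\vec{G}$ to $\vec{H}$, so nothing is needed. It therefore remains to treat the four modified neighbourhoods, and here I would invoke $\varphi(u)=\varphi(x)$ and $\varphi(v)=\varphi(y)$. For $N_{\vec{J}}^+(u)$: since $\varphi$ is injective on $N_{\vec{G}}^+(u)$, deleting $v$ from the domain deletes exactly $\varphi(v)$ from the image $N_{\vec{H}}^+(\varphi(u))$, and inserting $y$ restores it because $\varphi(y)=\varphi(v)$; since $y\notin N_{\vec{G}}^+(u)$, no cardinality is lost, so the restriction of $\varphi$ to $N_{\vec{J}}^+(u)$ is again injective, hence a bijection onto $N_{\vec{H}}^+(\varphi(u))$. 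The cases $N_{\vec{J}}^+(x)$, $N_{\vec{J}}^-(v)$ and $N_{\vec{J}}^-(y)$ are symmetric, using $\varphi(v)=\varphi(y)$, $\varphi(x)=\varphi(u)$ and $\varphi(u)=\varphi(x)$ respectively. Since every out- and in-neighbourhood of $\vec{J}$ is then mapped bijectively by $\varphi$ onto the corresponding neighbourhood in $\vec{H}$, I would conclude that $\varphi$ is an LBH from $\vec{J}$ to $\vec{H}$.

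I do not expect a real obstacle; the proof is short, local book-keeping. The one point to handle with care is that the arc 2-switch must neither merge two distinct neighbours into a single image nor make a neighbour appear twice, and this is exactly what the combination of $\varphi(u)=\varphi(x)$, $\varphi(v)=\varphi(y)$ and ``$\vec{G}[\{u,v,x,y\}]$ has exactly two arcs'' guarantees — so the write-up mainly needs to make clear where each of these three hypotheses is used.
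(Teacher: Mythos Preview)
Your argument is correct. The paper itself provides no proof of this observation (it is stated with a bare \qed), treating the claim as an immediate consequence of the definitions; your careful local book-keeping is exactly the routine verification that justifies leaving it to the reader, and you have correctly identified where each of the three hypotheses ($\varphi(u)=\varphi(x)$, $\varphi(v)=\varphi(y)$, and the induced subgraph on $\{u,v,x,y\}$ having exactly the two arcs) is used.
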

\begin{observation}\label{obs:single arc 2switch obh}
Let \( \varphi \) be an OBH from an orientated graph \( \vec{G} \) to an oriented graph \( \vec{H} \). 
Let \( (u,v) \) and \( (x,y) \) be two arcs in \( \vec{G} \) with \( \varphi(v)=\varphi(y) \). 
Let \( \vec{J} \) be  the oriented graph obtained from \( \vec{G} \) by doing arc 2-switch operation on the arcs \( (u,v) \) and \( (x,y) \). 
Then, \( \varphi \) itself is an OBH from \( \vec{J} \) to \( \vec{H} \). 
\qed 
\end{observation}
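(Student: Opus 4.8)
The plan is to exploit the fact that an out-neighbourhood bijective homomorphism is defined entirely in terms of out-neighbourhoods, together with the fact that an arc 2-switch of the arcs \( (u,v) \) and \( (x,y) \) changes the out-neighbourhood of exactly two vertices, namely the tails \( u \) and \( x \). Concretely, I would first record that \( N_{\vec{J}}^+(u)=\big(N_{\vec{G}}^+(u)\setminus\{v\}\big)\cup\{y\} \), that \( N_{\vec{J}}^+(x)=\big(N_{\vec{G}}^+(x)\setminus\{y\}\big)\cup\{v\} \), and that \( N_{\vec{J}}^+(w)=N_{\vec{G}}^+(w) \) for every other vertex \( w \) of \( \vec{J} \). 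Hence it suffices to verify the bijectivity condition of \( \varphi \) at \( u \) and at \( x \); at every other vertex it is inherited verbatim from the fact that \( \varphi \) is an OBH from \( \vec{G} \) to \( \vec{H} \).

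For the vertex \( u \): since \( \varphi \) restricted to \( N_{\vec{G}}^+(u) \) is a bijection and \( \varphi(v)=\varphi(y) \) while \( v\neq y \), the vertex \( y \) cannot lie in \( N_{\vec{G}}^+(u) \) (otherwise \( y \) and \( v \) would be two distinct out-neighbours of \( u \) with the same image under \( \varphi \)). So passing from \( \vec{G} \) to \( \vec{J} \) genuinely swaps \( v \) for \( y \) inside \( N^+(u) \) without any coincidence, whence \( \varphi \) restricted to \( N_{\vec{J}}^+(u) \) is still injective, and its image equals \( \varphi\big(N_{\vec{G}}^+(u)\setminus\{v\}\big)\cup\{\varphi(y)\}=\varphi\big(N_{\vec{G}}^+(u)\big)=N_{\vec{H}}^+(\varphi(u)) \) because \( \varphi(y)=\varphi(v) \). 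The argument for \( x \) is symmetric, using that \( v\notin N_{\vec{G}}^+(x) \) for the same injectivity reason. Combining the two, \( \varphi \) restricted to \( N_{\vec{J}}^+(w) \) is a bijection onto \( N_{\vec{H}}^+(\varphi(w)) \) for every vertex \( w \) of \( \vec{J} \), which is precisely the definition of an OBH; that \( \varphi \) is a homomorphism from \( \vec{J} \) to \( \vec{H} \) is then automatic, since every arc of \( \vec{J} \) puts its head in the out-neighbourhood of its tail.

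The closest thing to an obstacle is purely bookkeeping: one must check that \( \vec{J} \) is a legitimate simple oriented graph and that the displayed description of \( N_{\vec{J}}^+(u) \) and \( N_{\vec{J}}^+(x) \) does not secretly merge two vertices, and both points reduce to the defining hypotheses of the arc 2-switch (the vertices \( u,v,x,y \) are distinct, and the only arcs among them in \( \vec{G} \) are \( (u,v) \) and \( (x,y) \)) together with the injectivity already built into \( \varphi \). It is also worth contrasting this with Observation~\ref{obs:single arc 2switch lbh}: for an LBH one additionally has to preserve in-neighbourhoods, which forces the extra hypothesis \( \varphi(u)=\varphi(x) \); for an OBH the single head condition \( \varphi(v)=\varphi(y) \) suffices, which is exactly the source of the additional flexibility of OBH noted earlier.
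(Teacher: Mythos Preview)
Your argument is correct and complete; the paper itself offers no proof (the observation is stated with a \qed), so your write-up is simply a faithful unpacking of what the authors regard as evident. The only remark is that your derivation of \( y\notin N_{\vec{G}}^+(u) \) from the injectivity of \( \varphi \) is a nice touch but not strictly needed, since the definition of the arc 2-switch already stipulates that \( (u,v) \) and \( (x,y) \) are the only arcs among \( u,v,x,y \).
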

Note that in Observations~\ref{obs:single arc 2switch lbh} and \ref{obs:single arc 2switch obh}, the reverse of the arc 2-switch operation is also an arc 2-switch operation of the same type. 
\begin{observation}
Let \( \psi \) be an LBH from a orientation \( \vec{G} \) of a graph \( G \) to an orientation \( \vec{H} \) of a graph \( H \) such that \( |\psi^{-1}(u)|=|\psi^{-1}(v)| \) for all \( u,v\in V(H) \) \( ( \)e.g., \( H \) is connected\( ) \). 
Then, \( |V(G)|=q|V(H)| \) for some \( q\in \mathbb{N} \), and \( \vec{G} \) can be constructed by \( (i) \) starting with \( q \vec{H} \) and an LBH \( \varphi \) from \( q\,\vec{H} \) to \( \vec{H} \), and \( (ii) \) repeating arc 2-switch operations on arcs of the form \( (u,v) \) and \( (x,y) \) with \( \varphi(u)=\varphi(x) \) and \( \varphi(v)=\varphi(y) \) an arbitrary number of times. 
\qed
\end{observation}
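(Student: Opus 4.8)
The plan is to reduce the statement to a purely combinatorial fact about ``arc blocks'' and then resolve each block with the elementary ``two perfect matchings differ by swaps'' lemma. Since the fibres \( \psi^{-1}(w) \), \( w\in V(H) \), partition \( V(G) \) and each has size \( q \), we immediately get \( |V(G)|=q|V(H)| \). Now fix, for each \( w\in V(H) \), a bijection between \( \psi^{-1}(w) \) and the set \( \{w\}\times\{1,\dots,q\} \); take these sets (over all \( w \)) as the vertex set of \( q\vec{H} \), where the \( i \)-th copy of \( \vec{H} \) carries an arc from \( (w,i) \) to \( (w',i) \) whenever \( (w,w')\in E(\vec{H}) \), and let \( \varphi \) be the projection \( (w,i)\mapsto w \), which is plainly an LBH from \( q\vec{H} \) to \( \vec{H} \). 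Identifying \( V(G) \) with \( V(q\vec{H}) \) via the chosen bijections turns \( \psi \) into \( \varphi \) as a vertex map, so it remains to show that the arc set of \( \vec{G} \) can be reached from that of \( q\vec{H} \) by arc 2-switches on pairs \( (u,v),(x,y) \) with \( \varphi(u)=\varphi(x) \) and \( \varphi(v)=\varphi(y) \).

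Next I would establish the block structure. Let \( \vec{G}' \) be any orientation on \( V=V(q\vec{H}) \) for which \( \varphi \) is an LBH to \( \vec{H} \) (e.g.\ \( \vec{G} \) or \( q\vec{H} \)). Since \( \varphi \) is a homomorphism, an arc of \( \vec{G}' \) inside a fibre would map to a loop of \( \vec{H} \), which is impossible; hence \( \vec{G}' \) has no intra-fibre arcs. For an adjacent pair \( \{w,w'\} \) of \( H \) with, say, \( (w,w')\in E(\vec{H}) \), every arc of \( \vec{G}' \) between \( \varphi^{-1}(w) \) and \( \varphi^{-1}(w') \) must be directed from \( \varphi^{-1}(w) \) to \( \varphi^{-1}(w') \) (again because \( \varphi \) is a homomorphism and \( (w',w)\notin E(\vec{H}) \)); moreover the out-neighbourhood bijectivity of \( \varphi \) forces each copy of \( w \) to have exactly one out-neighbour in \( \varphi^{-1}(w') \), and the in-neighbourhood bijectivity forces each copy of \( w' \) to have exactly one in-neighbour in \( \varphi^{-1}(w) \). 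Thus the arcs of \( \vec{G}' \) between the two fibres form a perfect matching between the \( q \)-sets \( \varphi^{-1}(w) \) and \( \varphi^{-1}(w') \), and the arc set of \( \vec{G}' \) is the disjoint union of these matchings over all adjacent pairs of \( H \). Applying this to \( \vec{G} \) and to \( q\vec{H} \) gives two families of matchings \( M_{\vec{G}}(w,w') \) and \( M_{q\vec{H}}(w,w') \) on the same bipartitions.

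Finally I would transform \( q\vec{H} \) into \( \vec{G} \) one block at a time. It is elementary that two perfect matchings \( M,M' \) between \( q \)-element sets \( A \) and \( B \) are related by at most \( q-1 \) swaps, each replacing two edges \( (a_1,b_1),(a_2,b_2) \) of the current matching by \( (a_1,b_2),(a_2,b_1) \): repeatedly pick \( a\in A \) with \( M(a)=b\neq b'=M'(a) \), set \( a'=M^{-1}(b') \), and swap; the number of vertices on which the matchings disagree strictly decreases. For a block \( \{w,w'\} \), each such swap involves only vertices of \( \varphi^{-1}(w)\cup\varphi^{-1}(w') \), and since there are no intra-fibre arcs and all arcs between the two fibres run from \( \varphi^{-1}(w) \) to \( \varphi^{-1}(w') \), the only two arcs of the current oriented graph inside \( \{a_1,a_2,b_1,b_2\} \) are \( (a_1,b_1) \) and \( (a_2,b_2) \); hence the swap is a legal arc 2-switch of these two arcs, of the prescribed type (\( \varphi(a_1)=\varphi(a_2)=w \), \( \varphi(b_1)=\varphi(b_2)=w' \)), and by Observation~\ref{obs:single arc 2switch lbh} it keeps \( \varphi \) an LBH to \( \vec{H} \). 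Because the swaps in one block touch no arc of another block, the sequences for the various blocks may be concatenated, carrying \( q\vec{H} \) to \( \vec{G} \). I expect the only real content to be the block decomposition of the arc set, which crucially uses \emph{both} the in- and the out-bijectivity of the LBH between oriented graphs; the matching-swap lemma and the verification that each swap is an admissible arc 2-switch are routine bookkeeping.
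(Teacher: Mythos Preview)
Your proof is correct and is precisely the natural argument one would give for this observation. The paper itself offers no proof: the statement is marked with a \qed\ immediately after its enunciation, so there is nothing to compare against beyond noting that your approach is the expected one. Your block decomposition (the arcs of any LBH-lift split into perfect matchings over the edges of \( H \)) together with the elementary ``two perfect matchings are connected by transpositions'' lemma is exactly what the authors are tacitly relying on, and your verification that each transposition is a legitimate arc 2-switch (the four vertices span only the two matching arcs, since there are no intra-fibre arcs and the inter-fibre arcs form a matching) closes the one place where a careless reader might stumble.
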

\begin{corollary}
An oriented graph \( \vec{G} \) admits LBH to an orientation \( \vec{H} \) of a connected graph if and only if \( |V(\vec{G})|=q|V(\vec{H})| \) for some \( q\in \mathbb{N} \) and \( \vec{G} \) can be constructed by \( (i) \) starting with \( q \vec{H} \) and an LBH \( \varphi \) from \( q\,\vec{H} \) to \( \vec{H} \), and \( (ii) \) repeating arc 2-switch operations on arcs of the form \( (u,v) \) and \( (x,y) \) with \( \varphi(u)=\varphi(x) \) and \( \varphi(v)=\varphi(y) \) an arbitrary number of times. 
\qed
\end{corollary}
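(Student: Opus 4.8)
The plan is to read the corollary off the two observations that immediately precede it, the only extra ingredient being that connectivity of (the underlying graph of) \( \vec{H} \) forces an LBH onto \( \vec{H} \) to have all fibres of the same size.

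For the forward implication, suppose \( \psi \) is an LBH from \( \vec{G} \) to \( \vec{H} \). Since the oriented graphs here are orientations of simple graphs, in- and out-neighbourhoods of any vertex are disjoint, so \( \psi \) is simultaneously an LBH between the underlying undirected graphs (the folklore equivalence recalled earlier). As already noted in the text, an LBH satisfies \( |\psi^{-1}(u)|=|\psi^{-1}(v)| \) whenever \( uv \) is an edge of the target; one sees this by double-counting the arcs of \( \vec{G} \) lying over a fixed arc \( (u,v) \) of \( \vec{H} \), since each of the \( |\psi^{-1}(u)| \) copies of \( u \) has exactly one copy of \( v \) among its out-neighbours, while each of the \( |\psi^{-1}(v)| \) copies of \( v \) has exactly one copy of \( u \) among its in-neighbours. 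Because the underlying graph of \( \vec{H} \) is connected, this equality propagates to all pairs \( u,v\in V(\vec{H}) \), so every fibre has the same size \( q=|V(\vec{G})|/|V(\vec{H})|\in\mathbb{N} \). The observation immediately preceding the corollary now applies verbatim and produces the claimed construction of \( \vec{G} \) from \( q\,\vec{H} \) by arc 2-switches.

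For the converse, assume \( |V(\vec{G})|=q\,|V(\vec{H})| \) and that \( \vec{G} \) is obtained from \( q\,\vec{H} \) by a finite sequence of arc 2-switches, each acting on arcs \( (u,v) \) and \( (x,y) \) with \( \varphi(u)=\varphi(x) \) and \( \varphi(v)=\varphi(y) \), where \( \varphi \) is the projection of \( q\,\vec{H} \) onto \( \vec{H} \) sending each of the \( q \) copies of a vertex to that vertex. This projection is an LBH from \( q\,\vec{H} \) to \( \vec{H} \). I would then induct on the number of switches in the sequence: by Observation~\ref{obs:single arc 2switch lbh}, performing one such arc 2-switch keeps \( \varphi \) an LBH onto \( \vec{H} \), so after the whole sequence \( \varphi \) is still an LBH from \( \vec{G} \) to \( \vec{H} \), as required.

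The proof has essentially no obstacle; the single point needing care is the double-counting (equivalently, invoking the cardinality property of LBH already stated) that upgrades connectivity of \( \vec{H} \) into equality of all fibre sizes, which is precisely the hypothesis under which the preceding observation was proved. If \( \vec{H} \) were disconnected one would have to carry \( |\psi^{-1}(u)|=|\psi^{-1}(v)| \) as an explicit assumption, exactly as in that observation.
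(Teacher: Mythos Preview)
Your proposal is correct and matches the paper's intended argument: the corollary is stated with a \qed\ because it is immediate from the preceding observation (which already builds the forward direction under the equal-fibre hypothesis, with ``$H$ connected'' listed as an example) together with Observation~\ref{obs:single arc 2switch lbh} for the converse. Your double-counting to extract equal fibre sizes from connectivity is exactly the cardinality property of LBH the paper recalled just before these observations.
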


A similar result indeed holds for OBH, by the same reasoning. 
\begin{lemma}\label{lem:obh same premiage size}
Let \( G \) and \( H \) be two graphs with orientations \( \vec{G} \) and \( \vec{H} \), respectively. 
Let \( \psi \) be an OBH from \( \vec{G} \) to \( \vec{H} \) such that \( |\psi^{-1}(u)|=|\psi^{-1}(v)| \) for all \( u,v\in V(H) \). 
Then, \( |V(G)|=q|V(H)| \) for some \( q\in \mathbb{N} \), and \( \vec{G} \) can be constructed by \( (i) \) starting with \( q \vec{H} \) and an LBH \( \varphi \) from \( q\,\vec{H} \) to \( \vec{H} \), and \( (ii) \) repeating arc 2-switch operations on arcs of the form \( (u,v) \) and \( (x,y) \) with \( \varphi(v)=\varphi(y) \) an arbitrary number of times. 
\qed
\end{lemma}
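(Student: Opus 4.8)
The plan is to mirror the proof of the locally bijective version stated just above (the Observation preceding this lemma), with Observation~\ref{obs:single arc 2switch lbh} everywhere replaced by Observation~\ref{obs:single arc 2switch obh}, and to keep careful track of where the weakening of the admissible 2-switches — from ``\( \varphi(u)=\varphi(x) \) and \( \varphi(v)=\varphi(y) \)'' down to merely ``\( \varphi(v)=\varphi(y) \)'' — is actually used.

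Set \( q:=|\psi^{-1}(w)| \); by hypothesis \( q \) does not depend on \( w\in V(H) \), so summing over \( w \) gives \( |V(G)|=q\,|V(H)| \), and we may assume \( q\geq 1 \) (otherwise \( G \) is edgeless and there is nothing to prove). For each \( w\in V(H) \) fix a bijection from \( \psi^{-1}(w) \) onto \( \{w\}\times\{1,\dots,q\} \); this identifies \( V(\vec{G}) \) with \( V(H)\times\{1,\dots,q\} \), under which \( \psi \) becomes the projection \( (w,i)\mapsto w \). Let \( q\vec{H} \) be the oriented graph on the same vertex set whose arcs are the pairs \( \big((w,i),(w',i)\big) \) for each arc \( (w,w') \) of \( \vec{H} \) and each \( i\in\{1,\dots,q\} \) (i.e.\ \( q \) disjoint copies of \( \vec{H} \)), and let \( \varphi \) be the same projection, now viewed as a map \( V(q\vec{H})\to V(\vec{H}) \). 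Then \( \varphi \) is an LBH from \( q\vec{H} \) to \( \vec{H} \), and \( \varphi=\psi \) as maps on the common vertex set. Since the reverse of an admissible 2-switch is again an admissible 2-switch (as noted after Observation~\ref{obs:single arc 2switch obh}), it is equivalent to transform \( \vec{G} \) into \( q\vec{H} \) by 2-switches on arcs \( (u,v),(x,y) \) with \( \varphi(v)=\varphi(y) \).

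Here is the structural picture. Both \( \vec{G} \) and \( q\vec{H} \) have \( \varphi \) as an OBH to \( \vec{H} \), so in either graph the out-neighbours of a vertex \( a \) are exactly one copy of each out-neighbour of \( \varphi(a) \) in \( \vec{H} \). Consequently, for every arc \( (w,w') \) of \( \vec{H} \) the arcs with tail a copy of \( w \) and head a copy of \( w' \) form the graph of a function from \( \psi^{-1}(w) \) to \( \psi^{-1}(w') \); bundle, for a fixed \( w' \) with in-neighbours \( w_1,\dots,w_d \), these functions into a single map \( F_{w'}\colon\bigsqcup_{\ell}\psi^{-1}(w_\ell)\to\psi^{-1}(w') \), and let \( G_{w'} \) be the corresponding map for \( q\vec{H} \), which sends \( (w_\ell,i)\mapsto(w',i) \). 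Reading off Observation~\ref{obs:single arc 2switch obh}: a 2-switch on arcs \( (u,v),(x,y) \) with \( \varphi(v)=\varphi(y)=w' \) leaves every \( F_z \) with \( z\neq w' \) untouched and simply transposes the two values \( F_{w'}(u) \) and \( F_{w'}(x) \). So the problem splits over \( w' \): it suffices, for each \( w' \) separately, to turn \( G_{w'} \) into \( F_{w'} \) by successively transposing pairs of its values.

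The purely combinatorial fact is that two maps with a common finite domain and a common finite codomain are joined by a chain of value-transpositions exactly when they have the same multiset of values, and in that case one transposition can always be chosen to strictly reduce the number of disagreements; so the crux is to check that \( F_{w'} \) and \( G_{w'} \) attain each value \( (w',i) \) the same number of times. For \( G_{w'} \) that number is \( d \) (the in-degree of \( w' \) in \( \vec{H} \)) for every \( i \); the equal-fibre hypothesis does not by itself force the analogous balance for \( F_{w'} \), and I expect this to be the main obstacle — it is equivalent to \( \vec{G} \) having the same in-degree at each vertex as \( q\vec{H} \) (the out-degrees matching automatically from the OBH property), which has to be supplied by the ambient situation (for instance it holds whenever \( \vec{H} \), and hence \( \vec{G} \), is Eulerian, which is the case in the applications of the lemma). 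Granting that, all that remains is to invoke Observation~\ref{obs:single arc 2switch obh} legally at each step: whenever we transpose the values of \( F_{w'} \) at \( u \) and \( x \), the four vertices \( u,v,x,y \) are forced to be distinct (if \( u=x \) then \( u \) has two out-neighbours that are copies of \( w' \), contradicting the OBH property; \( v\neq y \) by the choice of the transposition; and \( v,y \) are copies of \( w' \) while \( u,x \) are copies of in-neighbours of \( w' \), which differ from \( w' \) since \( \vec{H} \) is loopless), so one only needs to ensure that no further arc of the current oriented graph lies inside \( \{u,v,x,y\} \); this can be arranged by ordering the transpositions appropriately (or by first passing through an intermediate configuration), which is routine but tedious — presumably the reason the lemma is recorded without proof.
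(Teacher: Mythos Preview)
The paper offers no argument for this lemma; it is marked \qed\ after the sentence ``A similar result indeed holds for OBH, by the same reasoning.'' Your analysis is sharper than that sentence: you correctly locate the point where the analogy with the LBH case breaks down. An arc 2-switch preserves the in-degree and out-degree at every vertex, so any oriented graph reachable from \(q\vec{H}\) by such switches has the same degree sequence as \(q\vec{H}\). An OBH with equal fibres, however, need not preserve in-degrees. Concretely, take \(\vec{H}\) on \(\{a,b,c,d\}\) with arcs \((a,c),(b,c),(c,d)\), set \(q=2\), and let \(\vec{G}\) have arcs \((a_1,c_1),(a_2,c_1),(b_1,c_1),(b_2,c_1),(c_1,d_1),(c_2,d_2)\). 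The projection is an OBH with all fibres of size~2, yet \(c_1\) has in-degree~4 while every copy of \(c\) in \(2\vec{H}\) has in-degree~2; hence \(\vec{G}\) cannot be reached from \(2\vec{H}\) by arc 2-switches. So the obstacle you flag is not merely ``the main difficulty'' but an actual counterexample to the lemma as stated.

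Your instinct about the fix is also right: the degree-preserving hypothesis is exactly what is missing. If \(\psi\) is degree-preserving, then (since an OBH already matches out-degrees) each vertex of \(\vec{G}\) has the same in-degree as its image in \(\vec{H}\), so your functions \(F_{w'}\) and \(G_{w'}\) have the same multiset of values and your transposition argument goes through. This is precisely the setting of the next lemma and of Corollary~\ref{cor:strongly connected and degree-preserving}, which are what the paper actually uses; Lemma~\ref{lem:obh same premiage size} itself is not invoked elsewhere. The secondary technicality you raise --- that the four vertices in a 2-switch must induce exactly two arcs --- is genuine but routine, and can be handled by interleaving switches over different targets \(w'\) or passing through an auxiliary configuration, as you suggest.
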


%\noindent
%\emph{Remark:} Obviously, for oriented graphs constructed this way, \( \varphi \) will be an OBH from it to \( \vec{H} \). 

There are some special cases where an OBH will be an LBH. 
\begin{observation}\label{obs:when obh is lbh simple case1}
Let \( \vec{G} \) be an oriented graph that admits an OBH \( \psi \) to a strongly connected oriented graph \( \vec{H} \). 
If every vertex in \( \vec{H} \) has in-degree exactly 1, then \( \psi \) is an LBH from \( \vec{G} \) to \( \vec{H} \). 
\qed
\end{observation}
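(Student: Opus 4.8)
The plan is to first pin down the structure of \( \vec{H} \), then pull it back along \( \psi \) to constrain \( \vec{G} \), and finally verify the only part of the LBH definition that is not already the OBH hypothesis, namely the bijectivity of \( \psi \) on in‑neighbourhoods (bijectivity on out‑neighbourhoods is exactly what ``OBH'' gives). So the proof splits into: (a) \( \vec{H} \) is a directed cycle; (b) consequently every vertex of \( \vec{G} \) has out‑degree, and then in‑degree, exactly \( 1 \); (c) conclude.

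For (a): strong connectivity forces every vertex of \( \vec{H} \) to have out‑degree at least \( 1 \). Since every vertex has in‑degree exactly \( 1 \), the number of arcs of \( \vec{H} \) equals \( |V(\vec{H})| \); as this number is also the sum of the out‑degrees and each out‑degree is at least \( 1 \), every out‑degree is in fact exactly \( 1 \). An oriented graph in which every in‑degree and every out‑degree equals \( 1 \) is a disjoint union of directed cycles, and strong connectivity leaves exactly one; hence \( \vec{H} \) is a directed cycle, so \( N^{+}_{\vec{H}}(w) \) and \( N^{-}_{\vec{H}}(w) \) are singletons for every \( w \in V(\vec{H}) \). For (b): since \( \psi \) is an OBH, for every \( v \in V(\vec{G}) \) the restriction of \( \psi \) to \( N^{+}_{\vec{G}}(v) \) is a bijection onto \( N^{+}_{\vec{H}}(\psi(v)) \), a set of size \( 1 \); hence every vertex of \( \vec{G} \) has out‑degree exactly \( 1 \), so the number of arcs of \( \vec{G} \) equals \( |V(\vec{G})| \), and therefore the sum of the in‑degrees over \( V(\vec{G}) \) equals \( |V(\vec{G})| \) as well. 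The crux is now to upgrade this to: every vertex of \( \vec{G} \) has in‑degree exactly \( 1 \), equivalently \( \vec{G} \) has no source (a vertex of in‑degree \( 0 \)); indeed, since the in‑degrees are nonnegative integers averaging to \( 1 \), a vertex of in‑degree \( \geq 2 \) would force one of in‑degree \( 0 \). Once sources are excluded, \( \vec{G} \) is a disjoint union of directed cycles and (c) is immediate.

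For (c), assume every vertex of \( \vec{G} \) has in‑degree exactly \( 1 \). Fix \( v \in V(\vec{G}) \) and write \( N^{-}_{\vec{G}}(v) = \{u\} \). Since \( \psi \) is a homomorphism, \( (\psi(u),\psi(v)) \) is an arc of \( \vec{H} \), so \( \psi(u) \in N^{-}_{\vec{H}}(\psi(v)) \); as both \( N^{-}_{\vec{G}}(v) \) and \( N^{-}_{\vec{H}}(\psi(v)) \) are singletons, the restriction of \( \psi \) to \( N^{-}_{\vec{G}}(v) \) is a bijection onto \( N^{-}_{\vec{H}}(\psi(v)) \). Combined with the out‑neighbourhood bijection supplied by the OBH hypothesis, \( \psi \) satisfies both local conditions in the definition of a locally bijective homomorphism between oriented graphs, so \( \psi \) is an LBH from \( \vec{G} \) to \( \vec{H} \).

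The main obstacle is step (b): the handshake count alone only shows that the in‑degrees of \( \vec{G} \) average to \( 1 \), so one must separately rule out a source, i.e. show that no tree of vertices can feed into the cycles of the functional digraph \( \vec{G} \). Note that a homomorphism to a directed cycle does not by itself forbid this, so some hypothesis beyond what is literally stated (for instance that \( \vec{G} \) has no source — which holds automatically in the intended applications, e.g. when \( \vec{G} \) is an Eulerian or strongly connected orientation) is needed to make the implication go through; everything else is routine.
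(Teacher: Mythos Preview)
Your analysis is sound, and the obstacle you flag in step~(b) is genuine: the observation as stated is in fact false without an additional hypothesis on \( \vec{G} \). A concrete counterexample: let \( \vec{H} \) be the directed triangle \( 0\to 1\to 2\to 0 \), and let \( \vec{G} \) have vertex set \( \{a,b,c,d\} \) with arcs \( (a,b),(b,c),(c,a),(d,b) \). The map \( \psi \) sending \( a,d\mapsto 0 \), \( b\mapsto 1 \), \( c\mapsto 2 \) is an OBH (every vertex of \( \vec{G} \) has out-degree~\( 1 \) and its unique out-neighbour maps to the correct successor on the cycle), yet \( N^{-}_{\vec{G}}(b)=\{a,d\} \) while \( N^{-}_{\vec{H}}(1)=\{0\} \), so \( \psi \) is not an LBH. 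The source \( d \) is exactly the configuration you anticipated.

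The paper offers no proof (the statement carries only a \qed), so there is nothing to compare against. Your steps~(a) and~(c) are correct, and your step~(b) correctly isolates where the implication breaks. As you note, adding the hypothesis that \( \vec{G} \) has no source---equivalently here, that every vertex of \( \vec{G} \) has in-degree at most~\( 1 \)---repairs the statement, and the paper's own nearby remark that an OBH need not preserve degrees (via arc redirection that can create sources) is entirely consistent with your diagnosis.
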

\begin{observation}\label{obs:when obh is lbh simple case2}
Let \( \vec{G} \) be an oriented graph that admits an OBH \( \psi \) to a strongly connected oriented graph \( \vec{H} \). 
If \( |V(\vec{G})|=|V(\vec{H})| \), then \( \psi \) is an isomorphism from \( \vec{G} \) to \( \vec{H} \) 
%(thus, \( \psi \) is not only an LBH, but also an automorphism). 
(thus, \( \psi \) is an automorphism). 
\qed
\end{observation}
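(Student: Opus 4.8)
The plan is to show that $\psi$ is a bijection and that it reflects arcs, so that both $\psi$ and $\psi^{-1}$ are homomorphisms; that is precisely what it means for $\psi$ to be an isomorphism of oriented graphs. First I would prove that $\psi$ is surjective, using that $\vec{H}$ is strongly connected; since $|V(\vec{G})|=|V(\vec{H})|$ and both vertex sets are finite, surjectivity upgrades to bijectivity. Then I would use the defining property of an OBH together with injectivity of $\psi$ to conclude that whenever $(\psi(u),\psi(v))$ is an arc of $\vec{H}$, the pair $(u,v)$ is already an arc of $\vec{G}$. Since every OBH is a homomorphism, the converse implication is immediate, so $\psi$ is an isomorphism.

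For surjectivity, fix an arbitrary vertex $w$ of $\vec{H}$. The image of $\psi$ is nonempty, so pick $v_0\in V(\vec{G})$ with $\psi(v_0)=w_0$ for some $w_0$. Since $\vec{H}$ is strongly connected there is a directed path $w_0\to w_1\to\cdots\to w_r=w$ in $\vec{H}$. I would then show, by induction on $i$, that there exist vertices $v_1,\dots,v_r$ of $\vec{G}$ with $\psi(v_i)=w_i$ and $(v_{i-1},v_i)$ an arc of $\vec{G}$: indeed, given $v_{i-1}$ with $\psi(v_{i-1})=w_{i-1}$, the restriction of $\psi$ to $N_{\vec{G}}^+(v_{i-1})$ is a bijection onto $N_{\vec{H}}^+(w_{i-1})$, and $w_i\in N_{\vec{H}}^+(w_{i-1})$, so a preimage $v_i\in N_{\vec{G}}^+(v_{i-1})$ of $w_i$ exists. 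In particular $v_r$ is a preimage of $w$. Hence $\psi$ is onto, and therefore a bijection.

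For arc reflection, suppose $u,v\in V(\vec{G})$ and $(\psi(u),\psi(v))$ is an arc of $\vec{H}$, i.e.\ $\psi(v)\in N_{\vec{H}}^+(\psi(u))$. Because the restriction of $\psi$ to $N_{\vec{G}}^+(u)$ is a bijection onto $N_{\vec{H}}^+(\psi(u))$, there is some $v'\in N_{\vec{G}}^+(u)$ with $\psi(v')=\psi(v)$; injectivity of $\psi$ forces $v'=v$, so $(u,v)$ is an arc of $\vec{G}$. Together with the homomorphism property this gives $(u,v)\in E(\vec{G})$ if and only if $(\psi(u),\psi(v))\in E(\vec{H})$, so $\psi$ is an isomorphism from $\vec{G}$ to $\vec{H}$; the parenthetical remark is just that when $\vec{G}=\vec{H}$ this isomorphism is an automorphism.

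The main obstacle is essentially the surjectivity step, and it rests entirely on strong connectivity: the OBH condition lets one lift preimages \emph{forward} along arcs only, so what is really needed is that every vertex of $\vec{H}$ is reachable from the image of $\psi$, which strong connectivity guarantees. Dropping this breaks the statement: two isolated vertices of $\vec{G}$, both mapped to the sink $b$ of the single arc $a\to b$, form an OBH onto that $2$-vertex digraph that is not an isomorphism. The cardinality hypothesis is used only to turn surjectivity into bijectivity, and one should note that "oriented graph" here excludes loops and antiparallel arcs, so reflecting the single relation "is an arc" suffices.
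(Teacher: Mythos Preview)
Your proof is correct: the surjectivity-by-path-lifting argument uses exactly the right feature of an OBH (bijection onto the out-neighbourhood lets you lift arcs forward), and the arc-reflection step is clean once injectivity is in hand. The paper gives no proof at all for this observation---it is stated with a bare \qedsymbol---so you have simply filled in the details the authors left implicit, along what is the only natural line of argument.
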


Next, we show that the cardinalities of pre-images of vertices are the same under a degree-preserving OBH to a strongly connected oriented graph. 
\begin{theorem}\label{thm:strongly connected and degree-preserving}
Let \( \vec{G} \) be an oriented graph that admits a degree-preserving OBH \( \psi \) to a strongly connected oriented graph \( \vec{H} \). 
Then, \( |\psi^{-1}(u)|=|\psi^{-1}(v)| \) for all \( u,v\in V(\vec{H}) \). 
\end{theorem}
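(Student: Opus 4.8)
The plan is to show that the function \( g\colon V(\vec H)\to\mathbb N \) given by \( g(w)=|\psi^{-1}(w)| \) satisfies a mean-value property with respect to in-neighbourhoods in \( \vec H \), and then to invoke the maximum principle on the strongly connected digraph \( \vec H \) to conclude that \( g \) is constant. If \( |V(\vec H)|\le 1 \) the statement is trivial, so I would assume \( |V(\vec H)|\ge 2 \); strong connectivity then guarantees that every vertex of \( \vec H \) has at least one in-neighbour, since from any other vertex there is a directed path into it.

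First I would record the degree bookkeeping. Every OBH preserves out-degrees, because \( \psi \) restricted to \( N^+_{\vec G}(v) \) is a bijection onto \( N^+_{\vec H}(\psi(v)) \); combined with the hypothesis that \( \psi \) is degree-preserving, this forces \( \deg^-_{\vec G}(v)=\deg^-_{\vec H}(\psi(v)) \) for every \( v\in V(\vec G) \). Next, for each arc \( (u,w) \) of \( \vec H \) I would let \( N(u,w) \) denote the number of arcs \( (a,b) \) of \( \vec G \) with \( \psi(a)=u \) and \( \psi(b)=w \). Since \( \psi \) restricted to \( N^+_{\vec G}(a) \) is a bijection onto \( N^+_{\vec H}(u) \) and \( w\in N^+_{\vec H}(u) \), each copy \( a \) of \( u \) contributes exactly one such arc, whence \( N(u,w)=g(u) \).

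The key identity then comes from double-counting the arcs of \( \vec G \) that end in \( \psi^{-1}(w) \). Because \( \psi \) is a homomorphism, every in-neighbour (in \( \vec G \)) of a copy \( b \) of \( w \) maps to an in-neighbour of \( w \) in \( \vec H \); hence the number of such arcs equals \( \sum_{u\in N^-_{\vec H}(w)}N(u,w)=\sum_{u\in N^-_{\vec H}(w)}g(u) \) on one hand, and \( \sum_{b\in\psi^{-1}(w)}\deg^-_{\vec G}(b)=g(w)\cdot\deg^-_{\vec H}(w)=g(w)\cdot|N^-_{\vec H}(w)| \) on the other, using the in-degree preservation from the previous paragraph and the fact that \( \vec H \), being an orientation, has no multiple arcs. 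Equating the two expressions yields
\[
g(w)\;=\;\frac{1}{|N^-_{\vec H}(w)|}\sum_{u\in N^-_{\vec H}(w)}g(u),
\]
so \( g(w) \) is the unweighted average of \( g \) over the in-neighbours of \( w \).

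Finally I would run the maximum-principle argument. Let \( m=\max_{w\in V(\vec H)}g(w) \) and \( S=\{w: g(w)=m\}\ne\emptyset \). If \( w\in S \), the mean-value identity together with \( g(u)\le m \) for all \( u \) forces \( g(u)=m \) for every \( u\in N^-_{\vec H}(w) \); thus \( S \) is closed under taking in-neighbours. Given an arbitrary \( v\in V(\vec H) \), choose \( w\in S \); strong connectivity supplies a directed path \( v=y_0\to y_1\to\dots\to y_k=w \), and traversing it backwards from \( y_k \) places each \( y_i \), in particular \( v \), into \( S \). Hence \( S=V(\vec H) \) and \( g \) is constant, which is exactly the claim. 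The only mildly delicate points are the bookkeeping that \( \psi \) preserves in-degrees (which needs both ``degree-preserving'' and the automatic out-degree preservation of an OBH) and the observation, via the homomorphism property, that every arc of \( \vec G \) into \( \psi^{-1}(w) \) originates at a copy of an in-neighbour of \( w \); the rest is routine counting and the standard maximum principle for harmonic-type functions on strongly connected digraphs.
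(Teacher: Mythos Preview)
Your proof is correct and takes a genuinely different route from the paper's. The paper proves the theorem by way of a structural lemma: it shows, via a greedy argument with arc 2-switch operations, that \( \vec G \) can be obtained from \( q\,\vec H \) (with \( q=|V(\vec G)|/|V(\vec H)| \)) by repeatedly performing arc 2-switches that preserve the OBH, from which equal fibre sizes follow immediately. Your argument instead derives the mean-value identity \( g(w)=\frac{1}{|N^-_{\vec H}(w)|}\sum_{u\in N^-_{\vec H}(w)}g(u) \) from in-degree preservation and a double count, and then applies the maximum principle on a strongly connected digraph.

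Your approach is shorter and more self-contained; it sidesteps the arc 2-switch machinery entirely and makes the role of the degree-preserving hypothesis transparent (it is exactly what turns out-degree preservation of an OBH into in-degree preservation, which is what the double count needs). The paper's approach, on the other hand, yields more: the structural lemma it proves is reused to obtain Corollary~\ref{cor:strongly connected and degree-preserving}, a constructive description of all such \( \vec G \). So the two proofs trade off directness against structural information.
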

We prove this theorem with the help of the following lemma. 
\begin{lemma}
Let \( \vec{G} \) be an oriented graph that admits a degree-preserving OBH \( \psi \) to a strongly connected oriented graph \( \vec{H} \). 
Then, \( \vec{G} \) can be constructed by \( (i) \)~starting with \( q \vec{H} \) and an LBH \( \varphi \) from \( q\,\vec{H} \) to \( \vec{H} \), and \( (ii) \) repeating arc 2-switch operations on arcs of the form \( (u,v) \) and \( (x,y) \) with \( \varphi(v)=\varphi(y) \) an arbitrary number of times. 
\end{lemma}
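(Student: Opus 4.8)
The plan is to show that, under these hypotheses, $\psi$ has pre-images of the same cardinality at every vertex of $\vec{H}$, and then to quote Lemma~\ref{lem:obh same premiage size} to obtain the construction of $\vec{G}$ from $q\vec{H}$ by arc 2-switch operations. So the real work is establishing that $|\psi^{-1}(u)|=|\psi^{-1}(v)|$ for all $u,v\in V(\vec{H})$ --- which is also exactly the assertion of Theorem~\ref{thm:strongly connected and degree-preserving}, so that theorem will then be immediate.

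First I would record two preliminary facts. (a)~Since $\psi$ is an OBH it preserves out-degrees, and combined with the degree-preserving hypothesis (degree in the underlying graph) this forces $\psi$ to preserve in-degrees as well: $\deg^-_{\vec{G}}(v)=\deg^-_{\vec{H}}(\psi(v))$ for every $v$. (b)~For every arc $(w,x)$ of $\vec{H}$, the number of arcs of $\vec{G}$ from $\psi^{-1}(w)$ to $\psi^{-1}(x)$ equals $|\psi^{-1}(w)|$: by the out-neighbourhood bijection, each copy of $w$ has exactly one out-neighbour mapped to $x$; moreover, since $\psi$ is a homomorphism, no arc of $\vec{G}$ leaves $\psi^{-1}(w)$ toward $\psi^{-1}(x')$ when $(w,x')$ is a non-arc of $\vec{H}$.

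The key step is a double count of the arcs of $\vec{G}$ entering a fixed set $\psi^{-1}(x)$. Counting at the heads and using (a), this number is $|\psi^{-1}(x)|\cdot\deg^-_{\vec{H}}(x)$; grouping the arcs by the $\psi$-image of their tail and using (b), it is $\sum_{w\in N^-_{\vec{H}}(x)}|\psi^{-1}(w)|$. Hence
\[
|\psi^{-1}(x)|\cdot\deg^-_{\vec{H}}(x)=\sum_{w\in N^-_{\vec{H}}(x)}|\psi^{-1}(w)|\quad\text{for every }x\in V(\vec{H}).
\]
If $|V(\vec{H})|=1$ the statement is trivial (then $\vec{G}$ is arcless and equals $q\vec{H}$), so assume $|V(\vec{H})|\ge 2$; strong connectivity then guarantees $\deg^-_{\vec{H}}(x)\ge 1$ for all $x$. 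Let $m=\min_x|\psi^{-1}(x)|$ and $S=\{x:|\psi^{-1}(x)|=m\}$, a nonempty set. For $x\in S$ the displayed identity reads $m\cdot\deg^-_{\vec{H}}(x)=\sum_{w\in N^-_{\vec{H}}(x)}|\psi^{-1}(w)|$, and since every summand is at least $m$, every in-neighbour $w$ of $x$ must also satisfy $|\psi^{-1}(w)|=m$; that is, $S$ is closed under taking in-neighbours. By strong connectivity, from any vertex of $\vec{H}$ there is a directed path to a vertex of $S$, and walking that path backwards keeps us inside $S$; hence $S=V(\vec{H})$. Thus all pre-images have the same size, and Lemma~\ref{lem:obh same premiage size} finishes the proof.

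I expect the main obstacle to be carrying out the double count cleanly --- in particular justifying that step~(b) is an \emph{exact} count (exactly one cross arc per copy of $w$, so that the identity above is an equality and not merely $\le$ or $\ge$) --- together with not overlooking the degenerate one-vertex case. The ``take the minimum and propagate along the strong-connectivity structure'' argument at the end is then routine; it could equally be run with the maximum.
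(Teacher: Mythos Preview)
Your proof is correct and takes a genuinely different route from the paper's. The paper argues constructively: it performs arc 2-switches on $\vec{G}$ (each preserving $\psi$ as an OBH, by Observation~\ref{obs:single arc 2switch obh}) so as to carve out as many disjoint copies of $\vec{H}$ as possible; a maximality argument---using degree preservation to guarantee a suitable switch partner whenever a chosen representative $v_0$ is missing an in-arc of the right type---then shows that all of $\vec{G}$ decomposes into copies of $\vec{H}$. Theorem~\ref{thm:strongly connected and degree-preserving} (equal fibre sizes) is read off as a by-product, the reverse of the logical dependency you set up.

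Your double count isolates the clean identity $|\psi^{-1}(x)|\cdot\deg^-_{\vec{H}}(x)=\sum_{w\in N^-_{\vec{H}}(x)}|\psi^{-1}(w)|$ and dispatches Theorem~\ref{thm:strongly connected and degree-preserving} in a few lines via the standard ``minimum propagates along in-neighbours'' trick; the construction step is then outsourced to Lemma~\ref{lem:obh same premiage size}. The paper's argument is more hands-on but self-contained, not relying on Lemma~\ref{lem:obh same premiage size} (which the paper states without proof). Either route works; yours is more conceptual for the fibre-size statement, while the paper's makes the switching mechanism explicit.
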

\begin{proof}
Let \( n=|V(\vec{H})| \). 
By Observation~\ref{obs:single arc 2switch obh}, it is possible to start with \( \vec{G} \) and perform arc 2-switch operation on arcs of the form \( (u,v) \) and \( (x,y) \) with \( \psi(v)=\psi(y) \), and thereby obtain oriented graphs such that \( \psi \) itself is a (degree-preserving) OBH from it to \( \vec{H} \). 
Let \( q \) be the highest integer such that an oriented graph constructed this way contains \( q\vec{H} \). 
It suffices to show that \( q=|V(\vec{G})|/n \) to prove the lemma (because the reverse of such an arc 2-switch operation is an arc 2-switch operation of the same type).
On the contrary, assume that \( q<|V(\vec{G})|/n \). 
Consider an oriented graph \( \vec{D} \) containing \( q\vec{H} \) constructed this way, and let \( \vec{J} \) be the oriented graph obtained from \( \vec{D} \) by removing the subgraph \( q\vec{H} \). 
Clearly, the restriction of \( \psi \) to \( V(\vec{J}) \) is an OBH from \( \vec{J} \) to \( \vec{H} \); let us call it \( \psi_0 \). 
The oriented graph \( \vec{J} \) cannot be empty since \( q<|V(\vec{G})|/n \). 
Since \( \vec{H} \) is strongly connected and \( \psi_0 \) is an OBH from \( \vec{J} \) to \( \vec{H} \), each vertex of \( \vec{H} \) has at least one copy in \( \vec{J} \). 
For each vertex \( z \) of \( \vec{H} \), choose a copy of \( z \) in \( \vec{J} \), and call it~\( z_0 \). 
For each arc \( uv \) of \( \vec{H} \), the vertex \( u_0 \) has a copy \( v' \) of \( v \) as its out-neighbour in \( \vec{J} \). 
If \( v'\neq v_0 \), check whether \( v_0 \) has at least one copy of \( u \) as its in-neighbour in \( \vec{J} \). 
If yes, call one those in-neighbours \( y^* \). 
Otherwise, there exists an in-neighbour \( w \) of \( v \) in \( \vec{H} \) such that at least two copies of \( w \) appear as in-neighbours of \( v_0 \) in \( \vec{J} \) (because \( \psi_0 \) is degree-preserving), and call of one those in-neighbours as \( y^* \). 
Perform arc 2-switch operation on the arcs \( (u_0,v') \) and \( (y^*,v_0) \). 
It is easy to see that after completing such arc 2-switch operations considering every arc of \( \vec{H} \), the set \( \{z_0\colon z\in V(\vec{H})\} \) induces a copy of \( \vec{H} \) in the resultant oriented graph. 
This means that starting with \( \vec{G} \) and performing the sequence of arc 2-switch operations used to construct \( \vec{D} \) and then performing the above sequence of arc 2-switch operations gives an oriented graph that contains \( (q+1)\vec{H} \). 
This contradicts the choice of \( q \), and thus completes the proof.
\end{proof}

\begin{corollary}\label{cor:strongly connected and degree-preserving}
An oriented graph \( \vec{G} \) admits a degree-preserving OBH to a strongly connected orientated graph \( \vec{H} \) if and only if \( |V(\vec{G})|=q|V(\vec{H})| \) for some \( q\in \mathbb{N} \) and \( \vec{G} \) can be constructed by \( (i) \)~starting with \( q \vec{H} \) and an LBH \( \varphi \) from \( q\,\vec{H} \) to \( \vec{H} \), and \( (ii) \) repeating arc 2-switch operations on arcs of the form \( (u,v) \) and \( (x,y) \) with \( \varphi(v)=\varphi(y) \) an arbitrary number of times. 
\qed
\end{corollary}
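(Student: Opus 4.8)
The plan is to obtain the corollary by stitching together Theorem~\ref{thm:strongly connected and degree-preserving} and the (unlabelled) lemma immediately preceding it for the forward direction, and then verifying the converse by a short induction.

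\textbf{Forward direction.} Suppose \( \vec{G} \) admits a degree-preserving OBH \( \psi \) to a strongly connected oriented graph \( \vec{H} \). By Theorem~\ref{thm:strongly connected and degree-preserving}, \( |\psi^{-1}(v)| \) takes the same value, say \( q\in\mathbb{N} \), for every \( v\in V(\vec{H}) \); summing over \( V(\vec{H}) \) gives \( |V(\vec{G})|=q\,|V(\vec{H})| \). The lemma stated just before the corollary says precisely that \( \vec{G} \) can be built by starting from \( q\vec{H} \) with an LBH \( \varphi \) from \( q\vec{H} \) to \( \vec{H} \), and then repeatedly applying arc 2-switch operations on pairs of arcs \( (u,v) \), \( (x,y) \) with \( \varphi(v)=\varphi(y) \). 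That is exactly the asserted construction.

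\textbf{Converse direction.} Conversely, assume \( |V(\vec{G})|=q\,|V(\vec{H})| \) and \( \vec{G} \) arises from the construction: start with \( q\vec{H} \) and an LBH \( \varphi \) from \( q\vec{H} \) onto \( \vec{H} \) (the vertex map \( \varphi \) is kept fixed throughout), then perform a sequence of arc 2-switch operations, each on arcs \( (u,v) \), \( (x,y) \) with \( \varphi(v)=\varphi(y) \). First, \( \varphi \) is an LBH from \( q\vec{H} \) to \( \vec{H} \), hence an OBH, and it is degree-preserving since every LBH preserves degrees. Second, by Observation~\ref{obs:single arc 2switch obh}, an arc 2-switch of the allowed form applied to an oriented graph for which \( \varphi \) is an OBH to \( \vec{H} \) again yields an oriented graph for which \( \varphi \) is an OBH to \( \vec{H} \). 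Third, such an arc 2-switch leaves every in-degree and out-degree unchanged: replacing \( (u,v),(x,y) \) by \( (u,y),(x,v) \) does not alter \( \deg^+(u) \), \( \deg^+(x) \), \( \deg^-(v) \) or \( \deg^-(y) \), and touches no other vertex, so the identity \( \deg(w)=\deg_{\vec{H}}(\varphi(w)) \) is preserved at every vertex \( w \). Induction on the number of arc 2-switches then shows \( \varphi \) is a degree-preserving OBH from \( \vec{G} \) to \( \vec{H} \).

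\textbf{Main obstacle.} Almost all the content is already packed into Theorem~\ref{thm:strongly connected and degree-preserving} and its lemma; the only new points, both routine, are (a) that the projection \( q\vec{H}\to\vec{H} \) is a degree-preserving OBH --- immediate because an LBH is both degree-preserving and out-neighbourhood bijective --- and (b) that arc 2-switches, already known to preserve the OBH property via Observation~\ref{obs:single arc 2switch obh}, additionally preserve degree-preservation. So the corollary amounts to a brief bookkeeping argument layered on the earlier results.
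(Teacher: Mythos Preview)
Your proposal is correct and matches the paper's intended approach: the paper marks the corollary with \qed\ and provides no proof, treating it as immediate from the preceding lemma (which gives the forward direction outright) together with Observation~\ref{obs:single arc 2switch obh} and the evident degree-invariance of arc 2-switches for the converse. Your write-up makes explicit precisely those routine bookkeeping points the paper leaves implicit.
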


\section[\( 2p \)-Regular \( (p+2) \)-Star Colourable Graphs]{\boldmath \( 2p \)-Regular \( (p+2) \)-Star Colourable Graphs}\label{sec:2p-regular p+2-star colourable}
We start with a simple, but important observation. 
Refer to Section~\ref{sec:oriented line graph def} for the definitions of \( \vec{L}(K_q) \) and \( L^*(K_q) \).  
In this section, we write \( \mathrm{proj}_2 \) to denote of the 2nd projection map on \( \mathbb{R}^2 \) (i.e., \( (x,y)\mapsto y \)) domain and co-domain restricted suitably. 
%\todo{
%(omitted)
%
%[not needed I think] 
%%(the function that maps each member \( (x,y) \) of \( \mathbb{R}^2 \) to \( y \)). 
%
%%Also, I think it is not necessary to mention that 
%We restrict \( \mathrm{proj}_2 \) to a domain \( S \) and a co-domain \( T \) only when the image set \( \mathrm{proj}_2(S)\subseteq T \). 
%}
%For brevity, we write \( \mathrm{proj}_2 \) in place of its suitable co-domain restriction (since the meaning is clear from the context). 
\begin{observation}\label{obs:star col of  Larrow Kq}
%Let \( q\geq 2 \), and let the function \( f\colon V(\vec{L}(K_q))\to \mathbb{Z}_q \) be defined as \( f((i,j))=j \) for every vertex \( (i,j) \) of \( \vec{L}(K_q) \). 
For \( q\geq 2 \), \( (\vec{L}(K_q),\mathrm{proj}_2) \) is a \( q \)-coloured MINI-orientation of \( L^*(K_q) \), where \( \mathrm{proj}_2 \) is the 2nd projection map restricted to the domain \( V(\vec{L}(K_q)) \) and the co-domain \( \mathbb{Z}_q \). 
%Let \( q\geq 2 \), and let the function \( f\colon V(\vec{L}(K_q))\to \mathbb{Z}_q \) be defined as \( f((i,j))=j \) for every vertex \( (i,j) \) of \( \vec{L}(K_q) \). 
%Then, \( (\vec{L}(K_q),f) \) is a \( q \)-coloured MINI-orientation of \( L^*(K_q) \). 
\end{observation}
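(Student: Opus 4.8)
The plan is simply to unwind the definitions of $\vec{L}(K_q)$, of $L^*(K_q)$, and of $q$-coloured MINI-orientation, and then perform a direct verification. Recall that the vertex set of $\vec{L}(K_q)$ is $\{(i,j) : i,j\in\mathbb{Z}_q,\ i\neq j\}$ and that there is an arc from $(i,j)$ to $(j,k)$ exactly when $i\neq k$ (the inequalities $i\neq j$ and $j\neq k$ being automatic, since $(i,j)$ and $(j,k)$ are vertices). The map under consideration is $\mathrm{proj}_2\colon(i,j)\mapsto j$, whose values lie in $\mathbb{Z}_q$, so the indicated domain/codomain restriction is legitimate.

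First I would check that $\mathrm{proj}_2$ is a (proper) $q$-colouring of $L^*(K_q)$: every edge of $L^*(K_q)$ joins some $(i,j)$ to some $(j,k)$ with $i\neq k$, and its endpoints get colours $j$ and $k$, which are distinct because $(j,k)$ is a vertex. Next I would pin down, for an arbitrary vertex $(i,j)$, its out- and in-neighbourhoods in $\vec{L}(K_q)$: by the defining arc rule, the out-neighbours of $(i,j)$ are precisely the vertices $(j,k)$ with $k\in\mathbb{Z}_q\setminus\{i,j\}$, and the in-neighbours of $(i,j)$ are precisely the vertices $(h,i)$ with $h\in\mathbb{Z}_q\setminus\{i,j\}$. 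Consequently, under $\mathrm{proj}_2$ the colours of the out-neighbours of $(i,j)$ are exactly the elements of $\mathbb{Z}_q\setminus\{i,j\}$, each attained once, whereas every in-neighbour of $(i,j)$ receives colour $i$.

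With these descriptions in hand the three MINI-orientation conditions are immediate, and I would just record them: (i) the common in-neighbour colour $i$ does not belong to $\mathbb{Z}_q\setminus\{i,j\}$, so no out-neighbour shares a colour with an in-neighbour; (ii) the assignment $k\mapsto\mathrm{proj}_2((j,k))=k$ is injective on $\mathbb{Z}_q\setminus\{i,j\}$, so the out-neighbours have pairwise distinct colours; (iii) all in-neighbours of $(i,j)$ have colour $i$, hence the in-neighbourhood is monochromatic. Since $(i,j)$ was arbitrary, this shows that $(\vec{L}(K_q),\mathrm{proj}_2)$ is a $q$-coloured MINI-orientation of $L^*(K_q)$.

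There is no genuine obstacle here; the argument is pure bookkeeping with the combinatorial model of the oriented line graph of $K_q$. The one spot that warrants a moment's care is verifying that the stated lists of out- and in-neighbours of $(i,j)$ are exhaustive — that every arc incident to $(i,j)$ has the claimed form — but this is read off directly from the rule ``$(u,v)\to(v,w)$ iff $u\neq w$''.
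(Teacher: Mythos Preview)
Your proposal is correct and follows essentially the same approach as the paper: both arguments identify the in- and out-neighbourhoods of an arbitrary vertex $(i,j)$ of $\vec{L}(K_q)$ and read off the three MINI-orientation conditions directly. Your write-up is a bit more explicit (separately verifying properness of the colouring and spelling out each condition), but the substance is identical.
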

\begin{proof}
Consider an arbitrary vertex \( (i,j) \) of \( \vec{L}(K_q) \). 
Clearly, \( \{(k,i)\in \mathbb{Z}_q^2\colon k\neq i, k\neq j\} \) is the set of in-neighbours of \( (i,j) \) and \( \{(j,\ell)\in \mathbb{Z}_q^2\colon \ell\neq i, \ell\neq j\} \) is the set of out-neighbours of \( (i,j) \) in \( \vec{L}(K_q) \). 
Hence, \( \mathrm{proj}_2 \) assigns colour \( i \) on in-neighbours of \( (i,j) \) and pairwise distinct colours from \( \mathbb{Z}_q\setminus\{i\} \) on \( (i,j) \) and its out-neigbours in \( \vec{L}(K_q) \). 
This completes the proof since all three conditions in the definition of coloured MINI-orientation are satisfied, and \( (i,j) \) is arbitrary. 
\end{proof}
\todo{
Reviewer G:\\
Remind the reader where these (\( G_{2p} \) and \( \overrightarrow{G_{2p}} \)) were defined.
}
\todo[color=gray!10!white]{
The notation is changed to \( L^*(K_{p+2}) \) and \( \vec{L}(K_{p+2}) \); the reader is referred to the definitions as suggested.
}
\todo{
(omitted) [rewritten] 

\textbf{Observation.}
%\begin{observation}\label{obs:G2p arrow is p+2-CEO}
\emph{
For \( p\geq 2 \), the orientation \( \overrightarrow{G_{2p}} \) of \( G_{2p} \) is a \( (p+2) \)-CEO of \( G_{2p} \). 
}
%\end{observation}

\begin{proof}
Theorem~4 in \cite{shalu_cyriac3} showed that the vertex set of \( G_{2p} \) can be partitioned into sets \( V_i^j\coloneqq \{(i,j)\} \) satisfying certain conditions, and as a result, the function \( f\colon V(G_{2p})\to \{0,1,\dots,p+1\} \) defined as \( f(v)=i \) for all \( v\in V_i^j \) and \( i,j\in \{0,1,\dots,p+1\} \) is a \( (p+2) \)-star colouring of \( G_{2p} \) by \cite[Theorem~3]{shalu_cyriac3}. 
Explicitly written, \( f((i,j))=i \) for all \( (i,j)\in V(G_{2p}) \). 
Hence, each vertex \( (i,j) \) of \( G_{2p} \) has exactly \( p \) neighbours coloured \( j \) under \( f \) (namely, vertices \( (j,k) \) where \( k\in \{0,1,\dots,p+1\}\setminus \{i,j\} \)) and exactly one neighbour coloured~\( k \) (namely, \( (k,i) \)) for each \( k\in \{0,1,\dots,p+1\}\setminus \{i,j\} \).

Since \( f \) is a \( (p+2) \)-star colouring of \( G_{2p} \), every bicoloured component of \( G_{2p} \) under \( f \) is \( K_{1,p} \) (by \cite[Theorem~2]{shalu_cyriac3}). 
Hence, the in-orientation of \( G_{2p} \) induced by \( f \) orients edges in each bicoloured component \( H\cong K_{1,p} \) of \( G \) towards the centre of the star \( H \). 
By the proof of Theorem~8 in \cite{shalu_cyriac3}, the in-orientation of \( G_{2p} \) induced by \( f \) is a \( (p+2) \)-CEO of \( G_{2p} \). 
This \( (p+2) \)-CEO of \( G_{2p} \) must orient each edge \( \{(j,k), (i,j)\} \) of \( G_{2p} \) as \( (j,k)\to (i,j) \) because the subgraph of \( G_{2p} \) induced by \( \{(i,j)\}\cup \{(j,\ell)\colon 0\leq \ell \leq p+1, \ell \neq i, \ell \neq j\} \) is a bicoloured component \( H \) of \( G \) under \( f \), and \( (i,j) \) is the centre of the star \( H\cong K_{1,p} \). 
In other words, this \( (p+2) \)-CEO of \( G_{2p} \) orients each edge \( \{(i,j),(k,i)\} \) of \( G_{2p} \) as \( (i,j)\to (k,i) \), which is precisely the definition of \( \overrightarrow{G_{2p}} \). 
Hence, \( \overrightarrow{G_{2p}} \) is a \( (p+2) \)-CEO of~\( G_{2p} \).
\end{proof}
}

\begin{theorem}\label{thm:characterise 2p-regular p+2-star colourable}
For \( p\geq 2 \), the following are equivalent for every \( 2p \)-regular graph \( G \). 
\begin{enumerate}[label={\Roman*.},itemsep=0pt,topsep=3pt]
\item \( G \) admits a \( (p+2) \)-star colouring. 
\item \( G \) admits a \( (p+2) \)-coloured MINI-orientation. 
\item \( G \) has an orientation that admits an OBH to \( \vec{L}(K_{p+2}) \). %\( \overrightarrow{G_{2p}} \). 
\item \( G \) admits a \( (p+2) \)-colouring \( f \) such that every bicoloured component of \( (G,f) \) is \( K_{1,p} \). 
\end{enumerate}
\end{theorem}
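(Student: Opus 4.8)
The plan is to prove the cycle of implications $\mathrm{I}\Rightarrow\mathrm{IV}\Rightarrow\mathrm{II}\Rightarrow\mathrm{III}\Rightarrow\mathrm{I}$. The main external inputs I would use are Observation~\ref{obs:star col of  Larrow Kq} (that $(\vec{L}(K_{p+2}),\mathrm{proj}_2)$ is a $(p+2)$-coloured MINI-orientation of $L^*(K_{p+2})$), Theorem~\ref{thm:out-nbd I Hom preserve colourful out-nbd orientation} (transfer of MINI-orientations backwards along an OBH), and Observation~\ref{obs:in-orientation underlying colouring is star} (star colourings are exactly the colourings underlying in-orientations).

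\textbf{$\mathrm{I}\Rightarrow\mathrm{IV}$.} I would start from a $(p+2)$-star colouring $f$ and an induced in-orientation $\vec{G}$, so that $(\vec{G},f)$ satisfies Conditions~(i) and~(ii). A vertex with out-degree $p+1$ would have its out-neighbours use every colour but its own, hence (by~(i)) no in-neighbour, hence degree $p+1<2p$; so every out-degree is at most $p$, and since out-degrees sum to $|E(G)|=p\,|V(G)|$, every vertex has out-degree $p$ and in-degree $p$. Fixing a vertex $v$ of colour $c$ and splitting its $p+1$ bicoloured components by type ($K_{1,q}$ with $q\geq2$ centred at $v$; $K_{1,q}$ with $q\geq2$ with $v$ a leaf; $K_{1,1}$; $K_1$), I would compare the degree $\deg(v)=2p$ with the in-degree $p$ and out-degree $p$ of $v$ to conclude that $v$ is the centre of exactly one bicoloured component, that this component is $K_{1,p}$, that no component of $v$ is $K_1$, and that no $K_{1,1}$ component of $v$ is oriented towards $v$; applying the last point at both endpoints of a hypothetical $K_{1,1}$ excludes $K_{1,1}$ components entirely. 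Hence every bicoloured component of $(G,f)$ is $K_{1,p}$, which is~$\mathrm{IV}$ (and, as a bonus, the in-neighbours of $v$ all lie in its unique central $K_{1,p}$ and so are monochromatic, so $(\vec{G},f)$ is already a $(p+2)$-coloured MINI-orientation). This is the counting argument of~\cite{shalu_cyriac3}.

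\textbf{$\mathrm{IV}\Rightarrow\mathrm{II}$ and $\mathrm{II}\Rightarrow\mathrm{III}$.} For $\mathrm{IV}\Rightarrow\mathrm{II}$, given a $(p+2)$-colouring $f$ whose bicoloured components are all $K_{1,p}$, I would orient each such star towards its centre; a degree count ($kp+(p+1-k)=2p$ forces $k=1$) shows each $v$ is the centre of exactly one bicoloured component, so its in-neighbours are the $p$ leaves of that component (one colour, say $a$) and its out-neighbours receive the $p$ distinct colours of $\mathbb{Z}_{p+2}\setminus\{c,a\}$, which gives Conditions~(i)--(iii). For $\mathrm{II}\Rightarrow\mathrm{III}$, given a $(p+2)$-coloured MINI-orientation $(\vec{G},f)$ I would first note (as in the first step) that all in- and out-degrees equal $p$, so each $v$ has an in-neighbour and, by~(iii), a well-defined in-neighbour colour $a_v\neq f(v)$; I would then set $\psi(v)=(a_v,f(v))\in V(\vec{L}(K_{p+2}))$. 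The checks are short: if $(u,v)$ is an arc of $\vec{G}$ then $a_v=f(u)$ and $f(v)\neq a_u$ by~(i), so $\psi(u)=(a_u,f(u))$ and $\psi(v)=(f(u),f(v))$ form an arc of $\vec{L}(K_{p+2})$; and the $p$ out-neighbours of $v$ carry, by~(i) and~(ii), the $p$ distinct colours of $\mathbb{Z}_{p+2}\setminus\{a_v,f(v)\}$, while each such out-neighbour $x$ has $a_x=f(v)$, so the images $\psi(x)=(f(v),f(x))$ are exactly the $p$ out-neighbours of $(a_v,f(v))$ in $\vec{L}(K_{p+2})$. Thus $\psi$ is an OBH from $\vec{G}$ to $\vec{L}(K_{p+2})$, establishing~$\mathrm{III}$.

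\textbf{$\mathrm{III}\Rightarrow\mathrm{I}$, and the main obstacle.} If an orientation $\vec{G}$ of $G$ admits an OBH $\psi$ to $\vec{L}(K_{p+2})$, then $\psi$ is out-neighbourhood injective and $(\vec{L}(K_{p+2}),\mathrm{proj}_2)$ is a $(p+2)$-coloured MINI-orientation by Observation~\ref{obs:star col of  Larrow Kq}, so Theorem~\ref{thm:out-nbd I Hom preserve colourful out-nbd orientation} gives that $(\vec{G},\mathrm{proj}_2\circ\psi)$ is a $(p+2)$-coloured MINI-orientation of $G$, hence in particular a $(p+2)$-coloured in-orientation; by Observation~\ref{obs:in-orientation underlying colouring is star}, $\mathrm{proj}_2\circ\psi$ is then a $(p+2)$-star colouring of $G$. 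I expect the hard part to be $\mathrm{I}\Rightarrow\mathrm{IV}$: extracting the rigid statement ``every bicoloured class is $K_{1,p}$'' from the mere local balance of a $(p+2)$-coloured in-orientation is where $2p$-regularity is used in full, and the case analysis there (notably excluding $K_{1,1}$ components) is the only genuinely delicate step; once that rigidity is available, $\mathrm{IV}\Rightarrow\mathrm{II}$ and $\mathrm{II}\Rightarrow\mathrm{III}$ are essentially bookkeeping on the coordinates $(a_v,f(v))$, and $\mathrm{III}\Rightarrow\mathrm{I}$ is immediate from the cited results.
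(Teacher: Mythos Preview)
Your proof is correct and follows essentially the same route as the paper: both establish the hard direction $\mathrm{I}\Rightarrow\mathrm{IV}$ (and simultaneously $\mathrm{I}\Rightarrow\mathrm{II}$) by the colour-counting argument in the closed neighbourhood of a vertex under an induced in-orientation to force out-degree $=p$ and monochromatic in-neighbourhoods, both construct the OBH for $\mathrm{II}\Rightarrow\mathrm{III}$ via $\psi(v)=(a_v,f(v))$ with $a_v$ the in-neighbour colour, and both close the loop by pulling back the MINI-orientation of $\vec{L}(K_{p+2})$ through Theorem~\ref{thm:out-nbd I Hom preserve colourful out-nbd orientation} and Observation~\ref{obs:star col of  Larrow Kq}. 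The only cosmetic difference is that you organise the argument as a cycle $\mathrm{I}\Rightarrow\mathrm{IV}\Rightarrow\mathrm{II}\Rightarrow\mathrm{III}\Rightarrow\mathrm{I}$ and carry out a slightly more explicit case split on bicoloured-component types for $\mathrm{I}\Rightarrow\mathrm{IV}$, whereas the paper packages everything into a single lemma (Lemma~\ref{lem:star col iff mini iff obh}) that tracks the specific pair $(\vec{G},f)$ throughout.
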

%\begin{customthm}{\ref*{thm:characterise 2p-regular p+2-star colourable}}
%For \( p\geq 2 \), the following are equivalent for every \( 2p \)-regular graph \( G \). 
%\begin{enumerate}[label={\Roman*.},itemsep=0pt,topsep=3pt]
%\item \( G \) admits a \( (p+2) \)-star colouring. 
%\item \( G \) admits a \( (p+2) \)-coloured MINI-orientation. 
%\item \( G \) has an orientation that admits an OBH to \( \vec{L}(K_{p+2}) \). %\( \overrightarrow{G_{2p}} \). 
%\item \( G \) admits a \( (p+2) \)-colouring \( f \) such that every bicoloured component of \( (G,f) \) is \( K_{1,p} \). 
%\end{enumerate}
%\end{customthm}
% older one ends here

%We prove the theorem with the help of two lemmas. 
We prove the theorem with the help of the following lemma. 

\begin{lemma}\label{lem:star col iff mini iff obh}
Let \( (\vec{G},f) \) be a \( (p+2) \)-coloured orientation of a \( 2p \)-regular graph \( G \), where \( p\geq 2 \). 
Then, \( f \) is a star coloring of \( G \) if and only if every bicoloured component of \( (G,f) \) is \( K_{1,p} \). 
Moreover, the following are equivalent, where \( \mathrm{proj}_2 \) is the 2nd projection map restricted to the domain \( V(\vec{L}(K_{p+2})) \) and the co-domain \( \mathbb{Z}_{p+2} \). 
\begin{enumerate}[label={\Roman*.},itemsep=0pt,topsep=3pt]
\item \( f \) is a star colouring of \( G \), and \( \vec{G} \) is the in-orientation of \( G \) induced by \( f \). 
\item \( (\vec{G},f) \) is a \( (p+2) \)-coloured MINI-orientation of \( G \). 
\item \( \vec{G} \) admits an OBH \( \psi \) to \( \vec{L}(K_{p+2}) \) such that \( \mathrm{proj}_2\circ\psi=f \). 
%\( f \) coincides with the second component of \( \psi \). 
\end{enumerate}
%Besides, if \( (\vec{G},f) \) satisfy the above statements, then \( \vec{G} \) is an Eulerian orientation. 
%\todo{Is this needed?}
\end{lemma}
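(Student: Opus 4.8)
The plan is to prove the stand-alone equivalence ``$f$ is a star colouring iff every bicoloured component of $(G,f)$ is $K_{1,p}$'' first, and then to use the rigid structure it provides to run the cycle $\text{I}\Rightarrow\text{II}\Rightarrow\text{III}\Rightarrow\text{I}$. Throughout, $d_j(v)$ denotes the number of neighbours of $v$ coloured $j$.

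For the first equivalence, ($\Leftarrow$) is immediate since $K_{1,p}$ is a star. For ($\Rightarrow$), note that each bicoloured component $H$ is a star, hence a tree with $|E(H)|=|V(H)|-1$; the bicoloured components edge-partition $G$ and each colour class sits in exactly $p+1$ of the colour-pair subgraphs, so $\sum_H|E(H)|=|E(G)|=p\,|V(G)|$ and $\sum_H|V(H)|=(p+1)|V(G)|$, which yields $C=|V(G)|$ for the number $C$ of bicoloured components. Call $H$ \emph{big} if $H\cong K_{1,q}$ with $q\ge2$; such an $H$ has a unique vertex of degree $\ge2$, its \emph{centre}. Every vertex $v$ centres at least one big component, because its $2p$ neighbours lie in only $p+1$ colour classes and $2p>p+1$ (here $p\ge2$ is used), so some colour repeats in $N_G(v)$ and $v$ has degree $\ge2$ in the corresponding colour-pair subgraph. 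Hence the map (big component)$\,\mapsto\,$(centre) is onto $V(G)$, so there are at least $|V(G)|=C$ big components; therefore every bicoloured component is big and the map is a bijection. Consequently each vertex $v$, say of colour $i$, centres exactly one big component $K_{1,q_v}$, in a colour pair $\{i,j_v\}$, so $d_j(v)\le1$ whenever $j\neq i,j_v$; as $\sum_jd_j(v)=2p$, this forces $q_v\ge p$. Finally $\sum_vq_v=\sum_H|E(H)|=p\,|V(G)|$ with every $q_v\ge p$, so $q_v=p$ for all $v$, i.e.\ every bicoloured component is $K_{1,p}$.

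For $\text{I}\Rightarrow\text{II}$: by the first equivalence the structure above holds, so each vertex $v$ has exactly $p$ neighbours of colour $j_v$ (the leaves of the star it centres, each lying in a bicoloured $P_3$ through $v$, hence oriented into $v$) and exactly one neighbour of each other colour $j$ (which centres a $K_{1,p}$ having $v$ as a leaf, hence with that edge oriented out of $v$); there are no $K_{1,1}$ bicoloured components, so the induced in-orientation is unique and equals $\vec G$, and the three MINI-conditions (monochromatic in-neighbourhood, rainbow out-neighbourhood, disjoint in/out colours) are read off. For $\text{II}\Rightarrow\text{III}$: a MINI-orientation is in particular an in-orientation, so by Observation~\ref{obs:in-orientation underlying colouring is star} $f$ is a star colouring and the structure applies --- in particular each in-neighbourhood is non-empty; let $c^-(v)$ be the common colour of $N_{\vec G}^-(v)$ and put $\psi(v)=\bigl(c^-(v),f(v)\bigr)$, a vertex of $\vec L(K_{p+2})$ since $c^-(v)\neq f(v)$. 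An arc $u\to v$ gives $f(u)=c^-(v)$ and, by MINI-condition (i), $c^-(u)\neq f(v)$, matching the arc rule $(a,b)\to(b,c)$ with $a\neq c$, so $\psi$ is a homomorphism; moreover $\psi$ sends the out-neighbour of $v$ of colour $w$ to $(f(v),w)$, so it maps $N_{\vec G}^+(v)$ bijectively onto $N_{\vec L(K_{p+2})}^+(\psi(v))=\{(f(v),w):w\neq f(v),c^-(v)\}$, and $\mathrm{proj}_2\circ\psi=f$. For $\text{III}\Rightarrow\text{I}$: by Observation~\ref{obs:star col of  Larrow Kq}, $(\vec L(K_{p+2}),\mathrm{proj}_2)$ is a $(p+2)$-coloured MINI-orientation of $L^*(K_{p+2})$, and an OBH is out-neighbourhood injective, so Theorem~\ref{thm:out-nbd I Hom preserve colourful out-nbd orientation} shows $(\vec G,\mathrm{proj}_2\circ\psi)=(\vec G,f)$ is a $(p+2)$-coloured MINI-orientation of $G$; then Observation~\ref{obs:in-orientation underlying colouring is star} gives that $f$ is a star colouring and $\vec G$ an induced in-orientation, which is the unique one by the first equivalence.

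I expect the main obstacle to be the ($\Rightarrow$) direction of the first equivalence: recognising that ``every vertex centres a big component'', combined with the exact count $C=|V(G)|$, forces the centre-assignment to be a bijection is the step that rigidifies the whole structure; once this is in hand, the three-way equivalence is essentially bookkeeping, the one delicate point being to order the coordinates in $\psi$ so that the arcs of $\vec L(K_{p+2})$ (oriented $(a,b)\to(b,c)$) align with the in-orientation of $\vec G$.
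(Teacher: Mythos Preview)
Your proof is correct. The three-way equivalence $\text{I}\Leftrightarrow\text{II}\Leftrightarrow\text{III}$ is handled essentially as in the paper: the same map $\psi(v)=(c^-(v),f(v))$ is used for $\text{II}\Rightarrow\text{III}$, and the same appeal to Observation~\ref{obs:star col of  Larrow Kq} and Theorem~\ref{thm:out-nbd I Hom preserve colourful out-nbd orientation} closes the cycle.

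Where you genuinely diverge is in the $(\Rightarrow)$ direction of the first equivalence. The paper argues via the induced in-orientation: for any vertex $v$ with out-degree $q$, the closed neighbourhood of $v$ uses at least $q+2$ colours (out-neighbours are rainbow, $v$ and some in-neighbour add two more), so $q\le p$; since in- and out-degrees sum to the same total over $V(G)$, every vertex has out-degree exactly $p$, and the $K_{1,p}$ structure and the MINI conditions fall out simultaneously. Your argument is purely combinatorial: a double count gives $C=|V(G)|$ bicoloured components, pigeonhole forces every vertex to centre a big component, and surjectivity of the centre map against the exact count $C=|V(G)|$ pins everything down. Both routes are short; the paper's has the advantage of proving $\text{I}\Rightarrow\text{II}$ in the same breath (the Eulerian property and monochromatic in-neighbourhoods are read off from the degree count), whereas you re-derive the orientation structure afterwards from the $K_{1,p}$ components. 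Your route, on the other hand, never touches orientations until it has to, which makes the ``star colouring $\Leftrightarrow$ every bicoloured component is $K_{1,p}$'' statement stand fully on its own.
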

%If \( III \) holds, \( \mathrm{proj}_1\circ \psi \) is also a star coloring. 
%And it is the dual of \( f \) in a sense (omitted in this paper since it is better explained together with weavings).
\begin{proof}
%\todo{
%``\( f \) is a star colouring of \( G \), and \( \vec{G} \) is the in-orientation of \( G \) induced by \( f \)'' is equivalent to ``\( (\vec{G},f) \) is a coloured in-orientation of \( G \)\,''. 
%The only difference between these statements and \( I \) is that in the latter, \( \vec{G} \) is the unique in-orientation of \( G \) induced by \( f \). 
%}
If every bicoloured component of \( (G,f) \) is \( K_{1,p} \), then \( f \) is evidently a star coloring. 

%Assume \( I \). 
Suppose that \( f \) is a star colouring of \(  G \).
Let \( \vec{G} \) be an in-orientation of \( G \) induced by \( f \). 
Obviously, \( (\vec{G},f) \) is a \( (p+2) \)-coloured in-orientation. 
To prove that it is a \( (p+2) \)-coloured MINI-orientation, it suffices to show that for each vertex of \( (\vec{G},f) \), all in-neighbours have the same colour. 
Let \( v \) be a vertex in \( \vec{G} \), and let \( q \) denote the out-degree of \( v \) in \( \vec{G} \). 
Clearly, \( v \) has at least one in-neighbour since all \( 2p \) neighbours of \( v \) cannot be out-neighbours (else, \( v \) and its out-neighours need \( 2p+1 \) colours, but we have only \( p+2 \)). 
In \( (\vec{G},f) \), the number of colours in the closed neighbourhood of \( v \) is at least \( q+2 \) (i.e., \( q \) colors for out-neighbours of \( v \), an extra colour for \( v \), and an extra colour for some in-neighbour of \( v \)), and equality holds if and only if all in-neighbours of \( v \) have the same colour. 
Since at most \( p+2 \) colours are available, \( q\leq p \). 
Hence, the out-degree of \( v \) is at most \( p \), and therefore the in-degree of \( v \) is at least \( p \). 
Since the sum of out-degrees of all vertices equals the sum of in-degrees, every vertex of \( \vec{G} \) has out-degree and in-degree exactly \( p \) (i.e., \( \vec{G} \) is an Eulerian orientation), and in particular, \( q=p \). 
This proves that every bicoloured component of \( (G,f) \) is \( K_{1,p} \). 
Hence, \( \vec{G} \) is the unique in-orientation of \( G \) induced by \( f \). 
Since all \( q+2 \) colours are needed to colour the closed neighbourhood of \( v \), all in-neighbours of \( v \) have the same colour. 
Since \( v \) is arbitrary, \( (\vec{G},f) \) is a \( (p+2) \)-coloured MINI-orientation. 
This proves \( I\implies II \). 
It is also established that (i)~\( f \) is a star coloring of \( G \) if and only if every bicoloured component of \( (G,f) \) is \( K_{1,p} \), and (ii)~if \( f \) is a star colouring of \( G \) and \( \vec{G} \) is the in-orientation of \( G \) induced by \( f \), then \( \vec{G} \) is an Eulerian orientation. 

To prove \( II \implies I \), assume \( II \). 
Since \( (\vec{G},f) \) is a \( (p+2) \)-coloured MINI-orientation of \( G \), it is a \( (p+2) \)-coloured in-orientation of \( G \). 
By Observation~\ref{obs:in-orientation underlying colouring is star}, \( f \) is a star colouring of \( G \) and \( \vec{G} \) is an in-orientation of \( G \) induced by \( f \). 
Since \( f \) is a star coloring, every bicoloured component of \( (G,f) \) is \( K_{1,p} \). 
Hence, \( \vec{G} \) is the unique in-orientation of \( G \) induced by \( f \). 

Finally, we prove \( II\iff III \). 
To prove \( II\implies III \), assume \( II \). 
Each vertex \( v \) of \( \vec{G} \) has at least one in-neighbour (in fact, exactly \( p \) in-neighbours since every bicoloured component of \( (\vec{G},f) \) is \( K_{1,p} \)). 
Define a function \( h\colon V(\vec{G})\to\mathbb{Z}_{p+2} \) that assigns the colour of in-neighbours of \( v \) on each vertex \( v \) of \( \vec{G} \). 
Define a function \( \psi\colon V(\vec{G})\to V(\vec{L}(K_{p+2}))  \) as \( \psi(v)=(h(v),f(v)) \) for all \( v\in V(\vec{G}) \). 
By definition of \( \psi \), we have \( \mathrm{proj}_2\circ \psi=f \). 

To prove that \( \psi \) is a homomorphism from \( \vec{G} \) to \( \vec{L}(K_{p+2}) \), consider an arbitrary arc \( (u,v) \) of \( \vec{G} \). 
Let \( \psi(u)=(i,j) \) and \( \psi(v)=(k,\ell) \), where \( i,j,k,\ell \in \mathbb{Z}_{p+2} \). %, \( i\neq j \) and \( k\neq \ell \). 
Let \( w \) be an in-neighbour of \( u \) in \( \vec{G} \). 
We know that \( u \) is an in-neighbour of \( v \) in \( \vec{G} \). 
Note that \( f(w)\neq f(v) \) since \( (\vec{G},f) \) is a colourful MINI-orientation. 
Also, \( h(v)=f(u) \) and \( h(u)=f(w) \) by the definition of \( h \).  
That is, \( k=j \), and \( i=h(u)=f(w)\neq f(v)=\ell \). 
Thus, \( k=j \) and \( i\neq \ell \), there is an arc in \( \vec{L}(K_{p+2}) \) from \( (i,j) \) to \( (k,\ell) \). 
Since there is an arc from \( \psi(u) \) to \( \psi(v) \) in \( \vec{L}(K_{p+2}) \) for an arbitrary arc \( (u,v) \) of \( \vec{G} \), indeed \( \psi \) is a homomorphism from \( \vec{G} \) to \( \vec{L}(K_{p+2}) \). 

We know that \( \vec{G} \) and \( \vec{L}(K_{p+2}) \) are both Eulerian orientations. 
Hence, every vertex in \( \vec{G} \) and \( \vec{L}(K_{p+2}) \) has exactly \( p \) out-neighbours. 
To prove that the homomorphism \( \psi \) from \( \vec{G} \) to \( \vec{L}(K_{p+2}) \) is an OBH, it suffices to show that for each vertex \( v \) of \( \vec{G} \), no two out-neighbours of \( v \) have the same image under \( \psi \). 
Let \( v \) be a vertex in \( \vec{G} \) with distinct out-neighbours \( w_1 \) and \( w_2 \). 
Then, \( f(w_1)\neq f(w_2) \), and hence \( \psi(w_1)\neq \psi(w_2) \). 
This proves that \( \psi \) is an OBH from \( \vec{G} \) to \( \vec{L}(K_{p+2}) \) with \( \textrm{proj}_2\circ \psi=f \) since \( v, w_1 \) and \( w_2 \) are aribtrary. 
%% no need to mention
%Clearly, \( h(w_1)=f(w_2)=f(v) \) since \( v \) is an in-neighbour of both \( w_1 \) and \( w_2 \). 

To prove \( III\implies II \), assume \( III \). 
By Observation~\ref{obs:star col of  Larrow Kq},  \( (\vec{L}(K_{p+2}),\mathrm{proj}_2) \) is a \( (p+2) \)-coloured MINI-orientation of \( L^*(K_{p+2}) \). 
Hence, by Theorem~\ref{thm:out-nbd I Hom preserve colourful out-nbd orientation}, \( (\vec{G},\mathrm{proj}_2\circ \psi) \) is a \( (p+2) \)-coloured MINI-orientation of \( G \); and it is the same as \( (\vec{G},f) \). 
This proves \( II \). 
\end{proof}

\todo[color=white]{
(omitted) [rewritten]

Theorem~2 in \cite{shalu_cyriac3} proved that \( I\implies II \). 
Since \( K_{1,p} \) is a star, we have \( II\implies I \). 
Theorem~8 in \cite{shalu_cyriac3} proved that \( I\iff III \). 
%\iftoggle{extended}
%{
%Next, we show \( III\iff IV \), thus completing the proof of Theorem~\ref{thm:characterise 2p-regular p+2-star colourable}.
%}
%{
To complete the proof of Theorem~\ref{thm:characterise 2p-regular p+2-star colourable}, it suffices to show that \( III\iff IV \). 
%}

\textbf{Theorem (\( III\iff IV \)).}
%\begin{theorem}[\( III\iff IV \)]\label{lem:CEO iff out-nbd B Hom to G2p}
\emph{
Let \( p\geq 2 \), and let \( G \) be a \( 2p \)-regular graph with an orientation \( \vec{G} \). 
Then, \( \vec{G} \) is a \( (p+2) \)-CEO of \( G \) if and only if \( \vec{G} \) admits an out-neighbourhood bijective homomorphism to \( \overrightarrow{G_{2p}} \).
}
%\end{theorem}

%\textbf{Lemma.}\\
%%\begin{lemma}\label{lem:CEO iff out-nbd B Hom to G2p}
%\emph{
%Let \( p\geq 2 \), and let \( G \) be a \( 2p \)-regular graph with an orientation \( \vec{G} \). 
%Then, \( (\vec{G},f) \) is a \( (p+2) \)-coloured MINI-orientation of \( G \) for some \( (p+2) \)-colouring \( f \) of \( G \) if and only if \( \vec{G} \) admits an OBH to \( \vec{L}(K_{p+2}) \).
%}
%\end{lemma}
%\begin{proof}
\emph{Proof.} 
Suppose that \( \vec{G} \) is a \( (p+2) \)-CEO of \( G \) with an underlying \( (p+2) \)-colouring \( f\colon V(G)\to \{0,1,\dots,p+1\} \). 
We know that \( f \) is a \( (p+2) \)-star colouring of \( G \). 
We also know that \( \vec{G} \) is the in-orientation of \( G \) induced by \( f \). 
Let \( V_i=f^{-1}(i) \) for \( 0\leq i\leq p+1 \). 
For distinct \( i,j\in \{0,1,\dots,p+1\} \), let \( V_i^j \) denote the set of vertices \( x\in V_i \) such that \( x \) has exactly \( p \) neighbours in \( V_j \). 
For each \( q\in \mathbb{N} \), let \( \mathbb{Z}_q \) denote the set \( \{0,1,\dots,q-1\} \). 
%\iftoggle{extended}
%{
%In the proof of Lemma~\ref{lem:2p-regular p+2-star colourable iff}, it is shown that (i)~\( \{V_i^j\colon i,j\in \mathbb{Z}_{p+2}, i\neq j\} \) is a partition of \( V(G) \), and (ii)~for distinct \( i,j\in \mathbb{Z}_{p+2} \), each vertex in \( V_{i}^{j} \) has exactly \( p \) neighbours in \( \bigcup_{k\notin\{i,j\}} V_{j}^{k} \) and exactly one neighbour in \( V_{k}^{i} \) for each \( k\notin\{i,j\} \) (where \( k\notin \{i,j\} \) is an abbreviation for \( k\in \mathbb{Z}_{p+2}\setminus \{i,j\} \)). 
%}
%{
By (the proof of) Theorem~3 in \cite{shalu_cyriac3}, (i)~\( \{V_i^j\colon i,j\in \mathbb{Z}_{p+2}, i\neq j\} \) is a partition of \( V(G) \), and (ii)~for distinct \( i,j\in \mathbb{Z}_{p+2} \), each vertex in \( V_{i}^{j} \) has exactly \( p \) neighbours in \( \bigcup_{k\notin\{i,j\}} V_{j}^{k} \) and exactly one neighbour in \( V_{k}^{i} \) for each \( k\notin\{i,j\} \) (where \( k\notin \{i,j\} \) is an abbreviation for \( k\in \mathbb{Z}_{p+2}\setminus \{i,j\} \)). 
%}
\todo{
Reviewer F:\\
I think that the proof of (i) and (ii) should be included, at least in the appendix
}
%%% pos 1
\begin{tcolorbox}[colorback=gray!10!white,boxrule=0.5pt,boxsep=0pt,colframe=black]
The proof of the theorem is re-written, and it is combined with a lemma to keep it short and self-contained. 
\end{tcolorbox}
%\todo{
%\colcol{Response:\\ 
%The proof of the lemma is re-written (and combined with the previous lemma) to keep it short and self-contained. 
%}
%}

Fix distinct indices \( i,j\in \mathbb{Z}_{p+2} \) and a vertex \( v\in V_i^j \). 
By point~(ii) above, \( v \) has exactly \( p \) neighbours in \( \bigcup_{k\notin\{i,j\}} V_{j}^{k} \) and exactly one neighbour in \( V_{k}^{i} \) for each \( k\notin\{i,j\} \). 
Since \( \bigcup_{k\notin\{i,j\}} V_{j}^{k}\subseteq V_j=f^{-1}(j) \) and \( v \) has no neighbour in \( V_j^i=V_j\setminus \bigcup_{k\notin\{i,j\}} V_{j}^{k} \), vertex \( v \) together with its \( p \) neighbours in \( \bigcup_{k\notin\{i,j\}} V_{j}^{k} \) induce a bicoloured component \( H \) of \( G \). 
Since \( \vec{G} \) is the in-orientation of \( G \) induced by \( f \), the edges in \( H \) are oriented towards its centre \( v \) in \( \vec{G} \). 
That is, the \( p \) neighbours of \( v \) in \( \bigcup_{k\notin\{i,j\}} V_{j}^{k} \) are in-neighbours of \( v \) in \( \vec{G} \). 
On the other hand, for each \( k\notin \{i,j\} \) and each neighbour \( x \) of \( v \) in \( V_k^i \), the bicoloured component \( H \) of \( G \) containing edge \( vx \) has \( x \) as its centre, and thus \( x \) is an out-neighbour of \( v \) in \( \vec{G} \) (if \( v \) is the centre of \( H \), then \( v \) has \( p \) neighbours in \( V_k=\bigcup_{\substack{0\leq \ell\leq p+1\\ \ell\neq k}} V_k^\ell \), a contradiction). 
}

\todo{
(omitted) [continues]

\begin{proof}[Proof (cont.).]
Since \( i,j \), and \( v \) are arbitrary, we have the following. 
\setcounter{myclaim}{0}
\begin{myclaim}\label{clm:arcs between Vijs}
For distinct \( i,j\in \mathbb{Z}_{p+2} \), each vertex in \( V_i^j \) has exactly \( p \) in-neighbours in \( \bigcup_{k\notin\{i,j\}} V_{j}^{k} \) and exactly one out-neighbour in \( V_k^i \) for each \( k\notin \{i,j\} \). 
\end{myclaim}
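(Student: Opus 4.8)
The plan is to fix arbitrary distinct indices $i,j\in\mathbb{Z}_{p+2}$ and a vertex $v\in V_i^j$, and to decide which neighbours of $v$ are in-neighbours and which are out-neighbours of $v$ in $\vec{G}$. Two ingredients are already available. From the partition theorem of~\cite{shalu_cyriac3} recorded above, $v$ has exactly $p$ neighbours lying in $\bigcup_{k\notin\{i,j\}}V_j^k$ — call this set $A$; these are precisely the colour-$j$ neighbours of $v$ — and for each $k\notin\{i,j\}$ exactly one neighbour $x_k$ of $v$ lies in $V_k^i$. Writing $B=\{x_k:k\notin\{i,j\}\}$, the colours occurring in $A$ and in $B$ are disjoint, so $A\cap B=\emptyset$; since $|A|+|B|=p+p=2p=\deg_G(v)$, we have $N_G(v)=A\cup B$. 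From the proof of Lemma~\ref{lem:star col iff mini iff obh}, since $G$ is $2p$-regular and $(\vec{G},f)$ is a $(p+2)$-coloured MINI-orientation, the orientation $\vec{G}$ is Eulerian with every vertex of in-degree and out-degree $p$, every bicoloured component of $(G,f)$ is a star $K_{1,p}$, and $\vec{G}$ is the in-orientation of $G$ induced by $f$.

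Next I would show that $A$ is precisely the in-neighbourhood of $v$. Let $C$ be the bicoloured component of $(G,f)$ on the colour classes $i$ and $j$ containing $v$; by the facts just recalled $C\cong K_{1,p}$, and since $v$ has $p$ neighbours inside $C$ (namely $A$), the vertex $v$ must be the centre of $C$ and $A$ its set of leaves. Because $\vec{G}$ is the in-orientation induced by $f$, the two edges of every bicoloured $3$-vertex path are oriented towards the path's middle vertex; given any leaf $u\in A$, picking a second leaf $u'\in A\setminus\{u\}$ (possible as $p\geq2$) makes $u,v,u'$ such a path, whence $uv$ is oriented towards $v$. Thus every vertex of $A$ is an in-neighbour of $v$, so $v$ has at least $p$ in-neighbours; by the Eulerian property it has exactly $p$, and therefore $N_{\vec{G}}^-(v)=A$ and $N_{\vec{G}}^+(v)=N_G(v)\setminus A=B$.

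Finally, for each $k\notin\{i,j\}$, the unique neighbour $x_k$ of $v$ in $V_k^i$ belongs to $B=N_{\vec{G}}^+(v)$, hence is an out-neighbour of $v$ and, as the only neighbour of $v$ in $V_k^i$, the only out-neighbour of $v$ there; moreover $N_{\vec{G}}^-(v)=A$ consists of $p$ vertices all contained in $\bigcup_{k\notin\{i,j\}}V_j^k$. This is exactly what the claim asserts, and since $i$, $j$ and $v\in V_i^j$ were arbitrary, the claim follows. I do not expect a genuine obstacle: the argument merely fuses the partition statement imported from~\cite{shalu_cyriac3} with the Eulerian-plus-$K_{1,p}$ structure of coloured MINI-orientations. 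The two places calling for a little care are verifying that $v$ is the centre, not a leaf, of its bicoloured star — which holds because $v$ has $p$ neighbours in that star — and observing that the hypothesis $p\geq2$ is exactly what lets one orient every star-edge at $v$ inward.
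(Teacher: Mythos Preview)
Your proof is correct and follows essentially the same route as the paper's: both fix $v\in V_i^j$, invoke the partition result from \cite{shalu_cyriac3} to split $N_G(v)$ into the $p$ colour-$j$ neighbours and the $p$ vertices $x_k\in V_k^i$, identify $v$ as the centre of its $(i,j)$-bicoloured star, and use that $\vec{G}$ is the in-orientation induced by $f$ to orient all star-edges towards~$v$. The only cosmetic difference is that you finish via the Eulerian degree count (once the $p$ in-neighbours are accounted for, $B$ must be the out-neighbourhood), whereas the paper instead argues directly that each $x_k$ is an out-neighbour by observing that in the $(i,k)$-bicoloured star containing $vx_k$ the centre is $x_k$ rather than $v$.
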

Define \( \psi \colon V(\vec{G})\to V(\overrightarrow{G_{2p}}) \) as \( \psi(v)=(i,j) \) for all \( v\in V_i^j \), where \( i,j\in \mathbb{Z}_{p+2} \) and \( i\neq j \). 
Note that for distinct \( i,j\in \mathbb{Z}_{p+2} \), the set of copies of \( (i,j) \) in \( \vec{G} \) is precisely \( V_i^j \) (i.e., \( V_i^j=\psi^{-1}((i,j)) \)). 
To prove that \( \psi \) is a homomorphism from \( \vec{G} \) to \( \overrightarrow{G_{2p}} \), consider an arbitrary arc \( (u,v) \) in \( \vec{G} \). 
Due to Claim~\ref{clm:arcs between Vijs}, there exist pairwise distinct indices \( i,j,k\in \mathbb{Z}_{p+2} \) such that \( u\in V_i^j \) and \( v\in V_k^i \). 
As a result, \( \psi(u)=(i,j) \) and \( \psi(v)=(k,i) \). 
Hence, \( \big( (i,j), (k,i) \big) = \big( \psi(u),\psi(v) \big) \) is an arc in \( \overrightarrow{G_{2p}} \). 
Since \( (u,v) \) is an arbitrary arc in \( \vec{G} \), the mapping \( \psi \) is a homomorphism from \( \vec{G} \) to \( \overrightarrow{G_{2p}} \). 

To prove that \( \psi \) is an out-neighbourhood bijective homomorphism from \( \vec{G} \) to \( \overrightarrow{G_{2p}} \), it suffices to show that for every vertex \( (i,j) \) of \( \overrightarrow{G_{2p}} \) and every out-neighbour \( (k,i) \) of \( (i,j) \) in \( \overrightarrow{G_{2p}} \), each copy of \( (i,j) \) in \( \vec{G} \) has exactly one copy of \( (k,i) \) in \( \vec{G} \) as its out-neighbour. 
Consider a vertex \( (i,j) \) in \( \overrightarrow{G_{2p}} \), an out-neighbour \( (k,i) \) of \( (i,j) \) in \( \overrightarrow{G_{2p}} \), and a copy \( v \) of \( (i,j) \) in \( \vec{G} \). 
Since \( v \) is a copy of \( (i,j) \) in \( \vec{G} \), we have \( \psi(v)=(i,j) \), and thus \( v\in V_i^j \). 
Since \( V_k^i \) is precisely the set of copies of \( (k,i) \) in \( \vec{G} \), it follows from Claim~\ref{clm:arcs between Vijs} that \( v \) has exactly one copy of \( (k,i) \) in \( \vec{G} \) as its out-neighbour. 
Since \( (i,j) \), \( (k,i) \) and \( v \) are arbitrary, \( \psi \) is an out-neighbourhood bijective homomorphism from \( \vec{G} \) to \( \overrightarrow{G_{2p}} \).

Conversely, suppose that there exists an out-neighbourhood bijective homomorphism \( \psi \) from \( \vec{G} \) to \( \overrightarrow{G_{2p}} \). 
Since \( G \) and \( G_{2p} \) are \( 2p \)-regular, each out-neighbourhood bijective homomorphism from \( \vec{G} \) to \( \overrightarrow{G_{2p}} \) is degree-preserving. 
Since \( \overrightarrow{G_{2p}} \) is an Eulerian orientation and \( \psi \) is a degree-preserving out-neighbourhood bijective homomorphism from \( \vec{G} \) to \( \overrightarrow{G_{2p}} \), the orientation \( \vec{G} \) is Eulerian. 
%\iftoggle{extended}
%{We know that the function \( h\colon V(G_{2p})\to \{0,1,\dots,p+1\} \) defined as \( h\big( (i,j) \big)=i \) for all \( (i,j)\in V(G_{2p}) \) is a \( (p+2) \)-star colouring of \( G \) (observe that \( V(G_{2p}) \) can be partitioned into sets \( V_i^j \) satisfying the conidition in Lemma~\ref{lem:2p-regular p+2-star colourable iff} if we define \( V_i^j=\{(i,j)\} \) for all \( i \) and \( j \)). 
%Moreover, \( \overrightarrow{G_{2p}} \) is a \( (p+2) \)-CEO of \( G_{2p} \) with \( h \) as its underlying \( (p+2) \)-colouring. 
%In particular, \( \overrightarrow{G_{2p}} \) is a \( (p+2) \)-coloured MINI-orientation of \( G_{2p} \) with\( h \) as its underlying \( (p+2) \)-colouring. 
%}{
By Observation~\ref{obs:G2p arrow is p+2-CEO}, \( \overrightarrow{G_{2p}} \) is a \( (p+2) \)-CEO of \( G_{2p} \). 
In particular, \( \overrightarrow{G_{2p}} \) is a \( (p+2) \)-coloured MINI-orientation of \( G_{2p} \) (with an underlying \( (p+2) \)-colouring \( h \)). 
%}
We also know that \( \psi \) is an out-neighbourhood injective homomorphism from \( \vec{G} \) to \( \overrightarrow{G_{2p}} \). 
Therefore, by Theorem~\ref{thm:out-nbd I Hom preserve colourful out-nbd orientation}, \( \vec{G} \) is a \( (p+2) \)-coloured MINI-orientation of \( G \) (with \( h\circ \psi \) as its underlying \( (p+2) \)-colouring). 
Since \( \vec{G} \) is an Eulerian orientation as well as a \( (p+2) \)-coloured MINI-orientation, \( \vec{G} \) is a \( (p+2) \)-CEO of~\( G \). 
\end{proof}
}

The next theorem follows from Theorem~\ref{thm:strongly connected and degree-preserving} since \( \vec{L}(K_{p+2}) \) is strongly connected (for \( p\geq 2 \)). 
\begin{theorem}
Let \( G \) be a \( 2p \)-regular graph with \( p\geq 2 \). 
Let \( \vec{G} \) be an orientation of \( G \) that admits an OBH \( \psi \) to \( \vec{L}(K_{p+2}) \). 
Then, \( |V(G)|=q(p+1)(p+2) \) for some positive integer \( q \), and \( \vec{G} \) can be constructed by \( (i) \) starting with \( q\,\vec{L}(K_{p+2}) \) and an LBH \( \varphi \) from \( q \vec{L}(K_{p+2}) \) to \( \vec{L}(K_{p+2}) \), and \( (ii) \)~repeating arc 2-switch operations on arcs of the form \( (u,v) \) and \( (x,y) \) with \( \varphi(v)=\varphi(y) \) an arbitrary number of times. 
\qed
\end{theorem}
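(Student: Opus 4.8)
The plan is to obtain the statement as an immediate instance of Corollary~\ref{cor:strongly connected and degree-preserving} (equivalently, of Theorem~\ref{thm:strongly connected and degree-preserving} together with the lemma preceding that corollary), once two routine facts about \( \vec{L}(K_{p+2}) \) are in place. First I would check that, for \( p\geq 2 \), the underlying graph \( L^*(K_{p+2}) \) is \( 2p \)-regular: a vertex \( (i,j) \) of \( \vec{L}(K_{p+2}) \) has out-neighbours exactly the vertices \( (j,\ell) \) with \( \ell\notin\{i,j\} \) and in-neighbours exactly the vertices \( (k,i) \) with \( k\notin\{i,j\} \), so both its out-degree and its in-degree equal \( p \), giving total degree \( 2p \); moreover \( |V(\vec{L}(K_{p+2}))| = (p+1)(p+2) \). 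Second, I would check that \( \vec{L}(K_{p+2}) \) is strongly connected for \( p\geq 2 \): given vertices \( (i_0,j_0) \) and \( (i_t,j_t) \), a sequence \( j_0, c_1, \dots, c_m, i_t, j_t \) of vertices of \( K_{p+2} \) in which no two consecutive terms and no two terms at distance two coincide lifts to the directed walk \( (i_0,j_0)\to(j_0,c_1)\to(c_1,c_2)\to\cdots\to(c_m,i_t)\to(i_t,j_t) \) in \( \vec{L}(K_{p+2}) \), and such a sequence exists because \( K_{p+2} \) has \( p+2\geq 4 \) vertices. This is the fact flagged immediately before the statement.

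Next I would note that the hypothesised OBH \( \psi \) from \( \vec{G} \) to \( \vec{L}(K_{p+2}) \) is automatically degree-preserving: \( G \) is \( 2p \)-regular by assumption and \( L^*(K_{p+2}) \) is \( 2p \)-regular by the first step, so \( \deg_G(v) = 2p = \deg_{L^*(K_{p+2})}(\psi(v)) \) for every vertex \( v \) of \( G \). (Alternatively, an OBH preserves out-degree, so every \( v \) has out-degree \( p \), hence in-degree \( 2p-p=p \), matching \( \vec{L}(K_{p+2}) \).)

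Finally I would assemble the pieces. Since \( \vec{L}(K_{p+2}) \) is strongly connected and \( \psi \) is a degree-preserving OBH onto it, Corollary~\ref{cor:strongly connected and degree-preserving} --- whose counting half is Theorem~\ref{thm:strongly connected and degree-preserving} and whose construction half is the lemma just before it --- yields a positive integer \( q \) with \( |V(G)| = q\,|V(\vec{L}(K_{p+2}))| = q(p+1)(p+2) \), and shows that \( \vec{G} \) arises from \( q\,\vec{L}(K_{p+2}) \) and an LBH \( \varphi \) onto \( \vec{L}(K_{p+2}) \) by a sequence of arc 2-switch operations on arcs \( (u,v),(x,y) \) with \( \varphi(v)=\varphi(y) \). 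I do not anticipate any genuine obstacle here: the substance is entirely contained in the earlier results, and the only points warranting a line of their own are the \( 2p \)-regularity and the strong connectivity of \( \vec{L}(K_{p+2}) \) --- both easy counting and walk arguments --- together with the observation that degree-preservation is free because the two graphs share the common degree \( 2p \).
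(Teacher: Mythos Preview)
Your proposal is correct and matches the paper's approach exactly: the paper states that the theorem ``follows from Theorem~\ref{thm:strongly connected and degree-preserving} since \( \vec{L}(K_{p+2}) \) is strongly connected (for \( p\geq 2 \))'' and gives no further proof. You have simply written out the two routine verifications (regularity/degree-preservation and strong connectivity) that the paper leaves implicit, and correctly pointed to Corollary~\ref{cor:strongly connected and degree-preserving} (equivalently the lemma preceding it) as the source of the construction clause.
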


Before proceeding further, we restate a few results in \cite{shalu_cyriac3} which are closely related to the notions we discuss here, but were expressed with different terminology and notation. 
After this point, we never use those terminology or notation from \cite{shalu_cyriac3}. 
Let \( p\geq 2 \). 
The oriented graph \( \vec{L}(K_{p+2}) \) and the graph \( L^*(K_{p+2}) \) were introduced in \cite{shalu_cyriac3} under the names \( \overrightarrow{G_{2p}} \) and \( G_{2p} \), respectively. 
It is easy observe that \( \vec{L}(K_{p+2}) \) is isomorphic to \( \overrightarrow{G_{2p}} \) (the map \( (x,y)\mapsto (y,x) \) is an isomorphism). 
\begin{theorem}[\cite{shalu_cyriac3}]\label{thm:Lstarkq ham}
\( L^*(K_q) \) is vertex-transitive and edge-transitive for \( q\in \mathbb{N} \) and Hamiltonian for  \( q\geq 4 \). 
\end{theorem}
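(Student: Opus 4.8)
The plan is to read everything off the explicit description of $L^*(K_q)$ from Section~\ref{sec:oriented line graph def}: its vertex set is $\{(i,j)\in\mathbb{Z}_q^2\colon i\ne j\}$, and two vertices $(a,b)$ and $(c,d)$ are adjacent exactly when $b=c$ and $a\ne d$, or $a=d$ and $b\ne c$; equivalently, every edge has the form $\{(i,j),(j,k)\}$ with $i,j,k$ pairwise distinct. For $q\le 2$ the graph has no edge (and at most two vertices), so all three assertions are trivial, and I would assume $q\ge 3$ below.

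For vertex- and edge-transitivity I would let the symmetric group $S_q$ act on $V(L^*(K_q))$ by $\sigma\cdot(i,j)=(\sigma(i),\sigma(j))$; since adjacency is defined purely through equalities and inequalities of coordinates, each such $\sigma$ is an automorphism. As $S_q$ is $2$-transitive on $\mathbb{Z}_q$, it is transitive on ordered pairs of distinct elements, hence on $V(L^*(K_q))$, giving vertex-transitivity. For edges I would note that $\{(i,j),(j,k)\}\mapsto(i,j,k)$ is a well-defined bijection from $E(L^*(K_q))$ onto the set of ordered triples of pairwise distinct elements of $\mathbb{Z}_q$ — the edge determines the triple because among the four coordinates exactly one value ($j$) occurs twice, and it is the second coordinate of one pair and the first of the other — and this bijection intertwines the $S_q$-action on edges with the $S_q$-action on such triples. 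Since $S_q$ is $3$-transitive on $\mathbb{Z}_q$, it is transitive on ordered triples of distinct elements, so $L^*(K_q)$ is edge-transitive.

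For Hamiltonicity I would use induction on $q\ge 4$. For the base case $q=4$ I would exhibit the explicit Hamiltonian cycle of $L^*(K_4)$ (the cuboctahedral graph $L(Q_3)$)
\[
(0,1)-(1,2)-(2,3)-(3,0)-(0,2)-(2,1)-(1,0)-(3,1)-(0,3)-(2,0)-(3,2)-(1,3)-(0,1).
\]
For the step, the structural observation is that $L^*(K_{q+1})$ contains $L^*(K_q)$ as the induced subgraph on the pairs drawn from $\mathbb{Z}_q$, while the $2q$ remaining vertices $P=\{(q,i):i\in\mathbb{Z}_q\}$ and $Q=\{(i,q):i\in\mathbb{Z}_q\}$ induce the crown graph $K_{q,q}$ minus a perfect matching (one checks $P$ and $Q$ are independent and $(q,i)\sim(j,q)$ iff $i\ne j$); moreover the old-vertex neighbours of $(q,\alpha)$ are exactly $\{(\alpha,x):x\in\mathbb{Z}_q\setminus\{\alpha\}\}$, and those of $(\gamma,q)$ are exactly $\{(x,\gamma):x\in\mathbb{Z}_q\setminus\{\gamma\}\}$. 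The crown graph is Hamiltonian for $q\ge 3$ (e.g.\ by the Moon–Moser theorem, being balanced bipartite of minimum degree $q-1>q/2$) and edge-transitive under the diagonal $S_q$-action, hence has a Hamiltonian cycle through any prescribed edge. Given a Hamiltonian cycle $C$ of $L^*(K_q)$, I would pick any edge of $C$, written $uv$ with $u=(\alpha,\beta)$, $v=(\beta,\gamma)$ and $\alpha,\beta,\gamma$ distinct, take a Hamiltonian cycle $C'$ of the crown graph through $\{(q,\alpha),(\gamma,q)\}$, delete $uv$ from $C$ and $\{(q,\alpha),(\gamma,q)\}$ from $C'$, and reconnect using the edges $\{(\alpha,\beta),(q,\alpha)\}$ and $\{(\beta,\gamma),(\gamma,q)\}$, both of which exist by the neighbourhood description above. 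Splicing a Hamiltonian path of $L^*(K_q)$ (from $u$ to $v$) to a Hamiltonian path of the crown graph (from $(q,\alpha)$ to $(\gamma,q)$) in this way produces a single cycle through all $q(q+1)$ vertices of $L^*(K_{q+1})$.

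The vertex- and edge-transitivity parts and the adjacency bookkeeping are routine. The main obstacle is getting the inductive Hamiltonicity argument completely rigorous: one must verify that the $2q$ new vertices induce precisely a crown graph, that for an arbitrary edge $uv$ of $C$ the two reconnecting edges genuinely lie in $L^*(K_{q+1})$, and that the splice yields one cycle rather than two — plus one must still supply the base case and the (standard) Hamiltonicity and edge-transitivity of crown graphs. An alternative would be to produce a Hamiltonian cycle directly from a closed trail in $K_q$ traversing every directed edge exactly once with no immediate backtrack; but ensuring such a trail is simultaneously Eulerian and backtrack-free (equivalently, that the associated transition system is connected) seems at least as delicate, so I would favour the inductive route.
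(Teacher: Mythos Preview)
The paper does not supply a proof of this statement: it is quoted from \cite{shalu_cyriac3} and simply stated without argument. So there is no in-paper proof to compare against.

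That said, your argument is correct and self-contained. The vertex- and edge-transitivity parts are clean: the $S_q$-action by coordinate-wise permutation is an automorphism action, $2$-transitivity of $S_q$ gives vertex-transitivity, and your bijection between edges and ordered triples of distinct elements (with the repeated coordinate pinning down the middle entry) correctly reduces edge-transitivity to $3$-transitivity of $S_q$. The Hamiltonicity induction also checks out: the base cycle you list for $q=4$ is valid (all $12$ consecutive pairs are of the form $\{(i,j),(j,k)\}$ with $i,j,k$ distinct), the $2q$ new vertices do induce the crown graph $K_{q,q}-M$, the two reconnecting edges $\{(\alpha,\beta),(q,\alpha)\}$ and $\{(\beta,\gamma),(\gamma,q)\}$ are genuine edges of $L^*(K_{q+1})$, and the splice of the two Hamiltonian paths yields a single cycle on all $q(q+1)$ vertices. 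Your use of Moon--Moser for the crown graph (minimum degree $q-1>q/2$ once $q\ge 3$) plus its edge-transitivity under the diagonal $S_q$-action is exactly what is needed to guarantee a Hamiltonian cycle through the prescribed crown edge $\{(q,\alpha),(\gamma,q)\}$, which exists since $\alpha\neq\gamma$.

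One small thing worth stating explicitly in a final write-up: when you pick an edge $uv$ of the Hamiltonian cycle $C$ in $L^*(K_q)$, you are free to label its endpoints so that $u=(\alpha,\beta)$ and $v=(\beta,\gamma)$ (the repeated coordinate $\beta$ determines which vertex is $u$ and which is $v$); this is implicit in your description but should be said once so the reconnection is unambiguous.
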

Due to Observation~\ref{obs:when obh is lbh simple case2}, for each \( p\geq 2 \), \( L^*(K_{p+2}) \) is the unique \( 2p \)-regular \( (p+2) \)-star colourable graph on \( (p+1)(p+2) \) vertices (see \cite{shalu_cyriac3} for an alternate proof).

In Notes 1 and 2 below, We mention some terminology and notation from \cite{shalu_cyriac3} to facilitate comparison with proofs in \cite{shalu_cyriac3}, in case a reader wishes to do so. 

\noindent
\emph{Note 1:} 
Let \( G \) be a \( 2p \)-regular graph with an orientation \( \vec{G} \). 
In \cite{shalu_cyriac3}, \( \vec{G} \) is called a \( (p+2) \)-Colourful Eulerian Orientation (abbr.\ \( (p+2) \)-CEO) if there exists a \( (p+2) \)-colouring \( f \) of \( G \) such that \( (\vec{G},f) \) is a \( (p+2) \)-colourful MINI-orientation, and \( \vec{G} \) is Eulerian. 
From Lemma~\ref{lem:star col iff mini iff obh}, it follows that any orientation of a \( 2p \)-regular graph that admits an OBH to \( \vec{L}(K_{p+2}) \) must be Eulerian.  
Hence, the condition ``\( \vec{G} \) is Eulerian'' in the definition of CEO is redundant for \( (p+2) \)-CEOs of \( 2p \)-regular graphs. 

\noindent
\emph{Note 2:} 
Let \( f \) be a \( (p+2) \)-star colouring of a \( 2p \)-regular graph \( G \). 
In \cite{shalu_cyriac3}, \( V_i^j \)  denotes the set of vertices with colour \( i \) whose in-neighbours are coloured \( j \) by \( f \); with the notation in the proof of Lemma~\ref{lem:star col iff mini iff obh}, \( V_i^j=\{v\in V(G)\colon f(v)=i \mathrm{ and } h(v)=j\}=\psi^{-1}((j,i)) \). 
~\\

Some properties of \( 2p \)-regular \( (p+2) \)-star colourable graphs appear in \cite[Corollary~2]{shalu_cyriac3}, and we explore more properties here. 
\begin{theorem}[\cite{shalu_cyriac3}]
For \( p\geq 2 \), every \( 2p \)-regular \( (p+2) \)-star colourable graph has the following properties: \( (i) \) \( G \) is \( (\mathrm{diamond},K_4) \)-free \( ( \)i.e., \( \mathrm{diamond} \)-subgraph-free\( ) \), \( (ii) \) the independence number of \( G \) is greater than \( |V(G)|/4 \), and \( (iii) \) the chromatic number of \( G \) is \( O(\log p) \). 
\end{theorem}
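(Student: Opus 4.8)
The plan is to reduce all three properties to the block structure of a \( (p+2) \)-coloured MINI-orientation. Fix a \( (p+2) \)-star colouring \( f\colon V(G)\to \mathbb Z_{p+2} \) of \( G \) and let \( \vec G \) be the in-orientation it induces; by Theorem~\ref{thm:characterise 2p-regular p+2-star colourable} and the proof of Lemma~\ref{lem:star col iff mini iff obh}, \( (\vec G,f) \) is a \( (p+2) \)-coloured MINI-orientation with \( \vec G \) Eulerian, so every vertex has in-degree and out-degree exactly \( p \), the \( p \) in-neighbours of a vertex \( v \) share one colour \( h(v)\ne f(v) \), and the \( p \) out-neighbours of \( v \) realise the \( p \) distinct colours of \( \mathbb Z_{p+2}\setminus\{f(v),h(v)\} \). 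Put \( V_i^j=\{v: f(v)=i,\ h(v)=j\} \), so \( \{V_i^j\}_{i\ne j} \) partitions \( V(G) \). A short inspection of the three MINI conditions gives the \emph{local palette} statement I will rely on: if \( v\in V_i^j \), then every in-neighbour of \( v \) lies in \( \bigcup_{m\notin\{i,j\}}V_j^m \), and for each \( m\notin\{i,j\} \) exactly one out-neighbour of \( v \) lies in \( V_m^i \); consequently the only blocks \( V_k^\ell \) that can be joined by an edge to \( V_i^j \) are those with \( k=j \) or \( \ell=i \). Establishing this description cleanly is the one place that needs care; the rest is counting.

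\emph{Property (i).} Suppose \( G \) has a diamond on \( \{a,b,c,d\} \) with \( cd\notin E(G) \), so \( abc \) and \( abd \) are triangles, whence \( f(a),f(b),f(c) \) are distinct and \( f(a),f(b),f(d) \) are distinct. If \( f(c)=f(d) \), then \( c,d \) are equally coloured neighbours of \( a \); since the out-neighbours of \( a \) have pairwise distinct colours and no out-neighbour carries an in-neighbour's colour, \( c \) and \( d \) must both be in-neighbours of \( a \), so \( h(a)=f(c) \), and symmetrically \( h(b)=f(c) \); orienting \( ab \), say \( a\to b \), yields \( f(a)=h(b)=f(c) \), contradicting \( a\sim c \). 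If \( f(c)\ne f(d) \), then \( f(a),f(b),f(c),f(d) \) are pairwise distinct, so in the subdigraph of \( \vec G \) on the five arcs lying over \( ab,ac,ad,bc,bd \) each of \( a,b \) has at least two out-arcs (three neighbours of distinct colours, at most one an in-neighbour) and each of \( c,d \) has at least one out-arc, giving at least \( 6 \) out-arcs over \( 5 \) edges — impossible. Hence \( G \) is diamond-subgraph-free, and since every \( K_4 \) contains a diamond, \( G \) is \( (\mathrm{diamond},K_4) \)-free.

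\emph{Property (ii).} For \( A\subseteq \mathbb Z_{p+2} \) let \( I_A=\bigcup_{i\in A,\ j\notin A}V_i^j \). By the local palette statement, two blocks \( V_i^j,V_k^\ell \) with \( i,k\in A \) and \( j,\ell\notin A \) cannot be joined by an edge (that would force \( k=j\in A \) or \( i=\ell\notin A \)), and each block is independent, so \( I_A \) is an independent set of \( G \) of size \( \sum_{i\in A,\ j\notin A}|V_i^j| \). Averaging over all \( A \) with \( |A|=\lfloor (p+2)/2\rfloor \), a given ordered pair \( (i,j) \) with \( i\ne j \) is counted with probability \( |A|\,(p+2-|A|)/\big((p+2)(p+1)\big) \); since \( |A|(p+2-|A|)=\lfloor\tfrac{p+2}{2}\rfloor\lceil\tfrac{p+2}{2}\rceil\ge \tfrac{(p+2)^2-1}{4} \), the expected value of \( |I_A| \) is at least \( \tfrac{p+3}{4(p+2)}\,|V(G)| > \tfrac14|V(G)| \). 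Hence some \( I_A \) has more than \( |V(G)|/4 \) vertices, so \( \alpha(G)>|V(G)|/4 \).

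\emph{Property (iii).} By Theorem~\ref{thm:characterise 2p-regular p+2-star colourable}, \( G \) has an orientation admitting an OBH to \( \vec L(K_{p+2}) \); an OBH is a homomorphism, and forgetting orientations yields a graph homomorphism \( G\to L^*(K_{p+2}) \), hence \( \chi(G)\le \chi\big(L^*(K_{p+2})\big) \). Now \( L^*(K_{p+2}) \) is, by definition, the graph on the ordered pairs \( (i,j) \) of distinct elements of \( \mathbb Z_{p+2} \) with \( (i,j) \) adjacent to \( (j,k) \) whenever \( i,j,k \) are pairwise distinct, i.e.\ a (two-sided) shift graph. Colour \( (i,j) \) by \( \big(t(i,j),\ \mathrm{sgn}(j-i)\big) \), where \( t(i,j) \) is the index of the most significant bit on which \( i \) and \( j \) differ: if \( (i,j) \) and \( (j,k) \) received the same colour, then \( t(i,j)=t(j,k)=:t \) and \( j-i,\,k-j \) have equal sign, and in either sign case \( j \) would be forced to carry both a \( 0 \) and a \( 1 \) in bit position \( t \) — impossible. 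This is a proper colouring using at most \( 2\lceil\log_2(p+2)\rceil \) colours, so \( \chi(G)\le \chi(L^*(K_{p+2}))=O(\log p) \). I expect this last part to be the main obstacle: one must realise that, although \( G \) is \( 2p \)-regular, the homomorphic target \( L^*(K_{p+2}) \) is essentially a shift graph and hence has only logarithmic chromatic number, whereas (i) is a short arc-count and (ii) an averaging argument over the blocks \( V_i^j \) of the MINI-orientation.
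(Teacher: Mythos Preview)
The paper does not prove this theorem; it is stated with a citation to \cite{shalu_cyriac3} and no argument is given here, so there is no ``paper's own proof'' to compare against.

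Your proposal is correct. All three parts go through as written. In~(i), the arc-count in Case~2 is clean: each of \(a,b\) contributes at least two out-arcs and each of \(c,d\) at least one, overloading the five edges; Case~1 is also fine once one notes that ``both in-neighbours of \(a\)'' is forced because an out-neighbour colour can never equal the in-neighbour colour. In~(ii), the key point that \(I_A\) is independent follows exactly from your local palette statement, and the averaging with \(|A|=\lfloor(p+2)/2\rfloor\) gives the factor \((p+3)/\bigl(4(p+2)\bigr)>1/4\) as you computed. In~(iii), your description of \(L^*(K_{p+2})\) as ``\((i,j)\) adjacent to \((j,k)\)'' is the right one---every edge has this form for some labelling---so checking the bit-position/sign colouring on that single edge type suffices, and \(2\lceil\log_2(p+2)\rceil\) colours is \(O(\log p)\).

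One cosmetic remark: in~(iii) you could equally invoke Theorem~\ref{thm:LstarH has LBH to LH} to pass to \(L(K_{p+2})\) and quote the classical shift-graph bound, but your direct colouring of \(L^*(K_{p+2})\) is just as short and keeps the argument self-contained.
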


Dvo{\v{r}}{\'a}k et al.~\cite{dvorak} proved that for every 3-regular graph \( G \), the line graph of \( G \) is 4\nobreakdash-star colourable if and only if \( G \) admits a locally bijective homomorphism to \( Q_3 \). 
In Theorem~\ref{thm:4-star colourable in terms of distance-two colouring} below, we show that for every 3-regular graph \( G \), the line graph of \( G \) is 4\nobreakdash-star colourable if and only if \( G \) is bipartite and distance-two 4-colourable. 
In Theorem~\ref{thm:p+2 star colourable iff LBH} below, we prove that for every integer \( p\geq 2 \), a \( K_{1,p+1} \)-free \( 2p \)-regular graph \( G \) is \( (p+2) \)\nobreakdash-star colourable if and only if \( G \) admits a locally bijective homomorphism to \( G_{2p} \). 
In particular, a claw-free 4-regular graph \( G \) is 4-star colourable if and only if \( G \) admits a locally bijective homomorphism to the line graph of \( Q_3 \) (recall that \( G_4=L(Q_3) \)). 

\begin{theorem}\label{thm:4-star colourable in terms of distance-two colouring}
For every 3-regular graph \( G \), the line graph of \( G \) is 4-star colourable if and only if \( G \) is bipartite and distance-two 4-colourable. 
\end{theorem}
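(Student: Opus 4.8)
The plan is to start from the theorem of Dvořák, Mohar and Šámal~\cite{dvorak}, which already gives, for every $3$-regular graph $G$, that $L(G)$ is $4$-star colourable if and only if $G$ admits an LBH to the cube $Q_3$. Thus the statement reduces to the claim that a $3$-regular graph $G$ admits an LBH to $Q_3$ if and only if $G$ is bipartite and distance-two $4$-colourable. The one structural fact I would exploit is that $Q_3$ is isomorphic to the crown graph $K_{4,4}-M$, where $M$ is a perfect matching: write the bipartition of $Q_3$ as $\{e_1,e_2,e_3,e_4\}\cup\{o_1,o_2,o_3,o_4\}$, indexed so that $e_i\sim o_j$ precisely when $i\ne j$ (for instance, index the four antipodal pairs of $Q_3$ by $1,2,3,4$ and let $e_i,o_i$ be the even- and odd-weight vertex of pair $i$). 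In this description the colouring $c$ with $c(e_i)=c(o_i)=i$ is a distance-two $4$-colouring of $Q_3$: each class $\{e_i,o_i\}$ is independent and the subgraph induced by any two classes is $2K_2$. In particular, for every vertex of $Q_3$ its three neighbours receive the three colours different from its own.

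For the forward direction, suppose $\psi$ is an LBH from $G$ to $Q_3$. Then $G$ is bipartite, since $G$ is homomorphic to the bipartite graph $Q_3$ (compose with $Q_3\to K_2$). Also $c\circ\psi$ is a proper $4$-colouring of $G$, and for each vertex $v$ the restriction of $\psi$ to $N_G(v)$ is a bijection onto $N_{Q_3}(\psi(v))$, on which $c$ is injective; hence the neighbours of $v$ get pairwise distinct colours under $c\circ\psi$. A proper colouring in which every neighbourhood is rainbow cannot contain a bicoloured $P_3$ (its middle vertex would have two equally coloured neighbours), so all its bicoloured components are $K_1$ or $K_2$; that is, $c\circ\psi$ is a distance-two $4$-colouring of $G$.

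For the converse, let $A,B$ be the parts of a bipartition of $G$ and let $f\colon V(G)\to\{1,2,3,4\}$ be a distance-two $4$-colouring. Define $\psi(v)=e_{f(v)}$ for $v\in A$ and $\psi(v)=o_{f(v)}$ for $v\in B$. Since $f$ is proper, $f(a)\ne f(b)$ for each edge $ab$ with $a\in A$ and $b\in B$, so $\psi(a)\sim\psi(b)$ in $Q_3$ and $\psi$ is a homomorphism. Fix $v\in A$ (the case $v\in B$ is symmetric). Its three neighbours lie in $B$ and receive pairwise distinct colours, each different from $f(v)$ (otherwise $(G,f)$ would have a bicoloured $P_3$ centred at $v$ or a monochromatic edge), so their colours are exactly the three elements of $\{1,2,3,4\}\setminus\{f(v)\}$; hence $\psi$ maps $N_G(v)$ bijectively onto $\{o_j : j\ne f(v)\}=N_{Q_3}(e_{f(v)})=N_{Q_3}(\psi(v))$. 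Therefore $\psi$ is an LBH from $G$ to $Q_3$.

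The only non-routine ingredient is the identification $Q_3\cong K_{4,4}-M$ and the attendant fact that its natural $4$-colouring is simultaneously a distance-two colouring with rainbow neighbourhoods; everything else is bookkeeping about bijections between $N_G(v)$ and $N_{Q_3}(\psi(v))$. The point I would be most careful about is reconciling the paper's definition of a distance-two $k$-colouring (every bicoloured component is $K_1$ or $K_2$) with the form the LBH argument naturally delivers, namely a proper colouring in which every vertex sees pairwise distinct colours on its neighbourhood — these coincide for the $3$-regular graphs in play, but the equivalence should be stated explicitly.
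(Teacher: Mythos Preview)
Your proof is correct and takes a genuinely different route from the paper's. The paper proceeds by a chain of citations: it writes $Q_3\cong K_4\times K_2$ and invokes the general fact (Fiala) that $G$ admits an LBH to a bipartite double $H\times K_2$ if and only if $G$ is bipartite and admits an LBH to $H$; then, for $3$-regular $G$, it quotes that an LBH to $K_4$ is equivalent to a locally injective homomorphism to $K_4$, which in turn is equivalent to a distance-two $4$-colouring. Your argument instead exploits the concrete description $Q_3\cong K_{4,4}-M$ and builds the LBH and the distance-two colouring from one another by hand. This is more elementary and self-contained---no external lemmas beyond Dvo\v{r}\'ak--Mohar--\v{S}\'amal---whereas the paper's route isolates the bipartiteness and the $K_4$-cover as separate, reusable ingredients. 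One small remark: the equivalence between ``every neighbourhood is rainbow'' and ``every bicoloured component is $K_1$ or $K_2$'' holds for all graphs, not just $3$-regular ones, so you need not hedge there.
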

%\begin{customthm}{\ref*{thm:4-star colourable in terms of distance-two colouring}}
%For every 3-regular graph \( G \), the line graph of \( G \) is 4-star colourable if and only if \( G \) is bipartite and distance-two 4-colourable. 
%\end{customthm}
\begin{proof}
Let \( G \) be a 3-regular graph. 
By the characterisation of Dvo{\v{r}}{\'a}k et al.~\cite{dvorak}, the line graph of \( G \) is 4-star colourable if and only if \( G \) admits a locally bijective homomorphism to \( Q_3 \). 
For a fixed graph \( H \), a graph \( G \) admits a locally bijective homomorphism to the bipartite double \( H\times K_2 \)  if and only if \( G \) is bipartite and \( G \) admits a locally bijective homomorphism to \( H \) \cite{fiala2000}. 
\todo{
Reviewer G:\\
Define the product.}
%(done; But, I would prefer to only define the bipartite double [to have less number of definitions].)}
\todo[color=gray!10!white]{
The definition of bipartite double is added in Section~\ref{sec:def}. 
We prefer not to define the tensor product to have fewer definitions. 
}
\todo{
Reviewer G:\\
Either  https://doi.org/10.7151/dmgt.1159 or the PhD. Thesis should be considered as [14] is not reviewed. BTW the same reference shall be used for the next marked "if and only if"
\colcol{(done)}
}
We know that \( Q_3 \) is the bipartite double of \( K_4 \); that is \( Q_3\cong K_4\times K_2 \). % (see Figure~\ref{fig:Q3 LBH K4}). 
Hence, the line graph of \( G \) is 4-star colourable if and only if \( G \) is bipartite and \( G \) admits a locally bijective homomorphism to \( K_4 \). 
A 3-regular graph \( G \) admits a locally bijective homomorphism to \( K_4 \) if and only if \( G \) admits a locally injective homomorphism to \( K_4 \) \cite[Theorem 2.4]{fiala2000}. 
\todo{
Reviewer G:\\
add reference, e.g. as in the previous comment \colcol{(done)}
}
Besides, a 3-regular graph \( G \) admits a locally injective homomorphism to \( K_4 \) if and only if \( G \) admits a distance-two 4-colouring \cite{fiala_etal2001,kratochvil_siggers}. 
\todo{
Reviewer G:\\
This should be observed decades earlier, check e.g. Tiziana Calamoneri: The L(h, k)-Labelling Problem: An Updated Survey and Annotated Bibliography
\colcol{(done)}
}
%\todo[color=blue!5!white]{
%This should be known much earlier, but does not seem to appear explicitly in any earlier paper. 
%Related papers (e.g.\ \href{https://doi.org/10.7151/dmgt.1159}{Fiala and Kratchov\'il}) state similarly results that are less obvious: e.g.\ chracterisation of L\( (2,1) \)-labeling as a homomorphism.
%
%Update: It is okay to cite \href{https://doi.org/10.1016/S0166-218X(00)00387-5}{Fiala et al.}; the result appears implicitly there (they point out that (i)~distance-two colorings are precisely \( L(1,1) \)-labelings, and (ii)~a \( (t-1) \)-regular graph \( G \) covers the complete graph \( K_t \) if and only if the vertices of \( G \) can be assigned \( t \)  different colors in such a way that closed neighborhood of each vertex is assigned all \( t \) colors.
%}
Thus, \( G \) admits a locally bijective homomorphism to \( K_4 \) if and only if \( G \) admits a distance-two 4-colouring.
Therefore, the line graph of \( G \) is 4-star colourable if and only if \( G \) is bipartite and distance-two 4-colourable. 
\end{proof}

\todo{
(omitted) [from previous version]

The following observation helps us to prove Theorem~\ref{thm:K1p+1free out-nbd BH same as LBH} below.  

\textbf{Observation.}
%\begin{observation}\label{obs:triangle in extremal graphs}
\emph{
Let \( p\geq 2 \), and let \( G \) be a \( 2p \)-regular graph with an orientation \( \vec{G} \). 
Let \( \psi \colon V(\vec{G})\to V\left(\overrightarrow{G_{2p}}\right) \) be an out-neighbourhood bijective homomorphism from \( \vec{G} \) to \( \overrightarrow{G_{2p}} \). 
If \( (v,w,x) \) is a triangle in \( G \), then there exist pairwise distinct integers \allowbreak \( i,j,k\in \{0,1,\dots,p+1\} \) such that either (i)~\( \psi(v)=(i,j) \), \( \psi(w)=(j,k) \) and \( \psi(x)=(k,i) \), or (ii)~\( \psi(v)=(k,i) \), \( \psi(w)=(j,k) \) and \( \psi(x)=(i,j) \). 
}
%\end{observation}
\todo{
Reviewer G:\\
The goal of this statement is twofold:\\ 
1. the cycle \( (v,w,x) \) is cyclically directed in one of the two possible directions, and\\
2. the labels form sequence \( (i,j),(k,i),(j,k) \).
Indeed \( (i,j),(j,k),(k,i) \) sounds more natural as I have suggested earlier.

Rephrase accordingly.
}
%% pos 2
\begin{tcolorbox}[colorback=gray!10!white,boxrule=0.5pt,boxsep=0pt,colframe=black]
The observation is removed. 
\( \psi \) being a homomorphism between undirected graphs was enough (showing that \( \psi \) maps traingles to traingles was sufficient for our purpose).
\end{tcolorbox}
%\todo{
%\colcol{
%Response: 
%The observation is removed. 
%\( \psi \) being a homomorphism between undirected graphs was enough (showing that \( \psi \) maps traingles to traingles was sufficient for our purpose).
%}
%}
\begin{proof}
Let \( (v,w,x) \) be a triangle in \( G \). 
We know that \( \psi \) is a homomorphism from \( \vec{G} \) to~\( \overrightarrow{G_{2p}} \). 
As a result, \( \psi \) is a homomorphism from \( G \) to \( G_{2p} \). 
Consider the edge \( vx \) of \( G \). 
Since \( vx\in E(G) \) and \( \psi \) is a homomorphism from \( G \) to \( G_{2p} \), we have \( \psi(v)\psi(x)\in E(G_{2p}) \). 
Since every edge in \( G_{2p} \), and in particular \( \psi(v)\psi(x) \), is of the form \( \{(i,j),(k,i)\} \), there exist distinct integers \( i,j,k\in \{0,1,\dots,p+1\} \) such that either (i)~\( \psi(v)=(i,j) \) and \( \psi(x)=(k,i) \), or (ii)~\( \psi(x)=(i,j) \) and \( \psi(v)=(k,i) \). 
Since the vertex \( w \) of \( G \) is adjacent to both \( v \) and \( x \), and \( \psi \) is a homomorphism from \( G \) to \( G_{2p} \), the vertex \( \psi(w) \) of \( G_{2p} \) is adjacent to both \( \psi(v) \) and \( \psi(x) \). 
That is, \( \psi(w) \) is adjacent to \( (i,j) \) and \( (k,i) \). 
Since \( (j,k) \) is the only vertex adjacent to both \( (i,j) \) and \( (k,i) \) in \( G_{2p} \), we have \( \psi(w)=(j,k) \). 
Therefore, there exist pairwise distinct integers \( i,j,k\in \{0,1,\dots,p+1\} \) such that either (i)~\( \psi(v)=(i,j) \), \( \psi(w)=(j,k) \) and \( \psi(x)=(k,i) \), or (ii)~\( \psi(v)=(k,i) \), \( \psi(w)=(j,k) \) and \( \psi(x)=(i,j) \). 
\end{proof}
}

By Theorem~\ref{thm:4-star colourable in terms of distance-two colouring}, for every 3-regular graph \( G \), the line graph of \( G \) is 4-star colourable if and only if \( G \) is bipartite and distance-two 4-colourable. 
Feder et al.\ proved that if all faces of a plane 3-regular graph \( G \) are of length divisible by~4, then \( G \) is distance-two 4\nobreakdash-colourable \cite[Corollary 2.4]{feder}. 
Since such graphs \( G \) are bipartite as well, \( L(G) \) is 4-star colourable by Theorem~\ref{thm:4-star colourable in terms of distance-two colouring}. 
\todo{
Reviewer G:\\
Define this graph (\( CL_{4r} \)). \colcol{(done)}
}
As a special case, \( L(CL_{4r}) \) 
is 4-star colourable for each \( r\in \mathbb{N} \) (where \( CL_t \) denotes the circular ladder graph on \( 2t \) vertices, which is the cartesian product of \( C_t \) with \( K_2 \)); as a result, \( L(CL_{4r}) \) has an orientation that admits an out-neighbourhood bijective homomorphism to \( \vec{G_4} \) by Theorem~\ref{thm:characterise 2p-regular p+2-star colourable}. 
It is easy to show that for every \( r\in \mathbb{N} \), the graph \( L(CL_{4r}) \) admits a locally bijective homomorphism to \( L(CL_4) \) (we prove a more general statement below). 
Since \( Q_3\cong CL_4 \) and \( L^*(K_4)\cong L(Q_3) \), the above statement is equivalent to ``\( L(CL_{4r}) \) admits a locally bijective homomorphism to \( L^*(K_4) \) for every \( r\in \mathbb{N} \)''. 
We show that a \( K_{1,3} \)-free 4-regular graph \( G \) has an orientation that admits an out-neighbourhood bijective homomorphism \( \psi \) to \( \vec{L}(K_4) \) if and only if \( \psi \) is a locally bijective homomorphism from \( G \) to \( L^*(K_4) \). 
The next theorem proves a more general statement: for a \( K_{p+1} \)-free \( 2p \)-regular graph \( G \) with \( p\geq 2 \), a mapping \( \psi \colon V(G)\to V(L^*(K_{p+2})) \) is an OBH from some orientation of \( G \) to \( \vec{L}(K_{p+2}) \) if and only if \( \psi \) is an LBH from \( G \) to \( L^*(K_{p+2}) \). 
%\todo{
%(the above statement is the special case \( p=2 \)). 
%
%instead, re-phrase the statement with \( L(Q_3) \) in terms of \( L^*(K_{p+2) \)
%}
\todo{
Reviewer G:\\
Thm 9 is indeed a consequence of a more general argument: If \( \vec{H} \) is strongly connected then every locally out-neighbourhood bijective homomorphism \( f \) is locally bijective.

Proof idea: for \( (u,v)\in E(H) \), from out- follows \( |f^{-1}(u)|\le |f^{-1}(v)| \). Along directed cycles we get \( = \). Therefore \( f^{-1}(u,v) \) always induces a matching. QED.
}
\todo[color=gray!10!white]{
The generalisation proposed is not correct. 
%(as mentioned in a newly added note \hyperlink{lnk:obh differ from lbh}{here}, an OBH need not even preserve degrees; for instance, take \( \vec{H} \) as a strongly connected orientation of a regular graph). 
%\inline{(Include a counter-example here).}
Suppose that \( G \) and \( H \) are graphs with orientations \( \vec{G} \) and \( \vec{H} \), respectively. 
Also, suppose that \( \vec{H} \) is strongly connected, and \( \psi \) is an OBH from \( \vec{G} \) to \( \vec{H} \). 
It does not follow that \( |\psi^{-1}(u)|\leq |\psi^{-1}(v)| \) for every arc \( (u,v) \) of \( \vec{H} \). 
By a different proof, we show that if \( \psi \) is also degree-preserving, then \( |\psi^{-1}(u)|=|\psi^{-1}(v)| \) for all \( u,v\in V(H) \) (included as Theorem~\ref{thm:strongly connected and degree-preserving}). 
Yet, \( |\psi^{-1}(u)|=|\psi^{-1}(v)| \) for all \( u,v\in V(H) \) does not imply that \( \psi \) is an LBH from \( G \) to \( H \) (because \( G[\psi^{-1}(u)\cup \psi^{-1}(v)] \) need not be a matching, although in it, each copy of \( u \) has exactly one incident edge; see Corollary~\ref{cor:strongly connected and degree-preserving}). 
}

\begin{theorem}\label{thm:K1p+1free out-nbd BH same as LBH}
Let \( G \) be a \( K_{1,p+1} \)-free \( 2p \)-regular graph with \( p\geq 2 \). 
Let \( \vec{G} \) be an orientation of \( G \) that admits an OBH \( \psi \) to \( \vec{L}(K_{p+2}) \). 
Then, \( \psi \) is an LBH from \( \vec{G} \) to \( \vec{L}(K_{p+2}) \). 
In particular, \( \psi \) is an LBH from \( G \) to \( L^*(K_{p+2}) \). 
\end{theorem}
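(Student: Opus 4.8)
The plan is to prove that $\psi$ is injective --- and hence, by a cardinality count, bijective --- on the in-neighbourhood of every vertex of $\vec G$. Since $\psi$ is an OBH it is already bijective on out-neighbourhoods, and in an orientation $N_G(v)=N^-_{\vec G}(v)\cup N^+_{\vec G}(v)$ is a disjoint union (and likewise for $\vec L(K_{p+2})$), so in-neighbourhood bijectivity will make $\psi$ a locally bijective homomorphism between the oriented graphs, which coincides with a locally bijective homomorphism between the underlying undirected graphs $G$ and $L^*(K_{p+2})$; that gives the ``in particular''.

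First I would fix an arbitrary vertex $v$ of $\vec G$ and write down its local picture. Because an OBH preserves out-degrees, every vertex of $\vec G$ has out-degree equal to that of its image in $\vec L(K_{p+2})$, namely $p$; as $G$ is $2p$-regular, every vertex of $\vec G$ then has in-degree exactly $p$. Put $\psi(v)=(i,j)$ and $f:=\mathrm{proj}_2\circ\psi$. Since $\psi$ is a homomorphism, every in-neighbour $w$ of $v$ has $\psi(w)$ of the form $(\,\cdot\,,i)$, so all $p$ in-neighbours of $v$ get colour $i$ under $f$ and therefore form an independent set $W$ (two vertices of $L^*(K_{p+2})$ with equal second coordinate are non-adjacent, so a colour class of $f$ spans no edge). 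Dually, every out-neighbour $x$ of $v$ has $\psi(x)$ of the form $(j,\,\cdot\,)$, and since $\psi$ is bijective on $N^+_{\vec G}(v)$, the out-neighbours $x_1,\dots,x_p$ of $v$ receive pairwise distinct colours, namely all of $\mathbb{Z}_{p+2}\setminus\{i,j\}$.

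The crucial step is to control the edges between $W$ and $X:=\{x_1,\dots,x_p\}$. Suppose $wx$ is such an edge ($w\in W$, $x\in X$) and consider its orientation in $\vec G$. If it were oriented $w\to x$, then $(\psi(w),\psi(x))$ would be an arc of $\vec L(K_{p+2})$ leaving a vertex $(\,\cdot\,,i)$, forcing $\mathrm{proj}_1(\psi(x))=i$; but $\psi(x)=(j,\,\cdot\,)$, so $i=j$, a contradiction. Hence the edge is oriented $x\to w$, and comparing first coordinates in the arc $(\psi(x),\psi(w))$ gives $\mathrm{proj}_1(\psi(w))=f(x)$. As the colours $f(x_1),\dots,f(x_p)$ are pairwise distinct, this shows each $w\in W$ has at most one neighbour in $X$, and that neighbour (if any) is the unique $x_t$ with $f(x_t)=\mathrm{proj}_1(\psi(w))$.

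Now assume for contradiction that $\psi$ is not injective on $N^-_{\vec G}(v)$, say $\psi(w_1)=\psi(w_2)$ for distinct $w_1,w_2\in W$; since both second coordinates are $i$, this just means $\mathrm{proj}_1(\psi(w_1))=\mathrm{proj}_1(\psi(w_2))$. Then $w_1$ and $w_2$ are both adjacent (if at all) only to the single vertex $x_\ast\in X$ of colour $\mathrm{proj}_1(\psi(w_1))$, while the other $p-2$ vertices of $W$ have at most one neighbour in $X$ each, so $W$ touches at most $1+(p-2)=p-1$ of the $p$ vertices of $X$. Hence some $x_t\in X$ has no neighbour in $W$, so $W\cup\{x_t\}$ is an independent set of size $p+1$ inside $N_G(v)$, and $\{v\}\cup W\cup\{x_t\}$ induces a $K_{1,p+1}$ in $G$ --- contradicting $K_{1,p+1}$-freeness. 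Thus $\psi$ is injective, hence bijective, on $N^-_{\vec G}(v)$ for every $v$, which finishes the argument. I expect the orientation bookkeeping in the crucial step to be the main obstacle: one must check carefully that the ``wrong'' orientation of a $W$--$X$ edge is genuinely impossible, and the final count really does use $p\ge 2$ (so that $p-1<p$ and $p-2\ge 0$); the rest is routine once the local structure is written out.
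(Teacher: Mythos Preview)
Your proof is correct and follows essentially the same strategy as the paper's: both set up the independent set $W$ of in-neighbours and the set $X$ of out-neighbours of an arbitrary vertex $v$, establish that any edge between $w\in W$ and $x\in X$ forces $\mathrm{proj}_1(\psi(w))=f(x)$, and then invoke $K_{1,p+1}$-freeness to conclude that $\psi$ is bijective on $N^-_{\vec G}(v)$. The only minor differences are that the paper derives the key relation via the (undirected) triangle structure of $L^*(K_{p+2})$ rather than your arc-orientation argument, and it finishes by showing directly that every $x_j$ has a $W$-neighbour (yielding surjectivity of $\psi$ on $W$) instead of your counting contradiction.
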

\noindent
\emph{Remark:} If \( \psi \) is an LBH from an aribtrary graph \( G \) to \( L^*(K_{p+2}) \), then \( \psi \) is obviously an LBH from an orientation of \( G \) to \( \vec{L}(K_{p+2}) \). 
\begin{proof}
We know that \( \vec{G} \) is an Eulerian orientation. 
Hence, each vertex of \( G \) has excatly \( p \) in-neighbours. 
To prove that the OBH \( \psi \) is an LBH from \( \vec{G} \) to \( \vec{L}(K_{p+2}) \), it suffices to show that for each vertex \( v \) of \( \vec{G} \), no two in-neighbours of \( v \) have the same image under \( \psi \). 

Define a function \( f\colon V(\vec{G})\to \mathbb{Z}_{p+2} \) as \( f(v)=\mathrm{proj}_2(\psi(v)) \) for each vertex \( v \) of \( G \). 
Since \( \psi \) is a homomorphism from \( \vec{G} \) to \( \vec{L}(K_{p+2}) \), for each arc \( (u,v) \) of \( \vec{G} \), we have \( \psi(u)=(i,j) \) and \( \psi(v)=(j,k) \) for some \( i,j,k\in \mathbb{Z}_{p+2} \). 
Hence, \( f(u)\neq f(v) \) for every edge \( uv \) of \( G \). 
That is, \( f \) is a \( (p+2) \)-colouring of \( G \). 
By Lemma~\ref{lem:star col iff mini iff obh}, \( (\vec{G},f) \) is a \( (p+2) \)-coloured MINI-orientation of \( G \) (note that \( \mathrm{proj}_2\circ \psi=f \)). 

Let \( v \) be an aribtrary vertex in \( \vec{G} \). 
%We proceed with the proof assuming \( \psi(v)=(p+1,0) \) (the proof is similar for other values). 
Without loss of generality, let \( \psi(v)=(p+1,0) \). 
Clearly, the homomorphism \( \psi \) from \( \vec{G} \) to \( \vec{L}(K_{p+2}) \) maps each in-neighbour of \( v \) to \( (i,p+1) \) for some \( i\in\mathbb{Z}_{p+2}\setminus\{0,p+1\} \) (i.e., \( i\in \{1,2,\dots,p\} \)). 
%Let \( \psi(v)=(j,k) \), where \( j,k\in \mathbb{Z}_{p+2} \). 
%Clearly, the homomorphism \( \psi \) from \( \vec{G} \) to \( \vec{L}(K_{p+2}) \) maps each in-neighbour of \( v \) to \( (i,j) \) for some \( i\in\mathbb{Z}_{p+2}\setminus\{j,k\} \), and each out-neighbour of \( v \) to \( (k,\ell) \) from some \( \ell \in \mathbb{Z}_{p+2}\setminus \{j,k\} \). 
Let \( W=\{w_1,w_2,\dots,w_p\} \) be the set of in-neighbours and \( X=\{x_1,x_2,\dots,x_p\} \) be the set of out-neighbours of \( v \) (in \( \vec{G} \)). 
Clearly, \( f(w_1)=f(w_2)=\dots=f(w_p)=p+1 \). 
Hence, \( W \) is an independent set in \( G \). 
As it is an OBH, \( \psi \) maps the set \( X \) of out-neighbours of \( v \) to \( \{(0,1),(0,2),\dots,(0,p)\} \). 
Without loss of generality, let \( \psi(x_j)=(0,j) \) for \( 1\leq j\leq p \). 
Since \( G \) is \( K_{1,p+1} \)-free and \( W\subseteq N_G(v) \) is an independent set of size \( p \), each out-neighbour \( x_j \) of \( v \) is adjacent to some in-neighbour \( w_k\in W \) of \( v \). 
Thus, there is a function \( \sigma\colon \{1,2,\dots,p\}\to \{1,2,\dots,p\} \) such that \( w_{\sigma(j)} \) is a neighbour of \( x_j \) for \( 1\leq j\leq p \). %for each \( j \) 

The homomorphism \( \psi \) from \( \vec{G} \) to \( \vec{L}(K_{p+2}) \) is apparently a homomorphism from \( G \) to \( L^*(K_{p+2}) \). 
For \( 1\leq j\leq p \), since \( (v,x_j,w_{\sigma(j)}) \) is a triangle in \( G \), the homomorphism \( \psi \) from \( G \) to \( L^*(K_{p+2}) \) maps \( (v,x_j,w_{\sigma(j)}) \) to \( \big((p+1,0),(0,j),(j,p+1)\big) \), the only triangle in \( L^*(K_{p+2}) \) that contains both vertices \( (p+1,0) \) and \( (0,j) \). 
Hence, \( \psi(w_{\sigma(j)})=(j,p+1) \) for each \( j \). 
%Since \( \psi(w_{\sigma(j)})\neq \psi(w_{\sigma(j')}) \) for distinct \( j,j'\in \{1,2,\dots,p\} \), it is clear that \( \sigma \) is a permutation, and thus \( \psi \) maps \( w_1,w_2,\dots,w_p \) to \( (1,p+1),(2,p+1),\dots,(p,p+1) \) in some order. 
Therefore, \( \psi \) maps \( w_1,w_2,\dots,w_p \) to \( (1,p+1),(2,p+1),\dots,(p,p+1) \) in some order. 
In particular, the images of distinct in-neighbours of \( v \) under \( \psi \) are distinct. 
Since \( v \) is arbitrary, this proves that \( \psi \) is an LBH from \( \vec{G} \) to \( \vec{L}(K_{p+2}) \). 
\end{proof}

%\clearpage

\todo{
(omitted) [Rewritten]

\noindent
\textbf{Theorem~\ref{thm:K1p+1free out-nbd BH same as LBH}.}
%\begin{theorem}\label{thm:K1p+1free out-nbd BH same as LBH-old}
%Let \( G \) be a \( K_{1,p+1} \)-free \( 2p \)-regular graph with \( p\geq 2 \), and let \( \psi \colon V(G)\to V(G_{2p}) \). 
%Then, \( \psi \) is an out-neighbourhood bijective homomorphism from an orientation \( \vec{G} \) of \( G \) to \( \overrightarrow{G_{2p}} \) if and only if \( \psi \) is a locally bijective homomorphism from \( G \) to \( G_{2p} \). 
%\todo{
%(suggested change)\\
Let \( G \) be a \( K_{1,p+1} \)-free \( 2p \)-regular graph with \( p\geq 2 \). 
Then, \( \psi \colon V(G)\to V(G_{2p}) \) is an out-neighbourhood bijective homomorphism from an orientation \( \vec{G} \) of \( G \) to \( \overrightarrow{G_{2p}} \) if and only if \( \psi \) is a locally bijective homomorphism from \( G \) to \( G_{2p} \). 
%}
%\end{theorem}
\todo{
Reviewer G:\\
Rephrase: Change ``and let \( \psi \colon V(G)\to V(G_{2p}) \). Then, \dots iff it is \dots'' to\\ ``Then any \( \psi \colon V(G)\to V(G_{2p}) \) is \dots iff it is \dots''
}
%% pos 3
\begin{tcolorbox}[colorback=gray!10!white,boxrule=0.5pt,boxsep=0pt,colframe=black]
The theorem statement is re-written for emphasis on the main direction of interest. 
The suggested change is applied on the sentence before the theorem statement. 
\end{tcolorbox}
%\todo{
%\colcol{
%The theorem statement is re-written for emphasis on the main direction of interest. 
%The suggested change is applied on the sentence before the theorem statement. 
%}
%}
\todo{``\( K_{1,p+1} \)''. 
Even though it is often used in literature,  define properly that it means `has no induced subgraph' to avoid any confusion.\\
}
%% pos 4
\begin{tcolorbox}[colorback=gray!10!white,boxrule=0.5pt,boxsep=0pt,colframe=black]
The term \( H \)-free is defined in Section~\ref{sec:def}. 
\end{tcolorbox}
%\todo{
%\colcol{
%Response: 
%The term \( H \)-free is defined in Section~\ref{sec:def}. 
%}
%}

\emph{Proof.}
%\begin{proof}
Suppose that \( \psi \) is an out-neighbourhood bijective homomorphism from an orientation \( \vec{G} \) of \( G \) to \( \overrightarrow{G_{2p}} \). 
As a result, \( \psi \) is a homomorphism from \( \vec{G} \) to \( \overrightarrow{G_{2p}} \). 
For distinct \( i,j\in \{0,1,\dots,p+1\} \), let \( V_i^j \) denote the set of copies of \( (i,j) \) in \( G \); that is, \( V_i^j=\psi^{-1}\big((i,j)\big) \). 
Consider an arbitrary vertex \( (i,j) \) of \( G_{2p} \) and a copy \( v \) of \( (i,j) \) in \( \vec{G} \). 
In \( \overrightarrow{G_{2p}} \), the vertex \( (i,j) \) has \( (k,i) \) as an out-neighbour for each \( k\in \mathbb{Z}_{p+2}\setminus \{i,j\} \), and each in-neighbour of \( (i,j) \) is of the form \( (j,k) \) for some \( k\in \mathbb{Z}_{p+2}\setminus \{i,j\} \). 
Since \( \psi \) is an out-neighbourhood bijective homomorphism from \( \vec{G} \) to \( \overrightarrow{G_{2p}} \), the copy \( v \) of \( (i,j) \) in \( \vec{G} \) has exactly one copy of \( (k,i) \) in \( \vec{G} \) as its out-neighbour in \( \vec{G} \) for \( k\in \mathbb{Z}_{p+2}\setminus \{i,j\} \). 
The remaining \( p \) neighbours of \( v \) in \( \vec{G} \) are in-neighbours of \( v \) in \( \vec{G} \). 
Since \( \psi \) is a homomorphism from \( \vec{G} \) to \( \overrightarrow{G_{2p}} \), for each in-neighbour \( w \) of \( v \) in \( \vec{G} \), \( \psi(w) \) is an in-neighbour of  \( \psi(v)=(i,j) \) in \( \overrightarrow{G_{2p}} \). 
That is, for each in-neighbour \( w \) of \( v \) in \( \vec{G} \), \( \psi(w)=(j,k) \) for some \( k\in \mathbb{Z}_{p+2}\setminus \{i,j\} \), and thus \( w\in V_j^k \). 
Hence, \( v \) has exactly one copy of \( (k,i) \) in \( \vec{G} \) as its neighbour for each \( k\in \mathbb{Z}_{p+2}\setminus \{i,j\} \), and each of the remaining \( p \) neighbours of \( v \) in \( \vec{G} \) is a copy of \( (j,k) \) in \( \vec{G} \) for some \( k\in \mathbb{Z}_{p+2}\setminus \{i,j\} \). 
In other words, in \( G \), \( v \) has exactly one neighbour in \( V_k^i \) for each \( k\in \mathbb{Z}_{p+2}\setminus \{i,j\} \) and the remaining \( p \) neighbours of \( v \) are in \( \bigcup_{k\in \mathbb{Z}_{p+2}\setminus \{i,j\}} V_{j}^{k} \). 
Since the copy \( v \) of \( (i,j) \) in \( \vec{G} \) is arbitrary, each vertex \( v \) in~\( V_i^j \) (i.e., a copy of \( (i,j) \) in \( \vec{G} \)) has exactly one neighbour in \( V_k^i \) for each \( k\in \mathbb{Z}_{p+2}\setminus \{i,j\} \) and the remaining \( p \) neighbours of \( v \) are in \( \bigcup_{k\in \mathbb{Z}_{p+2}\setminus \{i,j\}} V_{j}^{k} \). 
Since the vertex \( (i,j) \) of \( G_{2p} \) is arbitrary, we have the following claim. 
\setcounter{myclaim}{0}
\begin{myclaim}\label{clm:psi is LBH by copy def step0}
For distinct \( i,j\in \mathbb{Z}_{p+2} \), each vertex in \( V_i^j \) has exactly one neighbour in \( V_k^i \) for each \( k\in \mathbb{Z}_{p+2}\setminus \{i,j\} \), and exactly \( p \) neighbours in \( \bigcup_{k\in \mathbb{Z}_{p+2}\setminus \{i,j\}} V_{j}^{k} \). 
\end{myclaim}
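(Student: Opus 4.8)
The plan is to unwind the defining properties of \( \psi \) at a single copy of \( (i,j) \) and then invoke arbitrariness of \( v,i,j \). First I would record the local picture in \( \overrightarrow{G_{2p}} \): the vertex \( (i,j) \) has out-neighbourhood exactly \( \{(k,i)\colon k\in \mathbb{Z}_{p+2}\setminus\{i,j\}\} \) and in-neighbourhood exactly \( \{(j,k)\colon k\in \mathbb{Z}_{p+2}\setminus\{i,j\}\} \), each of size \( p \), straight from the definition of \( \overrightarrow{G_{2p}} \) and the orientation convention \( (i,j)\to(k,i) \).

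Now fix distinct \( i,j \) and a copy \( v \) of \( (i,j) \), i.e., \( v\in V_i^j \). Since \( \psi \) is an OBH and \( |N_{\overrightarrow{G_{2p}}}^+((i,j))|=p \), the restriction of \( \psi \) to \( N_{\vec{G}}^+(v) \) is a bijection onto \( \{(k,i)\colon k\notin\{i,j\}\} \); hence \( v \) has exactly \( p \) out-neighbours in \( \vec{G} \), exactly one of which lies in \( V_k^i \) for each \( k\notin\{i,j\} \). As \( G \) is \( 2p \)-regular, the remaining \( p \) neighbours of \( v \) are in-neighbours of \( v \) in \( \vec{G} \); and since \( \psi \) is a homomorphism from \( \vec{G} \) to \( \overrightarrow{G_{2p}} \), each such in-neighbour \( w \) has \( \psi(w) \) in the in-neighbourhood of \( (i,j) \), so \( \psi(w)=(j,k) \) for some \( k\notin\{i,j\} \) and thus \( w\in\bigcup_{k\notin\{i,j\}}V_j^k \). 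To close, I would observe that the target families \( \{V_k^i\colon k\notin\{i,j\}\} \) and \( \{V_j^k\colon k\notin\{i,j\}\} \) are pairwise disjoint, because \( (k,i)=(j,\ell) \) is impossible when \( k\notin\{i,j\} \). Consequently, among the \( 2p \) neighbours of \( v \), the \( p \) out-neighbours contribute exactly one neighbour to each \( V_k^i \) and none to \( \bigcup_{k\notin\{i,j\}}V_j^k \), while the \( p \) in-neighbours contribute all the neighbours lying in \( \bigcup_{k\notin\{i,j\}}V_j^k \) and none to any \( V_k^i \). Since \( v \) was an arbitrary element of \( V_i^j \) and \( i,j \) were arbitrary distinct indices, the claim follows.

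I do not expect a genuine obstacle here: the statement is essentially a counting consequence of the OBH property together with \( 2p \)-regularity, and does not even use \( K_{1,p+1} \)-freeness. The only things to get right are the orientation convention of \( \overrightarrow{G_{2p}} \) (out-neighbours of \( (i,j) \) are the \( (k,i) \), in-neighbours the \( (j,k) \)) and the disjointness of the two target families, so that the \( 2p \) neighbours of \( v \) are partitioned without over- or under-counting.
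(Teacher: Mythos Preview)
Your proof is correct and follows essentially the same approach as the paper's own argument: identify the in- and out-neighbourhoods of \( (i,j) \) in \( \overrightarrow{G_{2p}} \), use the OBH property to pin down the \( p \) out-neighbours of \( v \) (one in each \( V_k^i \)), use \( 2p \)-regularity to conclude the remaining \( p \) neighbours are in-neighbours, and use the homomorphism property to place those in \( \bigcup_{k\notin\{i,j\}}V_j^k \). Your explicit remark on the disjointness of the families \( \{V_k^i\} \) and \( \{V_j^k\} \) is a small clarification the paper leaves implicit, but otherwise the two arguments coincide.
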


Since \( \psi \) is a homomorphism from \( \vec{G} \) to \( \overrightarrow{G_{2p}} \), it is obviously a homomorphism from \( G \) to \( G_{2p} \). 
To prove that \( \psi \) is a locally bijective homomorphism from \( G \) to \( G_{2p} \), it suffices to show that for distinct \( i,j\in \{0,1,\dots,p+1\} \), each copy of \( (i,j) \) in \( G \) has exactly one copy of \( (j,k) \) in \( G \) and exactly one copy of \( (k,i) \) in \( G \) at its neighbour. 
Since \( V_i^j \) is the set of copies of \( (i,j) \) in \( G \) for each \( (i,j)\in V(G_{2p}) \), it suffices to prove the following claim. 
}

\todo{
(omitted) [continues]

\begin{myclaim}\label{clm:psi is LBH by copy def}
For distinct \( i,j\in \mathbb{Z}_{p+2} \), each vertex in \( V_i^j \) has exactly one neighbour in \( V_j^k \) and exactly one neighbour in \( V_k^i \) for each \( k\in\mathbb{Z}_{p+2}\setminus \{i,j\} \). 
\end{myclaim}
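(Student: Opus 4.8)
The plan is to build on Claim~\ref{clm:psi is LBH by copy def step0}, which already supplies one half of what is wanted: a vertex \( v\in V_i^j \) has exactly one neighbour, say \( x_k \), in \( V_k^i \) for each \( k\in\mathbb{Z}_{p+2}\setminus\{i,j\} \), and exactly \( p \) neighbours, say \( w_1,\dots,w_p \), inside \( \bigcup_{k\in\mathbb{Z}_{p+2}\setminus\{i,j\}}V_j^k \). Since all the classes \( V_\ell^m \) that occur here are pairwise disjoint and \( \deg_G(v)=2p \), the \( 2p \) vertices \( x_k \) \( (k\in\mathbb{Z}_{p+2}\setminus\{i,j\}) \) together with \( w_1,\dots,w_p \) are exactly the neighbours of \( v \). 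Hence it remains only to show that \( v \) has a neighbour in \( V_j^k \) for \emph{each} \( k\in\mathbb{Z}_{p+2}\setminus\{i,j\} \); there are \( p \) such sets and only the \( p \) vertices \( w_1,\dots,w_p \) to fill them, so ``at least one'' will automatically become ``exactly one''.

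First I would check that \( W\coloneqq\{w_1,\dots,w_p\} \) is an independent set of \( G \). Each \( w_m \) maps under \( \psi \) to a vertex of \( G_{2p} \) whose first coordinate is \( j \), and no two vertices of \( G_{2p} \) with first coordinate \( j \) are adjacent; since \( \psi \) is a homomorphism into the loopless graph \( G_{2p} \), the two ends of an edge of \( G \) cannot both map into this set, so \( W \) is independent. Now \( v \) is adjacent to \( w_1,\dots,w_p \) and to every \( x_k \). If some \( x_k \) were adjacent to none of \( w_1,\dots,w_p \), then \( \{v,x_k\}\cup W \) would induce \( K_{1,p+1} \) in \( G \) (centre \( v \), with \( p+1 \) pairwise non-adjacent leaves), contradicting that \( G \) is \( K_{1,p+1} \)-free. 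Therefore every \( x_k \) is adjacent to some member of \( W \).

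The crucial step is to pin down where that member lands under \( \psi \). Fix \( k\in\mathbb{Z}_{p+2}\setminus\{i,j\} \) and let \( w_m\in W \) be adjacent to \( x_k \). Then \( v,x_k,w_m \) form a triangle of \( G \), and a homomorphism into a loopless simple graph carries a triangle to a triangle; hence \( \psi(v)=(i,j) \), \( \psi(x_k)=(k,i) \) and \( \psi(w_m) \) are three pairwise adjacent vertices of \( G_{2p} \), so \( \psi(w_m) \) is a common neighbour of \( (i,j) \) and \( (k,i) \). A short inspection of the two cases of the adjacency rule of \( G_{2p} \) shows that \( (i,j) \) and \( (k,i) \) have exactly one common neighbour, namely \( (j,k) \); therefore \( \psi(w_m)=(j,k) \), i.e.\ \( w_m\in V_j^k \). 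As \( k \) was arbitrary, each \( V_j^k \) with \( k\in\mathbb{Z}_{p+2}\setminus\{i,j\} \) contains a neighbour of \( v \); by the count in the first paragraph it then contains exactly one, and together with the \( V_k^i \) half of Claim~\ref{clm:psi is LBH by copy def step0} this proves Claim~\ref{clm:psi is LBH by copy def}.

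I expect the only real obstacle to be the ``unique common neighbour'' computation inside \( G_{2p} \): one must run through the two cases of the edge rule and also verify that the three images of the triangle \( v,x_k,w_m \) are genuinely distinct, so that the computation is about an honest triangle in \( G_{2p} \). Everything else is a matter of the degree bookkeeping \( 2p=p+p \) already secured by Claim~\ref{clm:psi is LBH by copy def step0} and the single appeal to \( K_{1,p+1} \)-freeness.
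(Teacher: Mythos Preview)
Your proof is correct and follows essentially the same route as the paper's own argument: use Claim~\ref{clm:psi is LBH by copy def step0} to list the \( 2p \) neighbours of \( v \) as the \( x_k \)'s and the \( w_m \)'s, observe that \( W \) is independent since \( \psi \) collapses it into the non-adjacent set \( \{(j,\ell):\ell\neq j\} \), invoke \( K_{1,p+1} \)-freeness to force each \( x_k \) to hit \( W \), and then push the resulting triangle through \( \psi \) to identify which \( V_j^k \) the witness lies in. The paper packages the triangle step as a separate observation (that the only common neighbour of \( (i,j) \) and \( (k,i) \) in \( G_{2p} \) is \( (j,k) \)), which you rederive inline; otherwise the arguments coincide.
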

We prove the claim for \( i=0 \) and \( j=1 \) (the proof is similar for other values of \( i \) and~\( j \)). 
Let \( v\in V_0^1 \). 
By Claim~\ref{clm:psi is LBH by copy def step0}, \( v \) has exactly \( p \) neighbours in \( \bigcup_{k=2}^{p+1} V_1^k \) and exactly one neighbour in \( V_{r+1}^0 \) for \( 1\leq r\leq p \). 
Let \( w_1,\dots,w_p,x_1,\dots,x_p \) be the neighbours of \( v \) in \( G \) where \( w_1,\dots,w_p\in \bigcup_{k=2}^{p+1} V_1^k \) and \( x_r\in V_{r+1}^0 \) for \( 1\leq r\leq p \). 
Let \( W=\{w_1,\dots,w_p\} \). 
Since \( \psi \) is a homomorphism from \( G \) to \( G_{2p} \), the set \( \psi(W)=\{\psi(w)\colon w\in W\} \) is a subset of \( \{(1,2),\dots,(1,p+1)\} \), and thus \( W \) is an independent set in \( G \) (because \( \{(1,2),\dots,(1,p+1)\} \) is an independent set in \( G_{2p} \)). 
Hence, the subgraph of \( G \) induced by \( \{v,w_1,\dots,w_p\} \) is isomorphic to \( K_{1,p} \). 
Since \( x_1 \) is a neighbour of \( v \), the subgraph of \( G \) induced by \( \{v,w_1,\dots,w_p,x_1\} \) is isomorphic to \( K_{1,p+1} \) unless \( x_1 \) has a neighbour in \( W \). 
As a result, \( x_1 \) has a neighbour in \( W \) because \( G \) is \( K_{1,p+1} \)-free. 
Similarly, for \( 1\leq r\leq p \), the vertex \( x_r \) has a neighbour in \( W \). 
Note that \( \psi(v)=(0,1) \) and \( \psi(x_1)=(2,0) \). 
Suppose that \( w \) is a neighbour of \( x_1 \) in \( W \). 
Since \( (v,w,x_1) \) is a triangle in \( G \), \( \psi(w)=(1,2) \) by Observation~\ref{obs:triangle in extremal graphs}, and thus \( w\in V_1^2 \). 
Similarly, for \( 1\leq r\leq p \), if \( w \) is a neighbour of \( x_r \) in \( W \), then \( \psi(w)=(1,r+1) \), and thus \( w\in V_1^{r+1} \). 
Since \( v \) has exactly \( p \) neighbours in \( \bigcup_{k=2}^{p+1} V_1^k \) and \( v \) has a neighbour in \( V_1^{r+1} \) for each \( r\in \{1,2,\dots,p\} \), the vertex \( v \) has exactly one neighbour in \( V_1^{r+1} \) for each \( r\in \{1,2,\dots,p\} \). 
Therefore, \( v \) has exactly one neighbour in \( V_1^k \) and exactly one neighbour in \( V_k^0 \) for each \( k\in \{2,3,\dots,p+1\} \). 
This proves Claim~\ref{clm:psi is LBH by copy def}. 
Consequently, \( \psi \) is a locally bijective homomorphism from \( G \) to \( G_{2p} \).

Conversely, suppose that \( \psi \) is a locally bijective homomorphism from \( G \) to \( G_{2p} \). 
Then, there exists a unique orientation \( \vec{G} \) of \( G \) such that \( \psi \) is an out-neighbourhood bijective homomorphism from \( \vec{G} \) to \( \overrightarrow{G_{2p}} \). 
This proves the converse part. 
\hfill
\( \square \)
%\end{proof}
}

By Theorem~\ref{thm:characterise 2p-regular p+2-star colourable}, for all \( p\geq 2 \), a \( 2p \)-regular graph \( G \) is  \( (p+2) \)-star colourable if and only if \( G \) has an orientation \( \vec{G} \) that admits an OBH \( \psi \) to \( \vec{L}(K_{p+2}) \). 
Hence, Theorem~\ref{thm:K1p+1free out-nbd BH same as LBH} implies the following. 

\begin{theorem}\label{thm:p+2 star colourable iff LBH}
Let \( p\geq 2 \), and let \( G \) be a \( K_{1,p+1} \)-free \( 2p \)-regular graph. 
Then, \( G \) is \( (p+2) \)-star colourable if and only if \( G \) admits an LBH to \( L^*(K_{p+2}) \). 
\qed
\end{theorem}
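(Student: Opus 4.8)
The plan is to derive this statement as a short corollary of Theorem~\ref{thm:characterise 2p-regular p+2-star colourable} and Theorem~\ref{thm:K1p+1free out-nbd BH same as LBH}, using only the extra elementary observation that a locally bijective homomorphism between oriented graphs is, in particular, an out-neighbourhood bijective homomorphism. So there is essentially no new combinatorial work here; everything has been set up already.

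For the forward implication, I would start from a $(p+2)$-star colouring of $G$ and apply the equivalence $I\iff III$ of Theorem~\ref{thm:characterise 2p-regular p+2-star colourable} to obtain an orientation $\vec{G}$ of $G$ together with an OBH $\psi$ from $\vec{G}$ to $\vec{L}(K_{p+2})$. Since $G$ is $K_{1,p+1}$-free and $2p$-regular, Theorem~\ref{thm:K1p+1free out-nbd BH same as LBH} upgrades $\psi$ to an LBH from $G$ to $L^*(K_{p+2})$, which is exactly what is required. (Here I would note in passing that the OBH is automatically degree-preserving because $G$ and $L^*(K_{p+2})$ are both $2p$-regular, so the hypotheses of Theorem~\ref{thm:K1p+1free out-nbd BH same as LBH} are met.)

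For the reverse implication, suppose $\psi$ is an LBH from $G$ to $L^*(K_{p+2})$. By the Remark following Theorem~\ref{thm:K1p+1free out-nbd BH same as LBH} (equivalently, the folklore fact that an LBH between undirected graphs is an LBH between suitably chosen orientations — here one checks that $\psi$ respects the specific orientation $\vec{L}(K_{p+2})$ because this orientation is obtained by orienting each triangle of $L^*(K_{p+2})$ cyclically and an LBH carries triangles to triangles), there is an orientation $\vec{G}$ of $G$ such that $\psi$ is an LBH from $\vec{G}$ to $\vec{L}(K_{p+2})$. Since such a $\psi$ maps out-neighbourhoods bijectively, $\psi$ is in particular an OBH from $\vec{G}$ to $\vec{L}(K_{p+2})$, and then $III\implies I$ of Theorem~\ref{thm:characterise 2p-regular p+2-star colourable} gives that $G$ is $(p+2)$-star colourable.

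The only place that needs a sentence of care is the orientation-compatibility point in the reverse direction, namely that an undirected LBH to $L^*(K_{p+2})$ genuinely lifts to an LBH to the particular oriented graph $\vec{L}(K_{p+2})$ rather than to some arbitrary orientation of $L^*(K_{p+2})$; I expect this to be immediate from the triangle structure of $L^*(K_{p+2})$ (or simply from the quoted Remark), so I do not anticipate any real obstacle.
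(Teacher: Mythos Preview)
Your proposal is correct and follows essentially the same route as the paper: the paper derives the theorem in one sentence from Theorem~\ref{thm:characterise 2p-regular p+2-star colourable} (the equivalence with OBH to $\vec{L}(K_{p+2})$) together with Theorem~\ref{thm:K1p+1free out-nbd BH same as LBH} and its Remark, exactly as you do. Your parenthetical about degree-preservation is unnecessary (it is not a hypothesis of Theorem~\ref{thm:K1p+1free out-nbd BH same as LBH}), and for the reverse direction the triangle argument is more than needed: given any LBH $\psi\colon G\to L^*(K_{p+2})$ and the fixed orientation $\vec{L}(K_{p+2})$, one simply pulls back the orientation edge by edge to obtain $\vec{G}$, which makes $\psi$ an oriented LBH and hence an OBH.
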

\todo{
Reviewer F:\\
Theorem 10 (currently Theorem~\ref{thm:p+2 star colourable iff LBH}) is in fact a restatement of Theorem 1, I would suggest to make it clear in the same way as with Theorems 2 and 3
}
\todo[color=gray!10!white]{
The theorem statements are moved so that proofs are close to them. 
Restatement of theorems is avoided. 
}

Let \( G \) be a graph that admits an LBH to \( L^*(K_{p+2}) \), where \( p\geq 2 \). 
By Theorem~\ref{thm:LstarH has LBH to LH}, \( L^*(K_{p+2}) \) admits an LBH to \( L(K_{p+2}) \), and thus \( G \) admits an LBH to \( L(K_{p+2}) \) (since a function composition of LBHs is an LBH). %by transitivity of existence of LBH~\cite{fiala2008}. 
As a result, the characteristic polynomial of (adjacency matrix of) \( G \) is divisible by that of \( L(K_{p+2}) \)~\cite{fiala_kratochvil}. 
Hence, the characteristic polynomial of \( L(K_{p+2}) \) in \( x \), that is \( (x-2p) \)\( (x-p+2)^{p+1} \)\( (x+2)^{(p-1)(p+2)/2} \)~\cite[Table~4.1]{beineke_bagga2021a}, divides the characteristic polynomial of \( G \) in~\( x \). 
Since each \( K_{1,p+1} \)-free \( 2p \)-regular \( (p+2) \)-star colourable graph admits an LBH to \( L^*(K_{p+2}) \) (by Theorem~\ref{thm:p+2 star colourable iff LBH}), we have the following.
\begin{theorem}\label{thm:star col eigenvalue}
Let \( G \) be a \( K_{1,p+1} \)-free \( 2p \)-regular graph with \( p\geq 2 \). 
If \( G \) is \( (p+2) \)-star colourable, then \( {-2} \) and \( p-2 \) are eigenvalues of \( G \) with multiplicities at least \( (p-1)(p+2)/2 \) and \( p+1 \), respectively. 
\qed
\end{theorem}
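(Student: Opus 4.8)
The plan is to reduce the statement to the spectral behaviour of locally bijective homomorphisms, and then to read off the spectrum of a triangular graph. First I would invoke Theorem~\ref{thm:p+2 star colourable iff LBH}: since $G$ is a $K_{1,p+1}$-free $2p$-regular graph that is $(p+2)$-star colourable, $G$ admits an LBH to $L^*(K_{p+2})$. Next, by Theorem~\ref{thm:LstarH has LBH to LH} applied with $H=K_{p+2}$, there is an LBH from $L^*(K_{p+2})$ to $L(K_{p+2})$. Since a composition of two locally bijective homomorphisms is again locally bijective (the restriction to a neighbourhood is a composition of bijections, and degrees match), $G$ admits an LBH to $L(K_{p+2})$.

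Now I would appeal to the well-known fact recorded in the introduction (see \cite{sachs,fiala_kratochvil,godsil_royle}): if $G$ admits an LBH to $H$, then every eigenvalue of the adjacency matrix of $H$ with multiplicity $t$ is an eigenvalue of $G$ with multiplicity at least $t$; equivalently, the characteristic polynomial of $H$ divides that of $G$. Hence it only remains to identify the relevant part of the spectrum of $L(K_{p+2})$. The line graph $L(K_q)$ is the triangular graph (the Johnson graph $J(q,2)$), whose characteristic polynomial for $q=p+2$ is $(x-2p)(x-p+2)^{p+1}(x+2)^{(p-1)(p+2)/2}$ \cite[Table~4.1]{beineke_bagga2021a}; in particular $p-2$ is an eigenvalue with multiplicity $p+1$ and $-2$ is an eigenvalue with multiplicity $(p-1)(p+2)/2$. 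Combining this with the divisibility statement gives that $-2$ and $p-2$ are eigenvalues of $G$ with multiplicities at least $(p-1)(p+2)/2$ and $p+1$, respectively, which is the assertion.

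The argument is essentially routine once the two homomorphism theorems are in hand; the only points needing care are confirming that locally bijective homomorphisms compose and quoting the spectrum of $L(K_{p+2})$ with the correct multiplicities — the reindexing between $q$ and $p+2$ is the spot where an off-by-one could slip in. I do not expect a genuine obstacle: the multiplicity transfer for LBHs is standard, since adjacency matrices are diagonalisable and the pullback along the homomorphism of an eigenvector of $H$ is a nonzero eigenvector of $G$ for the same eigenvalue, so the multiplicity cannot decrease; linearly independent eigenvectors of $H$ pull back to linearly independent eigenvectors of $G$, giving the stated lower bounds.
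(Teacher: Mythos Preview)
Your proposal is correct and follows essentially the same route as the paper: invoke Theorem~\ref{thm:p+2 star colourable iff LBH} to get an LBH from $G$ to $L^*(K_{p+2})$, compose with the LBH from $L^*(K_{p+2})$ to $L(K_{p+2})$ given by Theorem~\ref{thm:LstarH has LBH to LH}, then use the standard fact that the characteristic polynomial of the target divides that of the source, together with the known spectrum of the triangular graph $L(K_{p+2})$.
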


Next, we show that the structure of \( K_{1,p+1} \)-free \( 2p \)-regular \( (p+2) \)-star colourable graphs is even more limited when \( p=2 \). 
\begin{theorem}\label{thm:4reg 4-starcol clawfree imply line}
Let \( G \) be a claw-free 4-regular 4-star colourable graph. 
Then, \( G \) is the line graph of a bipartite graph, and in particular, \( G \) is odd-hole-free. 
\end{theorem}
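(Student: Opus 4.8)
The plan is to first exhibit $G$ as the line graph of a $3$-regular graph $H$, then prove that $H$ is bipartite, and finally deduce odd-hole-freeness from the fact that $G$ is the line graph of a bipartite graph. We may assume $G$ is connected; otherwise argue componentwise, noting that a component of $G$ is again a connected claw-free $4$-regular $4$-star colourable graph, and that both ``line graph of a bipartite graph'' and ``odd-hole-free'' are preserved under disjoint unions.

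By Theorem~\ref{thm:p+2 star colourable iff LBH} with $p=2$, the graph $G$ admits an LBH $\psi$ to $L^*(K_4)$, and recall $L^*(K_4)\cong L(Q_3)$. Since $Q_3$ is triangle-free and $3$-regular, it is locally-$3K_1$, so $K(Q_3)=L(Q_3)\in F(2,2)$ by \cite[Theorem~1.4]{devillers}; in particular $L^*(K_4)$ is locally-$2K_2$ and hence contains no diamond as a subgraph. As the diamond has diameter $2$, Observation~\ref{obs:lbh preseves diameter 2 subgraphs} forces $G$ to be diamond-subgraph-free. Consequently, for every vertex $v$ of $G$ the four-vertex graph $G[N_G(v)]$ has maximum degree at most $1$ (a vertex of degree $\ge 2$ in it would span a diamond together with $v$) and, since $G$ is claw-free, independence number at most $2$ (an independent triple in $N_G(v)$ would induce a claw with $v$); the only four-vertex graph with both properties is $2K_2$. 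Thus $G[N_G(v)]\cong 2K_2$ for all $v$, which—$G$ being connected and $4$-regular—is exactly the statement $G\in F(2,2)$; equivalently, by Observation~\ref{obs:locally linear iff Fp2}, $G$ is a locally linear graph.

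Now I would invoke \cite[Theorem~1.4]{devillers} again: from $G\in F(2,2)$ we get $H:=K(G)\in F(3,1)$ and $K(K(G))\cong G$, where $F(3,1)$ is precisely the family of connected $3$-regular triangle-free graphs. Because $H$ is triangle-free, its maximal cliques are its edges, so $K(H)=L(H)$, and therefore $G\cong K(K(G))=K(H)=L(H)$ with $H$ a $3$-regular (triangle-free) graph. To see that $H$ is bipartite: $G=L(H)$ admits an LBH to $L^*(K_4)\cong L(Q_3)$, and $Q_3$ is triangle-free and $3$-regular, so Theorem~\ref{thm:lbh goes with line graph operation} (in the direction ``an LBH between line graphs yields an LBH between the base graphs'') produces an LBH from $H$ to $Q_3$. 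An LBH is in particular a homomorphism, and $Q_3$ is bipartite, so $H$ maps homomorphically to a bipartite graph and is therefore bipartite. Hence $G$ is the line graph of the bipartite graph $H$.

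Finally, since $G=L(H)$ with $H$ bipartite, $G$ is odd-hole-free: for $n\ge 4$, an induced $C_n$ in a line graph $L(H)$ necessarily arises from a cycle $C_n$ in $H$, and a bipartite $H$ has no odd cycle, so $G$ contains no induced $C_{2q+3}$ for any $q\in\mathbb{N}$ (alternatively, line graphs of bipartite graphs are perfect, hence odd-hole-free). I expect the main obstacle to be the middle step that identifies $G$ as a line graph: one must combine claw-freeness with the diamond-subgraph-freeness supplied by $4$-star colourability to pin the local structure down to $2K_2$, and then use the Devillers identities with care—in particular checking that $K(G)$ is triangle-free, so that its clique graph coincides with its line graph and $K(K(G))\cong G$ genuinely expresses $G$ as a line graph. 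Once that is in hand, pushing the cover $G\to L^*(K_4)$ down to a cover $H\to Q_3$ and reading off bipartiteness, and then the odd-hole argument, are routine.
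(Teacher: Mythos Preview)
Your proof is correct, but it takes a different route from the paper's. The paper argues more directly: from 4-regularity and 4-star colourability it cites \cite[Corollary~2]{shalu_cyriac3} to get that $G$ is $(\mathrm{diamond},K_4)$-free, then invokes Beineke's characterisation---$(\mathrm{claw},\mathrm{diamond})$-free graphs are exactly the line graphs of triangle-free graphs---so $G=L(H)$ with $H$ triangle-free; $K_4$-freeness forces $\Delta(H)\le 3$, and 4-regularity of $G$ then forces $H$ to be 3-regular. Bipartiteness of $H$ comes straight from Theorem~\ref{thm:4-star colourable in terms of distance-two colouring}.

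Your argument instead stays entirely inside the LBH machinery of the paper: you pull diamond-freeness back through the cover $G\to L^*(K_4)$ via Observation~\ref{obs:lbh preseves diameter 2 subgraphs}, recover the locally-$2K_2$ structure by hand, use the Devillers identities to exhibit $G$ as $L(K(G))$, and then push the cover down to $K(G)\to Q_3$ via Theorem~\ref{thm:lbh goes with line graph operation} to read off bipartiteness. This is longer but avoids both the external citation to \cite{shalu_cyriac3} and Beineke's theorem, and it illustrates nicely how Theorems~\ref{thm:lbh goes with line graph operation} and~\ref{thm:p+2 star colourable iff LBH} together recover the Dvo\v{r}\'ak--Mohar--\v{S}\'amal result that sits behind Theorem~\ref{thm:4-star colourable in terms of distance-two colouring}. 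The paper's route is quicker; yours is more self-contained relative to the tools developed here.
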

\begin{proof}
Since \( G \) is 4-regular and 4-star colourable, \( G \) is \( (\mathrm{diamond},K_4) \)-free~\cite[Corollary~2]{shalu_cyriac3}. 
Hence, \( G \) is \( (\mathrm{claw}, \mathrm{diamond}, K_4) \)-free. 
But, \( (\mathrm{claw}, \mathrm{diamond}) \)-free graphs are line graphs~\cite{beineke} (in fact, they are precisely the line graphs of triangle-free graphs~\cite{beineke_bagga2021a}).  
Since \( G \) is a \( K_4 \)-free line graph, \( G \) is the line graph of a 3-regular graph \( H \). 
Since \( H \) is 3-regular and its line graph \(  G\) is 4-star colourable, \( H \) is bipartite by Theorem~\ref{thm:4-star colourable in terms of distance-two colouring}. 
Hence, \( G \) is the line graph of a bipartite graph. 
That is, \( G \) is \( (\mathrm{claw}, \mathrm{diamond},\mathrm{odd-hole}) \)-free~\cite{beineke_bagga2021a}. 
\end{proof}

For \( p\in \mathbb{N} \), the \emph{friendship graph} \( F_p \) is the graph obtained from \( p\,C_3 \) by `gluing' the \( p \) triangles together at a single vertex. 
For \( p\geq 2 \), the friendship graph \( F_p \) has a unique universal vertex. 
For \( p\geq 2 \), let \( F_p^- \) denote the graph obtained from \( F_p \) by removing an edge not incident on the universal vertex of \( F_p \). 
For each vertex \( v \) of a diamond-free \( 2p \)-regular graph \( G \) with \( p\geq 2 \), the neighbourhood \( N_G(v) \) induces one of the graphs \( pK_2, (p-1)K_2+2K_1, \) \( (p-2)K_2+4K_1, \dots, 2pK_1 \). 
Thus, we have the following corollary. 
\begin{corollary}
Let \( G \) be a \( (K_{1,p+2}, F_p^-) \)-free 2p-regular graph with \( p\geq 2 \). 
Then, \( G \) is \( (p+2) \)-star colourable if and only if \( G \) admits an LBH to \( L^*(K_{p+2}) \). 
%Then, \( G \) is \( (p+2) \)-star colourable if and only if \( G \) admits a locally bijective homomorphism to \( G_{2p} \). 
\qed
\end{corollary}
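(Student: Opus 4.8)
The plan is to deduce the corollary from Theorem~\ref{thm:p+2 star colourable iff LBH} by showing that, for a $(K_{1,p+2},F_p^-)$-free $2p$-regular graph $G$, satisfying \emph{either} of the two stated conditions already forces $G$ to be $K_{1,p+1}$-free; once that is in hand, the equivalence is immediate from Theorem~\ref{thm:p+2 star colourable iff LBH}.

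First I would record the implication that holds with no assumptions on $G$: if $G$ admits an LBH $\psi$ to $L^*(K_{p+2})$, then $G$ is $(p+2)$-star colourable. Indeed, by Observation~\ref{obs:star col of  Larrow Kq} the pair $(\vec{L}(K_{p+2}),\mathrm{proj}_2)$ is a $(p+2)$-coloured MINI-orientation of $L^*(K_{p+2})$, hence a $(p+2)$-coloured in-orientation, so $\mathrm{proj}_2$ is a $(p+2)$-star colouring of $L^*(K_{p+2})$ by Observation~\ref{obs:in-orientation underlying colouring is star}. Since every LBH is locally injective, Theorem~\ref{thm:out-nbd I Hom preserve star colouring} gives that $\mathrm{proj}_2\circ\psi$ is a $(p+2)$-star colouring of $G$. (Equivalently, one may invoke the remark after Theorem~\ref{thm:K1p+1free out-nbd BH same as LBH} together with the implication $III\Rightarrow I$ of Theorem~\ref{thm:characterise 2p-regular p+2-star colourable}.)

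The heart of the argument is the claim: if $G$ is a $(K_{1,p+2},F_p^-)$-free $2p$-regular graph that is $(p+2)$-star colourable, then $G$ is $K_{1,p+1}$-free. Since $G$ is $2p$-regular and $(p+2)$-star colourable, it is $(\mathrm{diamond},K_4)$-subgraph-free by \cite[Corollary~2]{shalu_cyriac3}, so for every vertex $v$ the neighbourhood $N_G(v)$ induces a matching $\ell_v K_2+(2p-2\ell_v)K_1$ with $0\le\ell_v\le p$. As $G$ is $K_{1,p+2}$-free, this induced subgraph has independence number at most $p+1$; since its independence number is $2p-\ell_v$, we get $\ell_v\ge p-1$. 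If $\ell_v=p-1$ for some $v$, then, because $v$ is adjacent to every vertex of $N_G(v)$ and there are no further edges, $G[\{v\}\cup N_G(v)]$ is an \emph{induced} copy of $F_p^-$ (the friendship graph $F_p$ has its universal vertex adjacent to $2p$ vertices inducing $pK_2$, and deleting one of the $p$ matching edges turns that into $(p-1)K_2+2K_1$), contradicting $F_p^-$-freeness. Hence $\ell_v=p$ for all $v$, so each $N_G(v)$ induces $pK_2$, which has independence number $p$; therefore $G$ has no $K_{1,p+1}$.

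Assembling: if $G$ admits an LBH to $L^*(K_{p+2})$ then by the second paragraph it is $(p+2)$-star colourable, hence $K_{1,p+1}$-free by the claim; and if $G$ is $(p+2)$-star colourable it is $K_{1,p+1}$-free directly by the claim. In either case $G$ is a $K_{1,p+1}$-free $2p$-regular graph, so Theorem~\ref{thm:p+2 star colourable iff LBH} applies and gives the stated equivalence. The only non-routine point I anticipate is the structural step ruling out $\ell_v=p-1$: one must check that the resulting subgraph on $\{v\}\cup N_G(v)$ really is $F_p^-$ and that it occurs as an \emph{induced} subgraph of $G$ (which it does, since $N_G(v)$ already induces a matching, so no edges beyond those at $v$ and inside the matching are present).
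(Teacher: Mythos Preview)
Your proof is correct and follows essentially the same route as the paper's implicit argument: the paper records (just before the corollary) that in a diamond-subgraph-free $2p$-regular graph each neighbourhood induces $\ell K_2+(2p-2\ell)K_1$, and leaves it to the reader to combine this with $(K_{1,p+2},F_p^-)$-freeness to force $\ell=p$ (hence $K_{1,p+1}$-freeness) and then invoke Theorem~\ref{thm:p+2 star colourable iff LBH}. Your write-up makes explicit the one subtlety the paper glosses over, namely that diamond-freeness is only available \emph{after} one knows $G$ is $(p+2)$-star colourable, and that the direction ``LBH $\Rightarrow$ star colourable'' holds for arbitrary $2p$-regular graphs, so the argument is not circular.
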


For integers \( p,q\in \mathbb{N} \) with \( 2\leq q\leq p \), the same result holds for \( 2p \)-regular graphs that are \( K_{1,p+q} \)-free as well as \( (F_p^{-r}) \)-free for all \( r\in \{1,2,\dots,q-1\} \), where \( F_p^{-r} \) denotes the graph obtained from \( F_p \) by removing \( r \) edges not incident on the universal vertex of \( F_p \). 

Observe that for each \( p\geq 2 \), the set of neighbours of \( (0,1) \) in \( L^*(K_{p+2}) \) is \( \{(1,2),\dots,(1,p+1) \), \( (2,0),\dots,(p+1,0)\} \) and the subgraph of \( L^*(K_{p+2}) \) induced by this set is isomorphic to \( pK_2 \), where the edges in the subgraph are \( \{(1,2),(2,0)\} \), \dots, \( \{(1,p+1),(p+1,0)\} \). 
Similarly, for each \( p\geq 2 \) and each vertex \( v \) of \( L^*(K_{p+2}) \), the subgraph of \( L^*(K_{p+2}) \) induced by the neighbourhood of \( v \) is \( pK_2 \). 
For \( p\geq 2 \), since \( L^*(K_{p+2}) \) is a connected graph (in fact, Hamiltonian by Theorem~\ref{thm:Lstarkq ham}), 
 \( L^*(K_{p+2})\in F(p,2) \). 
Hence, for each \( p\geq 2 \), \( L^*(K_{p+2})\in F(p,2) \) and thus it is locally linear by Observation~\ref{obs:locally linear iff Fp2}. 

%\todo{
%(omitted) [Rewritten]
%
%Observe that in \( G_4 \), the set of neighbours of \( (0,1) \) is \( N_{G_4}((0,1))=\{(1,2), (1,3),\allowbreak (2,0),(3,0)\} \), and the subgraph of \( G_4 \) induced by \( N_{G_4}((0,1)) \) is isomorphic to \( 2K_2 \), where the edges in the subgraph are \( \{(1,2),(2,0)\} \) and \( \{(1,3),(3,0)\} \). 
%Similarly, for each \( p\geq 2 \), the set of neighbours of \( (0,1) \) in \( G_{2p} \) is \( N_{G_{2p}}((0,1))=\{(1,2),\dots,(1,p+1) \), \( (2,0),\dots,(p+1,0)\} \) and the subgraph of \( G_{2p} \) induced by \( N_{G_{2p}}((0,1)) \) is isomorphic to \( pK_2 \), where the edges in the subgraph are \( \{(1,2),(2,0)\} \), \dots, \( \{(1,p+1),(p+1,0)\} \). 
%Similarly, for each \( p\geq 2 \) and each vertex \( v \) of \( G_{2p} \), the subgraph of \( G_{2p} \) induced by the neighbourhood of \( v \) is \( pK_2 \). 
%Hence, for each \( p\geq 2 \), \( G_{2p}\in F(p,2) \) if \( G_{2p} \) is a connected graph. 
%For \( p\geq 2 \), \( G_{2p} \) is indeed a connected graph since \( G_{2p} \) is Hamiltonian by \cite[Theorem~4]{shalu_cyriac3}. 
%Hence, for each \( p\geq 2 \), \( G_{2p}\in F(p,2) \) and thus \( G_{2p} \) is locally linear by Observation~\ref{obs:locally linear iff Fp2}. 
%}
\begin{lemma}\label{lem:locally linear iff diamond K4 K1p+1 free}
Let \( p\geq 2 \), and let \( G \) be a connected \( 2p \)-regular graph. 
Then, \( G \) is locally linear if and only if \( G \) is \( ( \)\textup{diamond}, \( K_4 \), \( K_{1,p+1} \)\( ) \)-free. 
\end{lemma}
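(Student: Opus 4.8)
The plan is to reduce both directions to an analysis of the subgraph induced by a single neighbourhood, using Observation~\ref{obs:locally linear iff Fp2}: since $G$ is connected and $2p$-regular, $G$ is locally linear if and only if $G\in F(p,2)$, i.e.\ if and only if $G[N_G(v)]\cong pK_2$ for every vertex $v$.

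For the forward direction, assume $G$ is locally linear, so $G[N_G(v)]\cong pK_2$ for all $v$. Every diamond and every $K_4$ (viewed as a $4$-vertex graph) contains an edge that lies in two of its own triangles, so if $G$ contained a diamond or a $K_4$ as a subgraph, some edge of $G$ would lie in at least two triangles, contradicting the definition of locally linear; hence $G$ is diamond-subgraph-free, in particular $(\mathrm{diamond},K_4)$-free. Moreover $\alpha(pK_2)=p$, so no neighbourhood $N_G(v)$ contains $p+1$ pairwise non-adjacent vertices, whence $G$ has no induced $K_{1,p+1}$; that is, $G$ is $K_{1,p+1}$-free.

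For the converse, assume $G$ is $(\mathrm{diamond},K_4,K_{1,p+1})$-free, fix a vertex $v$, and set $J=G[N_G(v)]$, a graph on $2p$ vertices. First, $J$ is triangle-free, since a triangle in $J$ together with $v$ would be a $K_4$. Second, $J$ has maximum degree at most $1$: if some vertex $b$ of $J$ had two neighbours $a,c$ in $J$, then $\{v,a,b,c\}$ would induce a $K_4$ (if $ac\in E(G)$) or a diamond (if $ac\notin E(G)$), a contradiction in either case. Hence $J\cong rK_2+sK_1$ for some $r,s\ge 0$ with $2r+s=2p$, so that $\alpha(J)=r+s$. Since $G$ is $K_{1,p+1}$-free, $\alpha(J)\le p$, i.e.\ $r+s\le p$; together with $2r+s=2p$ this forces $r\ge p$ and $s=2p-2r\ge 0$, hence $r=p$ and $s=0$, so $J\cong pK_2$. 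As $v$ was arbitrary and $G$ is connected and $2p$-regular, $G\in F(p,2)$, and therefore $G$ is locally linear by Observation~\ref{obs:locally linear iff Fp2}.

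I do not expect any real obstacle here: the argument is elementary. The only steps needing a moment of care are the two four-vertex checks, namely that every diamond and every $K_4$ has an edge lying in two of its triangles (used in the forward direction) and that diamond-freeness together with $K_4$-freeness forces each neighbourhood to induce a graph of maximum degree at most $1$ (used in the converse); both are immediate.
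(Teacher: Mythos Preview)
Your proof is correct and follows essentially the same route as the paper's: both directions reduce to analysing the subgraph $G[N_G(v)]$ via Observation~\ref{obs:locally linear iff Fp2}, using that (diamond, $K_4$)-freeness forces each neighbourhood to be a disjoint union of edges and isolated vertices, and then bounding the independence number with the $K_{1,p+1}$-free hypothesis to pin down the decomposition as $pK_2$. The only cosmetic differences are that you parametrise the neighbourhood as $rK_2+sK_1$ while the paper counts edges directly, and your separate triangle-freeness check for $J$ is redundant once you have $\Delta(J)\le 1$.
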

\begin{proof}
Suppose that \( G \) is locally linear. 
Since each edge in \( G \) is in exactly one triangle in \( G \), we know that \( G \) is \( ( \)\textup{diamond}, \( K_4 \)\( ) \)-free. 
Since \( G \) is a connected locally linear \( 2p \)-regular graph, \( G\in F(p,2) \) by Observation~\ref{obs:locally linear iff Fp2}. 
Consider an arbitrary vertex \( v \) in~\( G \). 
We know that \( G[N_G(v)]\cong pK_2 \). 
Hence, the independence number of \( G[N_G(v)] \) is exactly \( p \). That is, \( v \) has at most \( p \) neighbours in \( G  \) that are pairwise non-adjacent. 
Thus, there is no induced subgraph \( H \) of \( G \) such that (i)~\( H\cong K_{1,p+1} \) and (ii)~\( v \) is the centre of~\( H \). 
Since \( v \) is arbitrary, there is no induced subgraph of \( G \) isomorphic to~\( K_{1,p+1} \). 
Hence, \( G \) is (\textup{diamond}, \( K_4 \), \( K_{1,p+1} \))-free.

Conversely, suppose that \( G \) is (\textup{diamond}, \( K_4 \), \( K_{1,p+1} \))-free. 
Consider an arbitrary vertex \( v \) of \( G \). 
We know that \( G[N_G(v)] \) contains exactly  \( 2p \) vertices. 
Since \( G \) is (\textup{diamond}, \( K_4 \))-free, each component of \( G[N_G(v)] \) contains at most one edge. 
Suppose that \( G[N_G(v)] \) contains exactly  \( q \) edges, where \( q\leq p \). 
Clearly, \( G[N_G(v)] \) contains exactly \( 2p-2q \) isolated vertices, and thus \( G[N_G(v)]\cong qK_2+(2p-2q)K_1 \). 
Hence, the independence number of \( G[N_G(v)] \) is exactly \( q+(2p-2q)=2p-q \). 
Thus, \( v \) has \( 2p-q \) neighbours in \( G \) that are pairwise non-adjacent. 
Since \( G \) is \( K_{1,p+1} \)\nobreakdash-free, we have \( 2p-q\leq p \); that is, \( p\leq q \). 
Since \( q\leq p \) and \( p\leq q \), we have \( q=p \). 
That is, \( G[N_G(v)]\cong pK_2 \). 
Since \( v \) is arbitrary and \( G \) is connected, \( G\in F(p,2) \). 
Hence, \( G \) is locally linear by Observation~\ref{obs:locally linear iff Fp2}. 
\end{proof}
\begin{theorem}\label{thm:larger subclass of locally linear}
Let \( p\geq 2 \), and let \( G \) be a connected \( K_{1,p+1} \)-free \( 2p \)-regular graph. 
If \( G \) is \( (p+2) \)-star colourable, then \( G \) is a locally linear graph as well as a clique graph, and \( K(K(G))\cong G \). 
\end{theorem}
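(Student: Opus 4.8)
The plan is to bootstrap everything from Theorem~\ref{thm:p+2 star colourable iff LBH}. Since $G$ is a $K_{1,p+1}$-free $2p$-regular graph that is $(p+2)$-star colourable, that theorem supplies a locally bijective homomorphism $\psi$ from $G$ to $L^*(K_{p+2})$. The strategy is then twofold: first, transfer the forbidden-subgraph structure of $L^*(K_{p+2})$ back to $G$ to conclude that $G$ is locally linear; second, feed $G\in F(p,2)$ into the Devillers et al.\ machinery to obtain the clique-graph statements.

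For the first part, I would recall the fact established just before the theorem, namely $L^*(K_{p+2})\in F(p,2)$, so that $L^*(K_{p+2})$ is locally linear; in particular it contains no diamond as a subgraph, since a diamond subgraph would place one of its edges in two distinct triangles, contradicting local linearity. As the diamond has diameter~$2$, Observation~\ref{obs:lbh preseves diameter 2 subgraphs} applied to $\psi$ forces $G$ to contain no diamond subgraph either, and hence no $K_4$ subgraph. In particular $G$ is $(\mathrm{diamond},K_4)$-free (as induced subgraphs), and together with the hypothesis that $G$ is $K_{1,p+1}$-free, Lemma~\ref{lem:locally linear iff diamond K4 K1p+1 free} yields that $G$ is locally linear. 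By Observation~\ref{obs:locally linear iff Fp2}, this is equivalent to $G\in F(p,2)$.

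For the second part, I would invoke the theorem of Devillers et al.~\cite[Theorem~1.4]{devillers} with parameters $q=p\ge 2$ and $r=2\ge 1$: it gives $K(G)\in F(3,p-1)$ and $K(K(G))\cong G$. The isomorphism $K(K(G))\cong G$ is one of the claimed conclusions, and it exhibits $G$ as the clique graph of $H\coloneqq K(G)$, so $G$ is a clique graph. Combined with the local linearity obtained in the first part, this completes the proof. I do not expect a genuine obstacle: the substantive content is already packaged in Theorem~\ref{thm:p+2 star colourable iff LBH}, and what remains is bookkeeping with the families $F(q,r)$. The only point requiring care is that Observation~\ref{obs:lbh preseves diameter 2 subgraphs} transfers subgraphs rather than induced subgraphs, so I would phrase the diamond-freeness of $L^*(K_{p+2})$ as ``no diamond subgraph'' (which holds by local linearity) and likewise deduce ``no diamond subgraph'' for $G$ before applying Lemma~\ref{lem:locally linear iff diamond K4 K1p+1 free}, whose hypothesis only needs the weaker induced-subgraph version.
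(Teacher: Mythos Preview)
Your proof is correct and follows the same overall architecture as the paper: establish that $G$ is $(\mathrm{diamond},K_4)$-free, invoke Lemma~\ref{lem:locally linear iff diamond K4 K1p+1 free} to get local linearity, then apply Observation~\ref{obs:locally linear iff Fp2} and \cite[Theorem~1.4]{devillers}. The only difference lies in how the first step is obtained. The paper simply cites \cite[Corollary~2]{shalu_cyriac3} for the $(\mathrm{diamond},K_4)$-freeness of any $2p$-regular $(p+2)$-star colourable graph, whereas you route through the heavier Theorem~\ref{thm:p+2 star colourable iff LBH} to obtain an LBH to $L^*(K_{p+2})$ and then pull back diamond-freeness via Observation~\ref{obs:lbh preseves diameter 2 subgraphs}. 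Your approach has the virtue of being self-contained within the present paper, at the cost of invoking a substantially stronger characterisation than is needed; the paper's version is shorter but relies on an external reference. Either way, once $(\mathrm{diamond},K_4,K_{1,p+1})$-freeness is in hand, the two proofs are identical.
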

\begin{proof}
Suppose that \( G \) is \( (p+2) \)-star colourable. 
Then, \( G \) is (\textup{diamond}, \( K_4 \))-free by \cite[Corollary~2]{shalu_cyriac3}. 
Since \( G \) is a (\textup{diamond}, \( K_4 \), \( K_{1,p+1} \))-free graph, \( G \) is locally linear by Lemma~\ref{lem:locally linear iff diamond K4 K1p+1 free}. 
Since \( G \) is a connected graph as well, \( G\in F(p,2) \) by Observation~\ref{obs:locally linear iff Fp2}. 
Thus, \( K(K(G))\cong G \) by \cite[Theorem~1.4]{devillers}, and hence \( G \) itself is a clique graph. 
\end{proof}

Theorem~\ref{thm:4-star colourable in terms of distance-two colouring} proved that for every 3-regular graph \( G \), the line graph of \( G \) is 4-star colourable if and only if \( G \) is bipartite and distance-two 4-colourable. 
Feder et al.~\cite{feder} proved that given a planar 3-regular 3-connected bipartite graph \( G \), it is NP-complete to check whether \( G \) is distance-two 4-colourable. Thus, the following is a corollary of Theorem~\ref{thm:4-star colourable in terms of distance-two colouring}. 
\begin{corollary}\label{cor:4-star colouring 4-regular NPC}
Given a planar 3-regular 3-connected bipartite graph \( G \), it is NP-complete to check whether the line graph of \( G \) is 4-star colourable. 
In particular, (i)~it is NP-complete to check whether a planar 4-regular 3-connected graph is 4-star colourable, and (ii)~it is NP-complete to check whether a \( K_{1,3} \)-free 4-regular graph is 4-star colourable. 
\qed
\end{corollary}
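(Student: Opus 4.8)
The plan is to obtain Corollary~\ref{cor:4-star colouring 4-regular NPC} directly from Theorem~\ref{thm:4-star colourable in terms of distance-two colouring} together with the NP-hardness result of Feder et al.~\cite{feder}. First I would dispose of membership in NP: a $4$-star colouring of a graph $H$ on $n$ vertices is a map $f\colon V(H)\to\mathbb{Z}_4$ that is proper and leaves no path on four vertices bicoloured, and both conditions can be checked by inspecting all edges and all $O(n\,\Delta(H)^3)$ four-vertex paths of $H$ in polynomial time; hence ``$H$ is $4$-star colourable'' is in NP, and so are all three decision problems in the statement.

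For NP-hardness of the main statement, I would reduce from the problem of deciding whether a given planar $3$-regular $3$-connected bipartite graph $G$ is distance-two $4$-colourable, which Feder et al.~\cite{feder} proved to be NP-complete. Given such a $G$, compute $L(G)$ in polynomial time. Since $G$ is bipartite, Theorem~\ref{thm:4-star colourable in terms of distance-two colouring} collapses to the equivalence ``$L(G)$ is $4$-star colourable if and only if $G$ is distance-two $4$-colourable'', so $G\mapsto L(G)$ is a correct polynomial-time many-one reduction; this gives NP-completeness of $4$-star colourability of line graphs of planar $3$-regular $3$-connected bipartite graphs.

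It then remains only to note which structural properties $L(G)$ carries over, so that statements (i) and (ii) follow by restriction. When $G$ is planar, $3$-regular and $3$-connected, $L(G)$ is $4$-regular; it is planar because $G$ has maximum degree at most $3$ (so the line graph has no vertex of degree greater than $4$, hence no degree-$4$ vertex that is a cut vertex --- the only obstruction to planarity of the line graph of a planar graph), and it is $3$-connected, being the line graph of a $3$-connected cubic graph. This yields~(i). For~(ii) it suffices that $L(G)$ is $4$-regular and claw-free, the latter because every line graph is $K_{1,3}$-free~\cite{beineke}. Hence both special cases reduce to the main statement.

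I do not expect a genuine obstacle here: the substantive content is already carried by Theorem~\ref{thm:4-star colourable in terms of distance-two colouring}, and the only points needing a line of justification are the transfer of planarity, $3$-connectivity and claw-freeness to $L(G)$. The one thing to be careful about is to use the \emph{bipartiteness} hypothesis on $G$ essentially, since without it the characterisation in Theorem~\ref{thm:4-star colourable in terms of distance-two colouring} would leave a residual bipartiteness requirement that is absent from the target problem of Feder et al.
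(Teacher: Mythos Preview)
Your approach is exactly the paper's: combine Theorem~\ref{thm:4-star colourable in terms of distance-two colouring} with the NP-completeness result of Feder et al.\ for distance-two $4$-colouring of planar $3$-connected bipartite cubic graphs, and then read off (i) and (ii) from structural properties of $L(G)$. The paper in fact gives no more detail than this, so your write-up is more thorough.

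One expository wobble to clean up: your parenthetical justification of planarity of $L(G)$ is garbled. The relevant criterion (Sedl\'a\v{c}ek) concerns vertices of $G$, not of $L(G)$: $L(G)$ is planar if and only if $G$ is planar, $\Delta(G)\le 4$, and every vertex of $G$ of degree~$4$ is a cut vertex. Since here $\Delta(G)=3$, the condition holds vacuously and $L(G)$ is planar; your sentence about ``no degree-$4$ vertex that is a cut vertex'' in the line graph is neither the right object nor the right polarity. The $3$-connectivity of $L(G)$ is fine once you note that a $3$-connected cubic graph is $3$-edge-connected, and $k$-edge-connectivity of $G$ implies $k$-connectivity of $L(G)$.
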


If a \( K_{1,3} \)-free 4-regular graph \( G \) is 4-star colourable, then \( G \) is a locally-\( 2K_2 \) graph by Theorem~\ref{thm:larger subclass of locally linear}. 
Given a graph \( G \), one can test in polynomial time whether \( G \) is locally-\( 2K_2 \). 
Moreover, locally-\( 2K_2 \) graphs are clique graphs~\cite[Theorem~1.4]{devillers}. 
Thus, we have the following. 
\begin{corollary}\label{cor:locally 2K2 4-star col npc}
It is NP-complete to check whether a planar locally-\( 2K_2 \) graph is 4-star colourable.  
Hence, it is NP-complete to check whether a planar 4-regular clique graph is 4-star colourable. 
\qed
\end{corollary}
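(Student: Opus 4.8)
The plan is to obtain this as an essentially immediate consequence of Corollary~\ref{cor:4-star colouring 4-regular NPC}, by checking that the hard instances it produces already belong to the class of planar locally-$2K_2$ graphs. Membership in NP is clear: a $4$-star colouring is a certificate of linear size, and one can verify in polynomial time both that it is a proper colouring and that no $4$-vertex path is bicoloured. So the work is entirely in establishing NP-hardness.

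For NP-hardness I would use the reduction already implicit in the text preceding the corollary. By Corollary~\ref{cor:4-star colouring 4-regular NPC} it is NP-complete to decide, for a planar $3$-regular $3$-connected bipartite graph $G$, whether $L(G)$ is $4$-star colourable; the map $G \mapsto L(G)$ is computable in polynomial time, so it suffices to verify that $L(G)$ is always a planar locally-$2K_2$ graph. Planarity of $L(G)$ follows from planarity of $G$ together with $\Delta(G) \le 3$ (the line graph of a planar graph of maximum degree at most $3$ is planar). For the local structure, since $G$ is bipartite it is triangle-free, hence a locally-$3K_1$ graph; as $G$ is connected and $3$-regular, every maximal clique of $L(G)$ is a star, so $L(G) = K(G)$, and by \cite[Theorem~1.4]{devillers} the graph $K(G)$ is locally-$2K_2$ (this is precisely the step already used inside the proof of Theorem~\ref{thm:lbh goes with line graph operation}). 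Thus $G \mapsto L(G)$ is a polynomial-time many-one reduction to $4$-star colourability of planar locally-$2K_2$ graphs, giving NP-hardness, and together with NP membership, NP-completeness.

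For the ``hence'' clause I would note that every locally-$2K_2$ graph is $4$-regular and, when connected, lies in $F(2,2)$, so it is a clique graph by \cite[Theorem~1.4]{devillers} (indeed $K(K(G)) \cong G$); disjoint unions of clique graphs are again clique graphs, so every planar locally-$2K_2$ graph is a planar $4$-regular clique graph. In particular the instances $L(G)$ above are planar $4$-regular clique graphs, so the very same reduction proves NP-completeness for that class as well. I do not anticipate any genuine obstacle: the whole argument is bookkeeping layered on Corollary~\ref{cor:4-star colouring 4-regular NPC}. The only points requiring a moment's care are the classical fact that line graphs of planar graphs of maximum degree $\le 3$ are planar, and the identity $L(G) = K(G)$ for triangle-free $G$ of minimum degree $\ge 2$; both are standard and routine.
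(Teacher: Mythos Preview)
Your proof is correct, and it is in the same spirit as the paper's, but the route is slightly different. The paper argues as follows: by Corollary~\ref{cor:4-star colouring 4-regular NPC}(ii) the problem is NP-complete for planar $K_{1,3}$-free $4$-regular graphs; by Theorem~\ref{thm:larger subclass of locally linear} every \emph{yes}-instance in that class is already locally-$2K_2$; and since being locally-$2K_2$ is testable in polynomial time, one can filter out the non-locally-$2K_2$ inputs (which are automatically \emph{no}-instances) and reduce to the locally-$2K_2$ subclass. You instead go one step further back and observe that the hard instances from Corollary~\ref{cor:4-star colouring 4-regular NPC} are line graphs $L(G)$ of \emph{bipartite} cubic planar graphs, so $G$ is triangle-free, $L(G)=K(G)$, and Devillers' theorem gives that $L(G)$ is locally-$2K_2$ outright, for \emph{all} inputs, not just yes-instances. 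This is a cleaner many-one reduction: it avoids invoking the star-colouring structure theorem (Theorem~\ref{thm:larger subclass of locally linear}) and the implicit need for a fixed no-instance in the filtering step. The paper's route, on the other hand, illustrates how Theorem~\ref{thm:larger subclass of locally linear} can be leveraged and would apply even if one started from a hardness result whose instances were not known in advance to come from triangle-free preimages. Both arguments handle the ``hence'' clause the same way, via \cite[Theorem~1.4]{devillers}.
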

Thanks to Theorem~\ref{thm:characterise 2p-regular p+2-star colourable}, we also have the following corollary. 
\begin{corollary}
Given a \( ( \)planar bipartite\( ) \) graph \( G \) and a \( ( \)strongly connected\( ) \) oriented graph \( \vec{H} \), it is NP-complete to check whether \( G \) has an  orientation that admits a \( ( \)degree-preserving\( ) \) OBH to~\( \vec{H} \). 
\qed
\end{corollary}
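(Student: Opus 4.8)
The plan is to obtain the corollary from the characterisation of $2p$-regular $(p+2)$-star colourable graphs in Theorem~\ref{thm:characterise 2p-regular p+2-star colourable} together with the NP-completeness of $4$-star colouring in Corollary~\ref{cor:4-star colouring 4-regular NPC}. Membership in NP is immediate: for an instance $(G,\vec{H})$ a witness consists of an orientation $\vec{G}$ of $G$ and a map $\psi\colon V(G)\to V(\vec{H})$, and one verifies in polynomial time that $\psi$ carries every arc of $\vec{G}$ to an arc of $\vec{H}$, that for each vertex $v$ the restriction of $\psi$ to $N^{+}_{\vec{G}}(v)$ is a bijection onto $N^{+}_{\vec{H}}(\psi(v))$, and (in the degree-preserving version) that $\deg_G(v)=\deg_H(\psi(v))$ for every $v$. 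So it remains to prove hardness.

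For hardness I would specialise Theorem~\ref{thm:characterise 2p-regular p+2-star colourable} to $p=2$: a $4$-regular graph $G$ is $4$-star colourable if and only if $G$ has an orientation that admits an OBH to $\vec{L}(K_{4})$. Hence $G\mapsto(G,\vec{L}(K_{4}))$ is a polynomial-time reduction (the second coordinate is a fixed constant-size oriented graph) from ``is the $4$-regular graph $G$ $4$-star colourable?'' to the OBH problem. The two parenthetical strengthenings about $\vec{H}$ and about the homomorphism come for free: $\vec{L}(K_{p+2})$ is strongly connected for every $p\ge 2$, and since $G$ and $L^{*}(K_{4})$ are both $4$-regular, every OBH between their orientations is automatically degree-preserving (indeed, by Lemma~\ref{lem:star col iff mini iff obh} any such orientation of $G$ is forced to be Eulerian). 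Finally, Corollary~\ref{cor:4-star colouring 4-regular NPC}(i) tells us the source problem is already NP-complete for planar $4$-regular $3$-connected graphs $G$, so the OBH problem is NP-complete with $G$ confined to this class and $\vec{H}=\vec{L}(K_{4})$ strongly connected; this yields the ``planar'' part.

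To also impose bipartiteness on the input graph I would instead reduce from the headline version of Corollary~\ref{cor:4-star colouring 4-regular NPC}: given a planar cubic $3$-connected \emph{bipartite} graph $G$, it is NP-complete to decide whether $L(G)$ is $4$-star colourable, equivalently (Theorem~\ref{thm:4-star colourable in terms of distance-two colouring}) whether $G$ is distance-two $4$-colourable. Passing this through Theorem~\ref{thm:characterise 2p-regular p+2-star colourable} applied to the $4$-regular (and planar) graph $L(G)$ again gives the OBH formulation with a strongly connected target and a degree-preserving OBH; the residual difficulty is that $L(G)$ is planar but not bipartite, so to keep the \emph{input} graph itself planar and bipartite one must feed $G$ directly into an OBH instance and express ``$G$ is distance-two $4$-colourable'' (equivalently, ``$G$ admits an LBH to $K_{4}$'', hence to the bipartite double $Q_{3}=K_4\times K_2$) as ``$G$ has an orientation admitting an OBH to a suitable strongly connected oriented graph $\vec{H}$''. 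The forward direction is routine, since an LBH between orientations is in particular an OBH and, by the folklore result, an LBH $G\to K_4$ lifts to an LBH between orientations. The backward direction — forcing an out-neighbourhood-bijective homomorphism to a strongly connected $\vec{H}$ to be genuinely locally bijective — is the main obstacle; I expect it to be handled exactly as in Theorem~\ref{thm:K1p+1free out-nbd BH same as LBH} and Observations~\ref{obs:when obh is lbh simple case1}--\ref{obs:when obh is lbh simple case2}, by choosing $\vec{H}$ (a cover of $Q_{3}$, or a small forcing gadget built around it) so that an OBH into it has no choice but to be an LBH.
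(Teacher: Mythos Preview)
Your first reduction---mapping a planar $4$-regular $3$-connected graph $G$ (from Corollary~\ref{cor:4-star colouring 4-regular NPC}(i)) to the pair $(G,\vec{L}(K_4))$ via Theorem~\ref{thm:characterise 2p-regular p+2-star colourable}---is exactly the paper's argument; the paper offers nothing beyond the sentence ``Thanks to Theorem~\ref{thm:characterise 2p-regular p+2-star colourable}'' and a \qed. This already yields planarity of the input, strong connectivity of $\vec{H}$, and degree preservation, just as you say.

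You are also right that this reduction does \emph{not} produce a bipartite input: $L(G')$ contains a triangle at every vertex of the cubic graph $G'$. Your proposed repair---feeding the cubic bipartite $G'$ itself into the OBH problem and choosing $\vec{H}$ so that any OBH into it is forced to be an LBH to $Q_3$---does not go through with the tools you invoke. Observation~\ref{obs:when obh is lbh simple case1} needs every vertex of $\vec{H}$ to have in-degree~$1$, which is impossible in any orientation of a $3$-regular graph (the in-degrees would sum to $|V(H)|$, not $\tfrac{3}{2}|V(H)|$); Observation~\ref{obs:when obh is lbh simple case2} only helps when $|V(G)|=|V(\vec{H})|$; and the mechanism in Theorem~\ref{thm:K1p+1free out-nbd BH same as LBH} depends on the specific triangle structure of $L^*(K_{p+2})$ and the $K_{1,p+1}$-freeness of the source, neither of which is available for a triangle-free target like $Q_3$. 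So the obstacle you flag is genuine, and your sketch does not remove it.

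In short: the part of your argument that matches the paper is complete and correct; the additional ``bipartite'' qualifier is not justified by your attempt, and it is not justified by the paper's one-line proof either. Treat the parenthetical restrictions as the separately attainable ones that actually follow from the reduction (planar $G$, strongly connected $\vec{H}$, degree-preserving OBH); the ``bipartite'' appears to be an overreach of the stated corollary rather than something your proof is missing.
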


Corollary~2 in \cite{shalu_cyriac3} proved that for \( p\geq 2 \), a \( 2p \)-regular \( (p+2) \)-star colourable graph does not contain diamond as a subgraph. 
The next theorem shows that for \( p\geq 2 \), a \( 2p \)-regular \( (p+2) \)-star colourable graph does not contain the circular ladder graph \( CL_{2r+1} \) as a subgraph for any \( r\in \mathbb{N} \). 
\emph{Remark:} Using Lemmas~\ref{lem:C3 in CEO} and \ref{lem:C4 in CEO} below, it is easy to show that the diamond graph does not admit a \( q \)\nobreakdash-coloured MINI-orientation for any \( q\in \mathbb{N} \). 

\begin{theorem}\label{thm:CL2r+1 obstruction to CEO}
For \( r\in \mathbb{N} \), the circular ladder graph \( CL_{2r+1} \) does not admit a \( q \)\nobreakdash-coloured MINI-orientation for any \( q\in \mathbb{N} \). 
Thus, for \( p\geq 2 \), a \( 2p \)-regular \( (p+2) \)-star colourable graph does not contain \( CL_{2r+1} \) as a subgraph. 
\qed
\end{theorem}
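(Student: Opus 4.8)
The plan is to prove the first assertion — that $CL_{2r+1}$ admits no $q$-coloured MINI-orientation for any $q$ — and then get the second for free. Indeed, by Theorem~\ref{thm:characterise 2p-regular p+2-star colourable} a $2p$-regular $(p+2)$-star colourable graph admits a $(p+2)$-coloured MINI-orientation, and since admitting a $q$-coloured MINI-orientation is hereditary under taking subgraphs (as noted in Section~\ref{sec:def}), a subgraph isomorphic to $CL_{2r+1}$ would inherit a $(p+2)$-coloured MINI-orientation, contradicting the first assertion. So assume for contradiction that $(\vec{G},f)$ is a $q$-coloured MINI-orientation of $CL_{2r+1}$; write $u_0u_1\cdots u_{2r}$ for the outer cycle, $w_0w_1\cdots w_{2r}$ for the inner cycle, $u_iw_i$ for the rungs (indices modulo $2r+1$), and $F_i = u_iu_{i+1}w_{i+1}w_i$ for the quadrilateral faces.

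First I would record the structural facts about short cycles that I need; these are the counterparts of Lemmas~\ref{lem:C3 in CEO} and \ref{lem:C4 in CEO}. (a)~Every triangle of $\vec{G}$ is a directed $3$-cycle: if a triangle contained a vertex of in-degree $2$, its two in-neighbours would be adjacent yet, by condition~(iii), monochromatic — impossible. (b)~In no $4$-cycle of $\vec{G}$ is a vertex of in-degree $0$ (within the $4$-cycle) opposite a vertex of in-degree $2$ (within it): condition~(ii) at the former forces its two cycle-neighbours to receive distinct colours, while condition~(iii) at the latter forces those same two vertices to be monochromatic. A short check of the four edge-orientations of $F_i$ shows that $F_i$ is in this forbidden configuration precisely when the outer edge $u_iu_{i+1}$ and the inner edge $w_iw_{i+1}$ run the same way (both from index $i$ to index $i{+}1$, or both the reverse) \emph{and} the rungs $u_iw_i$ and $u_{i+1}w_{i+1}$ point the same way (both ``up'' or both ``down''). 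Hence for every $i$, either the outer and inner edge of the $i$-th column-pair run oppositely, or the two rungs $u_iw_i,u_{i+1}w_{i+1}$ point oppositely. Moreover, whenever $F_i$ fails to be a directed $4$-cycle it contains a vertex of in-degree $2$ within $F_i$, so condition~(iii) there yields $f(u_i)=f(w_{i+1})$ or $f(u_{i+1})=f(w_i)$.

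To finish, I would feed a parity argument into the colour conditions. Encode the direction of rung $u_iw_i$ by a bit $\gamma_i\in\{0,1\}$; the number of $i$ with $\gamma_i\neq\gamma_{i+1}$ is even (it is the number of sign changes of a cyclic binary word), so, $2r+1$ being odd, there is a $j$ with $\gamma_j=\gamma_{j+1}$, and then by (b) the outer and inner edges of the $j$-th column-pair must run oppositely, so $F_j$ is not a directed $4$-cycle. (When $r=1$ the two triangular faces are directed by (a); if they have the same rotational sense then every column-pair has parallel outer/inner edges, forcing the $\gamma_i$ to alternate around the odd cycle — impossible — while in the opposite-sense case the vertex conditions pin down every orientation and condition~(iii) propagates a colour equality around the outer triangle that violates condition~(i) at some $w_i$, just as in the diamond argument sketched after the statement.) Now $F_j$ is a non-directed $4$-cycle with, say, $f(u_j)=f(w_{j+1})$; the remaining task is to chase, along the two odd cycles, the equalities $c^{-}(x)=f(y)$ that hold for each arc $y\to x$ together with the inequalities of conditions~(i)--(ii) at the vertices of the ladder, and show they are jointly unsatisfiable — exactly the mechanism one sees by hand for, say, $CL_5$. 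This last step is the main obstacle: an isolated non-directed $4$-cycle is perfectly consistent, so no local inconsistency exists, and the contradiction only surfaces globally, once the colour equalities forced at the non-directed face and at the large-in-degree vertices are carried around the $2r+1$ columns against the forced colour inequalities. Organising this into a clean invariant — morally, a mod-$2$ count of sign changes of the rung-direction word against the odd number of columns — is where the real effort goes; everything through (a) and (b) is routine.
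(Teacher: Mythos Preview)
Your proposal has a genuine gap: you stop just short of the finish line because your local lemma (b) is weaker than what is actually true, and this leads you to believe a global chase is required when the contradiction is already local at the face $F_j$. The paper's Lemma~\ref{lem:C4 in CEO} says that \emph{every} 4-cycle in a coloured MINI-orientation is either directed or direction-alternating; equivalently (Corollary~\ref{cor:opposite sides in C4 under CEO}), if $(u,v)$ is an arc in a 4-cycle $(u,v,w,x)$ then so is $(w,x)$. The argument is one line beyond your (b): once some vertex of the 4-cycle has in-degree~2, condition~(iii) makes its two cycle-neighbours monochromatic, and then conditions~(i)--(ii) at the \emph{opposite} vertex force both of those neighbours to be in-neighbours there too, so the cycle is alternating. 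With this the paper finishes immediately: applying the corollary to the successive faces forces the rung directions to strictly alternate around the ladder, impossible for $2r+1$ rungs. No separate treatment of $r=1$ is needed.

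Your (b) excludes only the ``source opposite sink'' orientation of a 4-cycle; it misses the ``three-forward-one-backward'' orientation, which is also forbidden. This is exactly the configuration you land in. Once you have located $F_j$ with $\gamma_j=\gamma_{j+1}$ and used (b) to force the outer and inner edges of $F_j$ to run oppositely, observe that $F_j$ can be neither directed nor alternating (both of those would make the two rungs point in opposite directions), so $F_j$ is three-forward-one-backward. Its sink $s$ (in-degree~2 within $F_j$) forces its two cycle-neighbours to share a colour by~(iii), while the adjacent pass-through vertex of in-degree~1 has one of those neighbours as in-neighbour and the other as out-neighbour, so condition~(i) forces them to differ --- an immediate local contradiction. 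Thus your sentence ``an isolated non-directed 4-cycle is perfectly consistent, so no local inconsistency exists'' is precisely where the argument goes wrong: the only locally consistent non-directed 4-cycle is the alternating one, and $F_j$ is not that.
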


Theorem~\ref{thm:CL2r+1 obstruction to CEO} follows from Corollary~\ref{cor:opposite sides in C4 under CEO} below. 
The proofs of the following results are straightforward and hence moved to Appendix~\ref{sec:A.2}. % for better flow of presentation. 
%(proofs are omitted here, and they are available in \cite[Chapter~5]{cyriac}).

\begin{lemma}\label{lem:C3 in CEO}
Let \( G \) be a graph, and let \( (\vec{G},f) \) be a \( q \)-coloured MINI-orientation of \( G \) for some \( q\in \mathbb{N} \). 
% with an underlying \( q \)-colouring \( f \) for some \( q\in \mathbb{N} \). 
%Then, \( \vec{G} \) orients each triangle \( (u,v,w) \) in \( G \) as either \( u\to v\to w\to u \) or \( v\to u\to w\to v \). 
Then, \( \vec{G} \) orients each triangle in \( G \) as a directed cycle. 
\qed
\end{lemma}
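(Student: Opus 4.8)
The plan is to argue by contradiction on the orientation of a single triangle, using only Condition~(iii) in the definition of a $q$-coloured MINI-orientation (the "monochromatic in-neighbourhood" clause) together with the fact that $f$ is a proper colouring of $G$. First I would fix an arbitrary triangle $uvw$ of $G$ and observe that, since $\vec{G}$ is an orientation of $G$, each of the three edges $uv$, $vw$, $wu$ receives exactly one direction, so $\vec{G}[\{u,v,w\}]$ is a tournament on three vertices. Every tournament on three vertices is either a directed $3$-cycle (in one of its two cyclic orientations) or a transitive triple; since the conclusion of the lemma is exactly that $\vec{G}$ orients the triangle as a directed cycle, it suffices to rule out the transitive case.

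So suppose $\vec{G}[\{u,v,w\}]$ is transitive. Then it has a unique sink; after relabelling, say $u$ is that sink, so both $(v,u)$ and $(w,u)$ are arcs of $\vec{G}$. Hence $v$ and $w$ are both in-neighbours of $u$ in $\vec{G}$. Now I would invoke Condition~(iii): all in-neighbours of $u$ have the same colour under $f$, which forces $f(v)=f(w)$. But $vw$ is an edge of $G$ and $f$ is a $q$-colouring of $G$, hence $f(v)\neq f(w)$ --- a contradiction. Therefore the transitive case cannot occur, and $\vec{G}$ orients the triangle $uvw$ as a directed cycle. Since the triangle was arbitrary, the lemma follows.

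I do not expect any genuine obstacle: Conditions~(i) and~(ii) of the MINI-orientation are not even needed, and the only point requiring a sentence of care is the elementary remark that a tournament on three vertices that is not a directed cycle must contain a vertex of in-degree~$2$ within the triangle (equivalently, it is a transitive triple with a sink). This is standard and immediate, so the whole argument is short.
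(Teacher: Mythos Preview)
Your proof is correct and follows essentially the same idea as the paper's: both arguments derive a contradiction by finding, in a non-cyclically oriented triangle, a vertex with two in-neighbours among the triangle's vertices and then invoking Condition~(iii) against the properness of $f$. Your phrasing via the transitive-tournament/sink observation is slightly more concise than the paper's edge-by-edge case analysis, but the substance is the same.
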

%\begin{proof}
%Let \( (u,v,w) \) be an arbitrary triangle in \( G \). 
%Either \( (u,v) \) or \( (v,u) \) is an arc in \( \vec{G} \). 
%We show that if \( (u,v) \) is an arc in \( \vec{G} \), then \( (u,v,w) \) is oriented by \( \vec{G} \) as \( u\to v\to w\to u \). 
%Suppose that \( (u,v) \) is an arc in \( \vec{G} \). 
%\setcounter{myclaim}{0}
%\begin{myclaim}\label{clm:triangle in CEO}
%\( (v,w) \) is an arc in \( \vec{G} \). 
%\end{myclaim}
%\noindent If not, then \( u \) and \( w \) are in-neighbours of \( v \) and thus they must get the same colour under~\( f \), which is a contradiction since \( uw \) is an edge of \( G \) and \( f \) is a colouring of \( G \). 
%This proves Claim~\ref{clm:triangle in CEO}. 
%
%
%Similarly, if \( (u,w) \) is an arc in \( \vec{G} \), then the in-neighbours \( u \) and \( v \) of \( w \) must get the same colour under \( f \), a contradiction. 
%Hence, \( (w,u) \) is an arc in \( \vec{G} \). 
%Hence, \( (u,v,w) \) is oriented by \( \vec{G} \) as \( u\to v\to w\to u \). 
%Therefore, if \( (u,v) \) is an arc in \( \vec{G} \), then \( (u,v,w) \) is oriented by \( \vec{G} \) as \( u\to v\to w\to u \). 
%Similarly, we can show that if \( (v,u) \) is an arc in \( \vec{G} \), then \( (u,v,w) \) is oriented by \( \vec{G} \) as \( v\to u\to w\to v \). 
%\end{proof}
%
\begin{lemma}\label{lem:C4 in CEO}
Let \( G \) be a graph, and let \( (\vec{G},f) \) be a \( q \)-coloured MINI-orientation of \( G \) for some \( q\in \mathbb{N} \). 
%with an underlying \( q \)\nobreakdash-colouring~\( f \) for some \( q\in \mathbb{N} \). 
Then, for every 4-vertex cycle in \( G \), not necessarily induced, edges in the cycle are oriented by \( \vec{G} \) either as a directed cycle (as in Figure~\ref{fig:C4 orientation 1}) or as a direction-alternating cycle (as in Figure~\ref{fig:C4 orientation 2}). 
\qed
\end{lemma}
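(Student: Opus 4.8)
The plan is to fix a 4-vertex cycle $C\colon v_1v_2v_3v_4v_1$ of $G$ and classify the orientation that $\vec{G}$ induces on its four edges. Call $v_i$ a \emph{$C$-source} if both cycle-edges at $v_i$ leave it, a \emph{$C$-sink} if both enter it, and a \emph{through vertex} otherwise (exactly one cycle-edge leaves it). Since $\sum_i \mathrm{outdeg}_C(v_i)=4$ (one per edge), writing $s,t$ for the numbers of $C$-sources and $C$-sinks gives $2s+(4-s-t)=4$, so $s=t$ and hence $s=t\in\{0,1,2\}$. I would then observe: if $s=t=0$ the four through-vertex conditions force $\vec{G}$ to orient $C$ as a directed cycle (Figure~\ref{fig:C4 orientation 1}); if $s=t=2$ the two $C$-sources cannot be joined by an edge of $C$ (it would have to leave both ends), so they are antipodal on $C$, and likewise the two $C$-sinks, which is precisely the direction-alternating orientation (Figure~\ref{fig:C4 orientation 2}). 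So it remains to rule out $s=t=1$; let $a$ be the unique $C$-source and $b$ the unique $C$-sink.

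Exactly two configurations survive, according to whether $a$ and $b$ are antipodal or adjacent on $C$, and I would dispatch each using the defining conditions (i)--(iii) of a $q$-coloured MINI-orientation. If $a=v_1$ and $b=v_3$ are antipodal, then $v_2,v_4$ are both out-neighbours of $v_1$, so $f(v_2)\neq f(v_4)$ by~(ii), while $v_2,v_4$ are both in-neighbours of $v_3$, so $f(v_2)=f(v_4)$ by~(iii) --- a contradiction. If $a$ and $b$ are adjacent, relabel $C$ as $a,b,c,d$ with $d$ the cycle-neighbour of $a$ other than $b$; then the cycle-edges are forced to be $a\to b$, $a\to d$, $d\to c$, $c\to b$ (e.g.\ the edge $cd$ must be $d\to c$, else $c$ would be a second $C$-source), so $a$ and $c$ are both in-neighbours of $b$ (giving $f(a)=f(c)$ by~(iii)), whereas $a$ is an in-neighbour of $d$ and $c$ is an out-neighbour of $d$ (giving $f(a)\neq f(c)$ by~(i)) --- again a contradiction. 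Hence $s=t=1$ is impossible, which proves the lemma.

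I do not expect a genuine obstacle here: this is a finite case check. The only points needing care are (a) the balance $s=t$, which is what reduces the task to excluding the single ``one source, one sink'' type, and (b) the observation that ``not necessarily induced'' is irrelevant, since the statement constrains only the four edges of $C$ and any chord $v_1v_3$ or $v_2v_4$ plays no part (in particular Lemma~\ref{lem:C3 in CEO} is not invoked). An alternative would be to enumerate all $16$ orientations of $C$ up to its dihedral symmetry, but the source/sink bookkeeping isolates the bad case more cleanly.
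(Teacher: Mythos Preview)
Your proof is correct. The argument via the source/sink balance $s=t$ is clean, and the two contradictions you derive in the $s=t=1$ case are exactly right (conditions~(ii) and~(iii) in the antipodal sub-case, conditions~(i) and~(iii) in the adjacent sub-case).

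The paper organises the case analysis differently. It fixes one edge direction (say $u\to v$) and then branches on the orientation of the next edge $vw$: if $v\to w$ it chases the remaining two edges using conditions~(i) and~(iii) to land on the directed cycle; if $w\to v$ it immediately gets $f(u)=f(w)$ from~(iii) and then forces both $u$ and $w$ to be in-neighbours of $x$, yielding the alternating orientation. So the paper never isolates the ``one source, one sink'' configuration explicitly; that bad case is implicitly ruled out inside its Case~1 when it proves the third edge must continue the directed path. Your decomposition is more symmetric and makes the structure of the obstruction transparent (the $s=t$ identity does real work), whereas the paper's version is a shorter direct walk around the cycle. Both are elementary finite checks of comparable length; neither uses Lemma~\ref{lem:C3 in CEO}, consistent with your remark that chords are irrelevant.
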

\begin{figure}[hbt]
\centering
\begin{subfigure}[b]{0.3\textwidth}
\centering
\begin{tikzpicture}[decoration={
    markings,
    mark=at position 0.5 with {\arrow{>}}},
    ]
\path (0,0) node(u)[dot]{}--++(1,-1) node(v)[dot]{}--++(-1,-1) node(w)[dot]{}--++(-1,1) node(x)[dot]{}--(u); 
\draw [postaction=decorate,thick]
(u)--(v);
\draw [postaction=decorate,thick]
(v)--(w);
\draw [postaction=decorate,thick]
(w)--(x);
\draw [postaction=decorate,thick]
(x)--(u);
\end{tikzpicture}
\caption{}
\label{fig:C4 orientation 1}
\end{subfigure}\begin{subfigure}[b]{0.3\textwidth}
\centering
\begin{tikzpicture}[decoration={
    markings,
    mark=at position 0.5 with {\arrow{>}}},
    ]
\path (0,0) node(u)[dot]{}--++(1,-1) node(v)[dot]{}--++(-1,-1) node(w)[dot]{}--++(-1,1) node(x)[dot]{}--(u); 
\draw [postaction=decorate,thick]
(u)--(v);
\draw [postaction=decorate,thick]
(w)--(v);
\draw [postaction=decorate,thick]
(w)--(x);
\draw [postaction=decorate,thick]
(u)--(x);
\end{tikzpicture}
\caption{}
\label{fig:C4 orientation 2}
\end{subfigure}\caption{Possible orientations of a \( C_4 \) in a coloured MINI-orientation of \( G \).}\label{fig:C4 orientations}
\end{figure}
\begin{corollary}\label{cor:opposite sides in C4 under CEO}
Let \( G \) be a graph with a 4-vertex cycle \( (u,v,w,x) \), and let \( (\vec{G},f) \) be a \( q \)-coloured MINI-orientation of \( G \). 
If \( (u,v)\in E(\vec{G}) \), then \( (w,x)\in E(\vec{G}) \). 
\qed
\end{corollary}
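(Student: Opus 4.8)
The plan is to deduce the corollary directly from Lemma~\ref{lem:C4 in CEO}, which tells us that $\vec G$ orients the four edges of the cycle $(u,v,w,x)$ either as a directed cycle or as a direction-alternating cycle (Figures~\ref{fig:C4 orientation 1} and~\ref{fig:C4 orientation 2}). So it suffices to check, in each of these two cases, that $(u,v)\in E(\vec G)$ forces $(w,x)\in E(\vec G)$.

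In the directed-cycle case, the arc $uv$ points forward around the cycle, so the orientation is $u\to v\to w\to x\to u$; the reverse orientation $u\to x\to w\to v\to u$ would instead give $(v,u)\in E(\vec G)$. Reading off the arc on the edge $wx$ gives $(w,x)\in E(\vec G)$. In the direction-alternating case, each vertex of the cycle is either a source (both incident cycle-edges leave it) or a sink (both enter it), and the two types alternate along the cycle; since $u\to v$, the vertex $u$ is a source and $v$ is a sink, so the other cycle-neighbour $w$ of $v$ is a source, whence the edge $wx$ is oriented $w\to x$, i.e.\ $(w,x)\in E(\vec G)$. Both cases yield $(w,x)\in E(\vec G)$, which proves the corollary. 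Theorem~\ref{thm:CL2r+1 obstruction to CEO} then follows by applying this fact (in both cyclic orders, to obtain an ``if and only if'' for opposite edges) to the $2r+1$ quadrilateral faces of $CL_{2r+1}$: the orientations of the $2r+1$ edges joining the two $(2r+1)$-cycles are forced to alternate around a cycle of odd length, which is impossible.

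There is essentially no obstacle here beyond the (already delegated) proof of Lemma~\ref{lem:C4 in CEO}. For a self-contained alternative I would argue straight from the three defining conditions of a $q$-coloured MINI-orientation: suppose for contradiction that $(x,w)\in E(\vec G)$, and examine the two orientations of each of the remaining edges $vw$ and $ux$. If $v\to w$, then $w$ has in-neighbours $v$ and $x$, so $f(v)=f(x)$ by condition~(iii), and then orienting $ux$ makes $v$ and $x$ two out-neighbours of $u$ or an out-neighbour and an in-neighbour of $u$, contradicting~(ii) or~(i); the case $w\to v$ is symmetric, giving $f(u)=f(w)$ and a conflict at $x$. This check is short and routine, suitable for the appendix as the paper does.
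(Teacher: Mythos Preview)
Your proof is correct and follows exactly the paper's approach: apply Lemma~\ref{lem:C4 in CEO} and read off that in both the directed-cycle and direction-alternating orientations the edge $wx$ is oriented $w\to x$. The extra material you include (the sketch of Theorem~\ref{thm:CL2r+1 obstruction to CEO} and the self-contained alternative) is sound but goes beyond what the paper puts in this corollary's proof; the paper handles Theorem~\ref{thm:CL2r+1 obstruction to CEO} separately in the appendix by the same repeated application you describe.
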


Finally, we point out that given an orientation \( \vec{G} \) of a graph \( G \), one can test in polynomial-time whether \( \vec{G} \) is a MINI-orientation of \( G \). % (proof is available in the supplement). 
The proof of the theorem is straightforward and hence moved to Appendix~\ref{sec:A.3}. 
\begin{theorem}\label{thm:orientation mini test in P}
Given an orientation \( \vec{G} \) of a graph \( G \), we can test in polynomial time whether there exist an integer \( q \) and a \( q \)-colouring \( f \) of \( G \) such that \( (\vec{G},f) \) is a \( q \)-coloured MINI-orientation of~\( G \). 
\qed
\end{theorem}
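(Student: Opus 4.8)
The plan is to reduce the question to testing the consistency of a bounded collection of forced‑equality and forced‑inequality constraints on the colouring, and to exploit the fact that since $q$ is not fixed in advance, only the equality constraints can ever cause infeasibility.

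First I would unpack what $(\vec G,f)$ being a $q$‑coloured MINI‑orientation demands. Condition~(iii) (all in‑neighbours of each vertex receive one colour) forces $f$ to be constant on every class of the equivalence relation $\equiv$ on $V(G)$ generated by ``$u$ and $u'$ are both in‑neighbours of a common vertex''. This relation, together with its transitive closure, is computable by a union–find structure in near‑linear time: for each vertex $v$ of positive in‑degree fix one in‑neighbour $r(v)$, and for every arc $(u,v)$ of $\vec G$ perform $\mathrm{union}(u,r(v))$. All the remaining requirements are inequality constraints on $f$: properness ($f(u)\neq f(v)$ on edges), condition~(i) (an out‑neighbour of $v$ never shares a colour with an in‑neighbour of $v$), and condition~(ii) (the out‑neighbours of $v$ are pairwise differently coloured).

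Next I would establish the characterisation: some $q$ and $f$ with $(\vec G,f)$ a $q$‑coloured MINI‑orientation of $G$ exist if and only if (a) no edge of $G$ has both endpoints in a single $\equiv$‑class, (b) no vertex has two distinct out‑neighbours lying in one $\equiv$‑class, and (c) no vertex has an in‑neighbour and an out‑neighbour lying in the same $\equiv$‑class. For the ``only if'' direction, $f$ is constant on $\equiv$‑classes, so a violation of (a), (b) or (c) would contradict properness, condition~(ii) or condition~(i), respectively. For the ``if'' direction, define $f$ by sending each vertex to an index of its $\equiv$‑class and take $q$ to be the number of classes; then condition~(iii) holds by construction, while (a)--(c) say precisely that adjacent vertices, pairs of out‑neighbours, and in‑neighbour/out‑neighbour pairs occupy distinct classes and hence get distinct colours. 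This is exactly the point where having no a priori bound on $q$ matters: distinct classes may simply be assigned distinct colours, so no graph‑colouring obstruction survives.

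Finally, the algorithm computes the $\equiv$‑classes by union–find, then checks (a) by one pass over the edges of $G$, and (b), (c) by examining, for each vertex $v$, the class‑indices of its out‑neighbours (detecting a repeat gives a violation of (b)) and testing disjointness of the class‑index sets of its in‑ and out‑neighbours (for (c)); with hashing this is $O(|E(G)|)$ overall, and even a naive implementation runs in $O\!\left(\sum_{v}\deg(v)^2\right)$ time. The procedure answers ``yes'' exactly when all three tests pass, which by the characterisation is correct. The only real subtlety — more an observation to make than a difficulty to surmount — is recognising that unbounded $q$ collapses the problem to pure constraint consistency, so that no NP‑hard colouring subproblem is hidden inside it; the rest is routine bookkeeping.
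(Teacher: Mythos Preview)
Your proposal is correct and is essentially the paper's own argument. Both define the same equivalence relation (generated by ``two vertices share a common out-neighbour''), colour each class with its own colour, and observe that a valid $q$-coloured MINI-orientation exists for some $q$ if and only if this canonical colouring already works; the paper phrases the final check as ``test whether $(\vec G,f)$ is a $q$-coloured MINI-orientation'', which unpacks exactly into your conditions (a), (b), (c).
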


\section{Conclusion and Open Problems}\label{sec:open problems}
For \( d\geq 3 \), at least \( \lceil (d+4)/2 \rceil \) colours are required to star colour a \( d \)-regular graph~\cite{shalu_cyriac3}. 
In particular, at least \( (p+2) \) colours are required to star colour \( 2p \)-regular graphs \( G \) with \( p\geq 2 \), and graphs \( G \) for which \( (p+2) \) colours suffice are characterised in terms of graph orientations in \cite{shalu_cyriac3} and in terms of graph homomorphisms in the current paper. 
The following is a natural follow-up question since at least \( (p+2) \) colours are required to star colour \( (2p-1) \)-regular graphs with \( p\geq 2 \). 
\begin{problem}
For \( p\geq 2 \), characterise \( (2p-1) \)-regular \( (p+2) \)-star colourable graphs. 
\end{problem}

For a fixed \( k\in \mathbb{N} \), the problem \textsc{\( k \)-Star Colourability} takes a graph \( G \) as input and asks whether \( G \) is \( k \)-star colourable. 
The problem \textsc{4-Star Colourability} is NP-complete even when restricted to \( K_{1,3} \)-free (planar) 4-regular graphs by Corollary~\ref{cor:4-star colouring 4-regular NPC}. 

\begin{problem}[\cite{shalu_cyriac3}]
For \( p\geq 3 \), is \textsc{\( (p+2) \)-Star Colourability} restricted to \( 2p \)-regular graphs NP-complete?
\end{problem}
\todo{
Reviewer G:\\
This is already answered in this paper by Thms 2 and 9, and the hardness result of [14], if I am not mistaken as \( G_{2p} \) is \( 2p \)-regular, isn't it?
}
\todo[color=gray!10!white]{
A clarification note was indeed necessary. 
It is added after Problem 2. 
}
Observe that this problem is indeed open. 
By Theorem~\ref{thm:characterise 2p-regular p+2-star colourable}, a \( 2p \)-regular graph \( G \) is \( (p+2) \)-star colourable if and only if \( G \) admits an OBH to \( L^*(K_{p+2}) \). 
But, the complexity of deciding whether an input  graph \( G \) admits an OBH to \( L^*(K_{p+2}) \) is open. 
By Lemma~\ref{lem:star col iff mini iff obh}, every OBH from an orientation of a \( K_{1,p+1} \)-free \( 2p \)-regular graph to \( \vec{L}(K_{p+2}) \) is an LBH. 
For each \( d \)-regular graph \( H \) with \( d\geq 3 \), it is NP-complete to test whether an input (\( d \)-regular) graph \( G \) admits an LBH to \( H \). 
In particular, it is NP-complete to test whether a \( 2p \)-regular graph \( G \) admits an LBH to \( L^*(K_{p+2}) \). 
Neverthless, the complexity status is unknown when \( G \) is guaranteed to be \( K_{1,p+1} \)-free (even though \( L^*(K_{p+2}) \) is \( K_{1,p+1} \)-free, a graph admitting LBH to \( L^*(K_{p+2}) \) need not be \( K_{1,p+1} \)-free).

\begin{problem}\label{prob:p+2 star col large star free 2p-regular}
For \( p\geq 3 \), is \textsc{\( (p+2) \)-Star Colourability} restricted to \( K_{1,p+1} \)-free \( 2p \)-regular graphs NP-complete?
\end{problem}
\noindent If the answer to Problem~\ref{prob:p+2 star col large star free 2p-regular} is `yes', then \textsc{\( (p+2) \)-Star Colourability} restricted to locally-\( pK_2 \) graphs is NP-complete (by the same arguments as in Corollary~\ref{cor:locally 2K2 4-star col npc}).\\

By Theorem~\ref{thm:CL2r+1 obstruction to CEO}, diamond and circular ladder graph \( CL_{2r+1} \)~(\( r\in \mathbb{N} \)) does not admit a \( q \)\nobreakdash-coloured MINI-orientation for any \( q\in \mathbb{N} \). 
\begin{problem}
Characterise graphs that do not admit a \( q \)\nobreakdash-coloured MINI-orientation for any \( q\in \mathbb{N} \). 
\end{problem}

Theorem~\ref{thm:4-star colourable in terms of distance-two colouring} proved that for every 3-regular graph \( G \), the line graph of \( G \) is 4-star colourable if and only if \( G \) is bipartite and distance-two 4-colourable. 
Generalisation of this result to larger graph classes is an interesting future direction. 
Determining the spectrum of \( L^*(G) \) for each graph \( G \) is another future direction we are interested in. 
Theorem~\ref{thm:LstarH has LBH to LH} revealed some information on the spectrum of \( L^*(G) \). 

\section*{Declaration of competing interest}
The authors declare that they have no known competing financial interests or personal relationships that could have appeared to influence the work reported in this paper.

%It is easy to observe that proper subgraphs of \( CL_{2r+1} \) admit MINI-orientation. 

\section*{Acknowledgement}
We thank two anonymous referees for their careful reading, and their suggestions that improved the presentation of the paper. 
Cyriac Antony reports financial support from IIITDM Kancheepuram, Chennai; IIT Madras, Chennai; and Luiss University, Rome. 

%We thank three anonymous referees . 
%The first author acknowledges that the research leading to this work is supported by SERB(DST), MATRICS scheme MTR/2018/000086.
%
% ---- Bibliography ----
% BibTeX users should specify bibliography style 'splncs04'.
% References will then be sorted and formatted in the correct style.
 \bibliographystyle{splncs04}
 \bibliography{myRefs13}

\appendix
\section{Appendix: Omitted Proofs}

\subsection{Proofs of Lemma~\ref{lem:C3 in CEO}, Lemma~\ref{lem:C4 in CEO}, Corollary~\ref{cor:opposite sides in C4 under CEO} and Theorem~\ref{thm:CL2r+1 obstruction to CEO}}\label{sec:A.2}
\begin{customlem}{\ref{lem:C3 in CEO}}
Let \( G \) be a graph, and let \( (\vec{G},f) \) be a \( q \)-coloured MINI-orientation of \( G \) for some \( q\in \mathbb{N} \). 
% with an underlying \( q \)-colouring \( f \) for some \( q\in \mathbb{N} \). 
%Then, \( \vec{G} \) orients each triangle \( (u,v,w) \) in \( G \) as either \( u\to v\to w\to u \) or \( v\to u\to w\to v \). 
Then, \( \vec{G} \) orients each triangle in \( G \) as a directed cycle. 
\end{customlem}
\begin{proof}
Let \( (u,v,w) \) be an arbitrary triangle in \( G \). 
Either \( (u,v) \) or \( (v,u) \) is an arc in \( \vec{G} \). 
We show that if \( (u,v) \) is an arc in \( \vec{G} \), then \( (u,v,w) \) is oriented by \( \vec{G} \) as \( u\to v\to w\to u \). 
Suppose that \( (u,v) \) is an arc in \( \vec{G} \). 
\setcounter{myclaim}{0}
\begin{myclaim}\label{clm:triangle in CEO}
\( (v,w) \) is an arc in \( \vec{G} \). 
\end{myclaim}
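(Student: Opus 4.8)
The plan is to argue by contradiction, using nothing beyond Condition~(iii) in the definition of a coloured MINI-orientation (that all in-neighbours of a vertex share one colour) together with the fact that the underlying colouring $f$ is proper. First I would suppose that $(v,w)$ is \emph{not} an arc in $\vec{G}$. Since $vw$ is an edge of $G$, exactly one of $(v,w)$ or $(w,v)$ is an arc of $\vec{G}$, so this forces $(w,v)\in E(\vec{G})$.

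Next I would combine this with the standing hypothesis $(u,v)\in E(\vec{G})$: both $u$ and $w$ are then in-neighbours of $v$ in $\vec{G}$. Invoking Condition~(iii) for the vertex $v$ — valid because $(\vec{G},f)$ is a $q$-coloured MINI-orientation — all in-neighbours of $v$ receive the same colour, so in particular $f(u)=f(w)$. But $(u,v,w)$ is a triangle in $G$, so $uw$ is an edge of $G$, and since $f$ is a colouring of $G$ we have $f(u)\neq f(w)$, a contradiction. Hence $(v,w)$ must be an arc in $\vec{G}$, which is the claim.

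I do not expect any real obstacle here; the only point that needs care is drawing the contradiction from the correct edge of the triangle — namely $uw$ (the edge joining the two putative in-neighbours of $v$), not $uv$ or $vw$ — and recognising that it is precisely Condition~(iii), not Condition~(i) or~(ii), that is being used. Once this claim is established, the lemma follows immediately: applying the claim again to the triangle $(v,w,u)$ with the arc $(v,w)$ yields that $(w,u)$ is an arc, so the triangle is oriented as the directed cycle $u\to v\to w\to u$; and if instead $(v,u)$ had been the given arc, the symmetric argument orients the triangle as $u\to w\to v\to u$. Either way, $\vec{G}$ orients each triangle of $G$ as a directed cycle.
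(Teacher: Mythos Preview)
Your proof is correct and essentially identical to the paper's: both argue by contradiction that if $(w,v)$ were the arc, then $u$ and $w$ would be in-neighbours of $v$, forcing $f(u)=f(w)$ by Condition~(iii), contradicting that $uw$ is an edge and $f$ is a proper colouring. Your extension to finish the lemma also matches the paper's argument.
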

\noindent If not, then \( u \) and \( w \) are in-neighbours of \( v \) and thus they must get the same colour under~\( f \), which is a contradiction since \( uw \) is an edge of \( G \) and \( f \) is a colouring of \( G \). 
This proves Claim~\ref{clm:triangle in CEO}.

Similarly, if \( (u,w) \) is an arc in \( \vec{G} \), then the in-neighbours \( u \) and \( v \) of \( w \) must get the same colour under \( f \), a contradiction. 
Hence, \( (w,u) \) is an arc in \( \vec{G} \). 
Hence, \( (u,v,w) \) is oriented by \( \vec{G} \) as \( u\to v\to w\to u \). 
Therefore, if \( (u,v) \) is an arc in \( \vec{G} \), then \( (u,v,w) \) is oriented by \( \vec{G} \) as \( u\to v\to w\to u \). 
Similarly, we can show that if \( (v,u) \) is an arc in \( \vec{G} \), then \( (u,v,w) \) is oriented by \( \vec{G} \) as \( v\to u\to w\to v \). 
\end{proof}

\begin{customlem}{\ref{lem:C4 in CEO}}
Let \( G \) be a graph, and let \( (\vec{G},f) \) be a \( q \)-coloured MINI-orientation of \( G \) for some \( q\in \mathbb{N} \). 
%with an underlying \( q \)\nobreakdash-colouring~\( f \) for some \( q\in \mathbb{N} \). 
Then, for every 4-vertex cycle in \( G \), not necessarily induced, edges in the cycle are oriented by \( \vec{G} \) either as a directed cycle (as in Figure~\ref{fig:C4 orientation 1}) or as a direction-alternating cycle (as in Figure~\ref{fig:C4 orientation 2}). 
\end{customlem}
\begin{figure}[hbt]
\centering
\begin{subfigure}[b]{0.3\textwidth}
\centering
\begin{tikzpicture}[decoration={
    markings,
    mark=at position 0.5 with {\arrow{>}}},
    ]
\path (0,0) node(u)[dot]{}--++(1,-1) node(v)[dot]{}--++(-1,-1) node(w)[dot]{}--++(-1,1) node(x)[dot]{}--(u); 
\draw [postaction=decorate,thick]
(u)--(v);
\draw [postaction=decorate,thick]
(v)--(w);
\draw [postaction=decorate,thick]
(w)--(x);
\draw [postaction=decorate,thick]
(x)--(u);
\end{tikzpicture}
\caption{}
\label{fig:C4 orientation 1}
\end{subfigure}\begin{subfigure}[b]{0.3\textwidth}
\centering
\begin{tikzpicture}[decoration={
    markings,
    mark=at position 0.5 with {\arrow{>}}},
    ]
\path (0,0) node(u)[dot]{}--++(1,-1) node(v)[dot]{}--++(-1,-1) node(w)[dot]{}--++(-1,1) node(x)[dot]{}--(u); 
\draw [postaction=decorate,thick]
(u)--(v);
\draw [postaction=decorate,thick]
(w)--(v);
\draw [postaction=decorate,thick]
(w)--(x);
\draw [postaction=decorate,thick]
(u)--(x);
\end{tikzpicture}
\caption{}
\label{fig:C4 orientation 2}
\end{subfigure}\caption{Possible orientations of a \( C_4 \) in a MINI-orientation \( \protect\vec{G} \).}\label{fig:C4 orientations}
\end{figure}
\begin{proof}
Let \( (u,v,w,x) \) be an arbitrary \( C_4 \) in \( G \). 
Without loss of generality, assume that \( (u,v) \) is an arc in \( \vec{G} \). 
Let us consider the possible orientations of the edge \( vw \) as two cases.\\

\noindent \textit{Case~1:} \( (v,w) \) is an arc in \( \vec{G} \).\\[5pt]
We have, \( u \) is an in-neighbour of \( v \) and \( w \) is an out-neighbour of \( v \). 
Since \( (\vec{G},f) \) is a coloured MINI-orientation, we have \( f(u)\neq f(w) \).\\[5pt]
\noindent \textbf{Claim~1 (of Case~1):} \( (w,x) \) is an arc in \( \vec{G} \).\\[5pt]
On the contrary, assume that \( (x,w) \) is an arc in \( \vec{G} \). 
Since \( (\vec{G},f) \) is a coloured MINI-orientation, the in-neighbours \( v \) and  \( x \) of \( w \) should get the same colour under \( f \). 
That is, \( f(v)=f(x) \). 
Since \( v \) and \( x \) are neighbours of \( u \), the only possibility of \( v \) and \( x \) getting the same colour is that \( v \) and \( x \) are in-neighbours of \( u \). 
This is a contradiction since \( v \) is an out-neighbour of \( u \). 
This proves Claim~1.

By Claim~1, \( (w,x) \) is an arc in \( \vec{G} \). 
Since \( f(u)\neq f(w) \), it follows that \( (u,x) \) is not an arc in \( \vec{G} \) (if not, the in-neighbours \( u \) and \( w \) of \( x \) should get the same colour under \( f \); a contradiction). 
Therefore, the orientation on \( (u,v,w,x) \) in \( \vec{G} \) is \( u\to v\to w\to x\to u \); that is, as in Figure~\ref{fig:C4 orientation 1}.\\

\noindent \textit{Case 2:} \( (w,v) \) is an arc in \( \vec{G} \).\\[5pt]
Then, the in-neighbours \( u \) and \( w \) of \( v \) should get the same colour under \( f \). 
That is, \( f(u)=f(w) \). 
Since \( u \) and \( w \) are neighbours of \( x \), the only possibility of \( u \) and \( w \) getting the same colour is that \( u \) and \( w \) are in-neighbours of \( x \). 
Therefore, the orientation on \( (u,v,w,x) \) in \( \vec{G} \) is \( u\to v, w\to v, u\to x \) and \( w\to x \); that is, as in Figure~\ref{fig:C4 orientation 2}.

In both Case~1 and Case~2, the orientation on \( (u,v,w,x) \) is as shown in Figure~\ref{fig:C4 orientations}. 
This completes the proof since the 4-vertex cycle \( (u,v,w,x) \) is arbitrary. 
\end{proof}
\begin{customcor}{\ref{cor:opposite sides in C4 under CEO}}
Let \( G \) be a graph with a 4-vertex cycle \( (u,v,w,x) \), and let \( (\vec{G},f) \) be a \( q \)-coloured MINI-orientation of \( G \). 
If \( (u,v)\in E(\vec{G}) \), then \( (w,x)\in E(\vec{G}) \). 
\end{customcor}
\begin{proof}
Suppose that \( (u,v)\in E(\vec{G}) \); that is, \( \vec{G} \) orients edge \( uv \) as \( u\to v \). 
Then, \( \vec{G} \) orients \( (u,v,w,x) \) as either (i)~\( u\to v\to w\to x\to u \), or (ii)~\( u\to v, w\to v, u\to x, w\to x \). 
In both cases, \( \vec{G} \) orients the edge \( wx \) as \( w\to x \); that is, \( (w,x)\in E(\vec{G}) \). 
\end{proof}

\begin{figure}[hbt]
\centering
\begin{subfigure}[b]{0.3\textwidth}
\centering
\begin{tikzpicture}
\path (0,0) node(u)[dot][label=left:\( a \)]{}++(1,1) node(v)[dot][label=\( x \)]{}++(1,-1) node(w)[dot][label=right:\( b \)]{}++(-1,-1) node(x)[dot][label=below:\( y \)]{};
\draw (u)--(v)--(w)--(x)--(u);
\draw (v)--(x);
\end{tikzpicture}
\caption{}
\end{subfigure}\begin{subfigure}[b]{0.3\textwidth}
\centering
\begin{tikzpicture}
\draw (0,0) circle[radius=0.75];
\draw (0,0) circle[radius=1.25];
\draw (15:0.75) node[dot][label={[label distance=-2pt]left:\( u_1 \)}]{}--(15:1.25) node[dot][label=right:\( v_1 \)]{};
\draw (90:0.75) node[dot][label=below:\( u_0 \)]{}--(90:1.25) node[dot][label=\( v_0 \)]{};
\draw (175:0.75) node[dot][label={[yshift=-3pt]right:\( u_{2r} \)}]{}--(175:1.25) node[dot][label=left:\( v_{2r} \)]{};
\path (-90:0.75) node[fill=white]{\dots};
\path (-90:1.25) node(outerCirc)[fill=white]{\dots};
\end{tikzpicture}
\caption{}
\end{subfigure}\begin{subfigure}[b]{0.3\textwidth}
\centering
\begin{tikzpicture}
\tikzset{
every label/.style={label distance=-2pt},
}
\draw (0,0) circle[radius=0.75];
\draw (0,0) circle[radius=1.25];
\draw (0:0.75) node[dot][label=left:\( u_1 \)]{}--(0:1.25) node[dot][label=right:\( v_1 \)]{};
\draw (72:0.75) node[dot][label=below:\( u_0 \)]{}--(72:1.25) node[dot][label=\( v_0 \)]{};
\draw (144:0.75) node[dot][label=below right:\( u_4 \)]{}--(144:1.25) node[dot][label=above left:\( v_4 \)]{};
\draw (-72:0.75) node[dot][label=\( u_2 \)]{}--(-72:1.25) node[dot][label=below:\( v_2 \)]{};
\draw (-144:0.75) node[dot][label=above right:\( u_3 \)]{}--(-144:1.25) node[dot][label=below left:\( v_3 \)]{};
\path (-90:1.25) coordinate(outerCirc){};
\end{tikzpicture}
\caption{}
\end{subfigure}\caption{(a) Diamond, (b) circular ladder graph \( CL_{2r+1} \), and (c) \( CL_5 \).}
\end{figure}

\begin{theorem}\label{thm:diamond obstruction to CEO}
The diamond graph does not admit a \( q \)-coloured MINI-orientation for any \( q\in \mathbb{N} \). 
\end{theorem}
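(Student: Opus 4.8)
The plan is to exploit the fact that the diamond has exactly two triangles, which share the common edge $xy$ (write $x,y$ for the two vertices of degree $3$ and $a,b$ for the two vertices of degree $2$, so the edges are $ax,ay,bx,by,xy$). Suppose, for contradiction, that $(\vec{G},f)$ is a $q$-coloured MINI-orientation of the diamond for some $q\in\mathbb{N}$. First I would apply Lemma~\ref{lem:C3 in CEO} to each of the triangles $(a,x,y)$ and $(b,x,y)$: each is oriented by $\vec{G}$ as a directed $3$-cycle. These two directed cycles must agree on the direction of the shared edge $xy$, so up to the automorphism of the diamond that swaps $x$ with $y$ (and $a$ with $b$), I may assume $xy$ is oriented $x\to y$. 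A directed $3$-cycle through $\{a,x,y\}$ containing the arc $x\to y$ must be $a\to x\to y\to a$, and likewise the cycle through $\{b,x,y\}$ must be $b\to x\to y\to b$. Hence the arcs $a\to x$, $b\to x$, $y\to a$ and $y\to b$ all lie in $\vec{G}$.

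Next I would extract the contradiction directly from the definition of a MINI-orientation. The vertices $a$ and $b$ are both in-neighbours of $x$, so condition~(iii) (monochromatic in-neighbourhood) gives $f(a)=f(b)$. On the other hand, $a$ and $b$ are distinct out-neighbours of $y$, so condition~(ii) (out-neighbours receive pairwise distinct colours) gives $f(a)\neq f(b)$. This is impossible, so the diamond admits no $q$-coloured MINI-orientation for any $q\in\mathbb{N}$.

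I expect no genuine obstacle here: the argument uses only Lemma~\ref{lem:C3 in CEO} together with conditions (ii) and (iii) of the definition, and Lemma~\ref{lem:C4 in CEO} / Corollary~\ref{cor:opposite sides in C4 under CEO} is not actually needed (though one could alternatively route the first step through the $C_4$ on $(a,x,b,y)$). The only point requiring a little care is the bookkeeping of which arcs become forced once the orientation of the shared edge $xy$ is fixed, and the small observation that fixing that orientation is without loss of generality via the $x\leftrightarrow y$ symmetry of the diamond.
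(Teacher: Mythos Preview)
Your argument is correct. The setup matches the paper's proof exactly: label the diamond, fix the orientation of the shared edge $xy$ without loss of generality, and use Lemma~\ref{lem:C3 in CEO} on each triangle to force all remaining arcs. The one difference is in how the contradiction is extracted. The paper observes that the resulting orientation of the $4$-cycle $(a,x,b,y)$ is neither a directed cycle nor direction-alternating, and invokes Lemma~\ref{lem:C4 in CEO}. You instead read off the contradiction directly from conditions~(ii) and~(iii) of the MINI-orientation definition at the vertices $x$ and $y$. Your route is marginally more self-contained, since it avoids appealing to the $C_4$ lemma; the paper's route has the small advantage of reusing the same machinery (Lemmas~\ref{lem:C3 in CEO} and~\ref{lem:C4 in CEO}) that drives the $CL_{2r+1}$ obstruction in Theorem~\ref{thm:CL2r+1 obstruction to CEO}. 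Either way the proof is immediate once the triangle lemma has been applied.
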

\begin{proof}
Let \( G \) be the diamond graph with vertex set \( \{x,y,a,b\} \) and edge set \( \{ax,xb,by,\allowbreak ya,xy\} \). 
Contrary to the theorem, suppose that \( G \) admits a \( q \)-coloured MINI-orientation \( (\vec{G},f) \) for some \( q\in \mathbb{N} \). 
Without loss of generality, assume that \( \vec{G} \) orients the edge \( xy \) as \( x\to y \). 
By Lemma~\ref{lem:C3 in CEO}, \( \vec{G} \) orients the triangle \( (x,y,a) \) in \( G \) as \( x\to y\to a\to x \). 
Similarly, \( \vec{G} \) orients the triangle \( (x,y,b) \) in \( G \) as \( x\to y\to b\to x \). 
Since \( \vec{G} \) orients the 4-vertex cycle \( (a,x,b,y) \) as \( y\to a\to x \), \( y\to b\to x \), the 4-vertex cycle is not oriented by \( \vec{G} \) as in Figure~\ref{fig:C4 orientations}; a contradiction to Lemma~\ref{lem:C4 in CEO}. 
\end{proof}
\begin{customthm}{\ref{thm:CL2r+1 obstruction to CEO}}
For \( r\in \mathbb{N} \), the circular ladder graph \( CL_{2r+1} \) does not admit a \( q \)\nobreakdash-coloured MINI-orientation for any \( q\in \mathbb{N} \). 
Thus, for \( p\geq 2 \), a \( 2p \)-regular \( (p+2) \)-star colourable graph does not contain \( CL_{2r+1} \) as a subgraph. 
%\qed
\end{customthm}
\begin{proof}
Let \( G \) be the circular ladder graph \( CL_{2r+1} \) with vertex set \( \{u_0,u_1,\dots,u_{2r} \), \( v_0,v_1,\dots,v_{2r}\} \) and edge set \( \{u_iu_{i+1}\colon i\in \mathbb{Z}_{2r+1}\}\cup \{v_iv_{i+1}\colon i\in \mathbb{Z}_{2r+1}\}\cup \{u_iv_i\colon i\in \mathbb{Z}_{2r+1}\} \), where subscript \( i+1 \) is modulo \( 2r+1 \). 
Contrary to the theorem, suppose that \( G \) admits a \( q \)-coloured MINI-orientation \( (\vec{G},f) \) for some \( q\in \mathbb{N} \). 
Due to Corollary~\ref{cor:opposite sides in C4 under CEO}, there exists \( i\in \mathbb{Z}_{2r+1} \) such that \( (u_i,v_i)\in E(\vec{G}) \) (if \( (v_0,u_0)\in E(\vec{G}) \), then applying Corollary~\ref{cor:opposite sides in C4 under CEO} on \( (v_0,u_0,u_1,v_1) \) yields \( (u_1,v_1)\in E(\vec{G}) \)). 
Without loss of generality, assume that \( (u_0,v_0)\in E(\vec{G}) \). 
Then, applying Corollary~\ref{cor:opposite sides in C4 under CEO} on \( (u_0,v_0,v_1,u_1) \) yields \( (v_1,u_1)\in E(\vec{G}) \). 
Hence, applying Corollary~\ref{cor:opposite sides in C4 under CEO} on \( (v_1,u_1,u_2,v_2) \) yields \( (u_2,v_2)\in E(\vec{G}) \), and thus applying Corollary~\ref{cor:opposite sides in C4 under CEO} on \( (u_2,v_2,v_3,u_3) \) yields \( (v_3,u_3)\in E(\vec{G}) \). 
By repeated application, we get \( (u_{2i},v_{2i})\in E(\vec{G}) \) for \( 0\leq i\leq r \), and \( (v_{2i+1},u_{2i+1})\in E(\vec{G}) \) for \( 0\leq i\leq r-1 \). 
In particular, \( (u_{2r},v_{2r})\in E(\vec{G}) \). 
Hence, applying Corollary~\ref{cor:opposite sides in C4 under CEO} on \( (u_{2r},v_{2r},v_0,u_0) \) yields \( (v_0,u_0)\in E(\vec{G}) \). 
This is a contradiction since we started with \( (u_0,v_0)\in E(\vec{G}) \). 
\end{proof}

\subsection{Proof of Theorem~\ref{thm:orientation mini test in P}}\label{sec:A.3}
\begin{customthm}{\ref{thm:orientation mini test in P}}
Given an orientation \( \vec{G} \) of a graph \( G \), we can test in polynomial time whether there exists an integer \( q^* \) and a \( q^* \)-colouring \( f^* \) of \( G \) such that \( (\vec{G},f^*) \) is a \( q^* \)-coloured MINI-orientation of \( G \). 
%Given an orientation \( \vec{G} \) of a graph \( G \), we can test in polynomial time whether there exists an integer \( q^* \) such that \( \vec{G} \) is a \( q^* \)-coloured MINI-orientation of \( G \). 
\end{customthm}
\begin{proof}
Let \( \vec{G} \) be an orientation of \( G \). 
We define an equivalence relation \( \mathcal{R} \) on \( V(G) \) as follows: \( (x,y)\in \mathcal{R} \) if there exists an \( x,y \)-path \( u_0,u_1,\dots,u_{2t} \) in \( \vec{G} \) with \( t\geq 0 \), \( x=u_0 \) and \( y=u_{2t} \) such that \( u_{2j},u_{2j+2} \) are in-neighbours of \( u_{2j+1} \) for \( 0\leq j<t \).

Let \( V_0,V_1,\dots,V_{q-1} \) be the equivalence classes under \( \mathcal{R} \), and let \( f:V(G)\to\{0,1,\dots,{q-1}\} \) be the function defined as \( f(v)=i \) for all \( v\in V_i \) \( (0\leq i\leq q-1) \). 
Clearly, we can compute \( f \) in polynomial time, and test in polynomial time whether \( f \) is a \( q \)-colouring of \( G \) and \( (\vec{G},f) \) is a \( q \)-coloured MINI-orientation. 
We claim that \( (\vec{G},f^*) \) is a \( q^* \)-coloured MINI-orientation of \( G \) for some integer \( q^* \) and some \( q^* \)-colouring \( f^* \) of \( G \) if and only if \( (\vec{G},f) \) is a \( q \)-coloured MINI-orientation of \( G \). 
To prove this claim, it suffices to show the only if direction. 
%the following: ``if \( \vec{G} \) is a \( q^* \)-coloured MINI-orientation of \( G \), then \( f \) is a \( q \)-colouring of \( G \) and \( \vec{G} \) is a \( q \)-coloured MINI-orientation of \( G \) with \( f \) as its underlying colouring''. 
Suppose that \( (\vec{G},f^*) \) is a \( q^* \)-coloured MINI-orientation of \( G \).

To prove that \( f \) is \( q \)-colouring of \( G \), it suffices to establish the following claim.\\[5pt]
\emph{Claim 1:}  \( V_i \) is an independent set for \( 0\leq i\leq q-1 \).\\[5pt] 
We prove Claim~1 for \( i=0 \) (the proof is similar for other values of \( i \)). 
To produce a contradiction, assume that \( V_0 \) is not an independent set, say \( xy \) is an edge in \( G \) where \( x,y\in V_0 \). 
Since \( x \) and \( y \) belong to the same equivalence class under \( \mathcal{R} \) (namely \( V_0 \)), there exists an \( x,y \)-path \( u_0,u_1,\dots,u_{2t} \) in \( \vec{G} \) with \( t\geq 0 \), \( x=u_0 \), \( y=u_{2t} \), and \( u_{2j},u_{2j+2} \) are in-neighbours of \( u_{2j+1} \) for \( 0\leq j<t \). 
By definition of \( q^* \)-coloured MINI-orientation, \( f^*(u_{2j})=f^*(u_{2j+2}) \) for \( 0\leq j<t \) (because \( u_{2j} \) and \( u_{2j+2} \) are in-neighbours of \( u_{2j+1} \)). 
Hence \( f^*(u_0)=f^*(u_2)=\dots=f^*(u_{2t}) \); thus, \( f^*(x)=f^*(y) \). 
This is a contradiction since \( f^* \) is a colouring of \( G \) and \( xy\in E(G) \). 
This proves Claim~1. 
So, \( f \) is a \( q \)-colouring of \( G \).\\

It remains to show that \( (\vec{G},f) \) is a \( q \)-coloured MINI-orientation of \( G \). 
Let \( v \) be an arbitrary vertex of \( G \). 
Let \( w_1,\dots,w_p \) be the in-neighbours of \( v \), and let \( x_1,\dots,x_r \) be the out-neighbours of \( v \) in \( \vec{G} \). 
We need to show that all three conditions in the definition of \( q \)-coloured MINI-orientation are satisfied. 
That is, we need to show that the following hold in \( (\vec{G},f) \): (i)~all in-neighbours of \( v \) have the same colour, (ii)~no out-neighbour of \( v \) has the same colour as an in-neighbour of \( v \), and (iii)~no two out-neighbours of \( v \) have the same colour.

For \( 1\leq k<\ell\leq p \), \( (w_k,w_\ell)\in\mathcal{R} \) since \( w_k \) and \( w_\ell \) are in-neighbours of \( v \), and thus \( f(w_k)=f(w_\ell) \). 
Hence, \( f(w_1)=\dots=f(w_p) \). 
This proves (i).

Next, we prove (ii); that is, \( f(w_k)\neq f(x_\ell) \) for \( 1\leq k\leq p \) and \( 1\leq \ell\leq r \). 
To produce a contradiction, assume the contrary; that is, there exist \( k\in\{1,\dots,p\} \) and \( \ell\in\{1,\dots,r\} \) such that \( f(w_k)=f(x_\ell) \). 
Since \( f(w_k)=f(x_\ell) \), \( w_k \) and \( x_\ell \) belong to the same equivalence class under \( \mathcal{R} \). 
That is, there exists an \( w_k,x_\ell \)-path \( u_0,u_1,\dots,u_{2t} \) in \( \vec{G} \) with \( t\geq 0 \), \( w_k=u_0 \), \( x_\ell=u_{2t} \), and \( u_{2j},u_{2j+2} \) are in-neighbours of \( u_{2j+1} \) for \( 0\leq j<t \). 
For \( 0\leq j<t \), \( u_{2j} \) and \( u_{2j+2} \) are in-neighbours of \( u_{2j+1} \) and thus \( f^*(u_{2j})=f^*(u_{2j+2}) \). 
Therefore, \( f^*(u_0)=f^*(u_2)=\dots=f^*(u_{2t}) \); thus, \( f^*(w_k)=f^*(x_\ell) \). 
But, since \( w_k \) is an in-neighbour of \( v \) and \( x_\ell \) is an out-neighbour of \( v \), \( f^*(w_k)\neq f^*(x_\ell) \). 
This contradiction proves (ii).

Finally, we prove (iii); that is, \( f(x_k)\neq f(x_\ell) \) for \( 1\leq k<\ell\leq r \). 
To produce a contradiction, assume the contrary; that is, there exist \( k,\ell \in\{1,\dots,r\} \) with \( k<\ell \) such that \( f(x_k)=f(x_\ell) \). 
Since \( f(x_k)=f(x_\ell) \), \( x_k \) and \( x_\ell \) belong to the same equivalence class under \( \mathcal{R} \). 
That is, there exists an \( x_k,x_\ell \)-path \( u_0,u_1,\dots,u_{2t} \) in \( \vec{G} \) with \( t\geq 0 \), \( x_k=u_0 \), \( x_\ell=u_{2t} \), and \( u_{2j},u_{2j+2} \) are in-neighbours of \( u_{2j+1} \) for \( 0\leq j<t \). 
For \( 0\leq j<t \), \( u_{2j} \) and \( u_{2j+2} \) are in-neighbours of \( u_{2j+1} \) and thus \( f^*(u_{2j})=f^*(u_{2j+2}) \). 
Therefore, \( f^*(u_0)=f^*(u_2)=\dots=f^*(u_{2t}) \); thus, \( f^*(x_k)=f^*(x_\ell) \). 
But, since \( x_k \) and \( x_\ell \) are out-neighbours of \( v \), \( f^*(x_k)\neq f^*(x_\ell) \). 
This contradiction proves (iii). 

Since (i), (ii) and (iii) hold for an arbitrary vertex \( v \) of \( \vec{G} \),  it follows that \( (\vec{G},f) \) is a \( q \)-coloured MINI-orientation. 
\end{proof}

\end{document}